\author{Ruben Henrard}
\address{Ruben Henrard \\ Universiteit Hasselt \\ Campus Diepenbeek \\ Departement WNI \\ 3590 Diepenbeek \\ Belgium}
\email{ruben.henrard@uhasselt.be}
\author{Adam-Christiaan van Roosmalen}
\address{Adam-Christiaan van Roosmalen \\ Universiteit Hasselt \\ Campus Diepenbeek \\ Departement WNI \\ 3590 Diepenbeek \\ Belgium}
\email{adamchristiaan.vanroosmalen@uhasselt.be}
\title{A categorical framework for glider representations}
\newtheorem{theorem}{Theorem}[section]
\newtheorem{proposition}[theorem]{Proposition}
\newtheorem{lemma}[theorem]{Lemma}
\newtheorem{corollary}[theorem]{Corollary}
\theoremstyle{definition}
\newtheorem{definition}[theorem]{Definition}
\newtheorem{remark}[theorem]{Remark}
\newtheorem{example}[theorem]{Example}
\newtheorem{notation}[theorem]{Notation}
\newtheorem{descript}[theorem]{Description}
\let\frak\mathfrak
\def\aa{{\frak a}}
\def\bb{{\frak b}}
\let\cal\mathcal
\def\AA{{\cal A}}
\def\BB{{\cal B}}
\def\CC{{\cal C}}
\def\DD{{\cal D}}
\def\EE{{\cal E}}
\def\FF{{\cal F}}
\def\MM{{\cal M}}
\def\NN{{\cal N}}
\def\TT{{\cal T}}
\def\VV{{\cal V}}
\def\XX{{\cal X}}
\def\bR{\mathbb{R}}
\def\bC{\mathbb{C}}
\def\bZ{\mathbb{Z}}
\DeclareMathOperator{\im}{im}
\DeclareMathOperator{\coim}{coim}
\DeclareMathOperator{\coker}{coker}
\DeclareMathOperator{\Hom}{Hom}
\DeclareMathOperator{\Fun}{Fun}
\DeclareMathOperator{\Mor}{Mor}
\DeclareMathOperator{\End}{End}
\DeclareMathOperator{\Mod}{Mod}
\DeclareMathOperator{\Proj}{Proj}
\DeclareMathOperator{\Inj}{Inj}
\DeclareMathOperator{\Ab}{\mathsf{Ab}}
\DeclareMathOperator{\Ob}{Ob}
\DeclareMathOperator{\C}{\mathbf{C}}
\DeclareMathOperator{\Cb}{\mathbf{C}^b}
\DeclareMathOperator{\K}{\mathbf{K}}
\DeclareMathOperator{\Kb}{\mathbf{K}^b}
\DeclareMathOperator{\Db}{\mathbf{D}^b}
\DeclareMathOperator{\DAb}{\mathbf{D}_{\AA}^b}
\DeclareMathOperator{\DModRb}{\mathbf{D}_{\Mod(R)}^b}
\DeclareMathOperator{\Ac}{\mathbf{Ac}}
\DeclareMathOperator{\Acb}{\mathbf{Ac}^b}
\DeclareMathOperator{\colim}{colim}
\newcommand{\oFF}{\overline{\FF}}
\renewcommand{\mod}{\operatorname{mod}}
\DeclareMathOperator{\Preglid}{Preglid_\Lambda}
\DeclareMathOperator{\Prefrag}{Prefrag_\Lambda}
\DeclareMathOperator{\NPreglid}{NPreglid_\Lambda}
\DeclareMathOperator{\NGlid}{NGlid_\Lambda}
\DeclareMathOperator{\preglid}{preglid_\Lambda}
\DeclareMathOperator{\prefrag}{prefrag_\Lambda}
\DeclareMathOperator{\Glid}{Glid_\Lambda}
\DeclareMathOperator{\glid}{glid_\Lambda}
\DeclareMathOperator{\Frag}{Frag_\Lambda}
\DeclareMathOperator{\Vect}{vec}
\newcommand{\inflation}{\rightarrowtail}
\DeclareMathOperator{\deflation}{\twoheadrightarrow}
\begin{document}

\subjclass[2010]{16W70; 18E10, 18E35} 

\begin{abstract}
Glider representations are a generalization of filtered modules over filtered rings.  Given a $\Gamma$-filtered ring $FR$ and a subset $\Lambda \subseteq \Gamma$, we provide a category $\Glid FR$ of glider representations, and show that it is a complete and cocomplete deflation quasi-abelian category.  We discuss its derived category, and the subcategories of natural gliders and Noetherian gliders.

If $R$ is a bialgebra over a field $k$ and $FR$ is a filtration by bialgebras, we show that $\Glid FR$ is a monoidal category which is derived equivalent to the category of representations of a semi-Hopf category (in the sense of Batista, Caenepeel, and Vercruysse).  We show that the monoidal category of glider representations associated to the one-step filtration $k \cdot 1 \subseteq R$ of a bialgebra $R$ is sufficient to recover the bialgebra $R$ by recovering the usual fiber functor from $\Glid FR.$  When applied to group algebras, this shows that the monoidal category $\Glid F(kG)$ alone is sufficient to distinguish even isocategorical groups.
\end{abstract}

\maketitle

\tableofcontents

\section{Introduction}

Filtered rings and their representations occur naturally in many areas of mathematics and physics, for example in the study of $D$-modules and quantum groups (see, for example, \cite{AsensioVandenBerghVanOstaeyen89, Kashiwara03, McConnell90, NastasescuVanOystaeyen79}). The following example illustrates that sometimes one is interested in truncated filtered representations.  Consider the $\mathbb{Z}$-filtered ring $\mathbb{R}[t]$ with $\deg(t)=1$.  For each $i\geq 0$, write $C^{i}(\mathbb{R})$ for the $i$-times continuously differentiable functions on $\mathbb{R}$, and consider the chain 
\[\dots\subseteq C^{i+1}(\mathbb{R}) \subseteq C^{i}(\mathbb{R})\subseteq \dots \subseteq C^{1}(\mathbb{R})\subseteq C^0(\mathbb{R}).\]
Whenever $i\geq n$, $t^n$ induces an action $t^n\colon C^{i}(\mathbb{R})\to C^{i-n}(\mathbb{R})\colon f\mapsto \frac{\mathrm{d}^nf}{\mathrm{d}x^n}$. In this way, the differential operator $\frac{\mathrm{d}}{\mathrm{d}x}$ (which is only densely defined on $C^0(\mathbb{R})$) is encoded algebraically into this filtration.

This type of truncated filtered representations were studied by Nawal and Van Oystaeyen in 1995 (see \cite{NawalVanOystaeyen95}) under the name of \emph{fragments} (as the definition of fragment has changed over the years, we will refer to this concept as a \emph{prefragment}).  Given a ring $R$ and a positive ring filtration $S=F_0R\subseteq F_1R\subseteq F_2R\subseteq \dots \subseteq F_nR \subseteq \dots $ one can consider an $FR$-fragment $M$, i.e. an $\bZ^{\leq 0}$-filtered $S$-module $M$ together with $S$-submodules
\[\ldots \subseteq M_{-i} \subseteq M_{-i+1} \subseteq \ldots \subseteq M_0 = M\]
and ``fragmented actions'' $\phi\colon F_n R \times M_{-n} \to M$.  Going deeper into the chain, one gradually sees more of the $R$-action.  The precise definition is recalled in definition \ref{definition:Fragment}.

Looking at the definition of an $FR$-prefragment, it is not clear to what extent one should require the fragmented actions to be compatible with each other.  Over the course of several papers, the original definition of an $FR$-prefragment has been amended, requiring more associativity conditions on the partial actions (compare, for example, the definition in \cite{NawalVanOystaeyen95} to the one in \cite{CaenepeelVanOystaeyenBook19}).  One way to circumvent the associativity issue is to require that all partial actions of an $FR$-prefragment $M$ are induced by some enveloping $R$-module $\Omega_M\supseteq M$. Prefragments satisfying this additional condition are called \emph{glider representations}. Example \ref{Example:IntroExample} below shows that $\{C^i(\mathbb{R})\}_{i \leq 0}$ is, in fact, a glider representation.

Recently, the theory of glider representations has regained some attention (see for example \cite{Caenepeel18, CaenepeelVanOystaeyen16, CaenepeelVanOystaeyen18} and the book \cite{CaenepeelVanOystaeyenBook19}).  Despite these new developments, a categorical framework for glider representations is missing (see \cite{CaenepeelVanOystaeyenBook19} or remark \ref{remark:Obstruction}).

We construct a category of glider representations over a filtered ring $FR$ as a localization of the category of pregliders.  For the purpose of this introduction, we sketch the construction of the categories of glider and preglider representations.  Let $\Gamma$ be an ordered group and fix any subset $\Lambda \subseteq \Gamma.$  Let $FR$ be a $\Gamma$-filtration of a ring $R$ and let $\Omega$ be any $R$-module.  We choose subgroups $\{M_\lambda\}_{\lambda \in \Lambda}$ of $\Omega$ such that the module action $R \times \Omega \to \Omega$ restricts to fragmented actions $F_{\lambda\mu^{-1}}R \times M_{\mu} \to M_{\lambda}$ (for all $\lambda, \mu \in \Lambda$).  We refer to $\Omega$ together with the subgroups $\{M_\lambda\}_{\lambda \in \Lambda}$ as a preglider.  A morphism $(\{M_\lambda\}_{\lambda \in \Lambda}, \Omega_M) \to (\{N_\lambda\}_{\lambda \in \Lambda}, \Omega_N)$ between pregliders is an $R$-module morphism $f\colon \Omega_M \to \Omega_N$ such that $f(M_\lambda) \subseteq N_\lambda$, for each $\lambda \in \Lambda.$

We then define the category of glider representations as the localization $(\Preglid FR)[\Sigma^{-1}]$ where $\Sigma$ consists of those morphisms such that the all induced maps $f_\lambda\colon M_\lambda \to N_\lambda$ are isomorphisms (but $f\colon M \to N$ itself need not be an isomorphism).  The set $\Sigma$ is a right multiplicative system in $\Preglid FR,$ so that the localization can be described using roofs (see proposition \ref{proposition:SigmaRMS}).

The category $\Prefrag FR$ of prefragments does, in general, not recover sufficient structure of the filtered ring: this is illustrated in example \ref{example:ShortPrefragIsBlind}.  In contrast, the category $\Preglid FR$ contains too much information: we see from example \ref{example:UsualFiltrations} that the category of pregliders does not reduce to the usual category of filtered modules.  The category $\Glid FR$ of glider representations lies in between the pregliders and the prefragments in the following way.  We can consider the restriction map $j^*\colon \Preglid FR \to \Prefrag FR$ which forgets the ambient $R$-module $\Omega.$  The category $\Glid FR$ is obtained from the usual factorization of $j^*\colon \Preglid FR \to \Glid FR \to \Prefrag FR$ where $\Glid FR = \Sigma^{-1}\Preglid FR$ and $\Sigma$ consists of all morphisms $\sigma$ between pregliders such that the restriction $j^*(\sigma)$ is invertible.

The following theorem provides different interpretations of the category of glider representations (see proposition \ref{proposition:PhiHasLeftAdjoint} and corollary \ref{corollary:GlidIsCoreflectiveInPreGlid}).

\begin{theorem}
Let $\Gamma$ be an ordered group and let $\Lambda \subseteq \Gamma.$
\begin{enumerate}
	\item The category $\Glid FR$ is a reflective subcategory of $\Prefrag FR,$ and
	\item the category $\Glid FR$ is a coreflective subcategory of $\Preglid FR.$
\end{enumerate}
\end{theorem}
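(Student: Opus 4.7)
The plan is to build a left adjoint $F\colon \Prefrag FR \to \Preglid FR$ to the restriction functor $j^*\colon \Preglid FR \to \Prefrag FR$, and to deduce both parts of the theorem by formal adjoint-functor manipulations combined with the universal property of the localization $L\colon \Preglid FR \to \Glid FR$. For a prefragment $M = \{M_\lambda\}_{\lambda \in \Lambda}$, I would construct an enveloping $R$-module $\Omega^F_M$ as a quotient of the free $R$-module on $\bigsqcup_\lambda M_\lambda$, imposing exactly the relations coming from the additive structure on each $M_\lambda$, the inclusions $M_\lambda \subseteq M_\mu$ implicit in the prefragment data, and the partial actions $F_{\lambda\mu^{-1}}R \times M_\mu \to M_\lambda$; then $F(M) := (\{M_\lambda\}, \Omega^F_M)$. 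After checking that each $M_\lambda$ still embeds in $\Omega^F_M$ so that $F(M)$ is a genuine preglider with $j^*F(M) = M$, the universal property produces the adjunction $F \dashv j^*$, and one observes that the counit $\epsilon_P \colon F(j^*(P)) \to P$ is the identity on each subgroup of the filtration, hence belongs to $\Sigma$.

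For part~(1), the universal property of the localization yields a unique factorisation $\bar{j^*}\colon \Glid FR \to \Prefrag FR$ with $\bar{j^*} \circ L = j^*$, because $j^*$ inverts $\Sigma$ by definition. Since $\Hom_{\Preglid FR}(F(M),-) \cong \Hom_{\Prefrag FR}(M, j^*(-))$ already inverts $\Sigma$, the object $F(M)$ is $\Sigma$-local, so the calculus of right fractions gives
\[
\Hom_{\Glid FR}(LF(M), L(P)) \cong \Hom_{\Preglid FR}(F(M), P) \cong \Hom_{\Prefrag FR}(M, \bar{j^*}(L(P))),
\]
i.e.\ $LF \dashv \bar{j^*}$. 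The counit $LF\bar{j^*} \Rightarrow \operatorname{id}_{\Glid FR}$ evaluated at $L(P)$ is $L(\epsilon_P)$, invertible by the previous paragraph; hence $\bar{j^*}$ is fully faithful and realises $\Glid FR$ as a reflective subcategory of $\Prefrag FR$.

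For part~(2), set $\iota := F \circ \bar{j^*}$. Chaining the adjunction $F \dashv j^*$, the identity $\bar{j^*} \circ L = j^*$, and the fully faithfulness of $\bar{j^*}$ just proved yields
\[
\Hom_{\Preglid FR}(\iota(G), P) \cong \Hom_{\Prefrag FR}(\bar{j^*}(G), \bar{j^*}(L(P))) \cong \Hom_{\Glid FR}(G, L(P)),
\]
so $\iota \dashv L$. The unit $\operatorname{id}_{\Glid FR} \Rightarrow L\iota = LF\bar{j^*}$ is inverse to the counit from part~(1) and is therefore invertible, so $\iota$ is fully faithful. This exhibits $\Glid FR$ as a coreflective subcategory of $\Preglid FR$.

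The main obstacle is the construction of $F$ itself. The categorical manipulations used to deduce both parts from the adjunction $F \dashv j^*$ are routine, but the heart of the argument lies in verifying that the universal presentation of $\Omega^F_M$ does not identify distinct elements of any $M_\lambda$---equivalently, that the unit $M \to j^*F(M)$ is an isomorphism---so that $F(M)$ is a genuine preglider whose underlying prefragment is $M$.
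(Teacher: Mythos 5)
Your overall architecture --- build a left adjoint to $j^*\colon \Preglid FR \to \Prefrag FR$ by a universal enveloping-module construction and then deduce both reflectivity and coreflectivity by chaining adjunctions through the localization --- is exactly the paper's (propositions \ref{proposition:jL}, \ref{proposition:PhiHasLeftAdjoint} and \ref{proposition:LeftAdjoint}, all driven by the cycle-of-adjoints lemma, proposition \ref{proposition:CycleOfAdjoints}). However, the step you single out as the heart of the argument is precisely the one that fails: for a general prefragment $M$, the canonical maps $M_\lambda \to \Omega^F_M$ into the universal enveloping module are \emph{not} injective, so $(\{M_\lambda\},\Omega^F_M)$ is not a preglider, $F$ does not land in $\Preglid FR$, and the unit $M \to j^*F(M)$ is not an isomorphism. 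Example \ref{example:NoRestriction} exhibits a prefragment (with $M(0)=\bZ/2\bZ$ over $R=\mathbb{Q}$) whose universal enveloping module is $0$, and example \ref{example:SymmetricGroup1} one in which a transposition of $S_3$ would be forced to act as zero on a nonzero vector; proposition \ref{proposition:WhenGlider} shows that injectivity of the maps $M_\lambda \to \Omega^F_M$ holds exactly when $M$ is already a glider, so no amount of care with the presentation of $\Omega^F_M$ can rescue the claim on all of $\Prefrag FR$.

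The repair is to replace each $M_\lambda$ by its image in $\Omega^F_M$; in the paper's notation the left adjoint is $j_L = \kappa \circ j_! \circ \eta$, where $j_!$ is your universal construction on the level of $\Mod \oFF_\Lambda R$ and $\kappa$ is the reflection of $\Mod \oFF_\Lambda R$ onto $\Preglid FR$ (proposition \ref{proposition:Kappa}). With this correction the unit $M \to j^*F(M)$ is in general only an epimorphism, but everything downstream in your argument survives: the counit $F(j^*P)\to P$ still lies in $\Sigma$, because $j^*P$ \emph{is} a glider and hence the unit at $j^*P$ is invertible (triangle identity plus proposition \ref{proposition:WhenGlider}), so your Gabriel--Zisman argument for the full faithfulness of the induced functor $\Glid FR \to \Prefrag FR$ (the paper's $\phi$) and the subsequent adjunction exhibiting $\Glid FR$ as coreflective in $\Preglid FR$ go through, essentially reproducing the paper's proofs. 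Your deduction of full faithfulness of $\phi$ from invertibility of the counit is a legitimate alternative to the paper's direct roof computation in proposition \ref{proposition:PhiFullyFaithful}.
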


The second statement implies that a glider representation admits a canonical ambient $R$-module.  The first statement indicates that a glider representation is a prefragment satisfying additional properties, rather than possessing additional structure.  In proposition \ref{proposition:WhenGlider}, we provide some criterion to decide whether a preframent is a glider representation.

In order to describe the rich structure of the category of glider representations, we will describe the relations with several other categories.  Figure \ref{figure:TheDiagram} below showcases the categories involved as well as some adjunctions.  Here, the preadditive category $\FF_\Lambda R$ is given by taking as objects the set $\Lambda$; the Hom-spaces are given by the the filtered ring $FR$.  Likewise, $\oFF_\Lambda R$ is given by adjoining an extra object with $R$ as endomorphism ring (see definition \ref{definition:CompanionCategory} for details).  We refer to $\FF_\Lambda R$ and $\oFF_\Lambda R$ as the \emph{filtered companion category} and the \emph{extended filtered companion category}.

\begin{figure}[htbp]
\[\xymatrix{
{\Mod R}\ar@{=}[d] \ar[rr]|{i_*} && {\Mod \oFF_\Lambda R} \ar[rr]|{j^*} \ar@/^/[ll]^{i^!} \ar@/_/[ll]_{i^*} \ar@/_/[d]_{\kappa} && {\Mod \FF_\Lambda R} \ar@/^/[ll]^{j_*} \ar@/_/[ll]_{j_!} \ar@/_/[d]_{\theta}\\
{\Mod R} \ar[rr]|{i_*} && {\Preglid FR} \ar[rr]|{j^*} \ar@/^/[ll]^{i^!} \ar@/_/[ll]_{i^*} \ar@/_/@{^{(}->}[u]_{\iota} \ar@/_/[drr]_{Q} && {\Prefrag FR} \ar@/_/@{^{(}->}[u]_{\eta} \ar@/_/[d]_{\psi} \ar@/_/[ll]_{j_L}\\
&&&&{\Glid FR} \ar@/_/@{^{(}->}[u]_{\phi} \ar@/_/[llu]_L 
}\]
	\caption{Overview diagram}
	\label{figure:TheDiagram}
\end{figure}
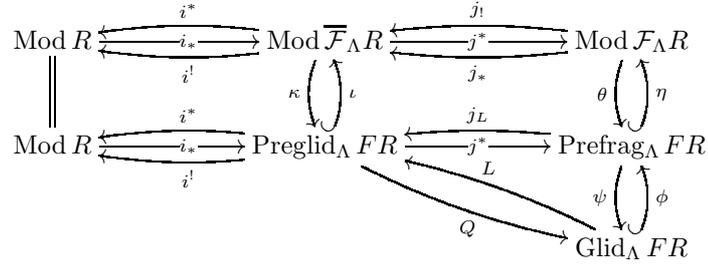

The categories $\Preglid FR$ and $\Prefrag FR$ are torsion-free classes in $\Mod \oFF_\Lambda R$ and $\Mod \FF_\Lambda R,$ respectively (proposition \ref{proposition:PreglidIsTorsionfree}).  As such, they are quasi-abelian categories (that is, they admit kernels and cokernels, and the class of all kernel-cokernel pairs endows them with the structure of a Quillen exact category).  In contrast, the category $\Glid FR$ has kernels and cokernels, but the class of kernel-cokernel pairs of $\Glid FR$ is not closed under pushouts (but it is closed under pullbacks).  Such a structure has been called a \emph{left almost abelian category} in \cite{Rump01} (we will refer to it as a \emph{deflation quasi-abelian category}).

The following theorem is shown in \S\ref{section:PropertiesOfTheCategoryOfGliderRepresentations}. 

\begin{theorem}\label{theorem:GlidersAreGrothendieckDeflationQuasiAbelian}
\begin{enumerate}
	\item The categories $\Preglid FR$ and $\Prefrag FR$ are complete and cocomplete quasi-abelian categories.
	\item The category $\Glid FR$ is a complete and cocomplete deflation quasi-abelian category.
\end{enumerate}
\end{theorem}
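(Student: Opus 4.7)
For part (1), the plan is to reduce both assertions to standard facts about torsion-free classes. Proposition \ref{proposition:PreglidIsTorsionfree} identifies $\Preglid FR$ and $\Prefrag FR$ as torsion-free classes inside the Grothendieck abelian categories $\Mod \oFF_\Lambda R$ and $\Mod \FF_\Lambda R,$ which are themselves complete and cocomplete. A torsion-free class is closed under subobjects and products, so it inherits all limits (in particular kernels) from the ambient abelian category, giving completeness. The torsion-radical quotient $M \mapsto M/tM$ is left adjoint to the inclusion of the torsion-free class, so cocompleteness and existence of cokernels follow by applying this reflector to ambient colimits. Pullback-stability of cokernels and pushout-stability of kernels then reduce to the corresponding (much stronger) facts in the ambient abelian category, together with the observations that the reflector preserves pushouts (being a left adjoint) and that kernels in the torsion-free class are already kernels in the ambient category.

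For part (2), the plan is to leverage the two adjunctions characterizing $\Glid FR$: the reflective inclusion $\phi:\Glid FR \hookrightarrow \Prefrag FR$ (with left adjoint $\psi$) and the coreflective inclusion $L:\Glid FR \hookrightarrow \Preglid FR$ (with right adjoint $Q$), both produced by the preceding theorem. Completeness of $\Glid FR$ will follow from completeness of $\Prefrag FR$ via $\phi$ (limits agree through the reflective inclusion), while cocompleteness will follow from cocompleteness of $\Preglid FR$ via $L$ (colimits agree through the coreflective inclusion). In particular, kernels in $\Glid FR$ coincide with kernels in $\Prefrag FR,$ and cokernels in $\Glid FR$ coincide (via $L$) with cokernels in $\Preglid FR,$ so $\Glid FR$ is preabelian.

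The only remaining ingredient is pullback-stability of cokernels in $\Glid FR,$ which I expect to be the genuine difficulty. Given a cokernel $p:X \to Y$ in $\Glid FR$ and a morphism $f:Y' \to Y,$ the plan is to compute the pullback through the coreflective inclusion: it equals $Q$ applied to the pullback of $L(p)$ and $L(f)$ in $\Preglid FR.$ By part (1), the projection onto $L(Y')$ in $\Preglid FR$ is already a cokernel. The main obstacle is then to promote this to a cokernel after applying $Q,$ and my strategy is to use the explicit right-multiplicative-system description of $\Sigma$ from Proposition \ref{proposition:SigmaRMS}: morphisms in $\Glid FR$ are computed by roofs, and the comparison maps relating the pullback in $\Preglid FR$ to its coreflection lie in $\Sigma,$ hence become isomorphisms after localization, which transports the cokernel property through $Q.$ The asymmetry (that $\Glid FR$ is only deflation quasi-abelian rather than fully quasi-abelian) is the expected consequence of $\Sigma$ being only a right, not a two-sided, multiplicative system, so the dual pushout-stability of kernels need not hold.
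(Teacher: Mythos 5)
Your overall architecture matches the paper's: part (1) rests on the torsion-free-class description from proposition \ref{proposition:PreglidIsTorsionfree}, and in part (2) completeness and cocompleteness come from the reflective embedding $\phi$ and the coreflective embedding $L$ exactly as in proposition \ref{proposition:LimitsAndColimits}. But at the two points where the real content lives, your plan stops short of an argument. For part (1), the paper just invokes theorem \ref{theorem:BondalVandenBergh} after checking the cotilting condition; your direct verification of pushout-stability of kernels does not work as written. The pushout in the torsion-free class $\FF$ is the reflection $P/tP$ of the ambient pushout $P$, and applying the reflector can destroy the monomorphism $X' \to P$, so nothing has yet been ``reduced to the ambient abelian category.'' The missing observation is that $P$ is automatically torsion-free: an inflation $X \rightarrowtail Y$ in $\FF$ has $Y/X \in \FF$, the torsion part $tP$ maps to zero in $P/X' \cong Y/X$ because $\Hom(\TT,\FF)=0$, hence $tP \subseteq X'$, hence $tP \in \TT \cap \FF = 0$. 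With that inserted your route closes up, and it is essentially what the cited black box proves.

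For part (2), the crux is your last step: that $Q$ carries the pullback projection $\pi\colon P \twoheadrightarrow L(Y')$, which is a deflation in $\Preglid FR$, to a cokernel in $\Glid FR$. Observing that ``the comparison maps lie in $\Sigma$, hence become isomorphisms after localization'' does not yield the universal property of a cokernel in the localized category: one has to show that every roof out of $Q(P)$ annihilating $Q(\ker\pi)$ factors uniquely through $Q(\pi)$, and this is precisely the nontrivial content of theorem \ref{theorem:MainTheoremPercoLocalization} applied to the admissibly deflation-percolating subcategory $i_*(\Mod R) \subseteq \Preglid FR$ (proposition \ref{proposition:GlidAsLocalization} and corollary \ref{corollary:GlidersAreDeflationExact}). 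The paper also orders the argument so as to avoid your difficulty entirely: it first endows $\Glid FR$ with the deflation-exact structure supplied by that localization theorem --- under which pullback-stability of deflations is an axiom rather than something to verify --- and then proves that every cokernel is a deflation by the same $L$-then-$Q$ device you use elsewhere: $L(g)$ is a cokernel, hence a deflation in the quasi-abelian category $\Preglid FR$, and $Q$ preserves deflations, so $g \cong QL(g)$ is a deflation. I would restructure your part (2) along those lines; as it stands, the sentence ``which transports the cokernel property through $Q$'' is carrying the whole theorem.
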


In fact, one shows that these categories are Grothendieck (deflation) quasi-abelian (see \S\ref{subsection:GrothendieckQuasiAbelian}). 

The conflation structure on $\Glid FR$ is compatible with the localization $Q\colon \Preglid FR \to \Glid FR$ and the embedding $\phi\colon \Glid FR \to \Prefrag FR$ in the following sense: it is the minimal conflation structure such that $Q$ preserves conflations, and the maximal conflation structure such that $\phi$ preserves conflations (see corollary \ref{corollary:GlidersAreDeflationExact} and proposition \ref{proposition:GlidAsSubcatOfPrefrag}).

In \S\ref{section:PropertiesOfTheCategoryOfGliderRepresentations}, we will also use the following interpretation of the category $\Glid FR$: it is given as the quotient $\Preglid FR / i_*(\Mod R)$ in the sense of \cite{HenrardVanRoosmalen19a}.  This is an example of a one-sided exact quotient of a two-sided exact category.

As $\Glid FR$ is a deflation quasi-abelian category, it admits a meaningful (bounded) derived category $\Db(\EE)$ (see \cite{BazzoniCrivei13, HenrardVanRoosmalen19b}).  Using the main result of \cite{HenrardVanRoosmalen19b}, we obtain a Verdier localization sequence
\[\Db(\Mod(R))\to \Db(\Preglid(FR))\to \Db(\Glid(FR)).\]
Moreover, in section \ref{TheDerivedCategoryOfGliderRepresentationsAsAVerdierLocalization}, we show that the categories in each column of figure \ref{figure:TheDiagram} are derived equivalent. Hence, the above Verdier localization sequence is equivalent to the sequence \[\Db(\Mod(R))\to \Db(\Mod(\oFF_{\Lambda} R))\to \Db(\Mod(\FF_{\Lambda}R)).\]
In particular, as in \cite{SchapiraSchneiders16} we find that $\Db(\Mod(\oFF_\Lambda R))\simeq \Db(\Glid(FR))$ are triangle equivalent.

In section \S\ref{section:HopfCategories} we consider a filtration $FB$ of a $k$-bialgebra $B$ such that each $F_nB$ is itself a $k$-coalgebra. For these type of filtrations the companion categories, $\FF_\Lambda R$ and $\oFF_\Lambda R$, are \emph{semi-Hopf categories} in the sense of \cite{BatistaCaenepeelVercruysse16,CaenepeelFieremans18}. It follows that the category $\Glid FB$ is a monoidal category. It is well-known that $\Mod B$ alone is not sufficient to reconstruct the bialgebra $B$.  On the other hand, the following theorem indicates that the monoidal category $\Glid FB$ (or, as we will assume that $B$ is finite-dimensional, the full subcategory $\glid FB$ consisting of the noetherian objects) contains enough information to reconstruct the bialgebra $B$.

\begin{theorem}\label{theorem:ReconstructionResultIntro}
	Let $B$ and $B'$ be bialgebras and consider the trivial filtrations $k\cdot 1_B\subseteq B$ and $k\cdot 1_{B'}\subseteq B'$. The categories $\glid(FB)$ and $\glid(FB')$ are monoidally equivalent if and only if $B$ and $B'$ are isomorphic as bialgebras.
\end{theorem}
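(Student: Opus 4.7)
The ``if'' direction is immediate: a bialgebra isomorphism $B \cong B'$ induces a $k$-linear tensor equivalence $\Mod B \simeq \Mod B'$ that preserves the distinguished subspace $k\cdot 1$, and therefore lifts to a monoidal equivalence of the associated glider categories for the trivial filtrations.

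For the converse, the plan is to carry out a Tannakian reconstruction inside $\glid(FB)$. The goal is to produce, intrinsically from the monoidal structure of $\glid(FB)$, a fiber functor $\omega\colon \glid(FB) \to \Vect_k$ whose coendomorphism bialgebra is canonically isomorphic to $B$. I would exploit the explicit description of gliders for the one-step filtration $k\cdot 1_B \subseteq B$: such a glider is, up to the coreflection $\phi$, a $B$-module $\Omega$ equipped with a distinguished $k$-subspace, giving a natural ``ambient module'' assignment $\Omega_{(-)}\colon \glid(FB) \to \Mod B$. Using the semi-Hopf interpretation of $\oFF_\Lambda B$ from \S\ref{section:HopfCategories}, this assignment is monoidal, and composing it with the standard forgetful functor $\Mod B \to \Vect_k$ yields the candidate $\omega$. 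A direct computation should then identify $\End^{\otimes}(\omega) \cong B$ as bialgebras, realising one side of Tannakian duality.

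The main obstacle is to show that $\omega$ (equivalently, the subcategory $\Mod B \hookrightarrow \glid(FB)$ through which it factors) is determined by the monoidal structure of $\glid(FB)$ alone, so that any monoidal equivalence $F\colon \glid(FB) \xrightarrow{\sim} \glid(FB')$ automatically intertwines $\omega$ and $\omega'$ up to tensor-natural isomorphism, yielding the desired chain of bialgebra isomorphisms $B \cong \End^{\otimes}(\omega) \cong \End^{\otimes}(\omega' \circ F) \cong \End^{\otimes}(\omega') \cong B'$. Since the monoidal structure of $\Mod B$ alone fails to determine the forgetful functor (as witnessed by nonisomorphic isocategorical groups), this intrinsic characterisation must genuinely exploit the extra rigidity supplied by the glider filtration. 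I expect it to come either from a characterisation of $\Mod B \hookrightarrow \glid(FB)$ via an adjunction visible in Figure \ref{figure:TheDiagram} (e.g.\ as the image of a canonical monoidal coreflection), or from distinguished objects arising from the filtration step itself, such as the glider $(k \cdot 1_B \subseteq B)$ and its tensor powers, whose structural role in $\glid(FB)$ is rich enough to pin down the fiber functor up to unique tensor isomorphism.
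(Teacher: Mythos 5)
Your overall strategy --- recover $B$ by Tannakian reconstruction from a fiber functor that is characterised intrinsically by the monoidal structure of $\glid FB$ --- is exactly the paper's strategy (theorem \ref{theorem:RecoverBialgebra}), and you correctly isolate the crux. But the proposal leaves that crux unresolved, and the one concrete candidate you offer for the fiber functor does not work. The canonical ambient-module assignment $M \mapsto L(M)(\infty)$ is not monoidal on $\glid FB$: the tensor product of gliders is computed pointwise on $\Lambda = \{-1,0\}$ only, and $L = j_L \circ \phi$ involves the left Kan extension $j_!$, which does not commute with $\otimes_k$. Concretely, for the glider $P_0 = (0 \subseteq k)$ one has $L(P_0)(\infty) \cong B$ and $P_0 \otimes P_0 \cong P_0$, so $L(P_0 \otimes P_0)(\infty) \cong B \not\cong B \otimes_k B \cong L(P_0)(\infty) \otimes_k L(P_0)(\infty)$. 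Moreover, a fiber functor defined on \emph{all} of $\glid FB$ is the wrong object to reconstruct from: the evaluation $M \mapsto M(0)$ is already faithful and monoidal on $\glid FB$, but its coendomorphism bialgebra is not $B$, since $\glid FB$ is strictly larger than $\mod B$.

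What the paper actually does is first carve out, purely categorically, a full monoidal subcategory equivalent to $\mod B$ and a copy of $\Vect_k$, and only then reconstruct. The two indecomposable projectives $P_{-1} = (k\cdot 1_B \subseteq B)$ and $P_0 = (0 \subseteq k)$ are intrinsic (the conflation structure of $\glid FB$ consists of \emph{all} kernel--cokernel pairs, so projectivity is a categorical notion) and are distinguished by $\Hom(P_{-1},P_0) = 0 \neq \Hom(P_0,P_{-1})$. One sets $\MM = \{M \mid \dim\Hom(P_0,M) = \dim\Hom(P_{-1},M)\}$, i.e.\ the gliders with $M(1_{-1,0})$ an isomorphism, and $\VV = \{M \mid \Hom(P_{-1},M) = 0\} \simeq \Vect_k$; then $\MM \simeq \mod B$ monoidally (lemma \ref{lemma:MMisModB}), $\VV$ is a monoidal coreflective subcategory with coreflection $M \mapsto (0 \subseteq M(0))$, the composite $\MM \hookrightarrow \glid FB \to \VV \simeq \Vect_k$ is the usual fiber functor of $\mod B$, and the identification $\VV \simeq \Vect_k$ is unique up to monoidal isomorphism (lemma \ref{lemma:EquivalenceWithVect}). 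Every ingredient is transported by any monoidal equivalence $\glid FB \simeq \glid FB'$, which supplies precisely the intrinsicness your argument is missing. Your second guess --- that the rigidity comes from distinguished objects attached to the filtration step, such as $(k\cdot 1_B \subseteq B)$ --- is in fact the right one: that object is $P_{-1}$.
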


In particular, the above theorem allows to distinguish even isocategorical groups (see \cite{EtingofGelaki01}) from the monoidal structure of the category $\glid F(kG)$ (without referring to the symmetric structure). This provides a conceptual explanation as to why the generalized character ring (which is related to the representation ring of the category $\glid F(kG)$) discussed in \cite{CaenepeelJanssens19} is capable of recovering more of the group structure than the ordinary character ring.

\subsection*{Structure of the paper}

We now turn to an overview of the paper.  Section \S\ref{section:Preliminaries} is preliminary in nature.  We recall some definitions and results that will be used throughout the paper.

In section \S\ref{section:GliderCategory} we construct the category $\Glid FR$ of glider representations.  We consider general $\Gamma$-filtered rings (where $\Gamma$ is a filtered group) and consider $\Lambda\subseteq \Gamma$.  We encode the information of $\Lambda$ and of the $\Gamma$-filtered ring $FR$ in the componanion categories $\FF_\Lambda R$ and $\oFF_{\Lambda} R,$ and use these to define the categories of fragments and (pre)glider representations.

In section \S\ref{section:StructureResults} we provide a framework in which the category of glider representations fits naturally.  We construct the diagram given in figure \ref{figure:TheDiagram} and show that the top row is a recollement of abelian categories.  The second row is then a restriction of the top row, and the category glider representations occurs naturally via a factorization of the restriction functor $j^*\colon \Preglid FR \to \Prefrag FR.$

In section \S\ref{section:PropertiesOfTheCategoryOfGliderRepresentations} we study the homological properties of the categories in figure \ref{figure:TheDiagram}.  In particular, we show that the categories $\Preglid FR$ and $\Preglid FR$ are Grothendieck quasi-abelian and that $\Glid(FR)$ is a Grothendieck deflation quasi-abelian category.

In sections \S\ref{section:NaturalGliders} and \S\ref{section:Noetherian} we look at some interesting subcategories of the category $\Glid FR$, namely the category $\NGlid FR$ of natural glider representations and the category $\glid FR$ of noetherian glider representations.  We show that both of these still carry the structure of deflation-exact categories.

Section \ref{TheDerivedCategoryOfGliderRepresentationsAsAVerdierLocalization} deals with the (bounded) derived categories of all categories involved.  In particular, we show that the localization sequence $\Mod(R)\to \Preglid FR\to \Glid FR$ induces a Verdier localization sequence $\Db(\Mod(R))\to \Db(\Preglid FR)\to \Db(\Glid FR)$. Moreover, this Verdier localization sequence is equivalent to the Verdier localization sequence $\Db(\Mod(R))\to \Db(\Mod(\oFF_{\Lambda}R))\to \Db(\Mod(\FF_{\Lambda} R))$.

Finally, in section \ref{section:HopfCategories} we show that given a $k$-bialgebra $B$ together with a filtration $FB$ by bialgebras, the companion categories $\FF_{\Lambda}B$ and $\oFF_{\Lambda}B$ are semi-Hopf categories.  In particular, the categories $\Mod_k(\FF_{\Lambda}B)$ and $\Mod_k(\oFF_{\Lambda}B)$ inherit a natural tensor structure. Moreover, the derived equivalences of the previous section are compatible with the tensor structure as well.  We end by showing theorem \ref{theorem:ReconstructionResultIntro} which shows that considering the monoidal category of glider representations of a filtered object retains enough information to reconstruct the original object.

\textbf{Acknowledgements} The authors would like to thank Francesco Genovese, Geoffrey Janssens, Tao Lu, and Wenqing Tao for helpful discussions and comments.  We especially want to thank Frederik Caenepeel and Fred Van Oystaeyen for many useful discussions and ideas.  We are grateful to Geoffrey Janssens and Frederik Caenepeel for sharing an early version of \cite{CaenepeelJanssens19}; section \ref{subsection:Isocategorical} and the results therein were motivated by those discussions and results.  The second author is currently a postdoctoral researcher at FWO (12.M33.16N).
\section{Preliminaries}\label{section:Preliminaries}

This section is preliminary in nature.  Throughout, we assume that all rings and ringhomomorphisms are unital.

\subsection{Filtered rings}

Let $\Gamma, \leq$ be an ordered group, i.e.~$\Gamma$ is a group, $\leq$ is an ordering on $\Gamma,$ satisfying the following property:
\[\forall \alpha, \beta, \gamma, \delta \in \Gamma: (\alpha \leq \beta \land \gamma \leq \delta) \Rightarrow \alpha \gamma \leq \beta \delta.\]

\begin{remark}
If $\Gamma, \leq$ is an ordered group, then the poset category of $\Gamma^+ \coloneqq \{\gamma \in \Gamma \mid e \leq \gamma\}$ is filtered.
\end{remark}

Let $R$ be a unital ring.  A \emph{$\Gamma$-filtration} on $R$ is a $\Gamma$-indexed set of additive subgroups $\{F_\gamma R\}_{\gamma \in \Gamma}$ of $R$ satisfying:
\begin{enumerate}
	\item $1_R \in F_e R$,
	\item $\forall \alpha, \beta \in \Gamma: \alpha \leq \beta \Rightarrow F_\alpha R \subseteq F_\beta R$,
	\item $\forall \alpha, \beta \in \Gamma: (F_\alpha R) (F_\beta R) \subseteq F_{\alpha\beta} R$.
\end{enumerate}
A \emph{$\Gamma$-filtered ring} is a ring $R$ together with a $\Gamma$-filtration on $R$.  We write $FR$ for the $\Gamma$-filtered ring $\{F_\gamma R\}_{\gamma \in \Gamma}$.

\begin{remark}
Even though we will assume $\Gamma$ is a group, it is straightforward to generalize our results to the case where $\Gamma$ is a cancellative monoid. 
\end{remark}

\subsection{Fragments and gliders}

The definition of a fragment over a filtered ring has changed since its original definition in \cite{NawalVanOystaeyen95}.  To avoid confusion with the terminology used in \cite{Caenepeel18, CaenepeelVanOystaeyen16, CaenepeelVanOystaeyen18, CaenepeelVanOystaeyenBook19, CaenepeelVanOystaeyen19}, we refer to the objects defined in \cite{NawalVanOystaeyen95} as prefragments.  We start by recalling the definition. 

\begin{definition}\label{definition:Fragment}	
	Let $FR$ be an $\mathbb{N}$-filtered ring with subring $S=F_0R$.  A \emph{(left) prefragment over $FR$} is a left $S$-module together with a descending chain of subgroups \[M=M_0\supseteq M_{-1} \supseteq \dots \supseteq M_{-i}\supseteq \dots\] satisfying the following properties.

	\begin{enumerate}[label=$\mathrm{\mathbf{f_{\arabic*}}}$,start=1]
		\item\label{enumerate:Fragment1} For every $i\in \mathbb{N}$ there is a map $\phi_i\colon F_i R\times M_{-i}\rightarrow M\colon (\lambda, m)\mapsto \lambda\cdot m$ such that 
		\begin{eqnarray*}
			\lambda\cdot (m+n) &=& \lambda\cdot m+\lambda \cdot n,\\
			(\lambda+\mu)\cdot m&=& \lambda\cdot m+\mu \cdot n,\\
			1\cdot m &=& m,
		\end{eqnarray*}
		for all $\lambda,\mu\in F_iR$ and $m,n\in M_i$.
		\item\label{enumerate:Fragment2} For every $i,j \in \mathbb{N}$ with $j+i \leq 0$, there is a commutative diagram 
		\[\xymatrix{
		M & &M_{j-i}\ar@{_{(}->}[ll]
\ar@{^{(}->}[rr]
 && M\\
		F_iR\times M_{-i} \ar[u]_{\phi_i} && F_j R\times M_{-i} \ar@{_{(}->}[ll]^{i_F}
\ar@{^{(}->}[rr]_{i_M}\ar[u]
&& F_jR\times M_{-j} \ar[u]^{\phi_j}
		}\]
		\item \label{enumerate:Fragment3} For every $i,j \in \mathbb{N}$, there is a commutative diagram
		\[\xymatrix{
			F_iR\times F_jR\times M_{-(i+j)}\ar[rr]^{m\times 1_{M_{-(i+j)}}}\ar[d] && F_{i+j}R\times M_{-(i+j)}\ar[d]^{\phi_{i+j}}\\
			F_iR\times M_{-i}\ar[rr]^{\phi_i} && M
		}\]
		Here $m$ denotes the multiplication in $R$ and the left vertical arrow is defined using \ref{enumerate:Fragment2}.
	\end{enumerate}
	
		Let $M$ and $N$ be $FR$-fragments. A \emph{morphism $f\colon M\to N$ of $FR$-fragments} is an $S$-linear map $f\colon M\to N$ such that $f(M_i)\subseteq N_i$ for all $i\in \mathbb{N}$ and $f(r\cdot m)=r\cdot f(m)$ for all $r\in F_iR$ and $m\in M_i$.
\end{definition}

\begin{remark}\label{remark:PrefragmentsAdditiveCategory}
The prefragments over a filtered ring $FR$ form an additive category.
\end{remark}
	
\begin{definition}\label{definition:EarlyDefinitionGliders}
	Let $\{M_i\}_i$ be a prefragment over a filtered ring $FR$.  If the fragmented scalar multiplications $F_i R\times M_{-i}\rightarrow M$ are induced from an $R$-module $\Omega\supseteq M$, we say that $M$ is a \emph{glider representation}.
	
	Let $M$ and $N$ be glider representations.  A morphism $f\colon M\to N$ of prefragments is called a \emph{morphism of glider representations} if there exist $R$-modules $\Omega_M$ and $\Omega_N$ such that $M\subseteq \Omega_M$ and $N\subseteq\Omega_N$ exhibiting that $M,N$ are glider representations and there exists an $R$-linear map $F\colon \Omega_M\to \Omega_N$ such that the following diagram commutes:
	\[\xymatrix{
		\Omega_M\ar[r]^F & \Omega_N\\
		M\ar[r]^f\ar@{^{(}->}[u] & N\ar@{^{(}->}[u]
	}\]
	(thus, the map between the prefragments $M \to N$ needs to be induced by an $R$-module morphism $\Omega_M \to \Omega_N$).
\end{definition}

\begin{remark}\label{remark:Obstruction}
	Despite remark \ref{remark:PrefragmentsAdditiveCategory}, it is not clear that the glider representations form a category. Indeed, let $f\colon A\to B$ and $g\colon B\to C$ be morphisms of $FR$-glider representations.  By definition, there are ambient $R$-modules $\Omega_A,\Omega_B,\Omega_B',\Omega_C$ and $R$-module maps $F\colon \Omega_A\to \Omega_B, G\colon \Omega_B'\to\Omega_C$ such that the diagrams 
\[\xymatrix{
	\Omega_A\ar[r]^F & \Omega_B &&&\Omega_B'\ar[r]^G& \Omega_C\\
	A\ar[r]^f\ar@{^{(}->}[u] & B\ar@{^{(}->}[u] &&& B\ar[r]^g\ar@{^{(}->}[u] & C\ar@{^{(}->}[u]
}\] commute.  However, it is not clear whether the composition $g\circ f$ defines a morphism of $FR$-glider representations.
\end{remark}

\begin{remark}
The definitions of prefragments and glider representations can be adjusted to accommodate more general filtered rings.  For prefragments, this will be done in definition \ref{definition:PrefragAndPreglid}.  In definition \ref{definition:GlidAsLoc}, we will give a different definition for a morphism of gliders.  It will follow from proposition \ref{proposition:PhiFullyFaithful} and \ref{proposition:LeftAdjoint} that this new definition is compatible with the one in definition \ref{definition:EarlyDefinitionGliders}.
\end{remark}

\subsection{Localizations of categories}

We recall some basic results about localizations of categories.  The material of this section is based on \cite{GabrielZisman67}.

\begin{definition}\label{Definition:LocalizationWithRespectToMorphisms}
Let $\CC$ be a small category and let $\Sigma \subseteq \Mor \CC$ be a subset of morphisms of $\CC$.  The \emph{localization of $\CC$ with respect to $\Sigma$} is a functor $Q\colon \CC \to \CC[\Sigma^{-1}]$, universal with respect to the property such that $Q(s)$ is invertible, for all $s \in \Sigma$.
\end{definition}

\begin{remark}
The universality in definition \ref{Definition:LocalizationWithRespectToMorphisms} means that, for every category $\DD$, the functor $(Q \circ -)\colon \Fun(\CC[\Sigma^{-1}], \DD) \to \Fun(\CC, \DD)$ induces an isomorphism between $\Fun(\CC[\Sigma^{-1}], \DD)$ and the full subcategory of $\Fun(\CC, \DD)$ consisting of those functors $F\colon \CC \to \DD$ which make every $s \in \Sigma$ invertible.
\end{remark}

The following proposition is standard (see \cite[proposition I.1.3]{GabrielZisman67}).

\begin{proposition}\label{proposition:GabrielZisman}
Let $F\colon \CC \to \DD$ and $G\colon \DD \to \CC$ be functors between small categories.  We write $\Sigma \subseteq \Mor \CC$ for the set of all morphisms $f$ for which $F(f)$ is invertible.  If $F$ is left adjoint to $G$, then the following are equivalent:
\begin{enumerate}
	\item $G$ is fully faithful,
	\item the counit $FG \Rightarrow 1_\DD$ is invertible,
	\item the unique map $H\colon \CC[\Sigma^{-1}] \to \DD$ making the diagram
	\[\xymatrix{
	{\CC} \ar[rr] \ar[rd]_{Q} && {\DD} \\
	& {\CC[\Sigma^{-1}]} \ar[ru]_H}\]
	commute, is an equivalence.
\end{enumerate}
\end{proposition}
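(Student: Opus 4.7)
The plan is to prove the cycle $(1) \Leftrightarrow (2)$, $(2) \Rightarrow (3)$, $(3) \Rightarrow (2)$.

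For $(1) \Leftrightarrow (2)$, I would invoke the classical fact that the right adjoint of an adjunction is fully faithful precisely when the counit is a natural isomorphism. Concretely, expressing the map $G_\ast\colon \Hom_\DD(Y,Z) \to \Hom_\CC(GY, GZ)$ as the composition of the adjunction bijection $\Hom_\DD(FGY, Z) \cong \Hom_\CC(GY, GZ)$ with precomposition by $\epsilon_Y$, one sees that $G_\ast$ is bijective for all $Z$ iff $(-\circ \epsilon_Y)$ is a bijection on hom-functors out of $Y$, iff by Yoneda $\epsilon_Y$ is invertible.

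For $(2) \Rightarrow (3)$, I would exhibit a quasi-inverse to $H$. The triangle identity $\epsilon_F \cdot F\eta = 1_F$ together with (2) forces $F\eta_X$ to be invertible for every $X \in \CC$; thus $\eta_X \in \Sigma$, and $Q\eta\colon Q \Rightarrow QGF$ is a natural isomorphism. Setting $K = QG\colon \DD \to \CC[\Sigma^{-1}]$, one direction is immediate: $HK = HQG = FG \cong 1_\DD$ via the counit. In the other direction, $KH \circ Q = QGF$ is naturally isomorphic to $Q$ via $Q\eta$; by the universal property of $Q$ (uniqueness up to a unique natural iso of extensions along $Q$) this promotes to $KH \cong 1_{\CC[\Sigma^{-1}]}$.

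For $(3) \Rightarrow (2)$, fix an adjoint equivalence $H \dashv H^{-1}$. Composing the two adjunction bijections yields a natural isomorphism
\[
\Hom_\CC(X, GY) \cong \Hom_\DD(FX, Y) = \Hom_\DD(HQX, Y) \cong \Hom_{\CC[\Sigma^{-1}]}(QX, H^{-1}Y)
\]
in $X \in \CC$ and $Y \in \DD$. Since the rightmost term sends every $\sigma \in \Sigma$ to a bijection, so does the leftmost term, i.e., each $GY$ is $\Sigma$-closed in $\CC$. By the standard reflective interpretation of the universal property of $Q$, this yields $\Hom_\CC(X, GY) \cong \Hom_{\CC[\Sigma^{-1}]}(QX, QGY)$, whence Yoneda in $\CC[\Sigma^{-1}]$ (applicable because every object of $\CC[\Sigma^{-1}]$ lies in the image of $Q$) gives $QGY \cong H^{-1}Y$. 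Applying $H$ delivers $FGY \cong Y$ naturally in $Y$; tracing $1_{GY}$ through the chain of bijections shows this isomorphism is the counit $\epsilon_Y$, proving (2). The main obstacle is this last implication: one must produce an isomorphism $FG \cong 1_\DD$ that is not merely natural but is the specific counit of $F \dashv G$. The key bridge is the observation that $GY$ is $\Sigma$-closed in $\CC$, which allows a natural iso of hom-functors on $\CC$ to be upgraded to an isomorphism of objects of $\CC[\Sigma^{-1}]$ via Yoneda.
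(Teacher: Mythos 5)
The paper does not actually prove this statement; it is quoted as standard with a citation to Gabriel--Zisman (proposition I.1.3), so there is no in-paper argument to compare against. Your proof is correct and follows the standard route: $(1)\Leftrightarrow(2)$ via the adjunction and Yoneda, $(2)\Rightarrow(3)$ by exhibiting $QG$ as a quasi-inverse (using the triangle identity to see $\eta_X\in\Sigma$ and the $2$-universal property of $Q$ to descend $Q\eta$ to an isomorphism $1\Rightarrow KH$), and $(3)\Rightarrow(2)$ by identifying $QGY$ with $H^{-1}Y$. The only step that is not self-contained is the appeal, in $(3)\Rightarrow(2)$, to the fact that for a $\Sigma$-local object $W$ the map $\Hom_{\CC}(X,W)\to\Hom_{\CC[\Sigma^{-1}]}(QX,QW)$ is bijective; this is true in full generality (it follows from full faithfulness of $-\circ Q$ on presheaf categories together with a Yoneda argument, with no calculus of fractions needed), so the appeal is legitimate, but since it is the real content of this implication you should either prove it or give a precise reference. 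A small simplification: $GY$ is $\Sigma$-local already from $\Hom_{\CC}(-,GY)\cong\Hom_{\DD}(F-,Y)$ and the definition of $\Sigma$, without routing through $H^{-1}$.
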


We recall the following proposition from \cite{GabrielZisman67}.

\begin{proposition}\label{proposition:CycleOfAdjoints}
Consider functors $F,G,H$ as in the diagram
\[\xymatrix{
\CC \ar[rr]^F && \DD\ar[dl]^G \\
& \EE \ar[lu]^H}\]
\begin{enumerate}
	\item\label{enumerate:CycleOfAdjointsLoc} Assume that $G\colon \DD \to \EE$ is a localization.
	  \begin{enumerate}
			\item\label{enumerate:CycleOfAdjointsLocInner} If $HG$ is left adjoint to $F$, then $H$ is left adjoint to $GF$.
			\item\label{enumerate:CycleOfAdjointsLocOuter} If $F$ is left adjoint to $HG$, then $GF$ is left adjoint to $H$.
		\end{enumerate}
	\item\label{enumerate:CycleOfAdjointsFF} Assume that $G\colon \DD \to \EE$ is fully faithful.
	\begin{enumerate}
		\item\label{enumerate:CycleOfAdjointsFFInner} If $H$ is left adjoint to $GF$, then $HG$ is left adjoint to $F$
		\item\label{enumerate:CycleOfAdjointsFFOuter} If $GF$ is left adjoint to $H$, then $F$ is left adjoint to $HG$.
	\end{enumerate}
\end{enumerate}
\end{proposition}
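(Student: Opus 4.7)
The plan is to verify each of the four claims by constructing the missing unit (or counit) of the desired adjunction from the given one, while reusing the other (co)unit on the nose; the structural hypothesis on $G$---localization in part (1), fully faithful in part (2)---is precisely what lets us transfer the remaining (co)unit across $G$. Once the new (co)unit is constructed, both triangle identities reduce to those of the given adjunction, either after whiskering with $G$ or after evaluating at objects of the form $G(D)$ or $F(C)$.

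For part (1), the universal property of the localization, as spelled out in the remark after Definition \ref{Definition:LocalizationWithRespectToMorphisms}, gives an isomorphism of functor categories $(-\circ G)\colon \Fun(\EE,\CC')\xrightarrow{\cong}\Fun_\Sigma(\DD,\CC')$ for any target $\CC'$; in particular, natural transformations between functors out of $\EE$ are determined by their whiskering with $G$. For (1a), write $\eta\colon 1_\DD\to FHG$ and $\epsilon\colon HGF\to 1_\CC$ for the unit and counit of $HG\dashv F$. I keep $\epsilon$ as the counit of $H\dashv GF$ and define its unit $\tilde\eta\colon 1_\EE\to GFH$ as the unique natural transformation satisfying $\tilde\eta G=G\eta$; this is legitimate because both $1_\EE$ and $GFH$ lie in $\Fun(\EE,\EE)$, while $G$ and $GFHG=GFH\circ G$ manifestly invert $\Sigma$. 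Case (1b) is handled dually: one keeps the unit of $F\dashv HG$ and constructs the counit $\tilde\epsilon\colon GFH\to 1_\EE$ of $GF\dashv H$ as the unique natural transformation with $\tilde\epsilon G=G\epsilon$.

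For part (2), the full faithfulness of $G$ supplies a natural bijection $\Hom_\DD(D,D')\cong\Hom_\EE(G(D),G(D'))$. In case (2a), write $\tilde\eta$ and $\tilde\epsilon$ for the unit and counit of $H\dashv GF$. I keep $\tilde\epsilon$ as the counit of $HG\dashv F$ and define its unit $\eta\colon 1_\DD\to FHG$ componentwise as the unique morphism $\eta_D$ with $G(\eta_D)=\tilde\eta_{G(D)}$. Naturality of $\eta$ in $D$ then follows from the naturality of $\tilde\eta$ in $E=G(D)$ together with the faithfulness of $G$. Case (2b) is dual, constructing the counit of $F\dashv HG$ from $\tilde\epsilon_{G(D)}$ via the same bijection.

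In every case the verification of the triangle identities is a routine diagram chase: one identity holds by construction after whiskering with $G$ (respectively, after applying $G$ in part (2)) and is then transferred by the universal property (respectively, by faithfulness); the other identity is checked by evaluating at an object of the form $F(C)\in\EE$, using the equality $GF(C)=G(F(C))$ and the formula $\tilde\eta_{G(D)}=G(\eta_D)$ (or its analogue for counits) to reduce to the triangle identity of the given adjunction. I do not expect any substantive obstacle; the only delicate point is organisational---keeping straight, in each of the four subcases, which (co)unit is carried over unchanged and which must be newly constructed, and which of the two triangle identities is automatic from the construction.
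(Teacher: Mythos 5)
Your proposal is correct and follows essentially the same route as the paper, which simply cites \cite[lemma I.1.3.1]{GabrielZisman67} for \eqref{enumerate:CycleOfAdjointsLocInner} and notes that the key ingredients are the full faithfulness of $-\circ G$ on functor categories (for the localization case) and of $G\circ -$, equivalently the hom-set bijection you use, (for the fully faithful case). Your explicit transfer of one (co)unit across $G$ and the reduction of the triangle identities by whiskering with $G$ or evaluating at objects $F(C)$ is exactly the standard execution of that strategy; the only superfluous remark is your appeal to $G$ and $GFHG$ inverting $\Sigma$, which is not needed since only the full faithfulness of $-\circ G$, not its essential image, enters.
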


\begin{proof}
The statement \eqref{enumerate:CycleOfAdjointsLocInner} is shown in \cite[lemma I.1.3.1]{GabrielZisman67}, and uses that $- \circ G \colon\Fun(\EE, \XX) \to \Fun(\DD, \XX)$ is fully faithful (as $G$ is a localization).  One can prove \eqref{enumerate:CycleOfAdjointsLocOuter} in a similar way.  The other statements can be shown in a similar fashion, using that, for each category $\XX$, the functor $G \circ - \colon\Fun(\XX, \DD) \to \Fun(\XX, \EE)$ is fully faithful.
\end{proof}

In this paper, we often consider localizations with respect to right multiplicative systems.

\begin{definition}\label{definition:RMS}
Let $\CC$ be a category and let $\Sigma \subseteq \Mor \CC$ be a subset of morphisms of $\CC$.  We say that $\Sigma$ is a \emph{right multiplicative system} if the following properties are satisfied.
\begin{enumerate}[label=\textbf{RMS\arabic*},start=1]
	\item\label{RMS1} For every object $A$ of $\CC$ the identity $1_A$ is contained in $\Sigma$, and $\Sigma$ is closed under composition.
	\item\label{RMS2} Every solid diagram
	\[\xymatrix{
		X \ar@{.>}[r]^{g} \ar@{.>}[d]_{t}^{\rotatebox{90}{$\sim$}}& Y\ar[d]_{s}^{\rotatebox{90}{$\sim$}}\\
		Z\ar[r]_{f} & W
	}\] with $s\in \Sigma$ can be completed to a commutative square as above, with $t\in \Sigma$. 
	\item\label{RMS3} For every pair of morphisms $f,g\colon X\rightarrow Y$ and every $s\in \Sigma$ with source $Y$ such that $s\circ f= s\circ g$ there exists a $t\in \Sigma$ with target $X$ such that $f\circ t =g\circ t$.
\end{enumerate}
\end{definition}

For localizations with respect to a right multiplicative system $\Sigma \subseteq \Mor \CC$, we have the following description of the localization $\CC[\Sigma^{-1}]$.

\begin{descript}\label{descript:Localization}
	Let $\CC$ be a category and $\Sigma$ a right multiplicative system in $\CC$.  We define a category $\Sigma^{-1}\CC$ as follows:
	\begin{enumerate}
		\item We have $\Ob(\Sigma^{-1}\CC)=\Ob(\CC)$.
		\item Let $f_1\colon X_1\rightarrow Y, s_1\colon X_1\rightarrow X, f_2\colon X_2\rightarrow Y, s_2\colon X_2\rightarrow X$ be morphisms in $\CC$ with $s_1,s_2\in \Sigma$.  We say that the pairs $(f_1,s_1), (f_2,s_2) \in (\Mor \CC) \times \Sigma$ are equivalent (denoted by $(f_1,s_1) \sim (f_2,s_2)$) if there exists a third pair $(f_3\colon X_3\rightarrow Y,s_3\colon X_3\rightarrow X) \in (\Mor \CC) \times \Sigma$ and morphisms $u\colon X_3\rightarrow X_1, v\colon X_3\rightarrow X_2$ such that 
		\[\xymatrix@!{
			& X_1\ar[ld]_{s_1}^{}\ar[rd]^{f_1} & \\
			X &X_3\ar[d]^{v}\ar[u]_{u}\ar[l]_{s_3}^{}\ar[r]_{f_3} & Y\\
			& X_2 \ar[ul]^{s_2}_{}\ar[ur]_{f_2}&
		}\] is a commutative diagram.
		\item $\Hom_{\Sigma^{-1}\CC}(X,Y)=\left\{(f,s)\mid f\in \Hom_{\CC}(X',Y), s\colon X'\rightarrow X \mbox{ with } s\in \Sigma \right\} / \sim$
		\item The composition of $(f\colon X'\rightarrow Y, s\colon X'\rightarrow X)$ and $(g\colon Y'\rightarrow Z, t\colon Y'\rightarrow Y)$ is given by $(g\circ h\colon X''\rightarrow Z,s\circ u\colon X''\rightarrow X)$ where $h$ and $u$ are chosen to fit in a commutative diagram 
		\[\xymatrix{
		X''\ar[r]^{h}\ar[d]_{u}^{\rotatebox{90}{$\sim$}} & Y'\ar[d]_{t}^{\rotatebox{90}{$\sim$}}\\
		X'\ar[r]^{f} & Y
		}\] which exists by \ref{RMS2}.
	\end{enumerate}                                                                                                       
\end{descript}

The canonical functor $Q\colon \CC \to \Sigma^{-1}\CC$ satisfies the conditions of definition \ref{Definition:LocalizationWithRespectToMorphisms}. 

\subsection{Representations of small preadditive categories}\label{subsection:RepresentationsOfCategories}

Let $k$ be a commutative ring.  Let $\aa$ be a small $k$-linear category.  A \emph{(left) $\aa$-module} is a covariant $k$-linear functor from $\aa$ to Mod $k$, the category of all $k$-modules.  The category of all left $\aa$-modules is denoted by $\Mod \aa$.  If we do not specify the ring $k$, we will take $k=\mathbb{Z}.$

It follows from the Yoneda lemma that, for every $A \in \aa$, the representable $\aa$-module $\aa(A,-)$ is projective.  We refer to such an $\aa$-module as a \emph{standard projective $\aa$-module}.  It is clear that every finitely generated projective is a direct summand of a finite direct sum of standard projectives.  If $\aa$ has finite direct sums and idempotents split in $\aa$, then every finitely generated projective is isomorphic to a standard projective.

If $f\colon \frak{a} \to \frak{b}$ is a functor between small preadditive categories, then there is an obvious restriction functor
\[(-)_{\frak{a}}\colon \Mod(\frak{b})\to \Mod(\frak{a})\]
which sends $N$ to $N \circ f$.  This restriction functor has a left adjoint 
\[\bb \otimes_{\aa} -\colon \Mod(\frak{a})\to \Mod(\frak{b})\]
which is the right exact functor sending the projective generators
$\frak{a}(A,-)$ in $\Mod(\frak{a})$ to $\frak{b}(f(A),-)$ in $\Mod(\frak{b})$.  If $f\colon \aa \to \bb$ is fully faithful, then the natural morphism $M \to (\bb \otimes_\aa M)_\aa$ is an isomorphism, for all $M \in \Mod \aa.$  It follows from \cite[Chapter I.3]{GabrielZisman67} that $\Mod \aa \simeq (\Mod \bb) / \ker (-)_\aa $.

\subsection{Recollements}

Recollements were introduced in a triangulated context in \cite{BeilinsonBernsteinDeligne82}.  To establish notations, we recall the definition of a recollement in both an abelian and a triangulated setting.

\begin{definition}
	Let $\AA$ and $\BB$ be categories and let $L,R\colon \BB\to \AA$ and $E\colon \AA\to \BB$ be functors.  The triple $(L,E,R)$ is called an \emph{adjoint triple} if $L \dashv E$ and $E \dashv R$.
\end{definition}

\begin{definition}\label{definition:AbelianRecollement}
	A \emph{recollement of abelian categories} is a triple of abelian categories $\AA,\BB$ and $\CC$ and a diagram
	\[\xymatrix{
		\AA\ar[rr]|{i_*} && \BB \ar@/^/[ll]^{i^!}\ar@/_/[ll]_{i^*} \ar[rr]|{j^*} && \CC\ar@/^/[ll]^{j_*}\ar@/_/[ll]_{j_!}
	}\] with $6$ additive functors satisfying the following conditions:
	\begin{enumerate}
		\item the triple $(i^*,i_*,i^!)$ is an adjoint triple,
		\item	the triple $(j_!,j^*,j_*)$ is an adjoint triple,
		\item the functors $i_*,j_*$ and $j_!$ are fully faithful,
		\item $\im(i_*)=\ker(j^*)$.
	\end{enumerate}
\end{definition}

\begin{remark}\label{remark:Recollements}
\begin{enumerate}
  \item It follows from proposition \ref{proposition:GabrielZisman} that $\CC \simeq \Sigma^{-1}\BB$ where $\Sigma \subseteq \Mor \BB$ is the class of morphisms that become invertible under $j^*$.  These are the morphisms with kernel and cokernel in $i_*(\AA) \subseteq \BB.$
	\item By proposition \ref{proposition:GabrielZisman}, the conditions in definition \ref{definition:AbelianRecollement} are not minimal; it suffices to determine the adjoint triple $(j_!,j^*,j_*)$ where either $j_!$ or $j_*$ is fully faithful (see, for example, \cite[remark 2.3]{Psaroudakis14}).
	\item We will be interested in recollements of abelian categories where all three categories are module categories.  It is shown in \cite{PsaroudakisVitoria14} that these recollements are classified by an idempotent (see also \cite{Jans65}).
\end{enumerate}
\end{remark}

In the same fashion, one defines a recollement of triangulated categories.

\begin{definition}\label{definition:TriangulatedRecollement}
	A \emph{recollement of triangulated categories} is a triple of triangulated categories $\TT'',\TT$ and $\TT'$ and a diagram
	\[\xymatrix{
		\TT''\ar[rr]|{i_*} && \TT \ar@/^/[ll]^{i^!}\ar@/_/[ll]_{i^*} \ar[rr]|{j^*} && \TT'\ar@/^/[ll]^{j_*}\ar@/_/[ll]_{j_!}
	}\] with $6$ triangulated functors satisfying the following conditions:
	\begin{enumerate}
		\item the triple $(i^*,i_*,i^!)$ is an adjoint triple,
		\item	the triple $(j_!,j^*,j_*)$ is an adjoint triple,
		\item the functors $i_*,j_*$ and $j_!$ are fully faithful,
		\item $j^*i_*=0$,
		\item for each $X\in \TT$, there are two triangles in $\TT$:
		\[i_*i^!(X)\to X \to j_*j^*(X)\to \Sigma i_*i^!(X) \mbox{ and } j_!j^*(X)\to X \to i_*i^*(X)\to \Sigma j_!j^*(X).\]
	\end{enumerate}
\end{definition}         

\begin{remark}
The conditions in definition \ref{definition:TriangulatedRecollement} are not minimal; it suffices to determine the adjoint triple $(j_!,j^*,j_*)$ of exact functors where either $j_!$ or $j_*$ is fully faithful (see, for example, \cite[proposition 1.14]{Heider07}).
\end{remark}

\subsection{One-sided exact categories and admissibly percolating subcategories}\label{subsection:LocalizationsPreliminaries}

One-sided exact categories were introduced in \cite{BazzoniCrivei13, Rump01} as a framework for studying one-sided quasi-abelian categories \cite{Rump01}.  We recall some definitions as well as some results concerning quotients of one-sided exact categories by percolating subcategories (\cite{HenrardVanRoosmalen19b, HenrardVanRoosmalen19a}).

\begin{definition}\label{definition:ConflationCategory}
	Let $\CC$ be an additive category.  We say that a sequence $A\xrightarrow{f} B\xrightarrow{g} C$ is a \emph{kernel-cokernel pair} if $f = \ker g$ and $g = \coker f$.
	A \emph{conflation category} $\CC$ is an additive category with a chosen class of kernel-cokernel pairs, called \emph{conflations}, closed under isomorphisms.  Given a conflation $A\xrightarrow{f} B\xrightarrow{g} C$, the map $f$ is called an \emph{inflation} and the map $g$ is called a \emph{deflation}.  Inflations are denoted by $\rightarrowtail$ and deflations by $\twoheadrightarrow$.  A map $f\colon X\rightarrow Y$ is called \emph{admissible} if it admits a deflation-inflation factorization, i.e.~$f$ factors as $X\twoheadrightarrow Z\rightarrowtail Y$.
		
	Let $\CC$ and $\DD$ be conflation categories.  An additive functor $F\colon \CC\rightarrow \DD$ is called \emph{exact} or \emph{conflation-exact} if it maps conflations in $\CC$ to conflations in $\DD.$
\end{definition}

\begin{definition}\label{definition:RightExact}
	A conflation category $\CC$ is called \emph{right exact} or \emph{deflation-exact} if it satisfies the following axioms:
	\begin{enumerate}[label=\textbf{R\arabic*},start=0]
		\item\label{R0} The identity morphism $1_0\colon 0\rightarrow 0$ is a deflation.
		\item\label{R1} The composition of two deflations is a deflation.
		\item\label{R2} Pullbacks along deflations exist and deflations are stable under pullbacks.
	\end{enumerate}
	
	 Dually, we call a conflation category $\CC$ \emph{left exact} or \emph{inflation-exact} if the opposite category $\CC^{op}$ is right exact.  For completeness, an inflation-exact category is a conflation category such that the distinguished class of conflations satisfies the following axioms:
		\begin{enumerate}[label=\textbf{L\arabic*},start=0]
		\item\label{L0} The identity morphism $1_0\colon 0\rightarrow 0$ is an inflation.
		\item\label{L1} The composition of two inflations is an inflation.
		\item\label{L2} Pushouts along inflations exist and inflations are stable under pushouts.
	\end{enumerate}
	
	 A conflation category which is both inflation-exact and deflation-exact is a (Quillen) \emph{exact category}.
\end{definition}

\begin{definition}\label{definition:DeflationQuasiAbelian}
	A category $\CC$ is called \emph{left quasi-abelian} or \emph{deflation quasi-abelian} if it is a pre-abelian category, i.e.~an additive category with kernels and cokernels, such that cokernels are stable under pullbacks. 	Dually, $\CC$ is called \emph{right quasi-abelian} or \emph{inflation quasi-abelian} if it is a pre-abelian category such that kernels are stable under pushouts.
	
	The category $\CC$ is called \emph{quasi-abelian} if it is both left and right quasi-abelian.
\end{definition}

\begin{remark}\label{remark:QuasiAb}
	\begin{enumerate}
		\item A deflation quasi-abelian category can be given the structure of a deflation-exact category by choosing all kernel-cokernel pairs as the conflations.  Dually, an inflation quasi-abelian category can be endowed with the structure of an inflation-exact category by choosing all kernel-cokernel pairs as conflations.  To avoid confusion, we prefer using ``inflation-exact'' and ``deflation-exact'' over ``left exact'' and ``right exact''.
		\item A quasi-abelian category has a natural exact structure (the conflations are given by all kernel-cokernel pairs).
	\end{enumerate}
\end{remark}

\begin{definition}\label{definition:GeneralPercolatingSubcategory}
	Let $\CC$ be a deflation-exact category. A non-empty full subcategory $\AA$ of $\CC$ satisfying the following three axioms is called an \emph{admissibly deflation-percolating subcategory} of $\CC$.
	\begin{enumerate}[label=\textbf{A\arabic*},start=1]
		\item\label{A1} $\AA$ is a \emph{Serre subcategory}, that is:
		\[\mbox{ If } A'\rightarrowtail A \twoheadrightarrow A'' \mbox{ is a conflation in $\CC$, then } A\in \Ob(\AA) \mbox{ if and only if } A',A''\in \Ob(\AA).\]
		\item\label{A2} For all morphisms $C\rightarrow A$ with $C \in \Ob(\CC)$ and $A\in \Ob(\AA)$, there exists a commutative diagram
		\[\xymatrix{
		A'\ar@{>->}[rd] & \\
		C \ar@{->>}[u]\ar[r]& A\\
				}\] with $A'\in \Ob(\AA)$.
		\item\label{A3} If $a\colon C\rightarrowtail D$ is an inflation and $b\colon C\twoheadrightarrow A$ is a deflation with $A\in \Ob(\AA)$, then the pushout of $a$ along $b$ exists and yields an inflation and a deflation, respectively, i.e.
		\[\xymatrix{
			C \ar@{>->}[r]^{a}\ar@{->>}[d]^b & D\ar@{.>>}[d]\\
			A\ar@{>.>}[r] & P
		}\]
	\end{enumerate}
\end{definition}

\begin{remark}\label{remark:PercolatingRemark}
	\begin{enumerate}
		\item An admissibly deflation-percolating subcategory $\AA$ of a deflation-exact category $\CC$ is automatically an abelian category (see \cite[proposition~6.4]{HenrardVanRoosmalen19a}). 
		\item When $\EE$ is an exact category, any full subcategory $\AA\subseteq \EE$ satisfies axiom \ref{A3} (this follows, for example, from \cite[proposition~2.12]{Buhler10}).
	\end{enumerate}
\end{remark}

\begin{definition}\label{definition:WeakIsomorphism}
	Let $\AA$ be an admissibly deflation-percolating subcategory of a deflation-exact category $\CC$. A morphism $f$ is called a \emph{weak $\AA$-isomorphism} if it is admissible and $\ker(f),\coker(f)\in \AA.$	 A weak isomorphism will often be endowed with $\sim$, i.e.~we write $f\colon A \stackrel{\sim}{\rightarrow} B$ for a weak isomorphism.
\end{definition}

\begin{remark}\label{remark:WeakIsomorphismsAdmissible}
A morphism $f\colon X \to Y$ is a weak $\AA$-isomorphism if it has a kernel and a cokernel (both lying in $\AA$) and factors as $X \deflation X/(\ker f) \inflation Y.$
\end{remark}

The following theorem summarizes various results from \cite{HenrardVanRoosmalen19a}.

\begin{theorem}\label{theorem:MainTheoremPercoLocalization}
	Let $\AA$ be an admissibly deflation-percolating subcategory of a delation-exact category $\CC$.
	\begin{enumerate}
		\item The set $\Sigma_{\AA}$ of weak $\AA$-isomorphisms is a right multiplicative system.
		\item  The weakest conflation structure on $\Sigma_{\AA}^{-1}\CC$ for which $Q\colon \CC \to \Sigma^{-1}_\AA \CC$ is conflation-exact, makes $\Sigma^{-1}_\AA \CC$ into a deflation-exact category.
		\item The localization functor $Q$ is a conflation-exact functor and is 2-universal among conflation-exact functors $F\colon \CC\to \DD$ such that $F(\AA)=0$.  Here, $\DD$ is any deflation-exact category.
		\item The right multiplicative system $\Sigma_{\AA}$ is saturated, i.e.~$Q(f)$ is an isomorphism if and only if $f\in \Sigma_{\AA}$.
		\item Pullbacks alongs weak isomorphisms exist and weak isomorphisms are stable under pullbacks.
	\end{enumerate}
\end{theorem}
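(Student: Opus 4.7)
The plan is to prove the five statements in the order (5), (1), (2)--(3), (4), since the pullback-stability result (5) is the technical engine from which the right multiplicative system axioms (1) follow, and then the remaining statements can be deduced from the general theory of localization at a right multiplicative system together with standard conflation-category arguments.

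The main work lies in (5). Any weak isomorphism $s\colon Y \to W$ admits, by remark \ref{remark:WeakIsomorphismsAdmissible}, a factorization $Y \twoheadrightarrow Y' \rightarrowtail W$ with $\ker(s) \in \AA$ the kernel of the deflation part and $\coker(s) \in \AA$ the cokernel of the inflation part. Given $f\colon Z \to W$, I would construct the pullback of $f$ along $s$ in two stages. The pullback along the deflation $Y \twoheadrightarrow Y'$ is furnished by axiom R2 and is itself a deflation. The pullback along the inflation $i\colon Y' \rightarrowtail W$ with cokernel $A := \coker(i) \in \AA$ is not an a priori piece of the deflation-exact structure; I would construct it by applying axiom A2 to the composite $Z \to W \twoheadrightarrow A$ to obtain a factorization through a deflation $Z \twoheadrightarrow A'$ with $A' \in \AA$ inflating into $A$, and then invoking axiom A3 to form the required pushout square which simultaneously provides the data for the desired pullback. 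A diagram chase using the Serre property A1 confirms that the pulled-back map is again a weak isomorphism. This is the step I expect to be the main obstacle, since it fabricates limits outside the native structure of a deflation-exact category and uses A2 and A3 in essential ways.

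With (5) in hand, (1) is nearly formal: closure of weak isomorphisms under composition (hence RMS1) follows from the Serre property together with the admissible factorization and (5); RMS2 is exactly (5); RMS3 is a standard equalizer-type argument made possible by pullbacks along weak isomorphisms. Description \ref{descript:Localization} then supplies a roof model of $\Sigma_\AA^{-1}\CC$. For (2) and (3) I would declare a sequence in $\Sigma_\AA^{-1}\CC$ to be a conflation precisely when it is isomorphic, in $\Sigma_\AA^{-1}\CC$, to the image under $Q$ of a conflation in $\CC$. Axioms R0--R2 in the quotient are then verified using the roof calculus together with (5). The 2-universal property (3) follows from the Gabriel--Zisman universal property of $Q$, plus the observation that any conflation-exact functor $F\colon \CC \to \DD$ annihilating $\AA$ must send every weak isomorphism to an isomorphism, since its kernel and cokernel are sent to $0$ and $F$ respects admissible factorizations. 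Finally, for (4), if $Q(f)$ is an isomorphism, I would factor $f$ admissibly as $X \twoheadrightarrow \im(f) \rightarrowtail Y$; the vanishing of $Q(\ker f)$ and $Q(\coker f)$ combined with (1) and A1 forces $\ker(f), \coker(f) \in \AA$, so $f \in \Sigma_\AA$.
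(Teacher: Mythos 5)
The paper does not prove this theorem at all: it is stated in the preliminaries with the remark that it ``summarizes various results from \cite{HenrardVanRoosmalen19a}'', so there is no in-paper argument to compare yours against. Judged on its own merits, your outline of (5) is sound and matches the standard strategy: factor the weak isomorphism as a deflation with kernel in $\AA$ followed by an inflation with cokernel in $\AA$, handle the deflation stage by \ref{R2}, and manufacture the pullback along the inflation $Y' \rightarrowtail W$ (with cokernel $A \in \AA$) by applying \ref{A2} to $Z \to W \twoheadrightarrow A$ and taking the kernel of the resulting deflation $Z \twoheadrightarrow A'$; this kernel is an inflation into $Z$ with cokernel $A' \in \AA$, and pasting the two squares gives both existence and stability. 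Your attribution of \ref{RMS3} to pullbacks is slightly off (it really comes from \ref{A2} applied to the factorization of $f-g$ through $\ker s$), but that step does go through.

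There are, however, two genuine gaps. First, closure of $\Sigma_\AA$ under composition is not ``nearly formal'': writing $ts = i_2(d_2 i_1)d_1$, the crux is to show that the middle composite (an inflation with cokernel in $\AA$ followed by a deflation with kernel in $\AA$) is again \emph{admissible} with kernel and cokernel in $\AA$, and neither the Serre property nor (5) hands you the required deflation--inflation factorization for free; this is the main technical content of the cited reference and your sketch does not engage with it. Second, your argument for saturation (4) is circular: you begin by ``factoring $f$ admissibly as $X \twoheadrightarrow \im(f) \rightarrowtail Y$'', but a deflation-exact category need not have kernels, cokernels, or admissible factorizations of arbitrary morphisms, and admissibility of $f$ is precisely part of the conclusion $f \in \Sigma_\AA$ that you are trying to establish. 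You need to extract admissibility of $f$ from the hypothesis that $Q(f)$ is invertible (via the roof calculus and the already-established structure of $\Sigma_\AA$), not assume it at the outset.
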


\begin{remark}
Because of the above universal property, we often write $\CC / \AA$ for the category $\Sigma_{\AA}^{-1}\CC.$
\end{remark}

\subsection{Torsion and torsion-free classes}\label{subsection:Torsion}

Let $\AA$ be an abelian category.  A \emph{torsion theory} on $\AA$ is a pair $(\TT, \FF)$ of full subcategories of $\AA$ so that $\Hom_\AA(\TT,\FF) = 0$ and for every object $A \in \AA$ there is a short exact sequence
\[0 \to T \to A \to F \to 0\]
with $T \in \TT$ and $F \in \FF$.  This short exact sequence is necessarily unique up to isomorphism.  The subcategory $\TT$ is called the \emph{torsion subcategory} and the category $\FF$ is called a \emph{torsion-free subcategory}.

Any full subcategory of $\AA$ satisfying the following properties is a torsion subcategory:
\begin{enumerate}
\item $\FF$ is closed under extensions and subobjects,
\item the embedding $\FF \to \AA$ has a left adjoint.
\end{enumerate}
The associated torsion subcategory is then given by
\[\TT = {}^{\perp_0}\FF = \{A \in \AA \mid \Hom(A, \FF) = 0\}.\]

Let $(\TT, \FF)$ be a torsion theory.  We say that $(\TT, \FF)$ is \emph{tilting} if for every $A \in \AA$ there is a monomorphism $A \to T$ for some $T \in \TT$.  Likewise, we say that $(\TT, \FF)$ is \emph{cotilting} if for every $A \in \AA$ there is an epimorphism $F \to A$ for some $F \in \AA$.

\begin{remark}
If $\AA$ has enough projectives, then a torsion theory $(\TT, \FF)$ is cotilting if and only if all projective objects lie in $\FF.$
\end{remark}

We recall the following result from \cite[proposition B.3]{BondalVandenBergh03}.

\begin{theorem}\label{theorem:BondalVandenBergh}
Let $\CC$ be an additive category.  The following are equivalent.
\begin{enumerate}
	\item $\CC$ is a quasi-abelian category,
	\item There is a cotilting torsion theory $(\TT , \FF)$ in an abelian category $\AA$ with $\CC \simeq \FF.$
	\item There is a tilting torsion theory $(\TT , \FF)$ in an abelian category $\AA$ with $\CC \simeq \TT.$
\end{enumerate}
\end{theorem}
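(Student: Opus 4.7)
The plan is to establish (2) $\Leftrightarrow$ (3) by duality, and then to prove (1) $\Leftrightarrow$ (2). The duality is straightforward: $(\TT, \FF)$ is a cotilting torsion theory in an abelian category $\AA$ if and only if $(\FF^{op}, \TT^{op})$ is a tilting torsion theory in $\AA^{op}$, since the defining epimorphism from a torsion-free object becomes a monomorphism into a torsion object upon reversing arrows, and both the classes of abelian and quasi-abelian categories are stable under $(-)^{op}$.

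For (2) $\Rightarrow$ (1), I will show that any torsion-free class $\FF$ of a cotilting torsion theory in an abelian category $\AA$ is quasi-abelian. Kernels in $\FF$ exist and coincide with those in $\AA$ because $\FF$ is closed under subobjects, and cokernels are obtained by applying the reflector $R\colon \AA \to \FF$, which sends $A$ to its torsion-free quotient $A/tA$, to cokernels computed in $\AA$. To verify the quasi-abelian axioms, pullbacks in $\FF$ also agree with pullbacks in $\AA$, so stability of $\FF$-cokernels under pullbacks reduces, via a diagram chase, to properties of $R$ combined with the exactness properties inherent in the cotilting setting. The pushout-stability of $\FF$-kernels is then dual, using that pushouts in $\FF$ are computed by applying $R$ to pushouts in $\AA$.

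For (1) $\Rightarrow$ (2), the idea is to build an abelian envelope of $\CC$ in which $\CC$ embeds as a cotilting torsion-free class. Following Schneiders, I define the \emph{left heart} $LH(\CC)$, concretely realized as the category whose objects are strict monomorphisms $X \inflation Y$ in $\CC$ modulo an appropriate equivalence on Hom-sets; equivalently $LH(\CC)$ may be identified with the heart of the natural t-structure on the derived category $\Db(\CC)$. The verification splits into three tasks: show $LH(\CC)$ is abelian, show that $\CC \hookrightarrow LH(\CC)$ via $X \mapsto (0 \inflation X)$ is fully faithful with image closed under extensions and subobjects, and show that every object of $LH(\CC)$ admits an epimorphism from an object of $\CC$, which supplies the cotilting condition. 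The main obstacle will be the technical verification that $LH(\CC)$ is abelian; this requires that pushouts of inflations along arbitrary maps in $\CC$ yield kernel-cokernel pairs in $LH(\CC)$, which rests ultimately on the quasi-abelian axiom that cokernels in $\CC$ are stable under pullbacks.
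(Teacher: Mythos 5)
The paper offers no proof of this statement: it is recalled verbatim from \cite[proposition B.3]{BondalVandenBergh03}, so there is nothing internal to compare against. Your outline follows the standard published route (Bondal--Van den Bergh; Schneiders' left heart), and the reduction of (2) $\Leftrightarrow$ (3) to duality is correct, since passing to $\AA^{op}$ swaps torsion and torsion-free classes and swaps the tilting and cotilting conditions, while abelian and quasi-abelian are self-dual notions.

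There are two genuine problems. First, your concrete model of $LH(\CC)$ is wrong: if its objects were the \emph{strict} monomorphisms $X \rightarrowtail Y$ (i.e.\ inflations, kernels of their cokernels), then the complex $[X \rightarrowtail Y]$ would be quasi-isomorphic to $\coker(X \to Y)$ placed in degree $0$, so $LH(\CC)$ would collapse back to $\CC$, which is not abelian in general. The correct model takes complexes $[X \xrightarrow{u} Y]$ with $u$ an \emph{arbitrary} monomorphism; the genuinely new objects are the formal cokernels of non-strict monomorphisms, and it is precisely the failure of $\coim u \to \im u$ to be invertible that the heart repairs. Second, everything that makes $(1) \Rightarrow (2)$ nontrivial --- that $LH(\CC)$ is abelian, that $\CC$ embeds as a full subcategory closed under extensions and subobjects, and that every object of $LH(\CC)$ receives an epimorphism from an object of $\CC$ --- is named but deferred, so as written this is a plan rather than a proof. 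A smaller point on $(2) \Rightarrow (1)$: the cotilting hypothesis is not what drives that direction. Cokernel maps in $\FF$ are exactly the epimorphisms of $\AA$ between objects of $\FF$, and pullbacks in $\FF$ coincide with pullbacks in $\AA$ because $\FF$ is closed under subobjects, which gives pullback-stability of cokernels directly; dually, the pushout in $\AA$ of an inflation (a monomorphism with torsion-free cokernel) along any map of $\FF$ already lies in $\FF$ by extension-closedness, so the reflector $R$ plays no role there. Making these two observations explicit, and replacing the appeal to ``exactness properties inherent in the cotilting setting,'' would complete that direction.
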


\subsection{The derived category of a deflation-exact category}

We recall the definition of the derived category of a deflation-exact category from \cite{BazzoniCrivei13, HenrardVanRoosmalen19b}. The definition is similar to the derived category of an exact category \cite{Buhler10, Neeman01}.  For an additive category $\EE$, we write $\C(\EE)$ for the category of complexes and $\K(\EE)$ for the homotopy category.  We write $\Cb(\EE)$ and $\Kb(\EE)$ for the bounded variants.

\begin{definition}\label{definition:AcylicComplex}
	Let $\EE$ be a deflation-exact category. A complex $X^{\bullet} \in \Cb(\EE)$ is called \emph{acylic in degree $n$} if $d_X^{n-1}$ factors as 
	\[\xymatrix{
	X^{n-1}\ar[rr]^{d_X^{n-1}}\ar@{->>}[rd]^{p^{n-1}} && X^n\\
	 & \ker(d_X^{n})\ar@{>->}[ru]^{i^{n-1}} &
	}\] where the deflation $p^{n-1}$ is the cokernel of $d_X^{n-2}$ and the inflation $i^{n-1}$ is the kernel of $d_X^{n}$.
	
	A complex $X^{\bullet}$ is called \emph{acyclic} if it is acylic in each degree.  The full subcategory of $\Cb(\EE)$ of acyclic complexes is denoted by $\Acb_{\C}(\EE)$. We write $\Acb_{\K}(\EE)$ for the full subcategory of acyclic complexes when viewed as a subcategory of $\Kb(\EE)$. We simply write $\Acb(\EE)$ if there is no confusion.
\end{definition}

In \cite{BazzoniCrivei13}, it is shown that $\Ac_{\K}(\EE)$ is a triangulated subcategory (not necessarily closed under isomorphisms) of $\K(\EE)$.

\begin{definition}\label{definition:DerivedCategory}
	Let $\EE$ be a deflation-exact category.  The bounded derived category $\Db(\EE)$ is defined as the Verdier localization $\Kb(\EE)/\Acb_{\K}(\EE)$.
\end{definition}

The derived category $\Db(\EE)$ enjoys many standard properties as in the exact case. We refer the reader to \cite{HenrardVanRoosmalen19b} for details and precise statements. The following theorem is \cite[theorem~1.4]{HenrardVanRoosmalen19b}.

\begin{theorem}\label{theorem:MainTheoremDerivedCatOfLoc}
	Let $\EE$ be a deflation-exact category and let $\AA\subseteq \EE$ be an admissibly deflation-percolating subcategory. The following sequence is a Verdier localization sequence
	\[\DAb(\EE)\to \Db(\EE)\to \Db(\EE/\AA).\]
	Here, $\DAb(\EE)$ is the thick triangulated subcategory of $\Db(\EE)$ generated by $\AA\subseteq \Db(\EE)$.
\end{theorem}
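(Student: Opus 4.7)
The plan is to adapt the standard approach (cf.~Keller, Neeman) for Verdier quotients of derived categories of exact categories to the deflation-exact setting. By theorem \ref{theorem:MainTheoremPercoLocalization}, the localization $Q\colon \EE \to \EE/\AA$ is conflation-exact, and hence induces a triangulated functor $\Db(Q)\colon \Db(\EE) \to \Db(\EE/\AA)$. Since $Q(\AA)=0$, every object of $\AA$, viewed as a stalk complex, is sent to zero, so the thick triangulated subcategory $\DAb(\EE) \subseteq \Db(\EE)$ generated by $\AA$ is killed. The universal property of the Verdier quotient then yields an induced triangulated functor
\[\bar Q\colon \Db(\EE)/\DAb(\EE) \to \Db(\EE/\AA),\]
and the task reduces to showing that $\bar Q$ is an equivalence.

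For essential surjectivity I would proceed by induction on the length of a complex $Y^\bullet \in \Cb(\EE/\AA)$. Using the roof description of $\EE/\AA = \Sigma_\AA^{-1}\EE$ from theorem \ref{theorem:MainTheoremPercoLocalization}, each differential of $Y^\bullet$ is represented by a roof $Y^i \xleftarrow{s^i} Z^i \to Y^{i+1}$ with $s^i \in \Sigma_\AA$. Iteratively completing pullback squares via \ref{RMS2}, I would align these roofs into a single complex $X^\bullet \in \Cb(\EE)$ together with a termwise weak isomorphism $Q(X^\bullet) \to Y^\bullet$ in $\Kb(\EE/\AA)$, exhibiting $Y^\bullet$ as isomorphic in $\Db(\EE/\AA)$ to $\bar Q(X^\bullet)$.

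For fullness, given $X^\bullet, Y^\bullet \in \Kb(\EE)$, any morphism $\bar Q(X^\bullet) \to \bar Q(Y^\bullet)$ in $\Db(\EE/\AA)$ is represented by a roof $Q(X^\bullet) \xleftarrow{q} W^\bullet \to Q(Y^\bullet)$ with $q$ a quasi-isomorphism. The same termwise roof-lifting technique constructs a preimage roof in $\Kb(\EE)$ whose left leg has cone in $\DAb(\EE)$, which gives a morphism in $\Db(\EE)/\DAb(\EE)$ whose image is the prescribed roof. Faithfulness follows analogously: a morphism in $\Db(\EE)/\DAb(\EE)$ that maps to zero in $\Db(\EE/\AA)$ can, by the same roof manipulation, be shown to factor through an object of $\DAb(\EE)$.

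The main obstacle is the compatibility between the two forms of localization at play: the Gabriel-Zisman-type one-step localization $\EE \to \Sigma_\AA^{-1}\EE$ governed by the right multiplicative system $\Sigma_\AA$, and the Verdier quotient $\Kb(\EE) \to \Db(\EE)$ by acyclic complexes. The technical crux is to establish the ``saturation-type'' statement that a bounded complex $X^\bullet$ in $\EE$ satisfies $Q(X^\bullet) \in \Acb(\EE/\AA)$ precisely when $X^\bullet \in \DAb(\EE)$. The ``if'' direction is immediate from exactness of $Q$; the converse requires the saturation of $\Sigma_\AA$ together with a careful analysis of how kernels, cokernels, and pullbacks in $\EE/\AA$ lift to admissible factorizations in $\EE$, which is exactly where the percolating axioms \ref{A1}-\ref{A3} are essential.
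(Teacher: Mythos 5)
First, a point of reference: the paper does not actually prove this statement; it is imported verbatim as \cite[theorem~1.4]{HenrardVanRoosmalen19b}, so there is no internal proof to compare against. Your outline does follow the strategy of that reference: construct the induced functor $\bar Q\colon \Db(\EE)/\DAb(\EE)\to\Db(\EE/\AA)$ and show it is an equivalence by lifting complexes and roofs along the right multiplicative system $\Sigma_{\AA}$ of theorem \ref{theorem:MainTheoremPercoLocalization}.

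That said, as written the proposal is a plan rather than a proof, and the two places where the real work lives are left open. (i) In the lifting of a complex $Y^\bullet\in\Cb(\EE/\AA)$: aligning the roofs via \ref{RMS2} only produces lifted differentials whose consecutive composites vanish after applying $Q$, not in $\EE$ itself; one must invoke \ref{RMS3} (equivalently, that $Q(f)=0$ forces $f\circ t=0$ for some $t\in\Sigma_{\AA}$) and run a finite backwards induction through the bounded complex to correct the lift to an honest complex. This is routine but cannot be skipped. (ii) Much more seriously, the ``technical crux'' you identify --- that $Q(X^\bullet)$ acyclic implies $X^\bullet\in\DAb(\EE)$ --- is essentially the content of the theorem, and you give no argument for it. The kernel of $\Db(\EE)\to\Db(\EE/\AA)$ is a thick subcategory containing $\AA$, so it contains $\DAb(\EE)$ for free; the hard direction is a d\'evissage showing that any bounded complex whose image is acyclic is built, inside $\Db(\EE)$, from objects of $\AA$ by cones and direct summands. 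This requires lifting the deflation--inflation factorizations of the differentials of $Q(X^\bullet)$ back to $\EE$ using \ref{A2} together with the closure properties \ref{A1} and \ref{A3}, and an induction on the width of the complex; naming these ingredients is not the same as carrying out the argument. Until (ii) is done you have at best that $\bar Q$ exists and is essentially surjective and full, not that the kernel is no larger than $\DAb(\EE)$, i.e.\ not that the sequence is a Verdier localization sequence.
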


\subsection{Semi-Hopf categories}\label{susection:SemiHopf}

Let $\VV$ be a strict braided monoidal category and let $\underline{\CC}(\VV)$ be the category of coalgebra (or comonoid) objects in $\VV$ with coalgebra morphisms. Note that $\underline{\CC}(\VV)$ is itself a monoidal category and the unit object $k$ of $\VV$ is a coalgebra.

\begin{definition}
Let $\VV$ be a strict braided monoidal category. A category $\CC$ enriched over $\underline{\CC}(\VV)$ is called a \emph{semi-Hopf category}.	If $\VV=\operatorname{Vec}_k$ the category of vector spaces over a field $k$, a category $\CC$ enriched over $\underline{\CC}(\operatorname{Vec}_k)$ is called a \emph{$k$-linear semi-Hopf category}.
\end{definition}

When $\CC$ is a $k$-linear semi-Hopf category, the category $\Mod_k \CC = \Fun_k(\CC, \operatorname{Vec}_k)$ of $k$-linear $\CC$-modules has an induced pointwise monoidal structure (see \cite[proposition 3.2]{BatistaCaenepeelVercruysse16} for details).

\begin{remark}
The terminology of a semi-Hopf category has been introduced in \cite{BucklyFieremansVasilakopoulouVercruysse17, CaenepeelFieremans18}.  A semi-Hopf category with an antipode is called a \emph{Hopf category}, and was earlier introduced in \cite{BatistaCaenepeelVercruysse16}.
\end{remark}
\section{The category of glider representations}\label{section:GliderCategory}

Let $\Gamma$ be an ordered group and let $\Lambda \subseteq \Gamma$ be any subset.  Let $FR$ be a $\Gamma$-filtered ring.  In this section, we collect all definitions necessary to define the category $\Glid FR$ of $\Lambda$-glider representations over a filtered ring $FR$.  We refer to the diagram in figure \ref{figure:TheDiagram} for an overview.  We start by defining the categories $\FF_\Lambda R$ and $\oFF_\Lambda R$, and proceed by defining $\Prefrag FR$ and $\Preglid FR$ as full subcategories of $\Mod \FF_\Lambda R$ and $\Mod \oFF_\Lambda R$, respectively.  The category $\Glid FR$ of glider representation is then defined as a localization of the category $\Preglid FR$ of pregliders (see definition \ref{definition:GlidAsLoc}).

The categories $\Mod \FF_\Lambda R$ and $\Prefrag FR$ do not occur in the definition of glider representations, but will occur in subsequent sections.

\subsection{Companion categories over a filtered ring}

Let $(\Gamma, \leq)$ be an ordered group and let $FR$ be a $\Gamma$-filtered ring.

\begin{definition}\label{definition:CompanionCategory}
Let $FR$ be a $\Gamma$-filtered ring and let $\Lambda \subseteq \Gamma$ be any subset.
\begin{enumerate}
	\item We define the \emph{$\Lambda$-filtered companion category} $\FF_\Lambda R$ of $FR$ as follows.  The objects are given by $\Ob (\FF_\Lambda R) = \Lambda$; the morphisms are given by 
		\[\Hom_{\FF_{\Lambda}R}(\alpha, \beta) = \begin{cases} 
	F_{\beta\alpha^{-1}} R & \alpha \leq \beta, \\ 
	0 & \mbox{otherwise.}
	\end{cases}\]
		The composition is given by the multiplication in $R.$
	\item We define the \emph{extended $\Lambda$-filtered companion category} $\overline{\FF}_\Lambda R$ of $FR$ as follows.  The objects are given by $\Ob (\oFF_\Lambda R) = \Lambda \coprod \{\infty\}$; the morphisms are given by
	\[\Hom_{\oFF_{\Lambda}R}(\alpha, \beta) = \begin{cases} 
	\Hom_{\FF_{\Lambda} R}(\alpha, \beta) & \alpha, \beta \in \Lambda, \\ 
	R & \beta = \infty, \\
	0 & \mbox{otherwise.}
	\end{cases}\]
		The composition is given by the multiplication in $R.$
\end{enumerate}
We write $j\colon \FF_\Lambda R \to \oFF_\Lambda R$ for the inclusion functor.
\end{definition}

\begin{notation}
For each $\alpha, \beta \in \Ob (\oFF_\Lambda)$ such that $\alpha \leq \beta$ (or $\beta = \infty$), there is an element $1_R \in \Hom(\alpha,\beta)$.  We refer to this element as $1_{\alpha, \beta}$.
\end{notation}

\subsection{Pregliders and prefragments}\label{subsection:PreglidPrefrag}

Having introduced the categories $\FF_\Lambda R$ and $\oFF_\Lambda R$, we can now define the categories $\Preglid FR$ and $\Prefrag FR.$

\begin{definition}\label{definition:PrefragAndPreglid}
Let $FR$ be a $\Gamma$-filtered ring and let $\Lambda \subseteq \Gamma$ be any subset.  With definitions as above, we have:
\begin{enumerate}
	\item The category $\Prefrag FR$ of \emph{$FR$-prefragments} is the full additive subcategory of $\Mod (\FF_\Lambda R)$ given by those $M \in \Mod (\FF_\Lambda R)$ which satisfy:
		\[\mbox{for all $\alpha \leq \beta$ in $\Ob (\FF_\Lambda R)$, the map $M(1_{\alpha, \beta})\colon M(\alpha) \to M(\beta)$ is a monomorphism.}\]
	We write $\eta\colon \Prefrag FR\hookrightarrow \Mod(\FF_{\Lambda} R)$ for the inclusion $\Prefrag FR\subseteq \Mod(\FF_\Lambda R).$
	\item The category $\Preglid FR$ of \emph{$FR$-pregliders} is the full additive subcategory of $\Mod(\oFF_\Lambda R)$ given by those $M \in \Mod(\oFF_\Lambda R)$ which satisfy:
	\[\mbox{for all $\alpha \leq \beta$ in $\Ob (\oFF_\Lambda R)$, the map $M(1_{\alpha, \beta})\colon M(\alpha) \to M(\beta)$ is a monomorphism.}\]
	We write $\iota\colon \Preglid FR\hookrightarrow \Mod(\oFF_\Lambda R)$ for the inclusion $\Preglid FR\subseteq  \Mod(\oFF_\Lambda R)$.
\end{enumerate}
\end{definition}

\begin{remark}\label{remark:SufficientToCheckMapsToOmega}
For an object $M\in \Ob (\Mod \oFF_\Lambda R)$ to be a preglider, it suffices to check that, for each $\alpha \in \Lambda$, the map $M(1_{\alpha, \infty})\colon M(\alpha) \to M(\infty)$ is a monomorphism.
\end{remark}

\begin{example}\label{example:WhenSchapiraSchneiders16}
When $\Lambda = \Gamma$ and $FR$ is a $\Gamma^+$-filtered ring, then $\Prefrag FR$ is equivalent to the category of filtered $FR$-modules in the sense of \cite{SchapiraSchneiders16}.  In particular, when $\Gamma = \bZ$, we recover the usual notion of a $\bZ$-filtered module (see, for example, \cite{NastasescuVanOystaeyen79}).
\end{example}

\begin{remark}
Let $\Gamma = \bZ$ and let $\Lambda = \bZ^{\leq 0}.$  Let $FR$ be a $\bZ$-filtered ring.  The category $\Prefrag FR$ is equivalent to the category of prefragments from definition \ref{definition:Fragment}.  Indeed, given a prefragment
\[M=M_0\supseteq M_{-1} \supseteq \dots \supseteq M_{-i}\supseteq \dots\]
in the sense of definition \ref{definition:Fragment}, we find a functor $\FF_\Lambda R \to \Ab$ by $-i \mapsto M_{-i}.$  The fragmented action $F_i R \times M_{-i} \to M_0$ is given by the action of $\Hom_{\FF R}(-i,0) = F_i R$.  In particular, the action of $1_{-i,0}$ on $M_{-i}$ is given by the inclusion $M_{-i} \subseteq M_0.$
\end{remark}

\begin{example}\label{example:StandardProjectives}
Let $\Gamma$ be a partially ordered group and $\Lambda \subseteq \Gamma$.  Let $FR$ be a $\Gamma$-filtered ring.  For each $\lambda \in \Lambda$, the representable functor $\FF_\Lambda (\lambda,-)$ is a prefragment and the representable functor $\oFF_\Lambda (\lambda,-)$ is a preglider.  Indeed, the conditions in definition \ref{definition:PrefragAndPreglid} are easy to verify in this case.
\end{example}

\subsection{The category of glider representations}

Let $(\Gamma, \leq)$ be an ordered group and let $FR$ be a $\Gamma$-filtered ring.  Let $\Lambda \subseteq \Gamma$ be any subset.  Consider the set $\Sigma \subseteq \Mor(\Preglid FR)$ given by:
\[(f\colon N \to M) \in \Sigma \quad \Leftrightarrow \quad \mbox{for all $\lambda \in \Lambda,$ the map $f_\lambda\colon N(\lambda) \to M(\Lambda)$ is an isomorphism.}\]
Note that we do not require $f_\infty\colon M(\infty) \to N(\infty)$ to be an isomorphism.

\begin{definition}\label{definition:GlidAsLoc}
	The category $\Glid FR$ is defined as the localization of the category $\Preglid FR$ with respect to $\Sigma$, i.e.~$\Glid FR \coloneqq (\Preglid FR)[\Sigma^{-1} ]$.
\end{definition}

\begin{notation}
 We denote the localization functor by $Q\colon \Preglid FR\to \Glid FR$.
\end{notation}

We will now work to understand this localization better: we will show that $\Sigma$ is a saturated right multiplicative system, so that morphisms in $\Glid FR$ are described by ``right roofs'' as in description \ref{descript:Localization}.  We start with the following lemma.

\begin{lemma}\label{lemma:PhiFullyFaithful}
Let $f\colon M \to N$ be a morphism in $\Mod \oFF_\Lambda R$.  Assume that $\ker f_\lambda = 0$, for all $\lambda \in \Lambda$.  If $N \in \Preglid FR$, then $M \in \Preglid FR.$
\end{lemma}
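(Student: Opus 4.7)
The plan is to invoke remark \ref{remark:SufficientToCheckMapsToOmega}: to show $M \in \Preglid FR$, I only need to verify that for each $\alpha \in \Lambda$, the map $M(1_{\alpha,\infty})\colon M(\alpha) \to M(\infty)$ is a monomorphism. All other conditions from the definition of a preglider are automatic once this is checked.

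To verify this, I would exploit the naturality of $f$ with respect to the morphism $1_{\alpha,\infty}$ in $\oFF_\Lambda R$. This gives a commutative square
\[\xymatrix{
M(\alpha) \ar[r]^{M(1_{\alpha,\infty})} \ar[d]_{f_\alpha} & M(\infty) \ar[d]^{f_\infty} \\
N(\alpha) \ar[r]_{N(1_{\alpha,\infty})} & N(\infty).
}\]
Since $N \in \Preglid FR$, the bottom map $N(1_{\alpha,\infty})$ is a monomorphism, and by hypothesis $f_\alpha$ is a monomorphism (as $\ker f_\alpha = 0$). Hence the composition $N(1_{\alpha,\infty}) \circ f_\alpha = f_\infty \circ M(1_{\alpha,\infty})$ is a monomorphism, which forces $M(1_{\alpha,\infty})$ to be a monomorphism.

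This is essentially a one-diagram argument; there is no real obstacle. The only subtlety worth flagging is that one must use the reduction in remark \ref{remark:SufficientToCheckMapsToOmega} rather than trying to check the monomorphism condition for every pair $\alpha \leq \beta$ in $\Lambda$ directly (although the same naturality argument would work there too, using $f_\beta$ instead of $f_\infty$, since $\beta \in \Lambda$ ensures $\ker f_\beta = 0$).
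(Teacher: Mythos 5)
Your proof is correct and is essentially identical to the one in the paper: both reduce to checking that each $M(1_{\alpha,\infty})$ is a monomorphism via remark \ref{remark:SufficientToCheckMapsToOmega}, and both conclude by observing that the composite $f_\infty \circ M(1_{\alpha,\infty}) = N(1_{\alpha,\infty}) \circ f_\alpha$ is a monomorphism. No further changes are needed.
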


\begin{proof}
As in remark \ref{remark:SufficientToCheckMapsToOmega}, it suffices to show that, for each $\lambda \in \Lambda$, the map $M(1_{\lambda, \infty})\colon M(\lambda) \to M(\infty)$ is a monomorphism.  Consider the commutative diagram
\[
\xymatrix{
M(\lambda) \ar[rr]^{M(1_{\lambda, \infty})} \ar[d]^{f_\lambda} && M(\infty) \ar[d]^{f_\infty} \\
N(\lambda) \ar[rr]^{N(1_{\lambda, \infty})} && N(\infty)}
\]
given by the morphism $f\colon M \to N$.  As $f_\lambda$ is a monomorphism by assumption and $N$ is a preglider, we know that the left-lower branch composes to a monomorphism.  We now see that $M(1_{\lambda, \infty})\colon M(\lambda) \to M(\infty)$ is a monomorphism as well.
\end{proof}

\begin{remark}
Note that every $s \in \Sigma$ satisfies the conditions in lemma \ref{lemma:PhiFullyFaithful}.
\end{remark}

\begin{proposition}\label{proposition:SigmaRMS}
The set $\Sigma \subset \Mor (\Preglid FR)$ is a saturated right multiplicative system.
\end{proposition}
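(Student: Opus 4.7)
The plan is to verify the three axioms \ref{RMS1}, \ref{RMS2}, \ref{RMS3} directly, and then note that saturation drops out from the definition of $\Sigma$ via a two-out-of-three style argument. Throughout, I will work with pullbacks, kernels, and intersections in the ambient abelian category $\Mod \oFF_\Lambda R$ (all of which are computed pointwise) and use lemma \ref{lemma:PhiFullyFaithful} to show that the relevant subobjects lie in $\Preglid FR$. Axiom \ref{RMS1} is immediate since the identity is in $\Sigma$, and since a composition of two morphisms that are invertible at every $\lambda \in \Lambda$ is again invertible at every $\lambda \in \Lambda$.

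For \ref{RMS2}, given morphisms $f\colon Z \to W$ and $s\colon Y \to W$ with $s \in \Sigma$, I would form the pullback
\[\xymatrix{X \ar[r]^g \ar[d]_t & Y \ar[d]^s \\ Z \ar[r]^f & W}\]
in $\Mod \oFF_\Lambda R$. Pullbacks in $\Mod \oFF_\Lambda R$ are computed pointwise, and the pullback of an isomorphism is an isomorphism, so $t_\lambda$ is an isomorphism for every $\lambda \in \Lambda$. In particular $t_\lambda$ is a monomorphism for every $\lambda \in \Lambda$, so by lemma \ref{lemma:PhiFullyFaithful} the object $X$ lies in $\Preglid FR$, and $t \in \Sigma$ by construction.

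For \ref{RMS3}, suppose $f,g\colon X \to Y$ and $s\colon Y \to W$ satisfy $s \in \Sigma$ and $sf = sg$. Setting $h = f - g$, we have $sh = 0$, so $h$ factors through the kernel $\ker s$ in $\Mod \oFF_\Lambda R$. Because $s_\lambda$ is an isomorphism for $\lambda \in \Lambda$, we get $(\ker s)(\lambda) = 0$, and hence $h_\lambda = 0$ for every $\lambda \in \Lambda$. Let $t\colon V \to X$ denote the inclusion of $V = \ker h$. Then $V(\lambda) = X(\lambda)$ for all $\lambda \in \Lambda$ (so $t_\lambda$ is the identity, hence an isomorphism), and $t$ is a monomorphism in $\Mod \oFF_\Lambda R$, so lemma \ref{lemma:PhiFullyFaithful} gives $V \in \Preglid FR$. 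By construction $ht = 0$, i.e.\ $ft = gt$, and $t \in \Sigma$.

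Finally, for saturation it suffices to show that whenever $gf, hg \in \Sigma$ are both defined and lie in $\Sigma$, also $g \in \Sigma$. Evaluating at $\lambda \in \Lambda$, the maps $g_\lambda f_\lambda$ and $h_\lambda g_\lambda$ are isomorphisms of abelian groups, which forces $g_\lambda$ to be both a split monomorphism and a split epimorphism, hence an isomorphism; so $g \in \Sigma$. The main conceptual point throughout is the same at every step: objects constructed as subobjects, pullbacks, or kernels inside $\Mod \oFF_\Lambda R$ need not a priori be pregliders, but lemma \ref{lemma:PhiFullyFaithful} consistently provides the mechanism to certify that they are, and this is the only nontrivial ingredient in the verification.
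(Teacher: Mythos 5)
Your verification of \ref{RMS1}--\ref{RMS3} is correct and is in substance the same argument the paper intends: the paper observes that $\Sigma = \Theta \cap \Mor(\Preglid FR)$ for the saturated system $\Theta$ of the ambient category $\Mod \oFF_\Lambda R$ and declares the rest ``straightforward using lemma \ref{lemma:PhiFullyFaithful}''; you have simply written out that routine, performing the pullback and kernel constructions pointwise in $\Mod \oFF_\Lambda R$ and using lemma \ref{lemma:PhiFullyFaithful} to certify that the resulting objects are pregliders. That part is fine. The one step that is thinner than it should be is saturation. The reduction ``two-out-of-six implies saturated'' is standard for two-sided multiplicative systems, but here $\Sigma$ is only known to be a \emph{right} multiplicative system, and for a right system the weak conclusion you state (only the middle morphism $g$ lies in $\Sigma$) is not enough: from the invertibility of $Q(\phi)$ the roof calculus only produces a chain of \emph{right} witnesses, namely $\alpha$ with $\phi\alpha \in \Sigma$ and then $\beta$ with $\alpha\beta \in \Sigma$, and applying the weak form to the triple $(\beta,\alpha,\phi)$ only returns $\alpha \in \Sigma$, not $\phi \in \Sigma$; a left witness $\delta$ with $\delta\phi \in \Sigma$ is not available from a right calculus of fractions. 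The fix is already contained in your pointwise computation: since $g_\lambda$ is an isomorphism, so are $f_\lambda = g_\lambda^{-1}(g_\lambda f_\lambda)$ and $h_\lambda = (h_\lambda g_\lambda)g_\lambda^{-1}$, so $\Sigma$ satisfies the \emph{strong} two-out-of-six property (all of $f,g,h$ lie in $\Sigma$), and applying that to the triple $(\beta,\alpha,\phi)$ above does yield $\phi \in \Sigma$. Alternatively, one can bypass the reduction entirely: unwind what $Q(\phi)$ being invertible means in terms of roofs and the equivalence relation of description \ref{descript:Localization}, evaluate at each $\lambda \in \Lambda$, and conclude directly that $\phi_\lambda$ is split epi and split mono. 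Either way the gap closes with the tools you already have, but as written the saturation step does not quite follow from what you prove.
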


\begin{proof}
We let $\Theta = \{f \in \Mor (\Mod \oFF_\Lambda R) \mid \forall \lambda \in \Lambda: \ker f_\lambda = 0 = \coker f_\lambda\},$ thus a morphism $f\colon M \to N$ is in $\Theta$ if and only if the restriction $(f)_{\FF_\Lambda R} = f \circ j\colon M \circ j \to N \circ j$ is invertible.  We know that $(\Mod \oFF_\Lambda R)[\Theta^{-1}] \cong \Mod \FF_\Lambda R$ and that $\Theta$ is a saturated right (as well as a left) multiplicative system in $\Mod \oFF_\Lambda R$.

Note that $\Sigma = \Theta \cap \Mor (\Preglid FR).$  Using lemma \ref{lemma:PhiFullyFaithful}, it is straightforward to show that $\Sigma$ is a saturated right multiplicative system.
\end{proof}

\begin{corollary}
The category $\Glid FR$ is an additive category.
\end{corollary}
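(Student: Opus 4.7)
The plan is to derive additivity of $\Glid FR$ from the fact that $\Preglid FR$ is already additive together with proposition \ref{proposition:SigmaRMS}. Since $\Preglid FR$ is a full additive subcategory of $\Mod \oFF_\Lambda R$ (by definition \ref{definition:PrefragAndPreglid}), it carries direct sums and a zero object, and proposition \ref{proposition:SigmaRMS} provides that $\Sigma$ is a right multiplicative system, so description \ref{descript:Localization} applies and Hom-sets in $\Glid FR$ are computed as equivalence classes of right roofs.

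First I would check that the zero object of $\Preglid FR$ is preserved by the localization functor $Q$, which is immediate since $Q$ is the identity on objects. Next I would define addition of two parallel morphisms $[(f_1, s_1)], [(f_2, s_2)] \colon X \to Y$ in $\Glid FR$ by using axiom \ref{RMS2} to bring both roofs over a common denominator: given $s_1\colon X_1 \to X$ and $s_2\colon X_2 \to X$ in $\Sigma$, apply \ref{RMS2} (with the identity on $X$ in the role of $f$) to obtain $s_3 \in \Sigma$ with target a common $X_3$ sitting over both $X_1$ and $X_2$, pull back $f_1$ and $f_2$ along these maps, and define the sum as the roof $(f_1 u_1 + f_2 u_2, s_3)$ where $u_i\colon X_3 \to X_i$ are the comparison maps. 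Independence from the chosen roof representatives follows in the standard way from \ref{RMS2} and \ref{RMS3}, and bilinearity of composition follows again from the Ore conditions together with bilinearity in $\Preglid FR$.

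For biproducts, given $M, N \in \Preglid FR$, their direct sum $M \oplus N$ taken in $\Mod \oFF_\Lambda R$ lies again in $\Preglid FR$ (a direct sum of monomorphisms is a monomorphism), and I would verify that $Q(M \oplus N)$ is a biproduct in $\Glid FR$ with the images under $Q$ of the usual structure maps; this reduces to the bilinearity of composition and the fact that $Q$ preserves finite limits on the nose on objects.

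The only mild obstacle is the verification that addition on roofs is well-defined on equivalence classes, and that the resulting addition is bilinear with respect to composition. This is however purely formal manipulation with the Ore axioms \ref{RMS1}–\ref{RMS3} and is the standard argument used, for instance, to show that the localization of an additive category at a (right or left) multiplicative system is additive; see \cite{GabrielZisman67}. No new input beyond proposition \ref{proposition:SigmaRMS} is needed.
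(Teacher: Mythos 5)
Your argument is correct and is essentially the paper's proof: the paper simply invokes proposition \ref{proposition:SigmaRMS} together with the standard fact that localizing an additive category at a right multiplicative system yields an additive category (\cite[corollary I.3.3]{GabrielZisman67}), and what you have written is an unwinding of that standard argument. (One small imprecision: to bring two roofs over a common denominator you apply \ref{RMS2} with $s_1$ in the role of $f$ and $s_2$ in the role of $s$, not with the identity of $X$; this does not affect the substance.)
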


\begin{proof}
This follows directly from proposition \ref{proposition:SigmaRMS} (see \cite[corollary I.3.3]{GabrielZisman67}).
\end{proof}

\begin{remark}
In \S\ref{subsection:GlidAsQuotient}, we obtain a different proof of proposition \ref{proposition:SigmaRMS} by interpreting the category of glider representations as the quotient $\Preglid FR / i_*(\Mod R).$
\end{remark}

\begin{example}\label{example:UsualFiltrations}
When $\Gamma = \Lambda = \bZ$, then $\Prefrag FR$ is the usual category of filtered modules over the filtered ring $FR.$  The objects in $\Prefrag FR$ are given by $\bZ$-filtered modules $\{M_i\}_{i \in \bZ}$; the objects in $\Preglid FR$ are given by $\bZ$-filtered modules $\{M_i\}_{i \in \bZ}$, together with an $R$-module $M$ and a monomorphism $\varinjlim_{i \in \bZ} M_i \to M.$  The category $\Glid FR$ of glider representations is equivalent to the category $\Prefrag FR$ of prefragments, which is itself equivalent to the usual category of $\bZ$-filtered $FR$-modules.
\end{example}

\begin{example}\label{example:EasyGliders}
Let $\Gamma = \bZ$ and $\Lambda = \{0\}.$  Let $R = k[t]$ for a commutative ring $k$, filtered in the usual way (with $\deg t = 1$).  We consider the following pregliders $M,N \in \Preglid FR$:
\begin{align*}
M(i) &= \begin{cases} k & i=0 \\ k[t]/(t) & i=\infty \end{cases} &
N(i) &= \begin{cases} k & i=0 \\ k[t]/(t-1) & i=\infty \end{cases}
\end{align*}
Note that $\Hom_{\Preglid FR}(M,N) = 0$ while $\Hom_{\Prefrag}(M \circ j, N \circ j) \cong k$ (here, $j\colon \FF_\Lambda R \to \oFF_\Lambda R$ is the inclusion).  Indeed, as $\Lambda = \{0\}$, we find that $\Prefrag FR \simeq \Mod k$ and $M \circ j \cong N \circ j $ are simple in $\Prefrag FR$.  In $\Glid FR$, an isomorphism $Q(M) \to Q(N)$ is given by a roof:
\[\xymatrix{
M && k \ar[r] & k[t]/(t) \\
P \ar[u]^{\rotatebox{90}{$\sim$}} \ar[d]_{\rotatebox{90}{$\sim$}} && k \ar[r] \ar@{=}[u] \ar@{=}[d] & k[t] \ar[u] \ar[d] \\
N && k \ar[r] & k[t]/(t-1)}\]
\end{example}
\section{Glider representations as prefragments and pregliders}\label{section:StructureResults}

Let $\Gamma$ be an ordered group and let $\Lambda \subseteq \Gamma$ be any subset.  Let $FR$ be a $\Gamma$-filtered ring.  Having defined the categories in figure \ref{figure:TheDiagram}, we now turn our attention to the remaining functors in the same diagram.  Our main goals are to introduce the (fully faithful) functors $L$ and $\phi$, allowing us to interpret the category of glider representations as full subcategories of $\Preglid FR$ and $\Prefrag FR$, respectively (see propositions \ref{proposition:PhiHasLeftAdjoint} and \ref{proposition:LeftAdjoint} below).

We start with completing the localization sequence in the top row of figure \ref{figure:TheDiagram} to a recollement of abelian categories (proposition \ref{proposition:AbelianRecollement} below).

\subsection{A recollement}\label{subsection:recollement}

We first consider the top row of figure \ref{figure:TheDiagram}:
\[\xymatrix{
{\Mod R} \ar[rr]|-{i_*} && {\Mod \oFF_\Lambda R} \ar[rr]|-{j^*} \ar@/^/[ll]^-{i^!} \ar@/_/[ll]_-{i^*} && {\Mod \FF_\Lambda R} \ar@/^/[ll]^-{j_*} \ar@/_/[ll]_-{j_!}
}\]

\begin{itemize}
	\item The functor $j^*\colon \Mod(\oFF_{\Lambda}R)\to \Mod(\FF_{\Lambda}R)$ is the restriction functor induced by the embedding $\FF_\Lambda R\subseteq \overline{\FF}_\Lambda R$.

	\item The functor $j_!\colon \Mod(\FF_{\Lambda}R)\to \Mod(\oFF_{\Lambda}R)$ is the left adjoint of the restriction functor $j^*$. Explicitly, $j_!$ is the right exact functor $\oFF_{\Lambda}R\otimes_{\FF_{\Lambda}R}-$ which maps the representable functor $\Hom_{\FF_\Lambda R}(\lambda,-)$ to the representable functor $\Hom_{\overline{\FF}_\Lambda R}(\lambda,-),$ for each $\lambda\in \Lambda$ (see \S\ref{subsection:RepresentationsOfCategories}). 

	\item The functor $j_*\colon  \Mod(\FF_\Lambda R)\to \Mod(\overline{\FF}_\Lambda R)$ is defined by \[(j_*M)(\lambda)=\begin{cases}
M(\lambda) & \mbox{ if } \lambda\in \Lambda,\\
0 & \mbox{ if } \lambda=\infty.
\end{cases}\]
\end{itemize}

It is straightforward to see that $(j_!,j^*,j_*)$ is an adjoint triple.

\begin{itemize}	
	\item The functor $i_*\colon \Mod(R)\to \Mod(\overline{\FF}_\Lambda R)$ is the kernel of $j^*$. Explicitly, $i_*$ is given by \[(i_*M)(\lambda)=\begin{cases}
	0 & \mbox{ if } \lambda\in \Lambda,\\
	M & \mbox{ if } \lambda=\infty.
\end{cases}\]

	\item The functor $i^!\colon \Mod(\overline{\FF}_\Lambda R)\to \Mod(R)$ is defined by $i^!(M)=M(\infty).$  Equivalently, $i^!(M)=\ker(\mu_M)$ where $\mu\colon Id_{\Mod(\overline{\FF}_\Lambda R)}\Rightarrow j_*j^*$ is the unit of the adjunction $j^*\dashv j_*$.
	\item The functor $i^*\colon \Mod(\overline{\FF}_\Lambda R)\to \Mod(R)$ is defined by $i^*(M)=M(\infty)/j_!j^*(M)$, thus $i^*(M) =	\coker(\varepsilon_M)$ where $\varepsilon\colon j_!j^*\Rightarrow Id_{\Mod(\overline{\FF}_\Lambda R)}$ is the counit of the adjunction $j_!\dashv j^*$.
\end{itemize}

Again, the triple $(i^*,i_*,i^!)$ is an adjoint triple.

\begin{proposition}\label{proposition:AbelianRecollement}
	The diagram 
\[\xymatrix{
{\Mod R} \ar[rr]|-{i_*} && {\Mod \oFF_\Lambda R} \ar[rr]|-{j^*} \ar@/^/[ll]^-{i^!} \ar@/_/[ll]_-{i^*} && {\Mod \FF_\Lambda R} \ar@/^/[ll]^-{j_*} \ar@/_/[ll]_-{j_!}
}\] is a recollement of abelian categories.
\end{proposition}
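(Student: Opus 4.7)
The plan is to invoke the minimality observation in Remark~\ref{remark:Recollements}: once we have an adjoint triple $(j_!, j^*, j_*)$ in which either $j_!$ or $j_*$ is fully faithful, the remaining data in Definition~\ref{definition:AbelianRecollement} is automatic. Since both adjoint triples $(i^*, i_*, i^!)$ and $(j_!, j^*, j_*)$ have already been written down explicitly in the paragraphs preceding the proposition, the only real task is to verify fully faithfulness of the appropriate functors.

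The quickest route is via $j_*$. From the explicit formula $(j_* M)(\lambda) = M(\lambda)$ for $\lambda \in \Lambda$ and $(j_* M)(\infty) = 0$, the composite $j^* j_*$ is literally the identity functor, so the counit of $j^* \dashv j_*$ is the identity natural transformation; Proposition~\ref{proposition:GabrielZisman} then yields that $j_*$ is fully faithful, and by Remark~\ref{remark:Recollements} the proposition follows. For completeness I would also verify the remaining conditions of Definition~\ref{definition:AbelianRecollement} directly. Fully faithfulness of $j_!$ follows from \S\ref{subsection:RepresentationsOfCategories}: since $j\colon \FF_\Lambda R \to \oFF_\Lambda R$ is fully faithful by construction of $\oFF_\Lambda R$, the unit $M \to j^* j_! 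M = (\oFF_\Lambda R \otimes_{\FF_\Lambda R} M)_{\FF_\Lambda R}$ is an isomorphism. Fully faithfulness of $i_*$ is immediate, since a natural transformation $i_*(M) \to i_*(N)$ is determined by its component at $\infty$, and such a component is precisely an $R = \End_{\oFF_\Lambda R}(\infty)$-linear map $M \to N$. Finally, $\im(i_*) = \ker(j^*)$ follows because an object $M$ with $j^*(M) = 0$ vanishes at every $\lambda \in \Lambda$ and is therefore determined by the $R$-module $M(\infty)$, which realizes $M$ canonically as $i_*(M(\infty))$.

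I do not anticipate a serious obstacle: the genuine content has already been packaged in the explicit construction of all six functors together with the general adjoint/fully-faithful equivalence of Proposition~\ref{proposition:GabrielZisman}. The verifications above amount to nothing more than unraveling the defining formulas for $\FF_\Lambda R$ and $\oFF_\Lambda R$, together with the observation that the inclusion $j$ is fully faithful essentially by construction.
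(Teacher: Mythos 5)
Your proposal is correct and follows essentially the same route as the paper: the paper's proof simply notes that $j_*$ is fully faithful and then invokes the non-minimality of the recollement axioms (\cite[remark~2.3 and example~2.13]{Psaroudakis14}, recorded in remark~\ref{remark:Recollements}). Your additional direct verifications of the fully faithfulness of $j_!$ and $i_*$ and of $\im(i_*)=\ker(j^*)$ are all accurate, just not needed once the minimality observation is in place.
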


\begin{proof}
	Note that $j_*$ is fully faithful. The result now follows from \cite[remark~2.3 and example~2.13]{Psaroudakis14}.
\end{proof}

\subsection{Pregliders and prefragments as torsion-free classes}

Recall from definition \ref{definition:PrefragAndPreglid} that we write $\eta\colon \Prefrag FR\hookrightarrow \Mod(\FF_{\Lambda}R)$ and $\iota\colon \Preglid FR \hookrightarrow \Mod(\oFF_{\Lambda}R)$ for the inclusion functors.  

\begin{proposition}\label{proposition:PreglidIsTorsionfree}
	\begin{enumerate}
		\item The category $\Preglid FR$ is a cotilting torsion-free subcategory of $\Mod(\FF_\Lambda R)$.
		\item The category $\Prefrag FR$ is a cotilting torsion-free subcategory of $\Mod(\oFF_\Lambda R)$.
	\end{enumerate}
\end{proposition}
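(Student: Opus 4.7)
The plan is to apply the criterion from \S\ref{subsection:Torsion}: a full subcategory of an abelian category that is closed under extensions and subobjects and whose inclusion admits a left adjoint is a torsion-free class, and it is cotilting precisely when every projective of the ambient abelian category lies in the subcategory. I will verify these four items (closure under subobjects, closure under extensions, existence of a reflector, and projectives lying in the subcategory) for each of $\Preglid FR$ and $\Prefrag FR$ inside the ambient module category asserted in the statement.

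For part (1), closure of $\Preglid FR$ under subobjects reduces essentially to lemma \ref{lemma:PhiFullyFaithful}: given a monomorphism $f\colon M \hookrightarrow N$ with $N$ a preglider, the pointwise injectivity of the structural maps $M(1_{\alpha,\beta})$ follows from the diagram chase carried out there, since each such map is a restriction of a composition of injections on $N$. Closure under extensions is a pointwise application of the five-lemma (or snake lemma): in a short exact sequence $0 \to M' \to M \to M'' \to 0$ with $M'$ and $M''$ satisfying the pointwise monomorphism condition, the middle term $M$ inherits that condition componentwise. For the reflector, I would send an arbitrary module to its largest quotient satisfying the preglider conditions, obtained by simultaneously quotienting out the kernels of the structural maps $M(1_{\alpha,\beta})$ that fail to be injective (iterating, if necessary, until the construction stabilizes); the universal property follows from the observation that any morphism from $M$ into a preglider must kill these kernels.

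For part (2), the proof for $\Prefrag FR$ is parallel in structure, with the pointwise monomorphism conditions now indexed by pairs $\alpha \leq \beta$ in $\Lambda$. Closure under subobjects and extensions is verified by the same pointwise arguments; the reflector is obtained by the analogous simultaneous quotient procedure. For the cotilting condition in both parts, example \ref{example:StandardProjectives} identifies each relevant representable functor with an object of the appropriate subcategory; since every projective of a module category is a summand of a coproduct of standard projectives and the defining pointwise monomorphism conditions are manifestly closed under coproducts and summands, all projectives lie in the subcategory in question. The torsion-free and cotilting characterization of theorem \ref{theorem:BondalVandenBergh} then applies.

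The main obstacle in both parts is the construction and functoriality of the reflector. One must check that the simultaneous quotient by the kernels of the structural maps is well-defined as a module over the relevant companion category (respecting the compositions $F_{\mu\nu^{-1}}R \otimes F_{\nu\lambda^{-1}}R \to F_{\mu\lambda^{-1}}R$), that the assignment $M \mapsto L(M)$ is functorial, and that $L$ satisfies the universal property making it left adjoint to the inclusion. Once $L$ is in hand, the remaining verifications are formal and the conclusion follows.
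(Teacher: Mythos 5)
Your verification of closure under subobjects and extensions, and your cotilting argument via the standard projectives of example \ref{example:StandardProjectives}, are correct and agree with the paper. The route you take to the torsion-free property, however, is different from the paper's and contains a genuine gap at the step you yourself flag as the main obstacle. The paper simply observes that $\Preglid FR$ (resp.\ $\Prefrag FR$) is closed under subobjects, \emph{products} and extensions in the ambient Grothendieck category and invokes Dickson's criterion; the existence of the reflector is then a \emph{consequence} of being a torsion-free class, not an input. You instead use the adjoint-functor criterion of \S\ref{subsection:Torsion} and must therefore build the reflector by hand, and your proposed construction does not work as stated.

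Concretely: for $\Preglid FR$ your one-step quotient does succeed --- by remark \ref{remark:SufficientToCheckMapsToOmega} only the maps to $\infty$ matter, and each $\ker M(1_{\lambda,\infty})$ is a subfunctor because $r\colon M(\lambda)\to M(\mu)$ followed by $M(1_{\mu,\infty})$ equals the action of $r\in\Hom(\lambda,\infty)=R$, which factors through $M(1_{\lambda,\infty})$; this recovers $\kappa(M)(\lambda)=\im M(1_{\lambda,\infty})$ of proposition \ref{proposition:Kappa}. For $\Prefrag FR$, however, ``simultaneously quotienting out the kernels of the structural maps'' is not well defined: for $x\in\ker M(1_{\alpha,\gamma})$ and $r\in F_{\beta\alpha^{-1}}R$, showing that $M(r)(x)$ again lies in some $\ker M(1_{\beta,\delta})$ requires an element $\delta\in\Lambda$ with $\delta\geq\beta\alpha^{-1}\gamma$, which need not exist for an arbitrary subset $\Lambda\subseteq\Gamma$. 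For instance, with $R=k[t]$, $F_iR=k[t]^{\leq i}$ and $\Lambda=\{0,1,2\}$ one can arrange $M(1_{0,2})(x)=0$ while $M(1_{1,2})\bigl(M(t)(x)\bigr)=M(t\in\Hom(0,2))(x)\neq 0$, so the sum of kernels is not a subfunctor and the naive quotient is not an $\FF_\Lambda R$-module. One must instead quotient by the subfunctor \emph{generated} by these kernels and then iterate, and the termination of that iteration needs its own (e.g.\ well-poweredness) argument. The cheapest repair is to abandon the explicit reflector: closure under arbitrary products is immediate (the defining monomorphism conditions are pointwise), and Dickson's theorem then yields the torsion-free class and, with it, the left adjoint.
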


\begin{proof}
We only show the first statement, the second statement is analogous.  Note that $\Preglid FR$ is closed under subobjects, direct products and extensions in $\Mod(\overline{\FF}_{\Lambda}R)$. By \cite[example~V.2.2]{Stenstrom75}, the category $\Mod(\overline{\FF}_{\Lambda}R)$ is a Grothendieck category. It now follows from \cite[theorem~2.3]{Dickson66} that $\Preglid FR$ is a torsion-free class. Note that the standard projectives in $\Mod(\overline{\FF}_{\Lambda}R)$ are in $\Preglid FR$ (see example \ref{example:StandardProjectives}).  As every projective $\overline{\FF}_{\Lambda}R$-module is a direct summand of a (possibly infinite) direct sum of standard projectives, we find that all projective $\overline{\FF}_{\Lambda}R$-modules are in $\Preglid FR$.  Hence, $\Preglid FR$ is a cotilting torsion-free class.
\end{proof}

By proposition \ref{proposition:PreglidIsTorsionfree}, the functor $\eta$ has a left adjoint $\theta$ and the functor $\iota$ has a left adjoint $\kappa$.  We give an explicit description of the functor $\kappa\colon \Mod(\oFF_{\Lambda}R)\to \Preglid FR.$

\begin{proposition}\label{proposition:Kappa}
The left adjoint functor $\kappa\colon \Mod(\oFF_{\Lambda}R)\to \Preglid FR$ is given by
\[\kappa(M)\colon \lambda \mapsto  \im\left(M(1_{\lambda,\infty})\right).\]
\end{proposition}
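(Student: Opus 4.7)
My plan is to define a candidate functor $\tilde\kappa$ by the given formula and verify it is left adjoint to $\iota$. Since left adjoints are unique up to isomorphism, this will show $\tilde\kappa \cong \kappa$.

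First I would make the formula functorial. Given $M \in \Mod(\oFF_\Lambda R)$, set $\tilde\kappa(M)(\lambda) = \im(M(1_{\lambda,\infty})) \subseteq M(\infty)$ for $\lambda \in \Lambda$ and $\tilde\kappa(M)(\infty) = M(\infty)$. For a morphism $r \in \Hom_{\oFF_\Lambda R}(\alpha,\beta)$, I would define $\tilde\kappa(M)(r)$ by the action of $r \in R$ on $M(\infty)$, restricted to $\tilde\kappa(M)(\alpha)$. To see this is well-defined, note that in $\oFF_\Lambda R$ we have $1_{\beta,\infty} \cdot r = r$ viewed as an element of $\Hom(\alpha,\infty)$, so for $x = M(1_{\alpha,\infty})(m) \in \tilde\kappa(M)(\alpha)$,
\[r \cdot x = M(r\colon \alpha \to \infty)(m) = M(1_{\beta,\infty})\bigl(M(r\colon \alpha \to \beta)(m)\bigr) \in \im(M(1_{\beta,\infty})) = \tilde\kappa(M)(\beta).\]
Functoriality of $\tilde\kappa(M)$ reduces to the associativity of the $R$-action on $M(\infty)$. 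That $\tilde\kappa(M)$ is a preglider is immediate from remark \ref{remark:SufficientToCheckMapsToOmega}, since the structure map $\tilde\kappa(M)(\lambda) \to \tilde\kappa(M)(\infty) = M(\infty)$ is just the inclusion of a subgroup.

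Next I would define the unit $\eta_M\colon M \to \iota\tilde\kappa(M)$ as the corestriction of $M(1_{\lambda,\infty})$ onto its image at each $\lambda \in \Lambda$, and as the identity at $\infty$; naturality both in the $\oFF_\Lambda R$-structure and in $M$ is routine. The essential step is to verify the universal property of the unit: for any preglider $N$ and any $f\colon M \to \iota N$, there is a unique $\bar f\colon \tilde\kappa(M) \to N$ with $\iota(\bar f)\circ \eta_M = f$. For existence, I set $\bar f_\infty = f_\infty$ and observe that $f_\infty$ restricted to $\tilde\kappa(M)(\lambda)$ factors through $N(\lambda) \hookrightarrow N(\infty)$: for $x = M(1_{\lambda,\infty})(m)$ one has $f_\infty(x) = N(1_{\lambda,\infty})(f_\lambda(m)) \in \im(N(1_{\lambda,\infty}))$, and since $N$ is a preglider, $N(1_{\lambda,\infty})$ is a monomorphism identifying this image with $N(\lambda)$. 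Well-definedness of $\bar f_\lambda$ follows from the same fact: $\ker(M(1_{\lambda,\infty}))$ is sent by $f_\lambda$ into $\ker(N(1_{\lambda,\infty})) = 0$. Naturality of $\bar f$ with respect to morphisms of $\oFF_\Lambda R$ is then a direct unwinding of the definitions, using once more the factorization $1_{\beta,\infty}\cdot r = r$.

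Uniqueness of $\bar f$ is automatic: $(\eta_M)_\lambda\colon M(\lambda) \twoheadrightarrow \tilde\kappa(M)(\lambda)$ is an epimorphism by construction, so the equation $\bar f_\lambda \circ (\eta_M)_\lambda = f_\lambda$ determines $\bar f_\lambda$, and $(\eta_M)_\infty = 1$ forces $\bar f_\infty = f_\infty$. The resulting natural bijection $\Hom_{\Preglid FR}(\tilde\kappa(M), N) \cong \Hom_{\Mod(\oFF_\Lambda R)}(M, \iota N)$ exhibits $\tilde\kappa$ as left adjoint to $\iota$, hence $\tilde\kappa \cong \kappa$. The main bookkeeping obstacle is the dual role of an arrow $r \in \Hom_{\oFF_\Lambda R}(\alpha,\beta)$ as both a ring element acting on $M(\infty)$ and a morphism in $\oFF_\Lambda R$; once that identification is handled, the verifications are mechanical consequences of the preglider condition on $N$.
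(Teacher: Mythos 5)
Your proposal is correct and follows essentially the same route as the paper: the paper's proof likewise observes that the formula defines an object of $\Preglid FR$, exhibits the natural map $M \to \iota(\kappa(M))$, and verifies the unique factorization of any $M \to \iota(N)$ through it. You have merely spelled out the details (well-definedness of the induced $R$-action on the images, and the use of the monomorphism $N(1_{\lambda,\infty})$ for existence and of the epimorphisms $(\eta_M)_\lambda$ for uniqueness) that the paper leaves as ``easy to verify.''
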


\begin{proof}
Note that $\kappa (M) \in \Preglid FR$, for all $M \in \Mod \oFF_\Lambda R.$  There is a natural map $M \to \iota(\kappa(M)).$  It is now easy to verify that a morphism $M \to \iota(N)$ (for any $N \in \Preglid FR$) factors uniquely as $M \to \iota(\kappa(M)) \to \iota(N).$  This shows that $\kappa$ is left adjoint to $\iota.$
\end{proof}

\begin{proposition}\label{proposition:ExactRestrictionOfJ}
The functor $j^*\colon \Mod \oFF_\Lambda R \to \Mod \FF_\Lambda R$ restricts to $j^*\colon \Preglid FR \to \Prefrag FR.$
\end{proposition}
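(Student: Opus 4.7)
The plan is to check this essentially by unwinding definitions. Let $M\in\Preglid FR$. By definition of $j^*$ as restriction along the inclusion $j\colon \FF_\Lambda R\hookrightarrow \oFF_\Lambda R$, we have $(j^*M)(\lambda) = M(\lambda)$ for all $\lambda\in\Lambda$, and for every morphism $r\in\Hom_{\FF_\Lambda R}(\alpha,\beta)$ (with $\alpha,\beta\in\Lambda$) the action $(j^*M)(r)$ equals $M(r)$. In particular, for $\alpha\leq\beta$ in $\Lambda$, the map $(j^*M)(1_{\alpha,\beta})\colon (j^*M)(\alpha)\to (j^*M)(\beta)$ coincides with $M(1_{\alpha,\beta})\colon M(\alpha)\to M(\beta)$.

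Now, since $\alpha,\beta\in\Lambda\subseteq \Ob(\oFF_\Lambda R)$ satisfy $\alpha\leq\beta$ also as objects of $\oFF_\Lambda R$, the defining property of $M\in\Preglid FR$ (definition \ref{definition:PrefragAndPreglid}) guarantees that $M(1_{\alpha,\beta})$ is a monomorphism in $\Ab$. Thus $(j^*M)(1_{\alpha,\beta})$ is a monomorphism for all $\alpha\leq\beta$ in $\Ob(\FF_\Lambda R)=\Lambda$, which is precisely the condition for $j^*M$ to lie in $\Prefrag FR$.

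The statement is really just the observation that the preglider condition is strictly stronger than the prefragment condition: pregliders require the monomorphism property on a larger indexing set (namely $\Lambda\sqcup\{\infty\}$) than prefragments, so restricting to $\Lambda$ preserves the condition. There is no genuine obstacle; the content of the proposition is conceptual rather than technical, as it records that the top-row restriction functor in figure \ref{figure:TheDiagram} descends to the torsion-free subcategories of pregliders and prefragments.
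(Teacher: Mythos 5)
Your proof is correct and matches the paper's approach, which simply declares the statement ``straightforward''; unwinding the definitions as you do shows exactly why, since the preglider monomorphism condition over $\Lambda\sqcup\{\infty\}$ restricts to the prefragment condition over $\Lambda$.
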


\begin{proof}
Straightforward.
\end{proof}

\begin{corollary}\label{corollary:EtaJAdjoint}
The composition $\kappa \circ j_!\colon \Mod \FF_\Lambda R \to \Preglid FR$ is left adjoint to the composition $\eta \circ j^*\colon \Preglid FR \to \Mod \FF_\Lambda R.$
\end{corollary}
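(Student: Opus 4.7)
The corollary is essentially a formal consequence of two adjunctions we already have in hand, so the proof plan is very short.

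First I would observe that the two candidate formulations of the right-hand functor agree: the composition $\eta \circ j^*\colon \Preglid FR \to \Mod \FF_\Lambda R$ described in the statement coincides with the composition $j^* \circ \iota$, where we first embed a preglider into $\Mod \oFF_\Lambda R$ via $\iota$ and then apply the original restriction functor $j^*\colon \Mod \oFF_\Lambda R \to \Mod \FF_\Lambda R$ from Subsection \ref{subsection:recollement}. This identification is immediate from Proposition \ref{proposition:ExactRestrictionOfJ}, which says that $j^*$ restricts to a functor $\Preglid FR \to \Prefrag FR$; on objects, both $\eta \circ j^*$ and $j^* \circ \iota$ just take a preglider $M$ and forget the value $M(\infty)$.

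Then I would simply compose two adjunctions. From Subsection \ref{subsection:recollement} we know $j_! \dashv j^*$ as functors between $\Mod \FF_\Lambda R$ and $\Mod \oFF_\Lambda R$, and from Proposition \ref{proposition:Kappa} we know $\kappa \dashv \iota$ as functors between $\Mod \oFF_\Lambda R$ and $\Preglid FR$. Since a composite of left adjoints is left adjoint to the composite of the right adjoints (taken in the reverse order), we conclude
\[\kappa \circ j_! \;\dashv\; j^* \circ \iota \;=\; \eta \circ j^*,\]
which is exactly the desired statement. There is no real obstacle: once Proposition \ref{proposition:Kappa} is in hand, the corollary is a formal consequence and one need not unfold the Hom-set bijections explicitly.
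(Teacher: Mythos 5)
Your proposal is correct and follows exactly the paper's own argument: the paper's proof simply states that the claim "follows directly from $\eta \circ j^* \cong j^* \circ \iota$," which is precisely your identification, after which the conclusion is the standard composition of the adjunctions $j_! \dashv j^*$ and $\kappa \dashv \iota$. Nothing to add.
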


\begin{proof}
This follows directly from $\eta \circ j^* \cong j^* \circ \iota$.
\end{proof}

\begin{proposition}\label{proposition:jL}
The functor $j^*\colon \Preglid FR \to \Prefrag FR$ has a left adjoint $j_L = \kappa \circ j_! \circ \eta.$
\end{proposition}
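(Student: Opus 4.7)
The plan is to recognize this as a direct application of proposition \ref{proposition:CycleOfAdjoints}\eqref{enumerate:CycleOfAdjointsFFInner}, using corollary \ref{corollary:EtaJAdjoint} as the input. Concretely, I would set up the triangle
\[
\xymatrix@C=3em@R=2em{
\Preglid FR \ar[rr]^{j^*} && \Prefrag FR \ar[dl]^{\eta} \\
& \Mod \FF_\Lambda R \ar[lu]^{\kappa \circ j_!}
}
\]
in the format of proposition \ref{proposition:CycleOfAdjoints}, with $F = j^*$, $G = \eta$, and $H = \kappa \circ j_!$.

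The first step is to verify the hypothesis of part \eqref{enumerate:CycleOfAdjointsFFInner}: namely, that $G = \eta$ is fully faithful and that $H$ is left adjoint to $GF$. Full faithfulness of $\eta\colon \Prefrag FR \hookrightarrow \Mod \FF_\Lambda R$ is immediate since it is the inclusion of a full subcategory (proposition \ref{proposition:PreglidIsTorsionfree}). The composite $GF = \eta \circ j^*$ is precisely the functor appearing in corollary \ref{corollary:EtaJAdjoint}, which asserts exactly that $\kappa \circ j_!$ is left adjoint to $\eta \circ j^*$.

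Applying proposition \ref{proposition:CycleOfAdjoints}\eqref{enumerate:CycleOfAdjointsFFInner} then yields that $HG = \kappa \circ j_! \circ \eta$ is left adjoint to $F = j^*\colon \Preglid FR \to \Prefrag FR$, which is the claim. There is no real obstacle here; the only thing to double-check is that the $j^*$ on the right-hand side of corollary \ref{corollary:EtaJAdjoint} (viewed as a functor $\Preglid FR \to \Prefrag FR$ via proposition \ref{proposition:ExactRestrictionOfJ}) matches the $j^*$ in the statement we are proving, which is immediate from the definitions.
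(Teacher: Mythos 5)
Your proof is correct and is essentially identical to the paper's: the paper likewise invokes proposition \ref{proposition:CycleOfAdjoints}\eqref{enumerate:CycleOfAdjointsFFInner} with exactly the same assignments $F = j^*$, $G = \eta$, $H = \kappa \circ j_!$, with corollary \ref{corollary:EtaJAdjoint} supplying the needed adjunction $H \dashv GF$.
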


\begin{proof}
This follows from proposition \ref{proposition:CycleOfAdjoints}\eqref{enumerate:CycleOfAdjointsFFInner} with $H = \kappa \circ j_!$, $G = \eta$, and $F = j^*\colon \Preglid FR \to \Prefrag FR$.
\end{proof}

\subsection{\texorpdfstring{$\Glid FR$ as subcategory of $\Prefrag FR$}{Glid FR as subcategory of Prefrag FR}}

The universal property of the localization functor shows that $j^*\colon \Preglid FR \to \Prefrag FR$ factors as $\Preglid FR \stackrel{Q}{\rightarrow} \Glid FR \stackrel{\phi}{\rightarrow} \Prefrag FR.$  We now show that $\phi$ is fully faithful, so that $\Glid FR$ can be interpreted as a full subcategory of $\Prefrag FR.$

\begin{remark}\label{remark:PhiExplicit}
The functor $\phi\colon \Glid FR \to \Prefrag FR$ is given on objects by mapping a glider $M \in \Ob(\Glid FR) = \Ob(\Preglid FR)$ to the restriction $j^*(M) = M \circ j$.

Let $f\colon M \to N$ be a morphism in $\Glid FR$, say that $f$ is represented by the roof $M \stackrel{s}{\leftarrow} M' \stackrel{g}{\rightarrow} N$ in $\Preglid FR$ (with $s \in \Sigma$).  The corresponding morphism in $\phi(f)\colon \phi(M) \to \phi(N)$ is given by $\phi(f)(\lambda) = g(\lambda) \circ s^{-1}(\lambda)\colon M(\lambda) \to N(\lambda)$, for all $\lambda \in \Lambda.$
\end{remark}

\begin{proposition}\label{proposition:PhiFullyFaithful}\label{proposition:PhiHasLeftAdjoint}
The functor $\phi\colon \Glid FR \to \Prefrag FR$ is fully faithful and admits a left adjoint $\psi\colon \Prefrag FR \to \Glid FR$ given by $\psi = Q \circ j_L.$
\end{proposition}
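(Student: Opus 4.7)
The plan is to establish the two claims in sequence: first obtain the left adjoint by a formal manipulation of adjoints across a localization, and then verify fully faithfulness by analyzing the counit of the resulting adjunction concretely.

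To produce $\psi = Q \circ j_L \dashv \phi$, I would apply proposition~\ref{proposition:CycleOfAdjoints}\ref{enumerate:CycleOfAdjointsLocOuter} with $F = j_L\colon \Prefrag FR \to \Preglid FR$, $G = Q\colon \Preglid FR \to \Glid FR$ (a localization by construction), and $H = \phi\colon \Glid FR \to \Prefrag FR$. By the factorization defining $\phi$ one has $HG = \phi Q = j^*$, and proposition~\ref{proposition:jL} gives $F = j_L \dashv j^* = HG$. The cited proposition then yields $\psi := GF = Q \circ j_L \dashv \phi = H$.

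For the fully faithfulness of $\phi$, I would invoke proposition~\ref{proposition:GabrielZisman} applied to the adjunction $\psi \dashv \phi$: it reduces the claim to showing that the counit $\varepsilon'\colon \psi\phi \Rightarrow 1_{\Glid FR}$ is a natural isomorphism. Tracing through the proof of proposition~\ref{proposition:CycleOfAdjoints}, the component at $M \in \Glid FR$ is $\varepsilon'_M = Q(\varepsilon_M)$, where $\varepsilon_M\colon j_L j^*M \to M$ is the counit of $j_L \dashv j^*$ in $\Preglid FR$. Since $\Sigma$ is saturated (proposition~\ref{proposition:SigmaRMS}), $Q(\varepsilon_M)$ is an isomorphism in $\Glid FR$ if and only if $\varepsilon_M \in \Sigma$, i.e., iff $j^*(\varepsilon_M)$ is an isomorphism in $\Prefrag FR$.

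To verify this, I would show that the unit $u_{j^*M}\colon j^*M \to j^*j_L j^*M$ of $j_L \dashv j^*$ is invertible: the triangle identity $j^*(\varepsilon_M) \circ u_{j^*M} = 1_{j^*M}$ then forces $j^*(\varepsilon_M)$ to be invertible as well. Unpacking $j_L = \kappa \circ j_! \circ \eta$ together with the full faithfulness of $j$ (so that $j_!(j^*M)(\lambda) = M(\lambda)$ for each $\lambda \in \Lambda$), proposition~\ref{proposition:Kappa} yields $(j_L j^*M)(\lambda) = \operatorname{image}\bigl(M(\lambda) \to j_!(j^*M)(\infty)\bigr)$, and $u_{j^*M}(\lambda)$ is the canonical surjection onto this image. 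Its kernel is $\ker\bigl(M(\lambda) \to j_!(j^*M)(\infty)\bigr)$, which vanishes because the counit at $\infty$ furnishes a map $j_!(j^*M)(\infty) \to M(\infty)$ whose composite with $M(\lambda) \to j_!(j^*M)(\infty)$ equals $M(1_{\lambda,\infty})\colon M(\lambda) \hookrightarrow M(\infty)$, and this composite is injective by the preglider hypothesis.

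The main obstacle is this last step. For an arbitrary prefragment $P$ the unit $u_P$ is generally only the canonical surjection onto the image of $P(\lambda) \to j_!P(\infty)$ and need not be invertible; it is precisely when $P = j^*M$ with $M \in \Preglid FR$ that the preglider axiom provides the factorization forcing $M(\lambda) \to j_!(j^*M)(\infty)$ to be injective. This also clarifies why $\phi$ is only fully faithful rather than an equivalence: $j_L$ itself need not be fully faithful on all of $\Prefrag FR$, but its composite with $Q$ behaves as a genuine left adjoint to the embedding $\phi$.
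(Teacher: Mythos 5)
Your proof of the adjunction $\psi = Q \circ j_L \dashv \phi$ is exactly the paper's: both apply proposition~\ref{proposition:CycleOfAdjoints}\eqref{enumerate:CycleOfAdjointsLocOuter} to $j_L \dashv j^* = \phi \circ Q$. For the fully faithfulness, however, you take a genuinely different route, and it is correct. The paper argues directly with roofs: it identifies $\Prefrag FR$ (up to the embedding $\eta$) with $\Theta^{-1}\Mod\oFF_\Lambda R$, observes that $\Sigma = \Theta \cap \Mor(\Preglid FR)$, and uses lemma~\ref{lemma:PhiFullyFaithful} to pull the apex of any roof between pregliders back into $\Preglid FR$, so that $\Sigma^{-1}\Preglid FR \to \Theta^{-1}\Mod\oFF_\Lambda R$ is fully faithful. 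You instead establish the adjunction first and then verify invertibility of its counit, which you correctly identify (via the uniqueness of $\epsilon$ with $\epsilon Q = Q\varepsilon$, $Q$ being the identity on objects) as $Q(\varepsilon_M)$ for the counit $\varepsilon$ of $j_L \dashv j^*$; saturation of $\Sigma$ reduces this to showing $j^*(\varepsilon_M)$ is invertible, which you get from the triangle identity once the unit $u_{j^*M}$ is seen to be an isomorphism. Your computation of $u_{j^*M}$ via $j_L = \kappa \circ j_! \circ \eta$ and proposition~\ref{proposition:Kappa} is sound: the composite $M(\lambda) \to j_!(j^*M)(\infty) \to M(\infty)$ is $M(1_{\lambda,\infty})$, injective by the preglider axiom, so the canonical surjection onto the image is bijective. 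What each approach buys: the paper's argument is shorter and yields the slightly stronger statement that $\eta \circ \phi$ is fully faithful into $\Mod \FF_\Lambda R$ (at the cost of a ``straightforward'' roof comparison left to the reader), whereas your argument is fully explicit, gives the concrete formula $\phi\psi(M)(\lambda) \cong \im(M(\lambda) \to j_!\eta(M)(\infty))$ that the paper only derives later (proposition~\ref{proposition:phiIsEpiReflective}), and your closing observation about where the argument fails for a general prefragment correctly anticipates the criterion of proposition~\ref{proposition:WhenGlider}.
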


\begin{proof}
To see that $\phi$ is fully faithful, it suffices to prove that $\eta \circ \phi$ is fully faithful.  Recall that $\Mod \FF_\Lambda R \simeq \Theta^{-1} \Mod \oFF_\Lambda R$ where $\Theta = \{f \in \Mor (\Mod \oFF_\Lambda R) \mid \mbox{$j^*(f)$ is invertible}\}$ (as in proposition \ref{proposition:SigmaRMS}).  Using this equivalence to identify these categories, we can describe the map $\eta \circ \phi\colon \Glid FR \to \Theta^{-1} \Mod \oFF_\Lambda R$ as mapping a roof $M \stackrel{s}{\leftarrow} M' \to N$ (with $s \in \Sigma$) in $\Sigma^{-1} \Preglid FR$ to the same roof in $\Theta^{-1} \Mod \oFF_\Lambda R$ (note that $\Sigma = \Theta \cap \Mor (\Preglid FR)$).  Using lemma \ref{lemma:PhiFullyFaithful}, it is straightforward to show that $\eta \circ \phi$ is fully faithful.

It is shown in proposition \ref{proposition:jL} that $j_L$ is left adjoint to $j^* = \phi \circ Q$.  It now follows from proposition \ref{proposition:CycleOfAdjoints}\eqref{enumerate:CycleOfAdjointsLocOuter} that $\psi = Q \circ j_L$ is left adjoint to $\phi$.
\end{proof}

\subsection{\texorpdfstring{$\Glid FR$ as subcategory of $\Preglid FR$}{Glid FR as subcategory of Preglid FR}}\label{subsection:GlidInPreglid}
In this subsection, we show that the localization $Q\colon \Preglid FR \to \Glid FR$ has a fully faithful left adjoint $L.$  In this way, $\Glid FR$ can be interpreted as a coreflective subcategory of $\Preglid FR.$

\begin{proposition}\label{proposition:LeftAdjoint}
The localization functor $Q\colon \Preglid FR\to \Glid FR$ admits a fully faithful left adjoint $L\colon \Glid FR\to \Preglid FR$.
\end{proposition}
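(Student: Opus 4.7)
The plan is to take $L = j_L \circ \phi$, composing the fully faithful embedding $\phi\colon \Glid FR \to \Prefrag FR$ from proposition \ref{proposition:PhiHasLeftAdjoint} with the left adjoint $j_L\colon \Prefrag FR \to \Preglid FR$ of $j^*$ from proposition \ref{proposition:jL}. Conceptually, $\phi$ realizes a glider as a concrete prefragment, and $j_L$ lifts it to a preglider in the universally minimal way; the fact that $L$ factors through a fully faithful embedding of $\Glid FR$ is what will drive both the adjunction and the fully-faithfulness.

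First I would verify $L \dashv Q$ by a direct application of proposition \ref{proposition:CycleOfAdjoints}\eqref{enumerate:CycleOfAdjointsFFInner} with $F = Q$, $G = \phi$ and $H = j_L$. The hypothesis that $G$ be fully faithful is exactly proposition \ref{proposition:PhiHasLeftAdjoint}, and the hypothesis $H \dashv GF$ reads $j_L \dashv \phi \circ Q$. But $\phi \circ Q = j^*\colon \Preglid FR \to \Prefrag FR$ by the universal factorization of $j^*$ through the localization defining $\Glid FR$, so this is exactly the adjunction $j_L \dashv j^*$ of proposition \ref{proposition:jL}. The conclusion $HG \dashv F$ then reads $L = j_L \circ \phi \dashv Q$, as desired.

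Next I would show fully-faithfulness of $L$ via a Hom computation. For $M, N \in \Glid FR$, chaining the established adjunctions and isomorphisms produces the natural bijections
\begin{align*}
\Hom_{\Preglid FR}(LM, LN) &\cong \Hom_{\Prefrag FR}(\phi M, j^* j_L \phi N) \\
&\cong \Hom_{\Prefrag FR}(\phi M, \phi \psi \phi N) \\
&\cong \Hom_{\Prefrag FR}(\phi M, \phi N) \\
&\cong \Hom_{\Glid FR}(M, N),
\end{align*}
using in order the adjunction $j_L \dashv j^*$; the identity $j^* \circ j_L = \phi \circ Q \circ j_L = \phi \circ \psi$ (where $\psi = Q \circ j_L$ from proposition \ref{proposition:PhiHasLeftAdjoint}); the fact that fully-faithfulness of $\phi$ forces the counit $\psi \phi \Rightarrow 1_{\Glid FR}$ to be a natural isomorphism (proposition \ref{proposition:GabrielZisman}); and fully-faithfulness of $\phi$ once more.

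The only point requiring care is to check that the composite of these natural bijections really does recover the map $\Hom(M,N) \to \Hom(LM, LN)$ induced by the functor $L$, as opposed to being only an abstract bijection of Hom-sets. This is a routine diagram chase through the triangle identities of the adjunctions $j_L \dashv j^*$ and $\psi \dashv \phi$, and I do not anticipate any serious obstacle there.
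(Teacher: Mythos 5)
Your construction of $L = j_L \circ \phi$ and your derivation of the adjunction $L \dashv Q$ via proposition \ref{proposition:CycleOfAdjoints}\eqref{enumerate:CycleOfAdjointsFFInner} (with $F=Q$, $G=\phi$, $H=j_L$) are exactly the paper's argument. Where you diverge is in establishing full faithfulness: you unwind the Hom-sets through the chain of adjunctions and then must check that the resulting bijection is the one induced by the functor $L$ --- a compatibility check you rightly flag and which does go through by the triangle identities. The paper instead invokes the dual of proposition \ref{proposition:GabrielZisman}: since $Q$ is the localization at $\Sigma$ and $L$ is left adjoint to it, the induced functor $(\Preglid FR)[\Sigma^{-1}] \to \Glid FR$ is (tautologically) an equivalence, so the dual of the equivalence (1)$\Leftrightarrow$(3) forces $L$ to be fully faithful, with no diagram chase needed. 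Your route is correct and more explicit about where each hypothesis is used; the paper's route is shorter precisely because the Gabriel--Zisman criterion is stated in terms of the unit/counit of the adjunction itself, so the question of whether an abstract bijection agrees with the functorial action never arises.
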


\begin{proof}
Recall from proposition \ref{proposition:jL} that $j_L$ is left adjoint to $j^* = \phi \circ Q$.  It follows from proposition \ref{proposition:CycleOfAdjoints}\eqref{enumerate:CycleOfAdjointsFFInner} (with $F = Q, G = \phi$, and $H = j_L$) that $L = j_L \circ \phi$ is left adjoint to $Q$.   This uses that $\phi$ is fully faithful, see proposition \ref{proposition:PhiFullyFaithful}.  The dual of proposition \ref{proposition:GabrielZisman} implies that $L$ is fully faithful.
\end{proof}

\begin{corollary}\label{corollary:GlidIsCoreflectiveInPreGlid}
The category $\Glid FR$ is a coreflective subcategory of $\Preglid FR.$
\end{corollary}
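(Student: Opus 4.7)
The corollary is essentially a restatement of proposition \ref{proposition:LeftAdjoint} in the language of coreflective subcategories, so my plan is to extract the statement directly from that proposition. Recall that a full subcategory $\BB$ of a category $\AA$ is called \emph{coreflective} precisely when the inclusion $\BB \hookrightarrow \AA$ admits a right adjoint. Thus, to prove the corollary, I need to exhibit $\Glid FR$ as (equivalent to) a full subcategory of $\Preglid FR$ via a fully faithful inclusion whose right adjoint exists.

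The first step is to use the fully faithful functor $L\colon \Glid FR \to \Preglid FR$ constructed in proposition \ref{proposition:LeftAdjoint} to identify $\Glid FR$ with its essential image $L(\Glid FR) \subseteq \Preglid FR$; since $L$ is fully faithful, this is an equivalence of $\Glid FR$ onto a full subcategory of $\Preglid FR$. The second step is to observe that, under this identification, the inclusion functor is (up to natural isomorphism) $L$ itself, and proposition \ref{proposition:LeftAdjoint} already provides the adjunction $L \dashv Q$, so the inclusion has $Q\colon \Preglid FR \to \Glid FR$ as its right adjoint. By the definition of coreflectivity, this is exactly what is required.

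I do not anticipate any real obstacle: all the substantive work has already been performed in propositions \ref{proposition:jL}, \ref{proposition:PhiFullyFaithful}, and especially \ref{proposition:LeftAdjoint}. The corollary is essentially a terminological unpacking of the statement ``$Q$ has a fully faithful left adjoint $L$'', recast from the perspective of the subcategory $L(\Glid FR)$ rather than the abstract category $\Glid FR$.
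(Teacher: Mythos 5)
Your proposal is correct and matches the paper's argument: the paper's proof likewise consists of observing that the embedding $L\colon \Glid FR \to \Preglid FR$ (fully faithful by proposition \ref{proposition:LeftAdjoint}) has $Q$ as a right adjoint, which is precisely the definition of a coreflective subcategory. Your additional remark about identifying $\Glid FR$ with its essential image under $L$ is a harmless and accurate unpacking of the same point.
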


\begin{proof}
The embedding $L\colon \Glid FR \to \Preglid FR$ has a right adjoint.
\end{proof}

\subsection{A criterion for gliders} We will now give a criterion to determine whether a given prefragment $M \in \Prefrag FR$ is a glider representation, i.e.~whether $M$ lies in the essential image of $\phi\colon \Glid FR \to \Prefrag FR.$

\begin{proposition}\label{proposition:WhenGlider}
The following are equivalent for an $M \in \Prefrag FR:$
\begin{enumerate}
	\item\label{enumerate:WhenGlider1} $M$ is a glider representation,
	\item\label{enumerate:WhenGlider2} the counit of the adjunction $\phi \circ \psi(M) \to M$ is an isomorphism,
	\item\label{enumerate:WhenGlider3} the counit of the adjunction $j^* \circ j_L(M) \to M$ is an isomorphism,
	\item\label{enumerate:WhenGlider4} $j_! \circ \eta (M)$ is a preglider (i.e.~$j_! \circ \eta (M)$ lies in the essential image of $\iota\colon \Preglid FR \to \Mod \oFF_\Lambda R$).
\end{enumerate}
\end{proposition}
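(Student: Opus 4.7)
The plan is to establish the four conditions equivalent by proving $(1)\Leftrightarrow(2)$, $(2)\Leftrightarrow(3)$, and $(3)\Leftrightarrow(4)$ in sequence; in statements (2) and (3) I interpret the displayed arrows as the natural comparison morphisms of the relevant adjunctions, whose invertibility is unambiguous. For $(1)\Leftrightarrow(2)$ I invoke the standard essential-image criterion for a fully faithful right adjoint: since $\phi\colon\Glid FR\to\Prefrag FR$ is fully faithful (Proposition~\ref{proposition:PhiFullyFaithful}) with left adjoint $\psi$ (Proposition~\ref{proposition:PhiHasLeftAdjoint}), an object $M\in\Prefrag FR$ lies in the essential image of $\phi$ if and only if the unit $M\to\phi\psi(M)$ of $\psi\dashv\phi$ is an isomorphism.

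For $(2)\Leftrightarrow(3)$ I unfold $\psi=Q\circ j_L$ (Proposition~\ref{proposition:PhiHasLeftAdjoint}) and use the factorization $\phi\circ Q=j^*|_{\Preglid FR}$ of the restriction, giving $\phi\psi(M)=j^*j_L(M)$ in $\Prefrag FR$. Since the adjunction $\psi\dashv\phi$ is obtained from $j_L\dashv j^*|_{\Preglid FR}$ via Proposition~\ref{proposition:CycleOfAdjoints}\eqref{enumerate:CycleOfAdjointsLocOuter}, tracing the construction identifies the units of the two adjunctions naturally, yielding the desired equivalence. This bookkeeping step, verifying that the units genuinely correspond and not merely that the endofunctors $\phi\psi$ and $j^*j_L$ coincide, is the main obstacle I expect in the argument.

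For $(3)\Leftrightarrow(4)$ I further unfold $j_L=\kappa\circ j_!\circ\eta$ (Proposition~\ref{proposition:jL}), so that $j^*j_L(M)=j^*\iota\kappa j_!\eta(M)$. Because $j\colon\FF_\Lambda R\to\oFF_\Lambda R$ is fully faithful, the canonical map $\eta(M)\to j^*j_!\eta(M)$ is invertible (\S\ref{subsection:RepresentationsOfCategories}); this identifies the unit of $j_L\dashv j^*|_{\Preglid FR}$ at $M$ with the image under $j^*$ of the unit $\beta\colon j_!\eta(M)\to\iota\kappa(j_!\eta(M))$ of $\kappa\dashv\iota$ at $j_!\eta(M)$. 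Using the explicit formula from Proposition~\ref{proposition:Kappa}, for an arbitrary $X\in\Mod\oFF_\Lambda R$ the unit $X\to\iota\kappa(X)$ has component at $\lambda\in\Lambda$ the canonical surjection $X(\lambda)\twoheadrightarrow\im(X(1_{\lambda,\infty}))$, and component at $\infty$ the identity; hence it is invertible precisely when each $X(1_{\lambda,\infty})$ is monic, which is exactly the defining condition for $X$ to be a preglider. Since $j^*$ only discards the always-identity component at $\infty$, $j^*(\beta)$ is an isomorphism if and only if $\beta$ is, completing the chain of equivalences.
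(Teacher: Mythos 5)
Your proof is correct. For the equivalence of the first three conditions you follow the same route as the paper (essential image criterion for the fully faithful right adjoint $\phi$, together with the identities $\phi\circ Q=j^*$ and $\psi=Q\circ j_L$); the paper simply declares this part ``easy.'' Your handling of the small infelicity in the statement (the displayed arrow $\phi\circ\psi(M)\to M$ points the wrong way for a counit of $\psi\dashv\phi$; the natural comparison map is the unit $M\to\phi\psi(M)$) is sensible and harmless, since only invertibility matters. Where you genuinely diverge is in $(3)\Leftrightarrow(4)$: the paper proves $(3)\Rightarrow(4)$ by applying lemma \ref{lemma:PhiFullyFaithful} to the counit $j_!j^*\iota(\bar{M})\to\iota(\bar{M})$, and then closes the cycle with a separate implication $(4)\Rightarrow(1)$ using $j^*\circ j_!\cong 1$ and full faithfulness of $\eta$. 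You instead prove the biconditional in one stroke by factoring the unit of $j_L\dashv j^*$ at $M$ as an isomorphism (the unit of $j_!\dashv j^*$ at $\eta(M)$) followed by $j^*(\beta)$, where $\beta$ is the unit of $\kappa\dashv\iota$ at $j_!\eta(M)$, and then reading off from the explicit formula of proposition \ref{proposition:Kappa} that $\beta$ is invertible exactly when $j_!\eta(M)$ is a preglider. The computation checks out: the component of $\beta$ at $\infty$ is indeed the identity, the component at $\lambda$ is the corestriction of $j_!\eta(M)(1_{\lambda,\infty})$ onto its image, and remark \ref{remark:SufficientToCheckMapsToOmega} gives the preglider criterion. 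Your approach buys a self-contained, symmetric treatment of $(3)\Leftrightarrow(4)$ at the cost of unwinding the composite adjunction explicitly, whereas the paper's cycle argument stays at the level of abstract units and counits.
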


\begin{proof}
Note that $M$ is a glider if and only if it lies in the essential image of $\phi$, or equivalently, in the essential image of $j^*\colon \Preglid FR \to \Prefrag FR.$  The equivalence of the first three statements is now easy.

Assume now that \eqref{enumerate:WhenGlider3} holds, so $M \cong j^*(\bar{M})$ where $\bar{M} = j_L(M) \in \Preglid FR.$  We find $j_! \eta (M) \cong j_! \eta j^*(\bar{M}) \cong j_! j^* \iota (\bar{M}).$  As $\iota(M)$ is a preglider, the counit $j_! j^* \iota (\bar{M}) \to \iota(M)$ satisfies the conditions of lemma \ref{lemma:PhiFullyFaithful}.  Hence, $j_! \eta (M)$ is a preglider.

Assume now that \eqref{enumerate:WhenGlider4} holds, so $j_! \eta (M) \cong \iota(\bar{M})$, for some $\bar{M} \in \Preglid FR.$    Using that $j^* \circ j_! \cong 1$, we find: $\eta(M) \cong j^* \circ j_! \circ \eta (M) \cong j^* \circ \iota(\bar{M}) \cong \eta \circ j^* (\bar{M}).$  As $\eta$ is fully faithful, we find $M \cong j^* (\bar{M}),$ hence $M$ is a glider representation.
\end{proof}

We now revisit the example in the introduction.

\begin{example}\label{Example:IntroExample}
Let $R = \bR[t]$ and let $FR$ be the usual $\bZ$-filtered ring with $t$ is degree 1.  Let $\Lambda = \bZ^{\leq 0}$ and let $M \in \Mod \FF_\Lambda R$ be the functor given by $-i \mapsto C^i(\bR)$ and $t^n\colon C^i(\bR) \to C^{i-n}(\bR)\colon f \mapsto \frac{d^n}{dx^n} f.$  This is clearly a prefragment.  To see that this is a glider representation, we consider the $\bR[t]$-module
\[\Omega = \oplus_{i \geq 0} C^0(\bR) \cdot t^i.\]
Let $N$ be the submodule of $\Omega$ generated by the elements $f\cdot t^i - (\frac{d}{dx} f) t^{i-1}$ where $f \in C^1(\bR), i \geq 1.$  We can extend $M$ to a preglider $\bar{M}$ by setting $\bar{M}(\infty) = \Omega/N$.  It is now clear that the restriction $M = j^*(\bar{M})$ is a glider representation.
\end{example}
\section{Properties of the category of glider representations}\label{section:PropertiesOfTheCategoryOfGliderRepresentations}

Let $\Gamma$ be an ordered group and let $\Lambda \subseteq \Gamma$ be any subset.  Let $FR$ be a $\Gamma$-filtered ring.  In this section, we establish some (homological) properties of the category of glider representations.  We start by showing that $\Preglid FR$ and $\Prefrag FR$ are complete and cocomplete.  The category $\Glid FR$ inherits these properties as it is a reflective subcategory of $\Prefrag FR$ (via the functor $\phi$) and a coreflective subcategory of $\Preglid FR$ (via the functor $L$).

In \S\ref{subsection:GlidAsQuotient}, we endow the category $\Glid FR$ with the structure of a deflation-exact category, induced by the natural exact structures on $\Preglid FR$ and $\Prefrag FR.$  We subsequently show that the conflation structure on $\Glid FR$ is an inherent feature (i.e.~it can be defined without referring to $\Preglid FR$ or $\Prefrag FR$) by showing that $\Glid FR$ is a (Grothendieck) deflation quasi-abelian category. 

\subsection{\texorpdfstring{Limits and colimits in $\Glid FR$}{Limits and colimits in Glid FR}} We will describe limits and colimits in $\Glid FR$ using the embeddings $L$ and $\phi$ into the categories $\Preglid FR$ and $\Prefrag FR,$ respectively (see \S\ref{section:StructureResults}).

\begin{proposition}\label{proposition:PreglidIsQuasiAbelian}
The categories $\Preglid FR$ and $\Prefrag FR$ are complete and cocomplete quasi-abelian categories.
\end{proposition}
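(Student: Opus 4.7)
The plan is to leverage the fact, already established in proposition \ref{proposition:PreglidIsTorsionfree}, that both $\Preglid FR$ and $\Prefrag FR$ are cotilting torsion-free classes in Grothendieck abelian module categories. Since theorem \ref{theorem:BondalVandenBergh} characterizes quasi-abelian categories as precisely the cotilting torsion-free classes in abelian categories, the quasi-abelian structure follows immediately from this identification, without any further calculation.

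For completeness, I would observe that a torsion-free class is closed under subobjects and arbitrary products inside its ambient abelian category (indeed this closure was used in the proof of proposition \ref{proposition:PreglidIsTorsionfree}). Since $\Mod \oFF_\Lambda R$ and $\Mod \FF_\Lambda R$ are Grothendieck, they are complete, and hence limits in $\Preglid FR$ and $\Prefrag FR$ can be computed as in the ambient module categories: given a diagram $D$ valued in $\Preglid FR$, the limit $\lim (\iota \circ D)$ taken in $\Mod \oFF_\Lambda R$ is again a preglider and provides the limit in $\Preglid FR$.

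For cocompleteness, I would invoke proposition \ref{proposition:Kappa}: the inclusion $\iota\colon \Preglid FR \hookrightarrow \Mod \oFF_\Lambda R$ admits a left adjoint $\kappa$, so $\Preglid FR$ is a reflective subcategory of a cocomplete category. A standard formal argument then shows such a subcategory is itself cocomplete, with colimits given by applying the reflector to colimits in the ambient category, i.e.\ $\colim D \cong \kappa(\colim (\iota \circ D))$. The analogous argument with the left adjoint $\theta$ of $\eta$ handles $\Prefrag FR$.

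There is no genuine obstacle here: the conceptual content is packed into the torsion-theoretic characterization of proposition \ref{proposition:PreglidIsTorsionfree}, after which Bondal--Van den Bergh and the formal properties of reflective subcategories do all the work. The only routine point is to verify that the closure properties of torsion-free classes suffice to keep limits within the subcategory, but this is immediate from the definition of a torsion-free class.
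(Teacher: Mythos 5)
Your proposal is correct and matches the paper's proof: the quasi-abelian structure is obtained exactly as in the paper, from proposition \ref{proposition:PreglidIsTorsionfree} combined with theorem \ref{theorem:BondalVandenBergh}. For (co)completeness the paper simply observes that $\Preglid FR$ and $\Prefrag FR$ are closed under arbitrary direct sums and products (and already have kernels and cokernels by quasi-abelianness), whereas you compute limits in the ambient module category and obtain colimits by reflecting ambient colimits along $\kappa$ (respectively $\theta$); this is an equivalent and equally routine justification, consistent with the paper's subsequent remark that limits coincide with ambient ones while colimits need not.
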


\begin{proof}
It follows from theorem \ref{theorem:BondalVandenBergh} and proposition \ref{proposition:PreglidIsTorsionfree} that $\Preglid FR$ and $\Prefrag FR$ are quasi-abelian.  It follows directly from the definitions that $\Preglid FR$ and $\Prefrag FR$ are closed under arbitrary direct sums and products.  We conclude that $\Preglid FR$ and $\Prefrag FR$ are complete and cocomplete.
\end{proof}

\begin{remark}
Limits in $\Preglid FR$ and $\Prefrag FR$ coincide with limits in $\Mod \oFF_\Lambda R$ and $\Mod \FF_\Lambda R$, respectively; colimits, however, need not coincide.
\end{remark}

\begin{proposition}\label{proposition:LimitsAndColimits}
The category $\Glid FR$ is complete and cocomplete.  Specifically, let $D\colon J \to \Glid FR$ be a diagram.
\begin{enumerate}
  \item We have $\lim D \cong \psi(\lim \phi D)$ and $\colim D \cong \psi(\colim \phi D).$
	\item We have $\lim D \cong Q(\lim (LD))$ and $\colim D \cong Q(\colim (LD)).$
\end{enumerate}
\end{proposition}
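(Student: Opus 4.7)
The plan is to apply standard reflective and coreflective subcategory arguments using the adjunctions constructed in Section \ref{section:StructureResults}. By Proposition \ref{proposition:PreglidIsQuasiAbelian}, the ambient categories $\Preglid FR$ and $\Prefrag FR$ are complete and cocomplete. Propositions \ref{proposition:PhiHasLeftAdjoint} and \ref{proposition:LeftAdjoint} exhibit $\Glid FR$ as a reflective subcategory of $\Prefrag FR$ (via the fully faithful embedding $\phi$ with left adjoint $\psi$) and as a coreflective subcategory of $\Preglid FR$ (via the fully faithful embedding $L$ with right adjoint $Q$). My strategy is to build limits in $\Glid FR$ using the coreflection $L \dashv Q$ and colimits using the reflection $\psi \dashv \phi$, choosing these pairings because the direction of each adjunction matches the direction of the $\Hom$-functor needed for a Yoneda verification.

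Concretely, for a diagram $D\colon J \to \Glid FR$, I would form $\lim_J LD$ in $\Preglid FR$ and apply $Q$. A routine Yoneda computation using $L \dashv Q$ and the full faithfulness of $L$ shows that $Q(\lim_J LD)$ represents $\lim_J D$ in $\Glid FR$, yielding the limit formula in part (2). Dually, forming $\colim_J \phi D$ in $\Prefrag FR$ and applying $\psi$ produces a candidate $\psi(\colim_J \phi D)$; the Yoneda check using $\psi \dashv \phi$ and the full faithfulness of $\phi$ shows this represents $\colim_J D$, yielding the colimit formula in part (1). This simultaneously establishes completeness and cocompleteness of $\Glid FR$ together with one formula from each part.

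The two remaining identities then follow without further work. As a right adjoint, $\phi$ preserves limits, so $\phi(\lim D) \cong \lim \phi D$; applying $\psi$ and using $\psi\phi \cong 1_{\Glid FR}$ (full faithfulness of $\phi$) gives $\lim D \cong \psi(\lim \phi D)$. The dual argument---$L$ preserves colimits as a left adjoint, combined with $QL \cong 1_{\Glid FR}$---yields $\colim D \cong Q(\colim LD)$. I do not anticipate any substantive obstacle: once the adjunctions of Section \ref{section:StructureResults} and Proposition \ref{proposition:PreglidIsQuasiAbelian} are in place, the proof is a direct application of standard categorical machinery.
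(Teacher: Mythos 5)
Your proposal is correct and follows essentially the same route as the paper: the paper's proof likewise deduces everything from Proposition \ref{proposition:PreglidIsQuasiAbelian} together with the reflectivity of $\Glid FR$ in $\Prefrag FR$ and its coreflectivity in $\Preglid FR$, leaving the standard Yoneda verifications implicit. The only (cosmetic) difference is the order of deduction: you first establish $\lim D \cong Q(\lim LD)$ and $\colim D \cong \psi(\colim \phi D)$ and then derive the other two formulas, whereas the paper treats all four as instances of the same standard facts about (co)reflective subcategories.
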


\begin{proof}
We have shown in proposition \ref{proposition:PreglidIsQuasiAbelian} that $\Preglid FR$ and $\Prefrag FR$ are complete and cocomplete.  As $\Glid FR$ is a reflective subcategory of $\Prefrag FR$ (proposition \ref{proposition:PhiHasLeftAdjoint}) and a coreflective subcategory of $\Preglid FR$ (corollary \ref{corollary:GlidIsCoreflectiveInPreGlid}), the other statements follow.
\end{proof}

\begin{remark}
As $\Glid FR$ is a reflective subcategory of $\Prefrag FR$ via $\phi$, limits in $\Glid FR$ and $\Prefrag FR$ coincide.  Similarly, $\Glid FR$ is a coreflective subcategory of $\Preglid FR$ via $L$, and, as such, colimits in $\Glid FR$ and $\Preglid FR$ coincide.
\end{remark}

\subsection{\texorpdfstring{$\Glid FR$ is a deflation-exact category}{Glid FR is a deflation-exact category}}\label{subsection:GlidAsQuotient} The category $\Glid FR$ naturally factors the map $j^*\colon \Preglid FR \stackrel{Q}{\rightarrow} \Glid FR \stackrel{\phi}{\rightarrow} \Prefrag FR$.  As such, it can be endowed with a conflation structure, either as a localization of $\Preglid FR$ or as a subcategory of $\Prefrag FR.$  In this subsection, we start with the former (corollary \ref{corollary:GlidersAreDeflationExact}) and show that it coincides with the latter (proposition \ref{proposition:GlidAsSubcatOfPrefrag}).

In definition \ref{definition:GlidAsLoc}, we introduced the category of gliders via a localization functor $Q\colon \Preglid FR \to \Sigma^{-1} \Preglid FR = \Glid FR$.  We will now interpret $\Glid FR$ as the quotient $\Preglid FR / i_*(\Mod R)$, as defined in \S\ref{subsection:LocalizationsPreliminaries}.

\begin{proposition}\label{proposition:GlidAsLocalization}
Let $(\Gamma, \leq)$ be an ordered group and let $FR$ be a $\Gamma$-filtered ring.  Let $\Lambda \subseteq \Gamma$ be any subset. 
\begin{enumerate}
	\item With the embedding $i_*$, the category $\Mod(R)$ is an admissibly deflation-percolating subcategory of $\Preglid FR$.
	\item A morphism $f\colon M\to N$ in $\Preglid FR$ is a weak $\Mod(R)$-isomorphism if and only if $f \in \Sigma$ (i.e.~the induced maps $f_{\lambda}\colon M(\lambda)\to N(\lambda)$ are isomorphisms for each $\lambda\neq \infty$).
	\item $\Glid FR = \Sigma^{-1} \Preglid FR = \Preglid FR / i_*(\Mod R).$
\end{enumerate}
\end{proposition}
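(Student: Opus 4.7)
The proof breaks into the three claims in order. Statement (1) establishes the percolating structure, statement (2) identifies the two classes of morphisms, and then statement (3) is an immediate consequence of theorem \ref{theorem:MainTheoremPercoLocalization}, which requires (1) and uses (2) to identify the resulting multiplicative system with $\Sigma$. Throughout, the key observation I would exploit is that $M \in \Preglid FR$ lies in (the essential image of) $i_*(\Mod R)$ if and only if $M(\lambda) = 0$ for every $\lambda \in \Lambda$.

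For (1), I would verify each axiom of definition \ref{definition:GeneralPercolatingSubcategory} in turn. Axiom \ref{A3} is free, since $\Preglid FR$ is quasi-abelian (proposition \ref{proposition:PreglidIsQuasiAbelian}), hence exact, so remark \ref{remark:PercolatingRemark}(2) applies. For the Serre axiom \ref{A1}, given a conflation $A' \inflation A \deflation A''$, kernels in $\Preglid FR$ agree with those in $\Mod \oFF_\Lambda R$ by torsion-freeness, and $A'' = \kappa(A/A')$ by proposition \ref{proposition:Kappa}; both operations preserve the condition $M(\lambda) = 0$ for $\lambda \in \Lambda$, yielding both directions. For axiom \ref{A2}, given $f\colon C \to A$ with $A \in i_*(\Mod R)$, I would take $A' := \im(f_\infty)$ (viewed as a preglider concentrated at $\infty$) and propose the factorization $C \deflation A' \inflation A$. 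Two computations are needed: that the kernel $K$ with $K(\lambda) = C(\lambda)$ and $K(\infty) = \ker f_\infty$ is again a preglider (which uses $f_\infty \circ C(1_{\lambda, \infty}) = 0$ together with the injectivity of $C(1_{\lambda, \infty})$), and that $\kappa$ applied to the ordinary cokernels correctly recovers $A'$ and $A/A'$ at the two endpoints.

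For (2), I would handle both directions concretely. Suppose first $f \in \Sigma$, so each $f_\lambda$ is an isomorphism; define the sub-preglider $I \subseteq N$ by $I(\lambda) = N(\lambda)$ and $I(\infty) = \im f_\infty$. It sits inside $N$ because $N(1_{\lambda, \infty})(N(\lambda)) \subseteq \im f_\infty$ (as $N(1_{\lambda, \infty}) \circ f_\lambda = f_\infty \circ M(1_{\lambda, \infty})$ and $f_\lambda$ is surjective). Arguments parallel to axiom \ref{A2} above show that $f = (M \deflation I \inflation N)$ is a deflation-inflation factorization with $\ker f$ and $\coker f \cong \coker f_\infty$ both in $i_*(\Mod R)$, witnessing that $f$ is a weak $i_*(\Mod R)$-isomorphism. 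Conversely, if $f$ is a weak isomorphism with factorization $M \deflation I \inflation N$, then $\ker f \in i_*(\Mod R)$ makes $f_\lambda$ injective, while $(N/I)(\lambda) = 0$ combined with the identity $I(\lambda) = \{n \in N(\lambda) \mid N(1_{\lambda, \infty})(n) \in I(\infty)\}$ (which holds because $I$ is a sub-preglider of $N$ and $N(1_{\lambda, \infty})$ is injective) forces $I(\lambda) = N(\lambda)$, hence $f_\lambda$ surjective.

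Statement (3) then follows at once: theorem \ref{theorem:MainTheoremPercoLocalization} shows that the weak $i_*(\Mod R)$-isomorphisms form a right multiplicative system whose localization is the quotient $\Preglid FR / i_*(\Mod R)$; part (2) identifies this set with $\Sigma$, and definition \ref{definition:GlidAsLoc} yields the string of equalities. The main obstacle I expect is the bookkeeping in (2): cokernels in $\Preglid FR$ differ from those in $\Mod \oFF_\Lambda R$ by the reflector $\kappa$, so one must be careful to perform all (co)kernel checks in the right ambient category. The recurring fact that $N(1_{\lambda, \infty})$ is a monomorphism (the defining property of $\Preglid FR$) is what lets one identify $I(\lambda)$ with the preimage of $I(\infty)$ in $N(\lambda)$, and it is this identification that closes the surjectivity loop in the converse direction of (2).
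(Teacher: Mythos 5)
Your proposal is correct and follows essentially the same route as the paper: axiom \ref{A3} is obtained for free from the exact structure on $\Preglid FR$, axiom \ref{A2} and the identification of weak isomorphisms with $\Sigma$ both come down to the epi-mono factorization of $f_\infty$ together with pointwise computation of kernels and cokernels, and (3) is then read off from definition \ref{definition:GlidAsLoc} and theorem \ref{theorem:MainTheoremPercoLocalization}. The only difference is that you spell out explicitly (via the intermediate object $I$ with $I(\infty)=\im f_\infty$) what the paper dismisses as ``straightforward'' or ``readily follows,'' which is fine.
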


\begin{proof}
As $\Preglid FR$ is an extension-closed subcategory of the abelian category $\Mod \oFF_\Lambda R$ (see proposition \ref{proposition:PreglidIsTorsionfree}), we find that $\Preglid FR$ is an exact category (see \cite[lemma 10.20]{Buhler10}): the conflations of $\Preglid FR$ are given by the short exact sequences in $\Mod \oFF_\Lambda R$.
	
We now verify axioms \ref{A1}, \ref{A2} and \ref{A3} of definition \ref{definition:GeneralPercolatingSubcategory}. As the category $\Preglid FR$ is an exact category, axiom \ref{A3} is automatic (see remark \ref{remark:PercolatingRemark}). Axiom \ref{A1} is straightforward to see. For axiom \ref{A2}, consider a map $f\colon M \to A$ where $M\in \Preglid FR$ and $A\in i^*(\Mod R)\subseteq \Preglid FR$. Clearly $f_{\lambda}\colon M(\lambda)\to 0$ for each $\lambda\in \Lambda$. The map $f_{\infty}\colon M(\infty)\to A$ has an epi-mono factorization as $\Mod(R)$ is abelian.  It readily follows that $f$ is admissible in $\Preglid FR$.  Hence, $\Mod(R)$ is an admissibly deflation-percolating subcategory of $\Preglid FR$.
	
	For the next statement, let $f\colon M\to N$ be a morphism in $\Preglid FR$.  Assume that $f$ is a weak $\Mod(R)$-isomorphism.  As $f$ is admissible, we find that $\ker(f),\coker(f)\in \Mod(R)$.  It follows that $\ker(f_{\lambda}) = 0 = \coker(f_{\lambda})$ for each $\lambda\neq \infty$.  Hence, each $f_{\lambda}$ is an isomorphism.  Conversely, assume that $f_{\lambda}$ is an isomorphism for each $\lambda\neq \infty$. Clearly, $\ker(f)$ and $\coker(f)$ exist, and belong to $\Mod(R)$.  The fact that $f$ is admissible follows immediately from the fact that $f_{\infty}$ admits an epi-mono factorization.
	
	For the last statement, the first equality is the definition of $\Glid FR$, the second equality is the definition of $\Preglid FR / i_*(\Mod R).$
\end{proof}

It follows from proposition \ref{proposition:GlidAsLocalization} that the localization $\Sigma^{-1} \Preglid FR$ can be described using theorem \ref{theorem:MainTheoremPercoLocalization}.

\begin{corollary}\label{corollary:GlidersAreDeflationExact}
The weakest conflation structure on $\Glid FR \coloneqq \Sigma^{-1} \Preglid FR$ for which the localization functor $Q\colon \Preglid FR \to \Glid FR$ is conflation-exact, gives $\Glid FR$ the structure of a deflation-exact category.
\end{corollary}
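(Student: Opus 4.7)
The statement is a direct application of the machinery developed in \S\ref{subsection:LocalizationsPreliminaries}, and the work has essentially been done in proposition \ref{proposition:GlidAsLocalization}. My plan is therefore to simply invoke theorem \ref{theorem:MainTheoremPercoLocalization} with the inputs supplied by that proposition.

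More precisely, proposition \ref{proposition:GlidAsLocalization} establishes three facts that together match the hypotheses of theorem \ref{theorem:MainTheoremPercoLocalization}: the category $\Preglid FR$ carries a canonical exact structure (inherited as an extension-closed subcategory of $\Mod \oFF_\Lambda R$), the image $i_*(\Mod R)$ is an admissibly deflation-percolating subcategory of $\Preglid FR$, and the set $\Sigma$ of morphisms inverted in the definition of $\Glid FR$ coincides with the weak $\Mod(R)$-isomorphisms $\Sigma_{i_*(\Mod R)}$. Consequently, the localization $\Glid FR = \Sigma^{-1} \Preglid FR$ is the same as the quotient $\Preglid FR / i_*(\Mod R)$ in the sense of \S\ref{subsection:LocalizationsPreliminaries}.

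With these identifications in hand, the second clause of theorem \ref{theorem:MainTheoremPercoLocalization} applies verbatim: the weakest conflation structure on $\Sigma^{-1}\Preglid FR$ making $Q$ conflation-exact does exist and does endow $\Glid FR$ with the structure of a deflation-exact category. No further verification of axioms \ref{R0}--\ref{R2} is required, since the cited theorem already encodes them.

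I expect no real obstacle here; the entire content of the corollary is bookkeeping, translating the already-proven proposition \ref{proposition:GlidAsLocalization} through theorem \ref{theorem:MainTheoremPercoLocalization}. The only point worth flagging explicitly in the write-up is the distinction between the definition of $\Glid FR$ via the right multiplicative system $\Sigma$ (description \ref{descript:Localization}) and its description as a percolating quotient $\Preglid FR / i_*(\Mod R)$, so that the reader sees clearly why theorem \ref{theorem:MainTheoremPercoLocalization} is applicable.
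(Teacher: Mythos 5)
Your proposal is correct and follows exactly the paper's route: the corollary is stated immediately after the observation that proposition \ref{proposition:GlidAsLocalization} identifies $\Sigma$ with the weak $\Mod(R)$-isomorphisms and $\Glid FR$ with the quotient $\Preglid FR / i_*(\Mod R)$, so that theorem \ref{theorem:MainTheoremPercoLocalization}(2) applies directly. Nothing is missing.
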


\begin{remark}
Theorem \ref{theorem:MainTheoremPercoLocalization} also implies that $\Sigma$ is a saturated right multiplicative system, i.e. we recover proposition \ref{proposition:SigmaRMS}.
\end{remark}

The following proposition connects the conflation structures of $\Glid FR$ and $\Prefrag FR$ using the embedding $\phi.$

\begin{proposition}\label{proposition:GlidAsSubcatOfPrefrag}
The embedding $\phi\colon \Glid FR \to \Prefrag FR$ reflects conflations, i.e.~a sequence $K\xrightarrow{f}M\xrightarrow{g}N$ in $\Glid FR$ is a conflation if and only if $\phi(K)\xrightarrow{\phi(f)}\phi(M)\xrightarrow{\phi(g)}\phi(N)$ is a conflation in $\Prefrag FR$.
\end{proposition}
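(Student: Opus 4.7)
The plan is to exploit the characterisation in corollary \ref{corollary:GlidersAreDeflationExact}: the conflations in $\Glid FR$ are, up to isomorphism, precisely the $Q$-images of conflations in $\Preglid FR$. The key technical input used throughout is that $j^*\colon \Mod\oFF_\Lambda R \to \Mod\FF_\Lambda R$ is exact, since it admits adjoints on both sides ($j_!$ and $j_*$). For the forward implication, let $K \to M \to N$ be a conflation in $\Glid FR$; up to isomorphism it is $Q$ of a conflation $K' \rightarrowtail M' \twoheadrightarrow N'$ in $\Preglid FR$, which is short exact in $\Mod\oFF_\Lambda R$ because $\Preglid FR$ is a torsion-free class there (proposition \ref{proposition:PreglidIsTorsionfree}). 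Applying $j^* = \phi\circ Q$ yields a short exact sequence in $\Mod\FF_\Lambda R$ whose terms lie in the torsion-free class $\Prefrag FR$ (proposition \ref{proposition:ExactRestrictionOfJ}), hence a conflation of $\Prefrag FR$ isomorphic to $\phi(K)\to\phi(M)\to\phi(N)$.

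For the reverse implication, assume $\phi(K)\to\phi(M)\to\phi(N)$ is a conflation in $\Prefrag FR$, so it is short exact in $\Mod\FF_\Lambda R$. The plan is to lift this to a conflation in $\Preglid FR$ by means of the fully faithful left adjoint $L\colon \Glid FR \to \Preglid FR$ of proposition \ref{proposition:LeftAdjoint}. Since $\phi$ is faithful, the composite $K\to M\to N$ vanishes in $\Glid FR$, whence $L(K)\to L(M)\to L(N)$ vanishes in $\Preglid FR$. Set
\[
K' := \ker\bigl(L(M)\to L(N)\bigr), \qquad N' := \im\bigl(L(M)\to L(N)\bigr) \subseteq L(N),
\]
computed in $\Mod\oFF_\Lambda R$; both are subobjects of pregliders and therefore lie in $\Preglid FR$, so $K'\rightarrowtail L(M)\twoheadrightarrow N'$ is a conflation in $\Preglid FR$, and its $Q$-image is a conflation in $\Glid FR$. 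The universal property of the kernel yields a comparison $L(K)\to K'$, while $N'\hookrightarrow L(N)$ is the tautological inclusion; applying the exact $j^*$ and invoking short-exactness of the $\phi$-image identifies both comparisons with the identity maps on $\phi(K)$ and $\phi(N)$ respectively, so both lie in $\Sigma$. Therefore $Q(K')\cong Q(L(K))\cong K$ and $Q(N')\cong Q(L(N))\cong N$, while $Q(L(M))\cong M$ is the unit of $L\dashv Q$.

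The remaining (and only non-formal) point is to check that these object-isomorphisms assemble into an isomorphism of three-term sequences, so that the conflation $Q(K'\rightarrowtail L(M)\twoheadrightarrow N')$ really represents the given sequence $K\xrightarrow{f}M\xrightarrow{g}N$. I expect this to follow by a routine diagram chase from naturality of $L$, the universal property defining $K'$, and the unit of $L\dashv Q$, applied to the factorisations $L(f) = \bigl(L(K)\to K'\hookrightarrow L(M)\bigr)$ and $L(g) = \bigl(L(M)\twoheadrightarrow N'\hookrightarrow L(N)\bigr)$; no further conceptual ingredient should be needed.
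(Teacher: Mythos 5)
Your argument is correct, but the route through the reverse implication differs from the paper's. The paper first uses that $\phi$ is fully faithful to reflect $K\xrightarrow{f}M\xrightarrow{g}N$ to a kernel-cokernel pair in $\Glid FR$, and then shows directly that $g$ is a deflation: $L$ preserves cokernels, so $L(g)=\coker L(f)$ is a deflation in the quasi-abelian category $\Preglid FR$, and $Q$ preserves deflations, so $g\cong QL(g)$ is a deflation. This is shorter and sidesteps any comparison of three-term sequences. You instead manufacture an explicit preimage conflation $\ker L(g)\inflation L(M)\deflation \im L(g)$ in $\Preglid FR$ and verify, by applying the exact functor $j^*$ and using the given short exact sequence in $\Mod\FF_\Lambda R$, that the comparison maps $L(K)\to\ker L(g)$ and $\im L(g)\hookrightarrow L(N)$ lie in $\Sigma$; this relies on the (correct) description of the conflations of $\Glid FR$ as the isomorphism closure of the $Q$-images of conflations, and it does require the final naturality check you flag, which is genuinely routine given the factorizations of $L(f)$ and $L(g)$ you wrote down. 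The trade-off: the paper's proof is more economical, while yours is more constructive, exhibiting a concrete preglider conflation representing the given sequence. Both proofs ultimately rest on the same structural inputs, namely that $L$ is a fully faithful left adjoint of $Q$, that $\Preglid FR$ is quasi-abelian, and that $j^*=\phi\circ Q$ is exact.
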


\begin{proof}
By construction, $\phi$ is conflation-exact and hence maps conflations to conflations.  Thus, let $K\xrightarrow{f}M\xrightarrow{g}N$ be a sequence in $\Glid FR$ which maps to a conflation in $\Prefrag FR.$  As $\phi$ is fully faithful, we know that $K \to M \to N$ is a kernel-cokernel pair in $\Glid FR.$

Using that $L$ is a left adjoint, we find that $L(g)\colon L(M) \to L(N)$ is the cokernel of $L(f).$  Since $\Preglid FR$ is a quasi-abelian category, $L(g)$ is a deflation.  Hence, so is $g = Q \circ L (g).$  This establishes that $K \inflation M \deflation N$ is a conflation in $\Glid FR.$
\end{proof}

\begin{proposition}\label{proposition:phiIsEpiReflective}
For all $M \in \Prefrag FR$, the reflection $M \to \phi \circ \psi(M)$ is a deflation.
\end{proposition}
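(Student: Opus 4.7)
The plan is to compute the reflection $M \to \phi\psi(M)$ pointwise, show that it is surjective at each $\lambda \in \Lambda$, and then conclude that it is a deflation using the fact that $\Prefrag FR$ is closed under subobjects inside $\Mod \FF_\Lambda R$.

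First I would reduce to the simpler adjunction $j_L \dashv j^*$. Since $\psi = Q \circ j_L$ by proposition \ref{proposition:PhiHasLeftAdjoint} and $\phi \circ Q = j^*$ by remark \ref{remark:PhiExplicit}, we have $\phi \circ \psi = j^* \circ j_L$ as endofunctors on $\Prefrag FR$; moreover the unit of $\psi \dashv \phi$ at $M$ coincides with the unit of $j_L \dashv j^*$ at $M$. So it suffices to verify that the latter unit is a deflation.

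Next, I would compute this unit componentwise. Using $j_L = \kappa \circ j_! \circ \eta$ (proposition \ref{proposition:jL}) together with the formula $\kappa(N)(\lambda) = \im(N(1_{\lambda,\infty}))$ from proposition \ref{proposition:Kappa}, evaluation at $\lambda \in \Lambda$ yields
\[(\phi\psi M)(\lambda) = \im\bigl((j_! \eta M)(\lambda) \to (j_! \eta M)(\infty)\bigr).\]
Because $j\colon \FF_\Lambda R \to \oFF_\Lambda R$ is fully faithful, the unit $1 \Rightarrow j^* j_!$ is invertible, so canonically $(j_! \eta M)(\lambda) \cong M(\lambda)$ for $\lambda \in \Lambda$. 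Tracing the unit of $j_L \dashv j^*$ through the constituent natural transformations coming from $\kappa \dashv \iota$, $j_! \dashv j^*$, and the fully faithfulness of $\eta$, one sees that at each $\lambda$ the unit is simply the canonical surjection $M(\lambda) \twoheadrightarrow \im\bigl(M(\lambda) \to (j_! \eta M)(\infty)\bigr)$.

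Finally, to promote this pointwise surjection to a deflation in $\Prefrag FR$, I would invoke proposition \ref{proposition:PreglidIsTorsionfree}: as a torsion-free class in the abelian category $\Mod \FF_\Lambda R$, the category $\Prefrag FR$ is closed under subobjects. Hence the pointwise kernel $K$ of the unit again lies in $\Prefrag FR$, and $K \to M \to \phi\psi(M)$ is short exact in $\Mod \FF_\Lambda R$. Since its cokernel $\phi\psi(M)$ already lives in $\Prefrag FR$, it is also the cokernel in $\Prefrag FR$, so the sequence is a kernel-cokernel pair in $\Prefrag FR$, exhibiting the unit as a deflation. The argument is essentially bookkeeping, and the only mildly delicate step is the explicit identification of the unit at $\lambda$ by chasing the three adjunctions that assemble $j_L$.
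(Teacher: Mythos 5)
Your proposal is correct and follows essentially the same route as the paper's proof: both compute $\phi\circ\psi(M)(\lambda)$ as $\im\bigl(M(\lambda)\to (j_!\circ\eta(M))(\infty)\bigr)$ via $j_L=\kappa\circ j_!\circ\eta$ and proposition \ref{proposition:Kappa}, identify the unit pointwise with the canonical surjection onto this image, and conclude that the resulting pointwise epimorphism gives a conflation $\ker r_M \inflation M \deflation \phi\circ\psi(M)$ in $\Prefrag FR$. Your final step is slightly more explicit than the paper's in invoking closure under subobjects to keep the kernel inside $\Prefrag FR$, but this is only a matter of detail, not of substance.
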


\begin{proof}
We write $\Omega_M$ for the $R$-module $j_! \circ \eta(M)(\infty).$  As $\psi = j_L \circ Q = \eta \circ j_! \circ \kappa \circ Q$, we find $\phi \circ \psi(M)(\lambda) \cong \im (M(1_{\lambda, \infty})\colon M(\lambda) \to \Omega_M)$, and the reflection is given by maps $r_M\colon M \to \phi \circ \psi(M)$ for which the following diagram commutes
\[\xymatrix{
 M(\lambda) \ar[rr]^{M(1_{\lambda, \infty})} \ar[d]^{r_M(\lambda)} && \Omega_M \ar@{=}[d] \\
 \im (M(1_{\lambda, \infty})) \ar[rr] && \Omega_M} \]
It follows that each $r_M$ is an epimorphism.  As such the map $r_M$ is an epimorphism in $\Mod \oFF_\Lambda R$ and hence $\ker r_M \inflation M \deflation \phi \circ \psi(M)$ is a conflation.
\end{proof}

\begin{remark}\label{remark:GlidersEpireflective}
Via the functor $\phi$, the category $\Glid FR$ is an \emph{epi-reflective} subcategory of $\Prefrag FR,$ i.e.~the fully faithful embedding has a left adjoint, and the unit $1 \to \phi \circ \psi$ of the adjunction is a deflation.  Hence, every prefragment $M \in \Prefrag FR$ has a largest glider quotient object $\phi \circ \psi (M).$
\end{remark}

\begin{corollary}\label{corollary:GlidersClosedUnderSubobjects}
The essential image of $\phi$ is closed under subobjects.
\end{corollary}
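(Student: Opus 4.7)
The plan is to reformulate ``being a glider'' as being in the essential image of $j^*\colon \Preglid FR \to \Prefrag FR$; this coincides with the essential image of $\phi$ because $j^* = \phi \circ Q$ and the localization $Q$ is essentially surjective on objects. So it suffices, given a subobject $K \hookrightarrow N$ in $\Prefrag FR$ with $N$ a glider, to exhibit $K$ as $j^*(\bar K)$ for some $\bar K \in \Preglid FR$.

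By Proposition \ref{proposition:WhenGlider}\ref{enumerate:WhenGlider4}, the hypothesis that $N$ is a glider means that $\bar N := j_!\eta(N)$ already lies in $\Preglid FR$. I would then apply the functor $j_! \circ \eta$ to the inclusion $K \hookrightarrow N$ to obtain a morphism $j_!\eta(K) \to \bar N$ in $\Mod \oFF_\Lambda R$ (which need not be a monomorphism, since $j_!$ is only right exact), and set $\bar K \subseteq \bar N$ to be its image.

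Next, since $\Preglid FR$ is a torsion-free class in $\Mod \oFF_\Lambda R$ by Proposition \ref{proposition:PreglidIsTorsionfree}, it is closed under subobjects, whence $\bar K \in \Preglid FR$. Finally, the functor $j^*$ is exact (having both a left adjoint $j_!$ and a right adjoint $j_*$) and therefore commutes with images; combined with the natural isomorphism $j^* j_! \cong 1$, this yields
\[ j^*(\bar K) \;\cong\; \im\bigl(j^* j_!\eta(K) \to j^* j_!\eta(N)\bigr) \;\cong\; \im(K \hookrightarrow N) \;\cong\; K \]
in $\Mod \FF_\Lambda R$, exhibiting $K$ as the $j^*$-image of a preglider, as required.

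The only mild subtlety is the failure of $j_!$ to be left exact, but passing to the image inside the preglider $\bar N$ sidesteps this entirely, so I do not expect a serious obstacle. The argument also makes transparent what the canonical ``ambient preglider'' of $K$ is, namely the submodule of $\bar N(\infty)$ generated by the images of the maps $K(\lambda) \to \bar N(\infty)$ for $\lambda \in \Lambda$.
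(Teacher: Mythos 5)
Your argument is correct, but it takes a different route from the paper's. The paper deduces the corollary formally from epi-reflectivity: given a monomorphism $N \hookrightarrow \phi(M)$, naturality of the unit $r\colon 1 \to \phi\circ\psi$ against this monomorphism shows that $r_N$ is a monomorphism, while proposition \ref{proposition:phiIsEpiReflective} says $r_N$ is a deflation; hence $r_N$ is an isomorphism and $N$ lies in the image of $\phi$. You instead work upstairs in $\Mod \oFF_\Lambda R$: using the characterization of gliders from proposition \ref{proposition:WhenGlider}\eqref{enumerate:WhenGlider4}, you take the image of $j_!\eta(K) \to j_!\eta(N)$ inside the preglider $j_!\eta(N)$, invoke closure of $\Preglid FR$ under subobjects (proposition \ref{proposition:PreglidIsTorsionfree}) to see this image is a preglider, and use exactness of $j^*$ together with $j^*j_! \cong 1$ to recover $K$ as its restriction. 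All the ingredients you use are established in the paper, and the one step you flag (non-left-exactness of $j_!$) is indeed harmless since you only need the image, not the whole of $j_!\eta(K)$. The paper's proof is shorter and exhibits the corollary as the standard fact that an epi-reflective subcategory is closed under subobjects; yours is more constructive and has the added value of producing the canonical ambient preglider of the subobject explicitly, essentially re-deriving a special case of the computation underlying proposition \ref{proposition:phiIsEpiReflective} rather than citing it. One small point worth making explicit: you should note that a monomorphism $K \hookrightarrow N$ in $\Prefrag FR$ remains a monomorphism after applying $\eta$ (this holds because $\eta$ has a left adjoint, hence preserves kernels), so that $\im(\eta(K) \to \eta(N)) \cong \eta(K)$ as you claim.
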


\begin{proof}
Let $M \in \Glid FR$ and let $f\colon N \hookrightarrow \phi M$ be a monomorphism in $\Prefrag FR.$  We find the following commutative diagram
\[\xymatrix{
N \ar[r]^{f} \ar[d]^{r_N} & {\phi (M)} \ar[d]^{\cong} \\
{\phi \psi(N)} \ar[r] & {\phi\psi\phi(M)}  }\]
where $r\colon 1 \to \phi \circ \psi$ is the unit of the adjunction.  As the top-right branch composes to a monomorphism, so does the left-lower branch.  In particular, $r_N\colon N \to \phi\psi N$ is a monomorphism.  As $r_N$ is a deflation (see proposition \ref{proposition:phiIsEpiReflective}), we find that $r_N$ is an isomorphism.  Hence, $N$ lies in the essential image of $\phi.$
\end{proof}

\subsection{Gliders as a Grothendieck deflation quasi-abelian category}\label{subsection:GrothendieckQuasiAbelian}

We now proceed to showing that $\Glid FR$ is a Grothendieck deflation quasi-abelian category (see definition \ref{definition:Grothendieck} below).  We start by showing that $\Glid FR$ is deflation quasi-abelian (see definition \ref{definition:DeflationQuasiAbelian}).

\begin{theorem}\label{theorem:GlidIsDeflationQuasiAbelian}
The category $\Glid FR$ is a complete and cocomplete deflation quasi-abelian category.
\end{theorem}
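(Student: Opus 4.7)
The plan is to reduce the claim to the corresponding statement in the quasi-abelian category $\Prefrag FR$ via the embedding $\phi$. Completeness and cocompleteness are already settled by proposition \ref{proposition:LimitsAndColimits}. In particular $\Glid FR$ has kernels and cokernels, so combined with additivity (from proposition \ref{proposition:SigmaRMS}) it is pre-abelian, and it only remains to show that cokernels are stable under pullback.

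The core observation I would establish first is the following: if $g\colon M\to N$ is a cokernel in $\Glid FR$, then $\phi(g)\colon \phi(M)\to \phi(N)$ is a deflation (equivalently, a cokernel) in $\Prefrag FR$. To see this, write $g=\coker_{\Glid FR}(k)$ where $k=\ker g$. Since $\psi$ is left adjoint to $\phi$, colimits in $\Glid FR$ are computed as $\psi$ of the corresponding colimits in $\Prefrag FR$, so $N\cong \psi(C)$ where $c\colon \phi(M)\twoheadrightarrow C$ is the cokernel of $\phi(k)$ in the quasi-abelian category $\Prefrag FR$. Under this identification, $\phi(g)$ factors as the composition
\[
\phi(M)\xrightarrow{\ c\ } C \xrightarrow{\ r_C\ } \phi\psi(C)\cong \phi(N),
\]
where $r_C$ is the unit of the adjunction $\psi\dashv \phi$. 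The first arrow is a deflation because it is a cokernel in a quasi-abelian category, and the second arrow is a deflation by proposition \ref{proposition:phiIsEpiReflective}. Deflations in $\Prefrag FR$ are closed under composition, so $\phi(g)$ is a deflation.

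Now take any morphism $f\colon N'\to N$ in $\Glid FR$ together with a cokernel $g\colon M\to N$, and form the pullback square in $\Glid FR$
\[
\xymatrix{
P\ar[r]^{g'}\ar[d]_{f'} & N'\ar[d]^{f}\\
M\ar[r]_{g} & N
}
\]
which exists by completeness. Since $\phi$ is a right adjoint it preserves limits, so applying $\phi$ yields a pullback square in $\Prefrag FR$. The right-hand vertical $\phi(g)$ is a deflation by the previous paragraph, and $\Prefrag FR$ is quasi-abelian, hence $\phi(g')$ is again a deflation. Let $k'\colon K'\to P$ be the kernel of $g'$ in $\Glid FR$. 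Because $\phi$ preserves kernels and its essential image is closed under subobjects (corollary \ref{corollary:GlidersClosedUnderSubobjects}), $\phi(k')$ is the kernel of $\phi(g')$ in $\Prefrag FR$, so $(\phi(k'),\phi(g'))$ is a kernel-cokernel conflation in $\Prefrag FR$. By proposition \ref{proposition:GlidAsSubcatOfPrefrag}, $(k',g')$ is then a conflation in $\Glid FR$, and in particular $g'$ is a cokernel (of $k'$). This proves that cokernels are stable under pullback, and hence that $\Glid FR$ is deflation quasi-abelian.

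The main obstacle I anticipate is the first step above, namely identifying $\phi$ of a cokernel in $\Glid FR$ with a deflation in $\Prefrag FR$, since in general right adjoints do not preserve colimits; the argument leverages the factorization through the unit together with the fact that $\Glid FR$ sits as an \emph{epi}-reflective (not merely reflective) subcategory, which is exactly the content of proposition \ref{proposition:phiIsEpiReflective}.
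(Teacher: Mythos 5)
Your proof is correct, but it takes a genuinely different route from the paper's. The paper already has a deflation-exact structure on $\Glid FR$ from corollary \ref{corollary:GlidersAreDeflationExact} (via the percolating-subcategory localization $\Preglid FR / i_*(\Mod R)$), so pullback-stability of deflations comes for free from that machinery; the only remaining work there is to show that every cokernel is a deflation, which is done through the coreflective embedding $L$ into $\Preglid FR$ ($L$ preserves cokernels, $\Preglid FR$ is quasi-abelian, and $Q$ sends deflations to deflations with $Q \circ L \cong 1$). You instead work entirely through the reflective embedding $\phi$ into $\Prefrag FR$: the key step, that $\phi$ sends cokernels to deflations, is exactly where epi-reflectivity (proposition \ref{proposition:phiIsEpiReflective}) enters via the factorization $\phi(g) = r_C \circ c$, and you then transport the pullback to $\Prefrag FR$ and reflect the resulting kernel--cokernel pair back along the fully faithful $\phi$. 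Both arguments are sound and non-circular. The paper's version is shorter given theorem \ref{theorem:MainTheoremPercoLocalization} and simultaneously identifies the localization conflation structure with the class of all kernel--cokernel pairs (cf.\ remark \ref{remark:ConflationStructureOnGlidNatural}); yours is essentially the general argument that an epi-reflective subcategory of a quasi-abelian category inherits a deflation quasi-abelian structure, and needs neither $L$ nor the percolating-subcategory theorem. One small remark: in your last step you only need the ``reflects kernel--cokernel pairs'' half of proposition \ref{proposition:GlidAsSubcatOfPrefrag}, which follows from full faithfulness and limit-preservation of $\phi$ alone; citing the full proposition quietly re-imports the $L$-based argument you were otherwise avoiding.
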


\begin{proof}
We have already established that $\Glid FR$ is deflation-exact (see corollary \ref{corollary:GlidersAreDeflationExact}), and is complete and cocomplete (see proposition \ref{proposition:LimitsAndColimits}).  In particular, $\Glid FR$ is pre-abelian.  To see that $\Glid FR$ is deflation quasi-abelian, we only need to verify that the conflation structure of $\Glid FR$ consists of all kernel-cokernel pairs.  For this, it suffices to show that all cokernels are deflations.  Let $f\colon X \to Y$ be any map; we claim that $g\colon Y \to \coker f$ is a deflation.  Applying the functor $L$ and using that a left adjoint commutes with cokernels, we find that $L(g)$ is the cokernel of $L(f)$.  As $\Preglid FR$ is a quasi-abelian category, this means that $L(g)$ is a deflation.  Finally, as $Q$ maps deflations to deflations, we find that $QL(g) \cong g$ is a deflation in $\Glid FR.$
\end{proof}

\begin{remark}\label{remark:ConflationStructureOnGlidNatural}
It follows from theorem \ref{theorem:GlidIsDeflationQuasiAbelian} that the conflation structure of $\Glid FR$ is not additional structure but is inherent to the category: the class of conflations consists of all kernel-cokernel pairs.
\end{remark}

We recall the definition of a Grothendieck quasi-abelian category from \cite{Wallbridge15}.

Let $\CC$ be a category admitting all small filtered direct limits.  We say that an object $C$ is \emph{finitely presentable} if, for all filtered diagrams $D\colon J \to \CC$, the natural morphism $\varinjlim \Hom(C,D) \to \Hom(C, \varinjlim D)$ is an isomorphism.  We say the category $\CC$ is \emph{locally presentable} if it has all small filtered direct limits and every object in $\CC$ is a filtered direct limit of finitely presentable objects.

\begin{definition}\label{definition:Grothendieck}
A (deflation) quasi-abelian category is called a \emph{Grothendieck (deflation) quasi-abelian category} if it is locally presentable and has exact filtered direct limits.
\end{definition}

\begin{proposition}\label{proposition:PreglidPrefragAreGrothendieck}
The categories $\Preglid FR$ and $\Prefrag FR$ are Grothendieck quasi-abelian categories.
\end{proposition}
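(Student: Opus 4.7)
The plan is to promote the cotilting torsion-free structure of Proposition~\ref{proposition:PreglidIsTorsionfree} into a Grothendieck one. Proposition~\ref{proposition:PreglidIsQuasiAbelian} already supplies completeness, cocompleteness, and the quasi-abelian structure, so only two things remain: exactness of filtered direct limits and local presentability. The central observation from which both follow is that the defining condition of a preglider (resp.~prefragment) is a pointwise monomorphism condition, and in the ambient Grothendieck abelian categories $\Mod \oFF_\Lambda R$ and $\Mod \FF_\Lambda R$ filtered colimits preserve monomorphisms (AB5).

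First I would establish that $\Preglid FR$ is closed under filtered colimits taken in $\Mod \oFF_\Lambda R$. For a filtered diagram $\{M_i\}_{i \in I}$ of pregliders, the colimit $M = \varinjlim M_i$ in $\Mod \oFF_\Lambda R$ is computed pointwise, so $M(\lambda) = \varinjlim M_i(\lambda)$ and $M(1_{\alpha,\beta}) = \varinjlim M_i(1_{\alpha,\beta})$; the latter is a filtered colimit of monomorphisms and hence a monomorphism, so $M \in \Preglid FR$. In particular the inclusion $\iota$ preserves filtered colimits, and the same reasoning handles $\eta\colon \Prefrag FR \hookrightarrow \Mod \FF_\Lambda R$. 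With this in hand, exactness of filtered colimits in $\Preglid FR$ is immediate: recall from the proof of Proposition~\ref{proposition:GlidAsLocalization} that the conflations in $\Preglid FR$ are exactly the short exact sequences of $\Mod \oFF_\Lambda R$ whose three terms lie in $\Preglid FR$, and a filtered colimit of such remains short exact (by AB5 in the ambient category) with all three terms still in $\Preglid FR$ by the closure just established.

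It then remains to show local presentability. Each standard projective $\oFF_\Lambda R(\lambda, -)$ lies in $\Preglid FR$ by Example~\ref{example:StandardProjectives}, and by Yoneda the functor $\Hom_{\Preglid FR}(\oFF_\Lambda R(\lambda, -), -)$ is the evaluation at $\lambda$; this commutes with filtered colimits since these are pointwise, so each representable is finitely presentable in $\Preglid FR$. They form a small generating set because any nonzero $f\colon M \to N$ must have a nonzero component $f(\lambda)$, detected via Yoneda by the corresponding representable. Together with cocompleteness this yields local finite presentability of $\Preglid FR$, and the same argument handles $\Prefrag FR$. I expect this last step to be the most delicate, since one needs the implication ``cocomplete with a small set of finitely presentable generators implies locally finitely presentable'' in a quasi-abelian rather than abelian setting; if verifying this directly proves inconvenient, the cleaner fallback is to invoke the Ad\'amek--Rosick\'y theorem that a reflective subcategory of a locally presentable category whose inclusion preserves filtered colimits is itself locally presentable, which applies here via the reflection $\kappa$ of Proposition~\ref{proposition:Kappa}.
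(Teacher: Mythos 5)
Your proposal is correct and follows essentially the same route as the paper: pointwise filtered colimits in the ambient module categories give exactness, and the standard projectives serve as a set of finitely presentable (strong) generators, from which local presentability follows. The "delicate" last step you flag is handled in the paper exactly by the result you suspect, namely \cite[theorem 1.11]{AdamekRosicky94}, so no fallback via the reflection $\kappa$ is needed.
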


\begin{proof}
It is shown in proposition \ref{proposition:PreglidIsQuasiAbelian} that $\Preglid FR$ and $\Prefrag FR$ are cocomplete.  As direct limits in $\Mod \oFF_\Lambda R$ and $\Mod \oFF_\Lambda R$ are exact and taken pointwise, we find that filtered direct limits are exact $\Preglid FR$ and $\Prefrag FR$ as well.  As the standard projectives in $\Preglid FR$ and $\Prefrag FR$ form a strong generating set, it follows from \cite[theorem 1.11]{AdamekRosicky94} that $\Preglid FR$ and $\Prefrag FR$ are finitely presentable.
\end{proof}

\begin{theorem}\label{theorem:Grothendieck}
Let $\Gamma$ be an ordered group.  Let $\Lambda \subseteq \Gamma$ be any subset and let $FR$ be a $\Gamma$-filtered ring.  The category $\Glid FR$ of glider representations is a Grothendieck deflation quasi-abelian category.
\end{theorem}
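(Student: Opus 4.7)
By Theorem \ref{theorem:GlidIsDeflationQuasiAbelian} we already know that $\Glid FR$ is deflation quasi-abelian, so the plan is to verify the two conditions of Definition \ref{definition:Grothendieck} that remain: local presentability and exactness of filtered colimits.  The pivot of the argument will be to establish that the fully faithful embedding $\phi\colon \Glid FR \to \Prefrag FR$ preserves filtered colimits; once this is in hand, both remaining conditions reduce cleanly to the corresponding statements for $\Prefrag FR$, which is Grothendieck quasi-abelian by Proposition \ref{proposition:PreglidPrefragAreGrothendieck}.

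To prove that $\phi$ preserves a given filtered colimit $\colim D$ (with $D\colon I \to \Glid FR$), I would use Proposition \ref{proposition:LimitsAndColimits} to write $\phi(\colim D) \cong \phi\psi(\colim \phi D)$, so that the question reduces to whether the canonical comparison between $\phi\psi(\colim \phi D)$ and $\colim \phi D$ is an isomorphism, equivalently (by Proposition \ref{proposition:WhenGlider}) whether $\colim \phi D$ is itself a glider.  By clause (4) of Proposition \ref{proposition:WhenGlider}, this amounts to showing that $j_!\eta(\colim \phi D)$ is a preglider.  Both $\eta$ and $j_!$ preserve filtered colimits: the latter as a left adjoint, and the former because $\Prefrag FR$ is closed under filtered colimits in the Grothendieck abelian category $\Mod \FF_\Lambda R$ (the defining pointwise monomorphism condition is preserved by filtered colimits in any Grothendieck category).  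Hence $j_!\eta(\colim \phi D) \cong \colim_i j_!\eta(\phi D_i)$, which is a filtered colimit of pregliders by Proposition \ref{proposition:WhenGlider} applied to each glider $\phi D_i$, and is therefore itself a preglider since $\Preglid FR$ is likewise closed under filtered colimits in $\Mod \oFF_\Lambda R$.

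Given that $\phi$ preserves filtered colimits, exactness of filtered colimits in $\Glid FR$ is immediate from Proposition \ref{proposition:GlidAsSubcatOfPrefrag} (which says that $\phi$ both preserves and reflects conflations) together with the exactness of filtered colimits in $\Prefrag FR$.  For local presentability, I would apply \cite[Theorem 1.11]{AdamekRosicky94} to the proposed strong generating set $\{G_\lambda := Q(\oFF_\Lambda R(\lambda,-))\}_{\lambda \in \Lambda}$.  Each $G_\lambda$ satisfies $\phi(G_\lambda) = \FF_\Lambda R(\lambda,-)$, which is finitely presentable in $\Prefrag FR$; combined with the previous paragraph, the Yoneda-style identification $\Hom_{\Glid FR}(G_\lambda, X) \cong \Hom_{\Prefrag FR}(\FF_\Lambda R(\lambda,-),\phi X) \cong (\phi X)(\lambda)$ then shows both that each $G_\lambda$ is finitely presentable in $\Glid FR$ and that a morphism $f$ for which every $\Hom(G_\lambda, f)$ is invertible must have $\phi(f)$ invertible, hence $f$ itself invertible by full faithfulness of $\phi$.

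The principal obstacle is the second paragraph: since $\phi$ is a right adjoint it is only guaranteed a priori to preserve limits, so the argument must detour through the glider criterion of Proposition \ref{proposition:WhenGlider} and exploit the closure of pregliders and prefragments under filtered colimits in their ambient module categories.  After this is settled, the rest of the proof transports along $\phi$ with no further subtleties.
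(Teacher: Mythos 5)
Your proposal is correct, and its overall strategy coincides with the paper's: the paper's proof of theorem \ref{theorem:Grothendieck} is literally the one line ``similar to the proof of proposition \ref{proposition:PreglidPrefragAreGrothendieck},'' i.e.\ verify exactness of filtered colimits and exhibit a strong generator of finitely presentable objects so that \cite[theorem 1.11]{AdamekRosicky94} applies. What you add is the one piece of substance that the paper leaves implicit: since $\phi$ is a right adjoint, the exactness and pointwise computation of filtered colimits do not transfer from $\Prefrag FR$ for free, and your detour through proposition \ref{proposition:WhenGlider}\eqref{enumerate:WhenGlider4} --- showing that a filtered colimit of gliders, computed in $\Prefrag FR$, is again a glider because $j_!\circ\eta$ commutes with filtered colimits and $\Preglid FR$ is closed under them in $\Mod\oFF_\Lambda R$ --- is exactly the right way to close that gap. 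The remaining steps (exactness via the conflation-reflecting property of $\phi$ from proposition \ref{proposition:GlidAsSubcatOfPrefrag}, and the generators $Q(\oFF_\Lambda R(\lambda,-))$ with $\Hom(G_\lambda,X)\cong(\phi X)(\lambda)$) all check out.
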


\begin{proof}
Similar to the proof of proposition \ref{proposition:PreglidPrefragAreGrothendieck}.
\end{proof}

\subsection{Some examples and counterexamples}  We provide some examples and counterexamples to illustrate some properties in this section.

By proposition \ref{proposition:PreglidIsQuasiAbelian}, the category $\Preglid FR$ admits kernels and cokernels.  For sake of reference we describe these explicitly in the next proposition (see also \cite[corollary~3.6]{SchapiraSchneiders16} for a similar result).

\begin{proposition}\label{proposition:ExcplicitKernelCokernelInPreglid}
	Let $f\colon M\to N$ be a morphism in $\Preglid FR$.  For each $\lambda\in \Ob(\oFF_{\Lambda}R)$ we have the following canonical isomorphisms:
	\begin{enumerate}[(a)]
		\item $(\ker f)(\lambda)\cong \ker f(\lambda)$,
		\item $(\coker f)(\lambda) \cong \im(N(\lambda)\to \coker f_{\infty})$,
		\item $(\im f)(\lambda)\cong \ker (N(\lambda)\to \coker f_{\infty})$,
		\item $(\coim f)(\lambda) \cong \im f(\lambda)$.
	\end{enumerate}	
\end{proposition}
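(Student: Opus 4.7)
The plan is to exploit the general machinery from \S\ref{section:StructureResults}: since $\Preglid FR$ is a cotilting torsion-free class in $\Mod(\oFF_\Lambda R)$ (Proposition \ref{proposition:PreglidIsTorsionfree}) with reflector $\kappa$ described pointwise by $\kappa(X)(\lambda)=\im(X(1_{\lambda,\infty}))$ (Proposition \ref{proposition:Kappa}), limits in $\Preglid FR$ are computed pointwise (they agree with limits in $\Mod(\oFF_\Lambda R)$, which $\iota$ creates since $\Preglid FR$ is closed under subobjects and products), while colimits are obtained by applying $\kappa$ to the pointwise colimit (because $\kappa$ preserves colimits and $\kappa\iota\cong 1$). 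Part (a) is then immediate: the kernel of $\iota f$ in $\Mod(\oFF_\Lambda R)$ is already a preglider (by Lemma \ref{lemma:PhiFullyFaithful}, or directly) and is pointwise the kernel of $f(\lambda)$.

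For (b), write $C$ for the pointwise cokernel of $\iota f$, so $C(\lambda)=N(\lambda)/\im f_\lambda$ and $C(\infty)=\coker f_\infty$. The cokernel of $f$ in $\Preglid FR$ is $\kappa C$, and by Proposition \ref{proposition:Kappa} one has
\[
(\coker f)(\lambda)=\kappa C(\lambda)=\im\bigl(C(1_{\lambda,\infty})\colon N(\lambda)/\im f_\lambda\to N(\infty)/\im f_\infty\bigr).
\]
Identifying this with the image of the composition $N(\lambda)\to N(\infty)\deflation\coker f_\infty$ (which visibly factors through the quotient map $N(\lambda)\deflation N(\lambda)/\im f_\lambda$) yields the formula.

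Part (c) uses that $\Preglid FR$ is quasi-abelian (Proposition \ref{proposition:PreglidIsQuasiAbelian}), so $\im f=\ker(N\to\coker f)$; by (a) this kernel is taken pointwise, and by (b) the canonical map $N(\lambda)\to(\coker f)(\lambda)$ is precisely the surjection of $N(\lambda)$ onto its image in $\coker f_\infty$, so its kernel coincides with the kernel of $N(\lambda)\to\coker f_\infty$. For (d), apply formula (b) to the inflation $\ker f\inflation M$ to get $(\coim f)(\lambda)=\im(M(\lambda)\to M(\infty)/\ker f_\infty)$. Then chase the naturality square $f_\infty\circ M(1_{\lambda,\infty})=N(1_{\lambda,\infty})\circ f_\lambda$: composing with the inclusion $M(\infty)/\ker f_\infty\inflation N(\infty)$ (first isomorphism theorem) identifies the image in question with $N(1_{\lambda,\infty})(\im f_\lambda)$, and the preglider hypothesis on $N$ (making $N(1_{\lambda,\infty})$ injective) returns $\im f_\lambda$.

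The only genuinely delicate step is (d), where the identification $(\coim f)(\lambda)\cong \im f(\lambda)$ depends essentially on the preglider condition on the target $N$; the rest of the proof is a direct consequence of the torsion-free reflection formula of Proposition \ref{proposition:Kappa} together with the quasi-abelian identities $\im f=\ker\coker f$ and $\coim f=\coker\ker f$.
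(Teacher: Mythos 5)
Your proof is correct and follows essentially the same route as the paper, whose entire argument is the one-line observation that $\iota$ commutes with kernels and $\kappa$ commutes with cokernels; you have simply carried out in detail the pointwise computation of $\kappa$ applied to the ambient cokernel (via proposition \ref{proposition:Kappa}) and the quasi-abelian identities $\im f=\ker\coker f$, $\coim f=\coker\ker f$ that the paper leaves implicit.
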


\begin{proof}
	This follows from the fact that $\iota$ commutes with kernels and that $\kappa$ commutes with cokernels.
\end{proof}

\subsubsection{The functor $\phi$ need not commute with colimits}

In proposition \ref{proposition:GlidAsSubcatOfPrefrag}, we showed that the functor $\phi$ reflects conflations.  However, $\phi$ need not commute with colimits, as the following example shows.  In particular, one needs to be careful in computing colimits using the functor $\phi.$

\begin{example}\label{example:SymmetricGroup1}
Let $R=\mathbb{C}[S_3]$ and consider the $\mathbb{Z}^{\geq 0}$-filtration 
\[\mathbb{C}\subseteq \mathbb{C}[S_3]\subseteq \mathbb{C}[S_3]\subseteq \dots\]
	and let $\Lambda=\left\{-1,0\right\}$. Let $V$ be a two-dimensional complex vector space with basis $e_a,e_b$ and set $e_c=-(e_a+e_b)$. View $S_3$ as the permutations on the set $\left\{a,b,c\right\}$ and let $S_3$ act on the vectors $e_a,e_b,e_c$ accordingly. Thus, $V$ is the standard representation of $S_3$.  Consider the following conflation of prefragments:	
	\[\xymatrix{
		M\ar@{>->}[d]_f&&0\ar@{^{(}->}[r]\ar[d] & \mathbb{C}e_a\ar@{^{(}->}[d]\\
		N\ar@{->>}[d]_g&&\mathbb{C}e_b\ar@{^{(}->}[r]\ar@{=}[d] & V\ar[d]\\
		S&& \mathbb{C}e_b\ar@{^{(}->}[r] & V/\mathbb{C}e_a
	}\]
	The reader may verify that $M$ and $N$ belong to $\phi(\Glid FR)$.  However, $S$ cannot be extended to a glider representation. Indeed,	consider the transposition $(ab)\in \Hom_{\FF_{\Lambda}R}(-1,0)=\mathbb{C}[S_3]$. As $g\colon M\to N$ is a natural transformation, the following diagram commutes:
	\[\xymatrix{
		\mathbb{C}e_b\ar[r]^{N(ab)}\ar@{=}[d] & V\ar[d]\\
		\mathbb{C}e_b\ar[r]_{S(ab)} & V/\mathbb{C}e_a
	}\] Hence, $S(ab)$ acts on $e_b$ as zero. It follows that $S$ is not a glider representation as the partial actions are not induced by a $\mathbb{C}[S_3]$-module.  Indeed, the transposition $(ab)$ squares to the identity and thus cannot act as zero.
	
	Following proposition \ref{proposition:LimitsAndColimits}, the cokernel of $f$ as a morphism of glider representations is zero. In other words, the inflation $f$ of prefragments is not an inflation of glider representations (as it is not the kernel of its cokernel).
\end{example}

\subsubsection{The functor $L$ need not be conflation-exact} The embedding $L\colon \Glid FR \to \Preglid FR$ is right exact (as it is left adjoint to $Q$), but need not commute with limits.  The following example shows that $L$ need not even be conflation-exact.

\begin{example}\label{example:LeftAdjointNotExact}
	Let $\Gamma=\mathbb{Z}$, $\Lambda=\left\{0\right\}$ and let $FR$ be the $\mathbb{Z}^{\geq 0}$-filtration of $\mathbb{C}[t,x]/(xt)$ given by
	\[\mathbb{C}[t]\subseteq \mathbb{C}[t,x]/(xt)\subseteq \mathbb{C}[t,x]/(xt) \subseteq \dots\]
So, $F_0 R = \mathbb{C}[t]$, and $F_i R = \mathbb{C}[t,x]/(xt)$, for all $i \geq 1.$

  Consider the natural morphism $M\xrightarrow{g} N$ in $\Preglid FR$ determined by the solid part of the commutative diagram
	\[\xymatrix{
		\ker(g)\ar@{>.>}[d]_f 	&& t\mathbb{C}[t] \ar@{^{(}.>}[r] \ar@{^{(}.>}[d]^{t}			& t\mathbb{C}[t,x]/(xt) \ar@{^{(}.>}[d]\\
		M\ar@{->>}[d]_g 			&& \mathbb{C}[t] 	\ar@{^{(}->}[r]	\ar@{->>}[d]				& \mathbb{C}[t,x]/(xt) \ar@{->>}[d]\\
		N											&& \mathbb{C}			\ar@{^{(}->}[r]											& \mathbb{C}[t,x]/(t)		
	}\]
	Using proposition \ref{proposition:ExcplicitKernelCokernelInPreglid}, one readily verifies that $\ker(g)\xrightarrow{f} M\xrightarrow{g} N$ is a conflation in $\Preglid FR$ and thus descends to a conflation in $\Glid FR$ as well.
	
	Applying $L\circ Q$ to $f$ we obtain the following commutative diagram in $\Preglid FR$:
	\[\xymatrix{
	LQ(\ker(g))\ar[d]_{LQ(f)} && \mathbb{C}[t]\ar@{^{(}->}[r]\ar@{^{(}->}[d]^t		 & \mathbb{C}[t,x]/(xt)\ar[d]^{t}	\\
	LQ(M) && \mathbb{C}[t]\ar@{^{(}->}[r]	 & \mathbb{C}[t,x]/(xt)
	}\]
	Note that multiplication by $t$ is not an injection from $\mathbb{C}[t,x]/(xt)$ to itself.  Hence, $LQ(\ker g) \to LQ(M)$ is not a monomorphism.  This shows that $L$ does not commute with kernels of deflations.  In particular, $L$ is not a conflation-exact functor
\end{example}

\begin{remark}
Although the functor $L$ is not conflation-exact, it maps deflations to deflations (as it maps cokernels to cokernels and $\Preglid FR$ is quasi-abelian).  Thus, given a conflation $X \inflation Y \deflation Z$ of glider representations, we obtain a sequence $LX \to LY \deflation LZ$ of pregliders.  Let $K$ be the kernel of $LY \deflation LZ.$  As the natural map $LX \to K$ becomes an isomorphism under the functor $Q$ (this uses that $Q$ is conflation-exact and that $Q \circ L \cong 1$), the morphism $LX \to K$ is a weak isomorphism.  So, for every $\lambda \in \Lambda,$ the map $LX(\lambda) \to K(\lambda)$ is an isomorphism.
\end{remark}

\subsubsection{The category $\Glid FR$ need not be (2-sided) exact}  We now give two examples to show that $\Glid FR$ is not a (2-sided) exact structure.  In particular, $\Glid FR$ need not be a quasi-abelian (or abelian) category.

\begin{example}\label{example:GlidIsNotExact}
	Let $R=k[t]$ be a polynomial ring in one variable $t$ over some field $k$. Let $\Gamma=\mathbb{Z}$ and let $\Lambda=\left\{-1,0\right\}$. Let $FR$ be the $\mathbb{Z}^+$-filtration 
		\[k\subseteq k[t]^{\leq 1}\subseteq k[t]^{\leq 2} \subseteq k[t]^{\leq 3} \subseteq \dots\]
		The following commutative diagram defines morphisms $f,g$ and $h$ in $\Preglid FR$.
		\[\xymatrix{
	L \ar@{>->}[d]_{f} && 0\ar@{^{(}->}[r]\ar[d] & kt\ar@{^{(}->}[r]\ar[d]^{\begin{psmallmatrix}1\\0\end{psmallmatrix}} & tk[t]\ar[d]^{\begin{psmallmatrix}1\\0\end{psmallmatrix}}\\
	L\oplus L\ar[d]_{g}^{\rotatebox{90}{$\sim$}} && 0 \ar@{^{(}->}[r]\ar[d] & kt \oplus kt \ar@{^{(}->}[r]\ar[d]^{\begin{psmallmatrix}1&0\\0&t\end{psmallmatrix}} & tk[t]\oplus tk[t]\ar[d]^{\begin{psmallmatrix}1&t\end{psmallmatrix}}\\
	M \ar@{>->}[d]_{h}&& 0 \ar@{^{(}->}[r]\ar[d] & kt\oplus kt^2 \ar@{^{(}->}[r]\ar[d]^{\begin{psmallmatrix}0&0\\1&0\\0&1 \end{psmallmatrix}} & tk[t]\ar[d]_1\\
	N && k \ar@{^{(}->}[r] & k\oplus kt \oplus kt^2 \ar@{^{(}->}[r] & k[t]
}\]
 Clearly $f$ and $h$ are inflations in $\Preglid FR$. Note that $\ker(g),\coker(g)\in \Mod(k[t])$ and hence $g$ is a weak $\Mod(k[t])$-isomorphism by proposition \ref{proposition:GlidAsLocalization}. By definition, the maps $Q(g\circ f)$ and $Q(h)$ are inflations in $\Glid FR$. We claim that the composition $Q(h)\circ Q(g\circ f)$ is not an inflation in $\Glid FR$. It follows that $\Glid FR$ does not satisfy axiom \ref{L1} and hence $\Glid FR$ is not an exact category.

Assume that $Q(h)\circ Q(g\circ f)=Q(h\circ g\circ f)$ is an inflation in $\Glid FR$. By proposition \ref{proposition:ExcplicitKernelCokernelInPreglid}, the cokernel of $hgf$ in $\Preglid FR$ is given by $k\hookrightarrow k \hookrightarrow k$ and $\ker(\coker(hgf))$ is given by $0\hookrightarrow kt\oplus kt^2\hookrightarrow tk[t]$. It follows that $hgf$ is not the kernel of its cokernel in $\Preglid FR$ and thus that $hgf$ is not an inflation in $\Glid FR$.
\end{example}

\begin{example}\label{example:SymmetricGroup2}
	In this example, we provide another illustration of the failure of axiom \ref{L1} in $\Glid FR$.  Using the same notation as in example \ref{example:SymmetricGroup1}, consider the following morphisms of prefragments:
	\[\xymatrix{
		A\ar@{>->}[d]^f&&0\ar@{^{(}->}[r]\ar[d] & \mathbb{C}e_a\ar[d]\\
		B\ar@{>->}[d]^g&&0\ar@{^{(}->}[r]\ar[d] & V\ar[d]^{\begin{psmallmatrix}1\\0\end{psmallmatrix}}\\
		C\ar@{->>}[d]^h&&\mathbb{C}e_b\ar@{^{(}->}[r]^{\Delta}\ar@{=}[d] & V\oplus V\ar[d]^{\begin{psmallmatrix}0&1\end{psmallmatrix}}\\
		D&&\mathbb{C}e_b\ar@{^{(}->}[r] & V
	}\] where $\Delta\colon V \to V \oplus V$ is the diagonal map.  It is easy to see that $f$ and $g$ can be extended to inflations in $\Glid FR$ but, as in example \ref{example:SymmetricGroup1}, the composition $g\circ f$ is not an inflation in $\Glid FR$.  Hence, the composition of inflations need not be an inflation in $\Glid FR$.
\end{example}

\subsubsection{The functors $j_*, j_!$ do not restrict to functors $\Prefrag FR \to \Preglid FR$} It is shown in proposition \ref{proposition:AbelianRecollement} that the top row in figure \ref{figure:TheDiagram} is a recollement.  The functors $j_*, j_!$, in general, fail to restrict to functors $\Prefrag FR \to \Preglid FR$.  The following example is based on \cite[example 1.3.3]{CaenepeelVanOystaeyenBook19}.

\begin{example}\label{example:NoRestriction}
	Set $\Gamma=\mathbb{Z}, \Lambda=-\mathbb{N}, R=\mathbb{Q},$ and set \[FR=\mathbb{Z}\subseteq \mathbb{Q}\subseteq \mathbb{Q}\subseteq \dots\]
	Let $M\in \Prefrag FR$ be the cokernel of the obvious map $\Hom_{\FF_\Lambda R}(0,-)\xrightarrow{\times 2} \Hom_{\FF_\Lambda R}(0,-)$, explicitly, $M$ is determined by 
	\[M(\lambda)=\begin{cases}0 & \mbox{ if } \lambda\neq 0,\\
	\mathbb{Z}/2\mathbb{Z} &\mbox{ if } \lambda=0.\end{cases}\]
Note that $j_!(M)(\infty)=0$ and thus $j_!(M)\notin \Preglid FR$. This shows that $j_!$ does not restrict to a functor $\Prefrag FR \to \Preglid FR$.

Note that $j_*(M)(\infty)=0$ but $j_*(M)(0)=\mathbb{Z}_2$, it follows that $j_*(M)\notin \Preglid FR$. Hence, the functor $j_*$ does not restrict to a functor $\Prefrag FR\to \Preglid FR$. 
\end{example}

\subsubsection{$\Prefrag FR$ is not the exact hull of $\Glid FR$}  It is shown in \cite{Rosenberg11} (see also \cite{HenrardVanRoosmalen19b}) that a deflation-exact category $\CC$ can be embedded in an exact category $\overline{\CC}$ in a 2-universal way: there is a conflation-exact embedding $i\colon \CC \to \overline{\CC}$ such that, for each exact category $\EE,$ the natural functor 
\[- \circ i\colon \Hom_{\mathrm{exact}}(\overline{\CC}, \EE) \to \Hom_{\mathrm{exact}}(\CC, \EE)\]
is an equivalence.  In particular, every conflation-exact functor $\CC \to \EE$ factors essentially uniquely through $i\colon \CC \to \overline{\CC}.$

It might now be tempting to assume that $\Prefrag FR$ is the exact hull of $\Glid FR$ (especially in light of proposition \ref{proposition:DerivedEquivalences} below).  However, in the notation of example \ref{example:NoRestriction}, the only sub-prefragments of $M$ are $0$ and $M$ itself.  As $M$ is not a glider, we see that $M$ cannot occur as an extension of gliders.  Hence, $\Prefrag FR$ is not the exact hull of $\Glid FR$.

\section{Noetherian objects}\label{section:Noetherian}

So far, we have considered the category of all glider representations of a filtered ring $FR$.  In this section, we look at Noetherian objects in deflation quasi-abelian categories.  Our first result is theorem \ref{theorem:NoetherianObjects}, stating that the subcategory of Noetherian objects is a Serre subcategory (see definition \ref{definition:GeneralPercolatingSubcategory}), and hence itself quasi-abelian (see \cite[lemma 4]{Rump01}).  We then provide a number of equivalent formulations of when an object of $\Glid FR$ is Noetherian in proposition \ref{proposition:WhenNoetherian}.

We will use the following proposition, which is a straightforward adaptation of a similar statement in for quasi-abelian categories (\cite[proposition 1.1.4]{Schneiders99}, see also \cite[proposition 1]{Rump01}).

\begin{proposition}\label{proposition:CoimageFactorization}
In a deflation quasi-abelian category, a morphism $f\colon X \to Y$ factors as $X \stackrel{d}\deflation \coim f \stackrel{i}{\hookrightarrow} Y$ where $d$ is a deflation and $i$ is a monomorphism.
\end{proposition}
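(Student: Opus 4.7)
The plan is to use the obvious candidate factorization coming from the universal property of the cokernel. Set $\coim f := \coker(\ker f)$ and let $d\colon X\deflation \coim f$ denote the canonical projection, which is a deflation by definition of the conflation structure on a deflation quasi-abelian category (cf.\ remark \ref{remark:QuasiAb}). Since $f$ annihilates $\ker f$, the universal property yields a unique morphism $i\colon \coim f\to Y$ with $f=i\circ d$. The only nontrivial content is therefore to show that $i$ is a monomorphism.

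To do this, I will compute $\ker i$ directly. In any pre-abelian category a morphism is monic iff its kernel is zero, so let $\ell\colon L\to \coim f$ be the kernel of $i$, and form the pullback
\[\xymatrix{
P \ar@{->>}[r]^{q} \ar[d]_{p} & L \ar[d]^{\ell} \\
X \ar@{->>}[r]_{d} & \coim f.
}\]
Here comes the crucial use of the deflation quasi-abelian hypothesis: cokernels (equivalently, deflations) are stable under pullback, hence $q$ is a deflation, and in particular an epimorphism.

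Now a short diagram chase finishes the argument. We have $f\circ p = i\circ d\circ p = i\circ \ell\circ q = 0$, so $p$ factors through the inclusion $\ker f\to X$; composing with $d$ then gives $d\circ p = 0$, that is $\ell\circ q = 0$. Since $q$ is an epimorphism we conclude $\ell=0$, and since $\ell$ is a monomorphism (being a kernel) we obtain $L=0$. Therefore $\ker i=0$ and $i$ is monic, as required.

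The only place where real structure is invoked is the pullback-stability of deflations; once that is available, the remainder is essentially the same diagram chase as in the two-sided quasi-abelian setting treated in \cite[proposition~1.1.4]{Schneiders99} (and \cite[proposition~1]{Rump01}), so I do not expect any further obstacle.
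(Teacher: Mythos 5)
Your proof is correct and is exactly the ``straightforward adaptation'' of \cite[proposition~1.1.4]{Schneiders99} and \cite[proposition~1]{Rump01} that the paper invokes without giving details: the paper offers no written proof, and your pullback-of-the-kernel-of-$i$ argument, using only that deflations are stable under pullback, is the standard one. No gaps.
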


\subsection{Noetherian objects in deflation quasi-abelian categories}
We now look at the subcategory of Noetherian objects in a deflation quasi-abelian category.  Recall that an object $X$ in a category is called \emph{Noetherian} if any ascending sequence of subobjects of $X$ is stationary.  The following theorem is a straightforward adaptation of a similar result for abelian categories.

\begin{theorem}\label{theorem:NoetherianObjects}
Let $\CC$ be a deflation quasi-abelian category.  The full subcategory $\NN \subseteq \CC$ of Noetherian objects is a Serre subcategory.  In particular, $\NN$ is deflation quasi-abelian.
\end{theorem}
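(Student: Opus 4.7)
My plan is to verify directly that $\NN$ is closed in $\CC$ under subobjects, quotients, and extensions, i.e., the three conditions defining a Serre subcategory in axiom \ref{A1}. The second assertion of the theorem will then follow formally: kernels and cokernels of morphisms between objects of $\NN$ (computed in $\CC$, since $\CC$ is pre-abelian) are respectively a subobject and a quotient of something Noetherian and hence lie in $\NN$; pullbacks in $\CC$ along a deflation between objects of $\NN$ fit into a conflation with kernel and quotient both in $\NN$, so by the extension closure they too lie in $\NN$. The inherited conflation structure then satisfies axioms \ref{R0}--\ref{R2} and exhibits $\NN$ as a deflation quasi-abelian category.

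Fix a conflation $A' \rightarrowtail A \twoheadrightarrow A''$ in $\CC$. Assuming $A \in \NN$, the subobject case is immediate: any ascending chain of subobjects of $A'$ composes with $A' \rightarrowtail A$ to give an ascending chain of subobjects of $A$, which stabilizes. For the quotient case, I would pull back an ascending chain $B_1 \subseteq B_2 \subseteq \dots$ of subobjects of $A''$ along the deflation $A \twoheadrightarrow A''$ (axiom \ref{R2}); the resulting chain $C_i = A \times_{A''} B_i$ is an ascending chain of subobjects of $A$ and therefore stabilizes, and because each $C_i \twoheadrightarrow B_i$ is a deflation (hence an epimorphism), the subobject $B_i$ is recovered as the image of $C_i$ in $A''$, so $\{B_i\}$ stabilizes as well.

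The extension case is the main obstacle. Assume $A', A'' \in \NN$ and let $\{B_i\}$ be an ascending chain of subobjects of $A$. Applying Proposition \ref{proposition:CoimageFactorization} to the composition $B_i \hookrightarrow A \twoheadrightarrow A''$ gives, for each $i$, a conflation $B_i' \rightarrowtail B_i \twoheadrightarrow B_i''$ with $B_i' = \ker(B_i \to A'')$ naturally a subobject of $A'$ and $B_i''$ a subobject of $A''$, together with a morphism of such conflations induced by each inclusion $n \colon B_i \hookrightarrow B_{i+1}$. Both chains $\{B_i'\}$ in $A'$ and $\{B_i''\}$ in $A''$ stabilize by hypothesis; the remaining task is to show that the middle morphism $n$ is then an isomorphism. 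This is a short five lemma in the deflation quasi-abelian setting, where it is not automatic, since a bimorphism need not be invertible. The plan to finish is first to verify by a diagram chase on the morphism of conflations (with identity outer arrows) that $\coker n = 0$ in $\CC$; then the composite $B_{i+1} \hookrightarrow A \twoheadrightarrow \coker(B_i \hookrightarrow A)$, being zero after pre-composition with $n$, factors through $\coker n = 0$ and so itself vanishes, whence $B_{i+1} \hookrightarrow A$ factors through $\ker(A \twoheadrightarrow \coker(B_i \hookrightarrow A)) = B_i$ via some $r \colon B_{i+1} \to B_i$. A short check using that $B_i \hookrightarrow A$ and $B_{i+1} \hookrightarrow A$ are monomorphisms then shows that $r$ is a two-sided inverse to $n$, so $B_i = B_{i+1}$ as subobjects of $A$, and $A$ is Noetherian.
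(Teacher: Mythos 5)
Your overall strategy coincides with the paper's: closure under subobjects is immediate, closure under quotients is obtained by pulling the chain back along the deflation, and closure under extensions is reduced, via the kernel--coimage decomposition of $B_i \to A''$ (which is the same data as the paper's pullback of $B_i$ along $A' \rightarrowtail A$), to a short five lemma for a morphism of conflations whose outer maps are isomorphisms. Up to that reduction everything is sound, including the diagram chase showing $\coker n = 0$.

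The gap is in the closing step of your hand-made short five lemma. Having shown that $n\colon B_i \to B_{i+1}$ is an epimorphism and that the composite $B_{i+1} \hookrightarrow A \twoheadrightarrow \coker(B_i \hookrightarrow A)$ vanishes, you conclude that $B_{i+1} \hookrightarrow A$ factors through ``$\ker(A \twoheadrightarrow \coker(B_i \hookrightarrow A)) = B_i$''. That equality requires $B_i \hookrightarrow A$ to be an inflation, i.e.\ the kernel of its own cokernel. But a subobject in a deflation quasi-abelian category is merely a monomorphism, and monomorphisms need not be kernels: already in the quasi-abelian category of torsion-free abelian groups the inclusion $2\bZ \hookrightarrow \bZ$ has zero cokernel, so $\ker(\coker(2\bZ \hookrightarrow \bZ)) = \bZ \neq 2\bZ$. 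In general your argument only produces a factorization of $B_{i+1}$ through $\ker(\coker(B_i \hookrightarrow A))$, which may be strictly larger than $B_i$, so the map $r$ need not land in $B_i$ and the two-sided inverse does not materialize. The statement you need is nevertheless true; it is the short five lemma for deflation-exact categories, which the paper simply invokes (\cite[lemma~5.3]{BazzoniCrivei13} or \cite[lemma~3]{Rump01}), and whose proof proceeds by a pullback construction along the deflations rather than through the image of $B_i$ in $A$. Replacing your final step by that citation (or by a correct proof of the lemma) repairs the argument; the rest, including the deduction that $\NN$ is deflation quasi-abelian, is fine.
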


\begin{proof}
Consider a conflation $X \inflation Y \deflation Z$ in $\CC$ with $Y \in \NN.$  As the composition of monomorphisms is a monomorphism, we find that $X \in \NN.$  To show that $Z \in \NN,$ consider an ascending sequence $Z_0 \hookrightarrow Z_1 \hookrightarrow \cdots$ of subobjects of $Z.$  Taking the pullback along $Y \deflation Z$ (and using that pullbacks of monomorphisms are again monomorphisms), we find an ascending sequence $Y_0 \hookrightarrow Y_1 \hookrightarrow \cdots$ of subobjects of $Y:$
\[\begin{tikzcd}
X \arrow[r, tail] \arrow[d, equal] & Y_i \arrow[r, two heads] \arrow[d] & Z_i \arrow[d] \\
X \arrow[r, tail]           & Y \arrow[r, two heads]                        & Z
\end{tikzcd}\]
As $Y \in \NN$, we know that $Y_i \hookrightarrow Y$ is an isomorphism for $i \gg 0$.  It follows that $Z_i \hookrightarrow Z$ is an isomorphism as well.

Assume now that $X,Z \in \NN.$  We show that $Y \in \NN$.  Let $Y_0 \hookrightarrow Y_1 \hookrightarrow \cdots$ be an ascending sequence of subobjects of $Y.$  Taking pullbacks along $X \inflation Y$ yields a diagram
\[\begin{tikzcd}
X \cap_Y Y_i \arrow[r, tail] \arrow[d] & Y_i \arrow[r, two heads] \arrow[d] & Y / (X \cap_Y Y_i) \arrow[d] \\
X \arrow[r, tail]           & Y \arrow[r, two heads]                        & Z
\end{tikzcd}\]
where the downward arrows are monomorphisms.  Indeed, the left-most arrow is a monomorphism as it is the pullback of a monomorphism.  For the right-most arrow, it follows from the pullback property that the map $X \cap_Y Y_i \inflation Y_i$ is the kernel of the composition $Y_i \to Y \deflation Z$, so that $Y_i \deflation Y / (X \cap_Y Y_i)$ is the coimage of the composition $Y_i \to Y \deflation Z$.  Proposition \ref{proposition:CoimageFactorization} now shows that the right-most map in the above diagram is a monomorphism.

As $X,Z \in \NN$, we may assume the outer monomorphisms are isomorphisms for $i \gg 0.$  It now follows from the short five lemma (\cite[lemma 5.3]{BazzoniCrivei13} or \cite[lemma 3]{Rump01}) that $Y_i \hookrightarrow Y$ is an isomorphism as well.  Hence, $Y \in \NN$.
\end{proof}

\subsection{Noetherian glider representations}

For later use, we introduce the category of Noetherian prefragments and glider representations.

\begin{definition}\label{definition:NoetherianGliders}
We write $\prefrag FR, \preglid FR,$ and $\glid FR$ for the full subcategories of $\Prefrag FR,\allowbreak \Preglid FR,$ and $\Glid FR$, respectively, consisting of all Noetherian objects.
\end{definition}

\begin{corollary}
The categories $\prefrag FR$ and $\preglid FR$ are quasi-abelian.  The category $\glid FR$ is deflation quasi-abelian.
\end{corollary}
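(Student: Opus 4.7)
The plan is to derive all three statements from Theorem \ref{theorem:NoetherianObjects} applied to the respective ambient category. By Theorem \ref{theorem:GlidIsDeflationQuasiAbelian}, the category $\Glid FR$ is deflation quasi-abelian, and by Proposition \ref{proposition:PreglidIsQuasiAbelian}, the categories $\Preglid FR$ and $\Prefrag FR$ are (two-sided) quasi-abelian, hence in particular deflation quasi-abelian. Applying Theorem \ref{theorem:NoetherianObjects} to each of these three ambient categories, I obtain that $\glid FR$, $\preglid FR$, and $\prefrag FR$ are Serre subcategories (closed under subobjects, quotients, and extensions) and that each is itself deflation quasi-abelian. This immediately settles the statement about $\glid FR$.

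For $\preglid FR$ and $\prefrag FR$ it remains to upgrade the conclusion from deflation quasi-abelian to two-sided quasi-abelian, i.e.\ to verify that kernels are stable under pushouts. I will carry this out for $\preglid FR$; the argument for $\prefrag FR$ is identical. Let $f\colon X \inflation Y$ be a kernel in $\preglid FR$ and let $g\colon X \to Z$ be an arbitrary morphism in $\preglid FR$. Since $\Preglid FR$ is cocomplete, the pushout $P$ of $f$ along $g$ exists in $\Preglid FR$, and since $\Preglid FR$ is (two-sided) quasi-abelian, the induced morphism $z\colon Z \to P$ is an inflation whose cokernel is canonically isomorphic to $\coker(f)$. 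Thus $P$ sits in a conflation $Z \inflation P \deflation \coker(f)$ in $\Preglid FR$; both $Z$ and $\coker(f)$ are Noetherian by hypothesis, so by the Serre (i.e.\ extension-closure) property of $\preglid FR$ established in Theorem \ref{theorem:NoetherianObjects}, the object $P$ is Noetherian. Hence the pushout is computed inside $\preglid FR$, and $z\colon Z \to P$ remains a kernel there.

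The only real obstacle is cosmetic: Theorem \ref{theorem:NoetherianObjects} is phrased in the one-sided deflation quasi-abelian setting, so one needs the short extension-closure argument above to recover the inflation axiom in the two-sided quasi-abelian cases. All the essential content --- closure of the Noetherian subcategory under kernels, cokernels, subobjects, quotients, and extensions --- has already been handled by Theorem \ref{theorem:NoetherianObjects}, and no new ingredients are required.
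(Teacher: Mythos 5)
Your proof is correct and follows the same route as the paper, whose entire proof of this corollary is the one-line citation of proposition \ref{proposition:PreglidIsQuasiAbelian}, theorem \ref{theorem:GlidIsDeflationQuasiAbelian}, and theorem \ref{theorem:NoetherianObjects}. The one place where you add genuine content is the upgrade from deflation quasi-abelian to two-sided quasi-abelian for $\preglid FR$ and $\prefrag FR$: theorem \ref{theorem:NoetherianObjects} as stated only delivers the deflation half, and this step cannot be obtained by dualizing (the dual statement concerns Artinian, not Noetherian, objects); the paper handles it by deferring to \cite[lemma 4]{Rump01} in the preamble of \S\ref{section:Noetherian} rather than in the proof itself. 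Your direct verification of the inflation axiom is sound: the pushout of an inflation $f$ along $g\colon X \to Z$ in the ambient quasi-abelian category sits in a conflation $Z \rightarrowtail P \twoheadrightarrow \coker f$, so $P$ is Noetherian by extension-closure, the pushout is therefore computed inside the Noetherian subcategory, and $Z \to P$ remains a kernel there. The only points worth making explicit are that a kernel in $\preglid FR$ is also a kernel in $\Preglid FR$ (the ambient kernel of the same morphism is a subobject of a Noetherian object, hence Noetherian, hence agrees by fullness with the kernel computed in the subcategory) and that $\coker f$ computed ambiently is Noetherian as a quotient of $Y$; both follow from the closure properties you already invoke, so there is no gap.
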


\begin{proof}
This follows from proposition \ref{proposition:PreglidIsQuasiAbelian} and theorem \ref{theorem:GlidIsDeflationQuasiAbelian}, together with theorem \ref{theorem:NoetherianObjects}.
\end{proof}

The following proposition helps recognizing Noetherian glider representations.

\begin{proposition}\label{proposition:WhenNoetherian}\label{proposition:NoetherianObjects}
Let $\Gamma = \bZ$ and $\Lambda = \bZ^{\leq 0}.$  Let $FR$ be a nonnegative filtration of a ring $R.$  Let $M \in \Glid FR$ be a glider representation.  The following are equivalent.
\begin{enumerate}
	\item $M$ is Noetherian in $\Glid FR,$
	\item $M$ is Noetherian in $\Prefrag FR,$
	\item $M$ is Noetherian in $\Mod \FF_\Lambda R,$
	\item the $F_0 R$-module $\oplus_{\lambda \in \Lambda} M(\lambda)$ is Noetherian,
	\item the $F_0 R$-module $M(0)$ is Noetherian and $M(\lambda) = 0$ for $\lambda \ll 0$. 
\end{enumerate}
\end{proposition}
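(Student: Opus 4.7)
The plan is to prove the five conditions equivalent through the chain $(1) \Leftrightarrow (2) \Leftrightarrow (3) \Rightarrow (5) \Leftrightarrow (4) \Rightarrow (3)$; this routing avoids having to compare arbitrary $F_0 R$-submodules of $\bigoplus_\lambda M(\lambda)$ directly with $\FF_\Lambda R$-submodules of $M$.

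For $(1) \Leftrightarrow (2)$, I would argue that $\phi\colon \Glid FR \to \Prefrag FR$ induces an isomorphism of subobject posets between $M$ and $\phi(M)$. Indeed, $\phi$ is a fully faithful right adjoint (propositions \ref{proposition:PhiFullyFaithful} and \ref{proposition:PhiHasLeftAdjoint}), so it preserves kernels and therefore monomorphisms, and corollary \ref{corollary:GlidersClosedUnderSubobjects} ensures that every subobject of $\phi(M)$ in $\Prefrag FR$ already lies in the essential image of $\phi.$ For $(2) \Leftrightarrow (3)$, I would observe that any $\FF_\Lambda R$-submodule $N \subseteq M$ of a prefragment is automatically a prefragment, because its transition maps are restrictions of the (injective) transition maps of $M$; combined with the fact that $\Prefrag FR$ is closed under subobjects in $\Mod \FF_\Lambda R$ as a torsion-free class (proposition \ref{proposition:PreglidIsTorsionfree}), the two subobject lattices coincide.

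For $(4) \Leftrightarrow (5)$, I would use that each map $M(\lambda) \to M(0)$ is injective, so $M(\lambda)$ is an $F_0 R$-submodule of $M(0)$. If (5) holds, then $\bigoplus_\lambda M(\lambda)$ is a finite direct sum of Noetherian $F_0 R$-modules. Conversely, under (4) the module $M(0)$ is a direct summand of $\bigoplus_\lambda M(\lambda)$ and hence Noetherian, and if $M(\lambda) \neq 0$ for infinitely many $\lambda$ then the chain $\bigoplus_{\lambda \geq -k} M(\lambda)$ (for increasing $k$) fails to stabilize. For $(4) \Rightarrow (3)$, each ascending chain $N_1 \subseteq N_2 \subseteq \cdots$ in $M$ yields an ascending chain $\bigoplus_\lambda N_k(\lambda) \subseteq \bigoplus_\lambda M(\lambda)$ of $F_0 R$-submodules; stabilization of the latter is equivalent to stabilization of each component, hence of the original chain.

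The remaining and most delicate step is $(3) \Rightarrow (5)$, which I would handle by constructing explicit subfunctors of $M$. To force $M(\lambda) = 0$ for $\lambda \ll 0$, define $P_k \subseteq M$ by $P_k(\lambda) = M(\lambda)$ for $\lambda \geq -k$ and $P_k(\lambda) = 0$ otherwise; closure under the $\FF_\Lambda R$-action is immediate since any action originating at a truncated level is automatically zero. To show $M(0)$ is Noetherian, given an ascending chain $A_1 \subseteq A_2 \subseteq \cdots$ in $M(0)$, set $\tilde A_k(0) = A_k$ and $\tilde A_k(\lambda) = 0$ for $\lambda < 0$; the only nontrivial verification is that $F_0 R \cdot A_k \subseteq A_k$ at level $0$, which holds by hypothesis. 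Both chains of subfunctors must stabilize by (3), giving the two parts of (5). The subtle point is that these truncated constructions work precisely because $\Lambda = \bZ^{\leq 0}$ has $0$ as a maximum, so one-sided truncation remains compatible with the filtered action.
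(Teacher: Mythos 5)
Your proposal is correct and matches the paper's approach: the equivalence of (1)--(3) is obtained exactly as in the paper, from the closure of $\Glid FR \subseteq \Prefrag FR \subseteq \Mod \FF_\Lambda R$ under subobjects (proposition \ref{proposition:PreglidIsTorsionfree} and corollary \ref{corollary:GlidersClosedUnderSubobjects}), which identifies the relevant subobject posets. The paper dismisses the remaining implications as ``easy''; your truncation arguments for $(3)\Rightarrow(5)$ and the componentwise comparison for $(4)\Rightarrow(3)$ are the natural ways to fill that in and are sound, relying as you note on $0$ being the maximum of $\Lambda$.
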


\begin{proof}
As each of the subcategories $\Glid FR \subseteq \Prefrag FR \subseteq \Mod \FF_\Lambda R$ is closed under subobjects (see proposition \ref{proposition:PreglidIsTorsionfree} and corollary \ref{corollary:GlidersClosedUnderSubobjects}), it is clear that the first three statements are equivalent.  The other statements are easy.
\end{proof}

\begin{example}
Let $\Gamma, \Lambda$ be as in proposition \ref{proposition:WhenNoetherian}.  Let $FR$ be the given by
\[F_i R = \begin{cases} 0 & i < 0, \\ \bC & i = 0, \\ \bC[t] & i > 0.\end{cases}\]
The projective prefragment $\FF_\Lambda R(0,-)$ is Noetherian, but the projective fragment $\FF_\Lambda R(-1,-)$ is not Noetherian (as $\FF_\Lambda R(-1,0) \cong \bC[t]$ is not a finitely generated $\bC$-vector space).

The preglider $L(\FF_\Lambda R(0,-)) = \oFF_\Lambda R(0,-)$ is not Noetherian.
\end{example}
\section{Natural gliders and pregliders}\label{section:NaturalGliders}

Let $\Gamma$ be an ordered group and let $\Lambda \subseteq \Gamma$ be any subset.  Let $FR$ be a $\Gamma$-filtered ring.  In this section, we assume that $\Lambda$ has a maximal element which we denote by $0$.  In order to discuss the concept of a natural glider from \cite{CaenepeelVanOystaeyenBook19}, we introduce the category $\NPreglid FR$ of natural pregliders over $FR$.

\subsection{Natural pregliders}

As in \S\ref{subsection:RepresentationsOfCategories}, the inclusion $n\colon \{0, \infty\} \to \Lambda \coprod \{\infty\}$ induces a restriction functor $n^*\colon \Mod \oFF_\Lambda R \to \Mod \oFF_{\{0\}} R$, which has a left adjoint $n_!$ and a right adjoint $n_*.$  These adjoints are fully faithful (see \cite[remark~2.3 and example~2.13]{Psaroudakis14}).  The adjoint triple $(n_!, n^*, n_*)$ restricts to the corresponding subcategories of pregliders, as can be seen from the following explicit formulation.

\begin{proposition}\label{proposition:AdjointsNaturalPregliders}
The natural functor $n^*\colon \Preglid FR \to \operatorname{Preglid}_{\{0\}} FR$ has a left adjoint given by
\begin{align*}
n_!(M)(\lambda) &= \begin{cases} M(\lambda) & \lambda \in \{0, \infty\} \\ 0 & \mbox{otherwise,} \end{cases}
\intertext{and a right adjoint given by}
n_*(M)(\lambda) &= \{m \in M(\infty) \mid F_{\lambda^{-1}}R \cdot m \subseteq \im M(1_{0,\infty})\}.
\end{align*}
\end{proposition}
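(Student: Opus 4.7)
The plan is to derive the restricted adjunctions from the known adjoint triple $(n_!,n^*,n_*)$ on the ambient module categories (see \S\ref{subsection:RepresentationsOfCategories}), by verifying that the displayed formulas compute $n_!, n_*$ applied to a preglider, and that the outputs lie in $\Preglid FR$. For the left adjoint, the prescription $n_!(M)(\lambda)=M(\lambda)$ for $\lambda\in\{0,\infty\}$ and $0$ otherwise assembles into an $\oFF_\Lambda R$-module thanks to the maximality of $0$ in $\Lambda$: any morphism $\lambda\to\mu$ in $\oFF_\Lambda R$ with both $n_!M(\lambda)$ and $n_!M(\mu)$ nonzero forces $\{\lambda,\mu\}\subseteq\{0,\infty\}$, so the $\oFF_\Lambda R$-structure reduces on those two objects to the $\oFF_{\{0\}}R$-structure of $M$. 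The preglider property is inherited from $M$, and a morphism $n_!M\to N$ is determined by its components at $0$ and $\infty$, yielding the adjunction bijection.

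For $n_*$, the main content is verifying that the prescribed subgroups $n_*M(\lambda)\subseteq M(\infty)$ form an $\oFF_\Lambda R$-module in which all structural maps $n_*M(1_{\lambda,\mu})$ are subgroup inclusions (automatically yielding the preglider property). Given $s\in F_{\mu\lambda^{-1}}R=\Hom_{\oFF_\Lambda R}(\lambda,\mu)$ and $m\in n_*M(\lambda)$, the membership $s\cdot m\in n_*M(\mu)$ follows from the filtration inclusion $F_{\mu^{-1}}R\cdot F_{\mu\lambda^{-1}}R\subseteq F_{\lambda^{-1}}R$. When $\lambda\leq\mu$ in $\Lambda$, one has $F_{\mu^{-1}}R\subseteq F_{\lambda^{-1}}R$, giving $n_*M(\lambda)\subseteq n_*M(\mu)$. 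The identifications $n_*M(0)=\im M(1_{0,\infty})\cong M(0)$ (using $1_R\in F_eR$ together with the naturality identity $1_{0,\infty}\cdot r=r\cdot 1_{0,\infty}$ in $\oFF_\Lambda R$) and $n_*M(\infty)=M(\infty)$ provide the counit isomorphism $n^*n_*M\cong M$.

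Finally, for the adjunction bijection $\Hom(N,n_*M)\cong\Hom(n^*N,M)$, I would send $\phi\colon n^*N\to M$ to $\psi$ with $\psi_\lambda(n)=\phi_\infty(N(1_{\lambda,\infty})(n))$. To see $\psi_\lambda(n)\in n_*M(\lambda)$, one computes, for $r\in F_{\lambda^{-1}}R$, that the $R$-equivariance of $\phi_\infty$ combined with the identity $r\cdot 1_{\lambda,\infty}=1_{0,\infty}\cdot r$ of morphisms into $\infty$ in $\oFF_\Lambda R$ rewrites $r\cdot\psi_\lambda(n)$ as $M(1_{0,\infty})(\phi_0(N(r)(n)))\in\im M(1_{0,\infty})$. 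The inverse sends $\psi$ to $n^*\psi$ composed with the counit. The principal subtlety throughout is the bookkeeping of viewing a single ring element as a morphism in several Hom-spaces of $\oFF_\Lambda R$ and invoking naturality of $N$ and $M$ to pass between them; once these conventions are fixed the verifications are entirely routine.
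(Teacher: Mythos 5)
Your argument is correct and follows exactly the route the paper intends: the paper states this proposition without proof, merely noting that the adjoint triple $(n_!,n^*,n_*)$ on the ambient module categories restricts to the preglider subcategories, and your verifications (functoriality of the two formulas via the maximality of $0$ and the filtration inclusion $F_{\mu^{-1}}R\cdot F_{\mu\lambda^{-1}}R\subseteq F_{\lambda^{-1}}R$, the preglider property, and the unit/counit computations) are precisely the routine details being omitted. No gaps.
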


\begin{definition}
We write $\NPreglid FR$ for the essential image of the functor $n_*\colon \operatorname{Preglid}_{\{0\}} FR \to \Preglid FR.$  An object of $\NPreglid FR$ is called a \emph{natural preglider}.  We write $\nu\colon \NPreglid FR \to \Preglid FR$ for the embedding and $\rho\colon \Preglid FR \to \NPreglid FR$ for the corresponding left adjoint.
\end{definition}

\begin{remark}\label{remark:NaturalUnitEpiMono}
For each $M \in \Preglid FR,$ the unit of the adjunction $M \to \nu \circ \rho(M)$ is a monomorphism (as, for every $\lambda \in \Lambda$, it is given by the inclusion $M(1_{\lambda, \infty})\colon M(\lambda) \to M^*(\lambda)$) and an epimorphism (as the map $M(\infty) \to (\nu \circ \rho(M)) (\infty)$ is an isomorphism).
\end{remark}

\begin{remark}
Since $\NPreglid FR \simeq \operatorname{Preglid}_{\{0\}} FR$, we know that $\NPreglid FR$ is quasi-abelian.  However, as the following example shows, the embedding $\NPreglid FR \to \Preglid FR$ need not commute with colimits.
\end{remark}

\begin{example}\label{example:CokernelNaturalPregliders}
We set $\Gamma = \bZ$ and $\Lambda = \bZ^{\leq 0}.$  Let $R = \bC[t]$ with filtration $\bC \subseteq \bC[t] \subseteq \bC[t] \subseteq \ldots$ where $F_0 R = \bC.$  We consider the following map between natural pregliders:
\[
\xymatrix{
M \ar[d]^{f} & \cdots \ar@{^{(}->}[r]& 0 \ar@{^{(}->}[r]\ar[d]& \bC \ar@{^{(}->}[r]\ar[d] & \bC[t]\ar[d] \\
N & \cdots \ar@{^{(}->}[r] & 0\ar@{^{(}->}[r] & \bC[t^{\leq 1}] \ar@{^{(}->}[r] & \bC[t] \\
}\]
given by multiplication by $t$.  One readily verifies that $f$ is an inflation in $\Preglid FR$, and that the cokernel (in $\Preglid FR$) is not a natural preglider.
\end{example}

\begin{proposition}\label{proposition:InflationsToNaturalPregliders}
Let $s\colon K \inflation M$ be an inflation in $\Preglid FR$.  If $M$ is a natural preglider, then so is $K.$
\end{proposition}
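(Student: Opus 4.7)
The plan is to unfold the naturality condition via the explicit formula for $n_*$ in proposition \ref{proposition:AdjointsNaturalPregliders}, and then to transfer naturality from $M$ to $K$ by exploiting that the cokernel of $s$, computed in $\Mod \oFF_\Lambda R$, is itself a preglider.

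First I would record the following reformulation: identifying each $N(\mu)$ with its image in $N(\infty)$ via the monomorphism $N(1_{\mu,\infty})$, a preglider $N$ is natural if and only if
\[N(\lambda) = \{n \in N(\infty) \mid F_{\lambda^{-1}} R \cdot n \subseteq N(0)\}\]
for every $\lambda \in \Lambda$. Applied to $M$, this gives an internal description of each $M(\lambda)$ as a subgroup of $M(\infty)$. Since inflations in $\Preglid FR$ are precisely the monomorphisms in $\Mod \oFF_\Lambda R$ whose cokernel again lies in $\Preglid FR$ (see the proof of proposition \ref{proposition:GlidAsLocalization}), the cokernel $C = M/K$ computed pointwise in $\Mod \oFF_\Lambda R$ is a preglider; in particular $C(1_{\lambda,\infty})\colon C(\lambda) \to C(\infty)$ is a monomorphism for each $\lambda$. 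Pointwise injectivity of $s$ allows me to identify $K(\lambda) \subseteq M(\lambda)$ and $K(\infty) \subseteq M(\infty)$, so that $C(\lambda) = M(\lambda)/K(\lambda)$ and $C(\infty) = M(\infty)/K(\infty)$.

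With these identifications in hand, one inclusion is automatic: $K(\lambda) \subseteq \{k \in K(\infty) \mid F_{\lambda^{-1}} R \cdot k \subseteq K(0)\}$ is just the preglider axiom for $K$ applied to the morphism $1_{\lambda,0}$. The reverse inclusion is the heart of the proof. Given $k \in K(\infty)$ with $F_{\lambda^{-1}} R \cdot k \subseteq K(0) \subseteq M(0)$, naturality of $M$ places $k$ in $M(\lambda)$; the class of $k$ in $C(\lambda) = M(\lambda)/K(\lambda)$ is sent by $C(1_{\lambda,\infty})$ to the class of $k$ in $C(\infty) = M(\infty)/K(\infty)$, which is zero since $k \in K(\infty)$. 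Injectivity of $C(1_{\lambda,\infty})$ then forces $k \in K(\lambda)$, which completes the verification that $K$ is natural.

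I do not anticipate a serious obstacle beyond keeping the various identifications straight. The only non-tautological step is the observation that the preglider axiom for the cokernel $C$ is exactly the constraint needed to prevent an element of $K(\infty) \cap M(\lambda)$ from escaping $K(\lambda)$; everything else is bookkeeping.
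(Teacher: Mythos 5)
Your argument is correct, but it follows a genuinely different route from the paper's. The paper's proof is purely categorical: since $M$ is natural, the inflation $s\colon K \inflation M$ factors through the unit $K \to \nu\rho(K)$; because $\Preglid FR$ is quasi-abelian (hence idempotent complete), a first factor of an inflation is again an inflation, and since the unit is also an epimorphism (remark \ref{remark:NaturalUnitEpiMono}) it must be an isomorphism, so $K \cong \nu\rho(K)$ is natural. You instead unfold the formula for $n_*$ from proposition \ref{proposition:AdjointsNaturalPregliders} into an element-wise criterion and transfer it from $M$ to $K$ using the injectivity of $C(1_{\lambda,\infty})$ for the pointwise cokernel $C = M/K$; your identification of inflations in $\Preglid FR$ with pointwise monomorphisms whose $\Mod \oFF_\Lambda R$-cokernel is again a preglider is justified by the extension-closedness used in proposition \ref{proposition:GlidAsLocalization}, and the rest of your bookkeeping checks out. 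The trade-off: the paper's argument is shorter and reuses general quasi-abelian machinery (the same pattern recurs in propositions \ref{proposition:NPreglidExtensionClosed} and \ref{proposition:InflationsToNaturalGliders}), whereas your computation makes explicit exactly where the hypothesis that $s$ is an inflation, rather than a mere monomorphism, enters --- namely through the preglider axiom for the cokernel --- which is a genuinely illuminating point the paper's proof leaves implicit.
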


\begin{proof}
Using that $M$ is a natural preglider, the adjunction gives the following commutative diagram:
\[\xymatrix{
K \ar@{>->}[r]^{s} \ar[d]& M \ar@{=}[d] \\
\nu \circ \rho(K) \ar[r] & M}\]
As $\Preglid FR$ is a quasi-abelian category, we know that the canonical morphism $K \to \nu \circ \rho(K)$ is an inflation.  As it is also an epimorphism (see remark \ref{remark:NaturalUnitEpiMono}), we find that it is an isomorphism.  Hence, $K$ is a natural preglider.
\end{proof}

\begin{proposition}\label{proposition:NPreglidExtensionClosed}
The subcategory $\NPreglid FR$ lies extension-closed in $\Preglid FR.$
\end{proposition}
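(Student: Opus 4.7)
My plan is to reduce the extension-closedness to a pointwise element chase, using the explicit characterization of natural pregliders given in proposition \ref{proposition:AdjointsNaturalPregliders}. A preglider $M$ is natural precisely when, after identifying each $M(\lambda)$ with its image in $M(\infty)$ via the monomorphism $M(1_{\lambda,\infty})$, one has
\[
M(\lambda) \;=\; \{\, m \in M(\infty) \mid F_{\lambda^{-1}} R \cdot m \subseteq M(0) \,\}
\]
for every $\lambda \in \Lambda$ (here $F_{\lambda^{-1}} R \subseteq R = \End_{\oFF_\Lambda R}(\infty)$ acts on $M(\infty)$). The inclusion ``$\subseteq$'' holds by functoriality of $M$ for any preglider, so naturality amounts to the reverse inclusion ``$\supseteq$''.

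Now let $K \inflation M \deflation N$ be a conflation in $\Preglid FR$ with $K$ and $N$ natural. Since $\Preglid FR$ is a torsion-free (hence extension-closed) subcategory of $\Mod \oFF_\Lambda R$ by proposition \ref{proposition:PreglidIsTorsionfree}, this conflation is a short exact sequence in the ambient abelian category, so $0 \to K(\lambda) \to M(\lambda) \to N(\lambda) \to 0$ is exact for every $\lambda$. Fix $\lambda \in \Lambda$ and $m \in M(\infty)$ with $F_{\lambda^{-1}} R \cdot m \subseteq M(0)$; I must show $m \in M(\lambda)$. Pushing $m$ to $\bar m \in N(\infty)$ gives $F_{\lambda^{-1}} R \cdot \bar m \subseteq N(0)$, so naturality of $N$ yields $\bar m \in N(\lambda)$. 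By levelwise surjectivity of $M(\lambda) \twoheadrightarrow N(\lambda)$, choose a preimage $m' \in M(\lambda) \subseteq M(\infty)$.

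The difference $m - m'$ then lies in $K(\infty) = \ker(M(\infty) \to N(\infty))$. For any $r \in F_{\lambda^{-1}} R$, the identity $1_{0,\infty} \circ r = r \circ 1_{\lambda,\infty}$ in $\oFF_\Lambda R$ (both sides equal $r \in R = \Hom(\lambda, \infty)$) together with functoriality of $M$ shows $r \cdot m' \in M(0)$; combined with the hypothesis on $m$, we get $F_{\lambda^{-1}} R \cdot (m - m') \subseteq M(0)$. A direct check using that $N(0) \to N(\infty)$ is a monomorphism identifies $K(0) = K(\infty) \cap M(0)$ inside $M(\infty)$, so $F_{\lambda^{-1}} R \cdot (m - m') \subseteq K(0)$. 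Naturality of $K$ then gives $m - m' \in K(\lambda) \subseteq M(\lambda)$, and therefore $m = m' + (m - m') \in M(\lambda)$, as required.

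The only non-routine point is keeping the various identifications straight: $M(\lambda) \hookrightarrow M(0) \hookrightarrow M(\infty)$ for $\lambda \leq 0$, and the fact that the ``partial action'' $F_{\lambda^{-1}} R \times M(\lambda) \to M(0)$ coincides with the restriction of the $R$-action on $M(\infty)$. This compatibility is precisely what forces $F_{\lambda^{-1}} R \cdot m' \subseteq M(0)$ for $m' \in M(\lambda)$, and it is the one place where the explicit structure of $\oFF_\Lambda R$ (including $\End(\infty) = R$) really enters the argument.
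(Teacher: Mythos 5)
Your proof is correct, but it takes a genuinely different route from the paper's. You unwind the explicit formula for $n_*$ from proposition \ref{proposition:AdjointsNaturalPregliders} and run a pointwise element chase: push the test element $m$ down to $N$, use naturality of $N$ and levelwise surjectivity to lift back to some $m' \in M(\lambda)$, and absorb the discrepancy $m-m'$ into $K$ via the identification $K(0)=K(\infty)\cap M(0)$ inside $M(\infty)$. All the steps check out; in particular the compatibility $1_{0,\infty}\circ r = r\circ 1_{\lambda,\infty}$ in $\oFF_\Lambda R$ that you isolate at the end is indeed the crucial point making $F_{\lambda^{-1}}R\cdot m'\subseteq M(0)$ for $m'\in M(\lambda)$. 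The paper argues structurally instead: it applies the left-exact functor $\nu\circ\rho$ to the conflation $X\inflation Y\deflation Z$, shows that the resulting row $X\to\nu\rho(Y)\to Z$ is again a conflation (left exactness gives the kernel statement, and quasi-abelianness of $\Preglid FR$ together with the fact that the composite $Y\to\nu\rho(Y)\to Z$ is a deflation gives that $\nu\rho(Y)\to Z$ is a deflation), and concludes via the short five lemma that the unit $Y\to\nu\rho(Y)$ is an isomorphism. The content is essentially the same --- your lift-and-correct step is precisely the surjectivity input that the short five lemma packages --- but your version is more elementary and self-contained, at the cost of bookkeeping the identifications of $M(\lambda)$ and $M(0)$ as subgroups of $M(\infty)$, while the paper's version is shorter given the surrounding machinery and makes no reference to elements. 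One small point worth stating explicitly in your write-up: the levelwise exactness you use comes from the fact that the conflations of $\Preglid FR$ are exactly the short exact sequences of $\Mod\oFF_\Lambda R$ (as recorded in the proof of proposition \ref{proposition:GlidAsLocalization}), which is the intended consequence of the extension-closedness you cite from proposition \ref{proposition:PreglidIsTorsionfree}.
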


\begin{proof}
Let $X \inflation Y \deflation Z$ be a conflation in $\Preglid FR$.  Assume that $X$ and $Z$ are natural pregliders.  Applying $\nu \circ \rho$ yields the commutative diagram
\[\xymatrix{
X \ar@{=}[d]\ar@{>->}[rr] && Y \ar@{->>}[rr] \ar[d] && Z\ar@{=}[d] \\
X \ar[rr]^{i} && \nu \circ \rho(Y) \ar[rr]^{p} && Z }\]
As $\nu \circ \rho$ is left exact, we know that $i = \ker(p)$.  Furthermore, as the composition $Y \to \nu \circ \rho(Y) \to Z$ is a deflation, and $\Preglid FR$ is quasi-abelian, we find that $p\colon \nu \circ \rho(Y) \to Z$ is a deflation.  Hence, the bottom row is a conflation.  The short five lemma now shows that the canonical morphism $Y \to \nu \circ \rho(Y)$ is an isomorphism.  Hence, $Y$ is a natural preglider.
\end{proof}

\begin{remark}\label{remark:TwoExactStructuresOnNPreglid}
The above puts forward two different conflation structures on $\NPreglid FR.$
\begin{enumerate}
	\item As $\NPreglid FR \simeq \operatorname{Preglid}_{\{0\}} FR$, we know that $\NPreglid FR$ is a quasi-abelian category.  The conflation structure is induced by $\Preglid FR$ via the embedding $n_!\colon \operatorname{Preglid}_{\{0\}} FR \to \Preglid FR.$
	\item By proposition \ref{proposition:NPreglidExtensionClosed}, we know that the exact structure of $\Preglid FR$ induces an exact structure on $\NPreglid FR$: the conflations in $\NPreglid FR$ are those sequences which are conflations in $\Preglid FR.$
\end{enumerate}
In general, these two conflation structures need not coincide.  As an example, the morphism $f$ in example \ref{example:CokernelNaturalPregliders} is an inflation in the first exact structure (as it is the kernel of its cokernel), but not in the second exact structure.
\end{remark}

\subsection{Natural gliders}

Having discussed the category of natural pregliders, we now turn to the category of natural gliders.

\begin{definition}
A glider $M \in \Glid FR$ is called a \emph{natural glider} if it is isomorphic to $Q(\bar{M})$ for a natural preglider $\bar{M}$.  We write $\NGlid FR$ for the full subcategory of $\Glid FR$ given by the natural gliders.
\end{definition}

\begin{remark}
The category $\NGlid FR$ is the essential image of $\NPreglid FR$ under the localization functor $Q\colon \Preglid FR \to \Glid FR.$
\end{remark}

In proposition \ref{proposition:CriterionNatural}, we establish a recognition result for natural gliders.  We start with the following lemma.

\begin{lemma}\label{lemma:NaturalPregliders}
Let $s\colon M \stackrel{\sim}{\rightarrow} N$ be a weak isomorphism in $\Preglid FR$ (i.e. $s \in \Sigma$).  If $N$ is a natural preglider, then so is $M.$
\end{lemma}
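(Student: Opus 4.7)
The plan is to show that the unit $\eta_M\colon M \to \nu \rho (M)$ of the adjunction is an isomorphism. By proposition \ref{proposition:AdjointsNaturalPregliders}, this amounts to verifying that, for each $\lambda \in \Lambda$,
\[\im M(1_{\lambda, \infty}) = \{y \in M(\infty) \mid F_{\lambda^{-1}}R \cdot y \subseteq \im M(1_{0, \infty})\}\]
as subsets of $M(\infty)$. The inclusion $\subseteq$ follows immediately from the functoriality of $M$ (since $F_{\lambda^{-1}}R = \oFF_\Lambda R(\lambda, 0)$). Note that by remark \ref{remark:NaturalUnitEpiMono} the unit $\eta_M$ is automatically a mono-epi, so the quasi-abelian structure alone cannot upgrade it to an isomorphism; a direct element-level argument is required.

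For the reverse inclusion, given $y$ in the right-hand set, I would first push it through $s_\infty$: by $R$-linearity of $s_\infty$ and the commutative square with $s_0$, one obtains $F_{\lambda^{-1}}R \cdot s_\infty(y) \subseteq \im N(1_{0, \infty})$. Since $N$ is natural, the description $\nu\rho(N) = N$ provides some $x \in N(\lambda)$ with $N(1_{\lambda, \infty})(x) = s_\infty(y)$. Because $s_\lambda$ is an isomorphism, there is a unique $z \in M(\lambda)$ with $s_\lambda(z) = x$, and naturality of $s$ then forces $u \coloneqq y - M(1_{\lambda, \infty})(z)$ to lie in $\ker s_\infty$.

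The key obstacle is to promote ``agreement after applying $s_\infty$'' to honest equality, i.e.~to conclude $u = 0$. The trick is to exploit the filtered action: on the one hand $F_{\lambda^{-1}}R \cdot u \subseteq \im M(1_{0, \infty})$ by linearity (both $y$ and $M(1_{\lambda, \infty})(z)$ are carried into $\im M(1_{0, \infty})$ by this action), and on the other hand $F_{\lambda^{-1}}R \cdot u \subseteq \ker s_\infty$, since $\ker s_\infty$ is an $R$-submodule of $M(\infty)$. Under the monomorphism $M(0) \hookrightarrow M(\infty)$, the intersection $\im M(1_{0, \infty}) \cap \ker s_\infty$ corresponds to $\ker s_0$, which vanishes because $s \in \Sigma$. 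Hence $F_{\lambda^{-1}}R \cdot u = 0$, and taking $1_R \in F_e R \subseteq F_{\lambda^{-1}}R$ (using $\lambda \leq 0$ in the ordered group) gives $u = 1_R \cdot u = 0$. Therefore $y = M(1_{\lambda, \infty})(z) \in \im M(1_{\lambda, \infty})$, so $M$ is natural.
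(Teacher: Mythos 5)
Your proof is correct, but it takes a genuinely different route from the paper's. The paper's proof is purely structural: it factors the weak isomorphism as $M \stackrel{\sim}{\deflation} I \stackrel{\sim}{\inflation} N$ (cf.~remark \ref{remark:WeakIsomorphismsAdmissible}), invokes proposition \ref{proposition:InflationsToNaturalPregliders} to conclude that $I$ is a natural preglider, and then applies the extension-closedness of proposition \ref{proposition:NPreglidExtensionClosed} to the conflation $\ker(s) \inflation M \deflation I$, whose outer terms are natural. Your argument instead verifies the identity $\im M(1_{\lambda,\infty}) = (\nu\circ\rho(M))(\lambda)$ directly on elements, and every step checks out: the easy inclusion via functoriality, the transport of the defining condition along $s_\infty$, the choice of $z \in M(\lambda)$ using that $s_\lambda$ is invertible, the two containments $F_{\lambda^{-1}}R\cdot u \subseteq \im M(1_{0,\infty})$ and $F_{\lambda^{-1}}R\cdot u \subseteq \ker s_\infty$, the vanishing $\im M(1_{0,\infty}) \cap \ker s_\infty = M(1_{0,\infty})(\ker s_0) = 0$ (using that $N(1_{0,\infty})$ is a monomorphism and $s_0$ is invertible), and the final evaluation at $1_R \in F_{\lambda^{-1}}R$, which is legitimate since $\lambda \leq 0 = e$ forces $F_eR \subseteq F_{\lambda^{-1}}R$. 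What your computation buys is independence from propositions \ref{proposition:InflationsToNaturalPregliders} and \ref{proposition:NPreglidExtensionClosed} together with an explicit identification of the only possible obstruction (the kernel of $s_\infty$ meeting $\im M(1_{0,\infty})$); what the paper's argument buys is brevity and the observation that the lemma is a formal consequence of how $\NPreglid FR$ sits inside the conflation structure of $\Preglid FR$. One small caveat: your aside that the quasi-abelian structure ``cannot upgrade'' the mono-epi unit to an isomorphism is fair motivation for computing directly, but the paper does perform exactly such an upgrade --- it just first reduces to the inflation case via the deflation-inflation factorization of $s$.
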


\begin{proof}
As $s\colon M \to N$ is a weak isomorphism, it factors as $M \stackrel{\sim}{\deflation} I \stackrel{\sim}{\inflation} N$.  It follows from proposition \ref{proposition:InflationsToNaturalPregliders} that $I$ is a natural preglider, and then from proposition \ref{proposition:NPreglidExtensionClosed} that $M$ is a natural preglider (the last statement uses that $\ker(M \stackrel{\sim}{\deflation} I)$ is a natural preglider).
\end{proof}

\begin{proposition}\label{proposition:CriterionNatural}
A glider $M \in \Glid FR$ is natural if and only if $L(M) \in \Preglid FR$ is a natural preglider.
\end{proposition}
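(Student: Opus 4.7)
The plan is to show both directions separately, with the easy direction following immediately from the fully faithfulness of $L$, and the nontrivial direction reducing to lemma \ref{lemma:NaturalPregliders} via the counit of the adjunction $L \dashv Q$.

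For the ``if'' direction, suppose $L(M)$ is a natural preglider. Since $L$ is fully faithful (proposition \ref{proposition:LeftAdjoint}), the unit $\eta\colon 1_{\Glid FR}\to Q\circ L$ is a natural isomorphism, hence $M\cong Q(L(M))$, which exhibits $M$ as being in the essential image of $\NPreglid FR$ under $Q$. Thus $M$ is a natural glider by definition.

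For the ``only if'' direction, assume $M \in \NGlid FR$, so there exists a natural preglider $\bar{M}$ with $M \cong Q(\bar{M})$. Applying $L$ gives $L(M)\cong LQ(\bar{M})$. Consider the counit $\varepsilon_{\bar{M}}\colon LQ(\bar{M})\to \bar{M}$ of the adjunction $L\dashv Q$. The triangle identity $(Q\varepsilon)\circ (\eta Q) = 1_{Q}$, combined with the invertibility of the unit $\eta$, forces $Q(\varepsilon_{\bar{M}})$ to be an isomorphism in $\Glid FR$. By proposition \ref{proposition:GlidAsLocalization}, the right multiplicative system $\Sigma$ is saturated, so $\varepsilon_{\bar{M}} \in \Sigma$, i.e., $\varepsilon_{\bar{M}}$ is a weak isomorphism in $\Preglid FR$. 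Since its codomain $\bar{M}$ is a natural preglider, lemma \ref{lemma:NaturalPregliders} yields that $LQ(\bar{M})\cong L(M)$ is a natural preglider, as desired.

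The whole argument is essentially formal manipulation of the adjunction; the substantive content has already been carried out in lemma \ref{lemma:NaturalPregliders} (naturalness is inherited along weak isomorphisms, via the deflation--inflation factorization together with propositions \ref{proposition:InflationsToNaturalPregliders} and \ref{proposition:NPreglidExtensionClosed}). Thus I do not anticipate any genuine obstacle; the only point requiring care is identifying the counit $\varepsilon_{\bar M}$ as a weak isomorphism, which is precisely where the fully faithfulness of $L$ and the saturation of $\Sigma$ come into play.
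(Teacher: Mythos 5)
Your proof is correct and follows essentially the same route as the paper: the easy direction uses $QL \cong 1$, and the harder direction produces a weak isomorphism $L(M) \to \bar{M}$ from the adjunction and concludes via lemma \ref{lemma:NaturalPregliders}. You merely spell out what the paper compresses into ``by adjointness, we find a weak isomorphism'' (namely that the counit becomes invertible under $Q$ and that $\Sigma$ is saturated), which is a legitimate and complete justification of that step.
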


\begin{proof}
If $L(M)$ is a natural preglider, then $QL(M) \cong M$ is a natural glider.  For the reverse implication, let $M \in \Glid FR$ be a natural glider.  We know that there is a natural preglider $\bar{M} \in \Preglid FR$ such that $Q(\bar{M}) \cong M.$  By adjointness, we find a weak isomorphism $L(M) \stackrel{\sim}{\rightarrow} \bar{M}.$  Lemma \ref{lemma:NaturalPregliders} shows that $L(M)$ is a natural preglider.
\end{proof}

The following is an analogon of \ref{proposition:InflationsToNaturalPregliders} (see also \cite[lemma 1.6.2(3)]{CaenepeelVanOystaeyenBook19}).

\begin{proposition}\label{proposition:InflationsToNaturalGliders}
Let $s\colon K \inflation M$ be an inflation in $\Glid FR$.  If $M$ is a natural glider, then so is $K.$
\end{proposition}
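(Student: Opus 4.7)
The plan is to lift the inflation to $\Preglid FR$ via the functor $L$, and then invoke the preglider version (Proposition \ref{proposition:InflationsToNaturalPregliders}) together with the weak-isomorphism criterion (Lemma \ref{lemma:NaturalPregliders}). By Proposition \ref{proposition:CriterionNatural}, it suffices to show that $L(K)$ is a natural preglider.

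Starting from the inflation $s\colon K \inflation M$ in $\Glid FR$, I would complete it to a conflation $K \inflation M \deflation Z$. The main obstacle is that $L$ is not conflation-exact (see Example \ref{example:LeftAdjointNotExact}), so $L(s)$ need not itself be an inflation. However, as a left adjoint, $L$ preserves cokernels; combined with the fact that $\Preglid FR$ is quasi-abelian, this gives that $L(M) \to L(Z)$ is a deflation. Let $\bar{K} \inflation L(M) \deflation L(Z)$ be the resulting conflation in $\Preglid FR$, and let $t\colon L(K) \to \bar{K}$ be the canonical map. By the remark following Example \ref{example:LeftAdjointNotExact}, $t$ is a weak isomorphism (since $Q$ is conflation-exact and $Q \circ L \cong 1$, the map $Q(t)$ is an isomorphism).

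Now since $M$ is a natural glider, Proposition \ref{proposition:CriterionNatural} yields that $L(M)$ is a natural preglider. Applying Proposition \ref{proposition:InflationsToNaturalPregliders} to the inflation $\bar{K} \inflation L(M)$ shows that $\bar{K}$ is a natural preglider. Then Lemma \ref{lemma:NaturalPregliders}, applied to the weak isomorphism $t\colon L(K) \stackrel{\sim}{\rightarrow} \bar{K}$, forces $L(K)$ to be a natural preglider as well. One final application of Proposition \ref{proposition:CriterionNatural} then concludes that $K$ is a natural glider.

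The only delicate point is the failure of $L$ to preserve inflations; the whole argument is organised around routing past this by replacing $L(s)$ with the actual kernel $\bar{K} \inflation L(M)$ of $L$ applied to the cokernel map, and exploiting the fact that the discrepancy between $L(K)$ and $\bar{K}$ is precisely a weak isomorphism --- exactly the setting in which Lemma \ref{lemma:NaturalPregliders} is designed to operate.
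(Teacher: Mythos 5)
Your proposal is correct and follows essentially the same route as the paper: apply $L$, replace $L(s)$ by the genuine inflation $\ker(L(M)\deflation L(Z))\inflation L(M)$, note that the comparison map from $L(K)$ is a weak isomorphism, and then combine propositions \ref{proposition:CriterionNatural} and \ref{proposition:InflationsToNaturalPregliders} with lemma \ref{lemma:NaturalPregliders}. You are in fact slightly more explicit than the paper about the final appeal to lemma \ref{lemma:NaturalPregliders}, which the paper leaves implicit.
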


\begin{proof}
Let $K \stackrel{i}{\inflation} M \stackrel{p}{\deflation} N$ be a conflation in $\Glid FR$ with $M \in \NGlid FR.$  Applying the functor $L\colon \Glid FR \to \Preglid FR$, we find a sequence $LK \stackrel{Li}{\to} LM \stackrel{Lp}{\deflation} LN$.  The map $Li$ factors as $LK \stackrel{\sim}{\rightarrow} \ker Lp \inflation LM$, where the first map is a weak isomorphism.  It now follows from propositions \ref{proposition:InflationsToNaturalPregliders} and \ref{proposition:CriterionNatural} that $LK \in \NPreglid FR,$ and hence $K \in \NGlid FR,$ as required.
\end{proof}

\begin{corollary}
The conflation structure on $\NGlid FR$ induced by the embedding $\NGlid FR \to \Glid FR$ gives $\NGlid FR$ the structure of a deflation-exact category.
\end{corollary}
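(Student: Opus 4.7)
My plan is to verify axioms \ref{R0}, \ref{R1}, \ref{R2} of Definition \ref{definition:RightExact} directly; by construction a sequence in $\NGlid FR$ is a conflation for the induced structure exactly when it is a conflation in $\Glid FR$ all of whose terms lie in $\NGlid FR$. Axiom \ref{R0} is immediate since $0 \in \NGlid FR$. For \ref{R1}, given deflations $X \deflation Y \deflation Z$ in $\NGlid FR$, the composition is already a deflation in $\Glid FR$, so I only need its kernel to lie in $\NGlid FR$; that kernel embeds in $X \in \NGlid FR$ via an inflation, so Proposition \ref{proposition:InflationsToNaturalGliders} takes care of it.

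The substantive step is \ref{R2}: given $f\colon Y \deflation Z$ in $\NGlid FR$ and $g\colon W \to Z$ with $W \in \NGlid FR$, I must show the pullback exists in $\NGlid FR$ with the projection to $W$ a deflation. The pullback $P$ exists in $\Glid FR$, yielding a conflation $\ker f \inflation P \deflation W$, and $\ker f \in \NGlid FR$ by Proposition \ref{proposition:InflationsToNaturalGliders} applied to $\ker f \inflation Y$. The only real content is $P \in \NGlid FR$, and since $\NGlid FR$ is full in $\Glid FR$, any such $P$ automatically serves as the pullback inside $\NGlid FR$.

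To establish $P \in \NGlid FR$ I would exploit the adjunction $L \dashv Q$: since $L$ is fully faithful one has $Q L \cong 1_{\Glid FR}$, and $Q$ preserves limits as a right adjoint, so $P \cong Q(P')$ where $P'$ is the pullback of $L(Y) \to L(Z) \leftarrow L(W)$ formed in the quasi-abelian category $\Preglid FR$. The functor $L$ preserves colimits, so $L(f)$ is a cokernel and hence a deflation in $\Preglid FR$; therefore $P'$ fits into a conflation $K^* \inflation P' \deflation L(W)$ with $K^* = \ker L(f) \inflation L(Y)$. By Proposition \ref{proposition:CriterionNatural} the objects $L(Y), L(W)$ lie in $\NPreglid FR$; Proposition \ref{proposition:InflationsToNaturalPregliders} then gives $K^* \in \NPreglid FR$; and Proposition \ref{proposition:NPreglidExtensionClosed} (extension-closure of $\NPreglid FR$ in $\Preglid FR$) forces $P' \in \NPreglid FR$. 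Consequently $P \cong Q(P')$ is a natural glider by the very definition of $\NGlid FR$.

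The main obstacle is precisely this last reduction, because $L$ does not preserve pullbacks directly and one cannot simply compute $L(P)$ as a pullback in $\Preglid FR$; the detour through $Q(P')$ is what makes things work, using that $Q$ is a right adjoint satisfying $QL \cong 1$ and that $\NPreglid FR$ is well behaved under both inflation-subobjects and extensions inside its quasi-abelian ambient category.
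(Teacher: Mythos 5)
Your proof is correct: the reduction of \ref{R0} and \ref{R1} to proposition \ref{proposition:InflationsToNaturalGliders} is right, and your treatment of \ref{R2} — identifying the pullback $P$ in $\Glid FR$ with $Q(P')$ for $P'$ the pullback formed in $\Preglid FR$ (legitimate since $Q$ is a limit-preserving right adjoint with $QL \cong 1$), and then combining propositions \ref{proposition:CriterionNatural}, \ref{proposition:InflationsToNaturalPregliders} and \ref{proposition:NPreglidExtensionClosed} to get $P' \in \NPreglid FR$ — holds up at every step. The paper, however, states the corollary with no proof at all, as an immediate consequence of proposition \ref{proposition:InflationsToNaturalGliders} alone, and there is indeed a shortcut for \ref{R2} that avoids your detour through $\Preglid FR$: since $\Glid FR$ is deflation quasi-abelian and its conflations are all kernel-cokernel pairs (theorem \ref{theorem:GlidIsDeflationQuasiAbelian} and remark \ref{remark:ConflationStructureOnGlidNatural}), the pullback $P$ of $f\colon Y \deflation Z$ along $g\colon W \to Z$ is the kernel of $(f,-g)\colon Y \oplus W \to Z$, hence equals the kernel of the cokernel of $P \to Y \oplus W$, so that $P \inflation Y \oplus W$ is an inflation; as $\NGlid FR$ is additive, $Y \oplus W$ is a natural glider and proposition \ref{proposition:InflationsToNaturalGliders} immediately gives $P \in \NGlid FR$. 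In short, closure of $\NGlid FR$ under inflation-subobjects already forces closure under pullbacks of deflations, which is what makes the statement a one-line corollary; your argument proves the same thing by instead establishing extension-closure of the natural objects upstairs in $\Preglid FR$, which is heavier machinery but a perfectly valid alternative.
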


The following diagram can be appended to the diagram given in figure \ref{figure:TheDiagram}.

\begin{theorem}\label{theorem:NaturalGliders}
The following diagram commutes:
\[\xymatrix{
{\Preglid FR} && {\Glid FR} \ar[ll]_{L} \\
{\NPreglid FR} \ar[u]^{\nu} && {\NGlid FR} \ar[ll]_{L} \ar[u]^{\sigma} }\]
Moreover, the functors are fully faithful, the vertical functors have left adjoints, and the horizontal functors have right adjoints.
\end{theorem}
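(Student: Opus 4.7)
The plan is to assemble the four assertions from the adjunctions already in place. First, I would define the bottom horizontal arrow and verify commutativity. For $M\in\NGlid FR$, Proposition \ref{proposition:CriterionNatural} ensures that $L\sigma M$ is a natural preglider, so $L\sigma M\cong\nu L'M$ for a unique $L'M\in\NPreglid FR$. This is functorial in $M$ and defines the bottom horizontal arrow $L'\colon\NGlid FR\to\NPreglid FR$ (written simply as $L$ in the diagram); the commutativity $\nu L'\cong L\sigma$ holds by construction. Full faithfulness of $\nu$ and $\sigma$ is built into their definitions, and the top $L$ is fully faithful with right adjoint $Q$ by Proposition \ref{proposition:LeftAdjoint}. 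Hence $L'$ is also fully faithful, since $\nu L'\cong L\sigma$ is a composition of fully faithful functors and $\nu$ itself is fully faithful.

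Next, I would produce the right adjoint $Q'$ of $L'$. Because $\NGlid FR$ is by definition the essential image of $Q\circ\nu\colon\NPreglid FR\to\Glid FR$, this composite factors essentially uniquely as $\sigma\circ Q'$ for a functor $Q'\colon\NPreglid FR\to\NGlid FR$. The adjunction $L'\dashv Q'$ then follows from the chain
\[\Hom_{\NPreglid FR}(L'M,\bar N)\cong\Hom_{\Preglid FR}(\nu L'M,\nu\bar N)\cong\Hom_{\Preglid FR}(L\sigma M,\nu\bar N)\cong\Hom_{\Glid FR}(\sigma M,Q\nu\bar N)\cong\Hom_{\NGlid FR}(M,Q'\bar N),\]
valid for $M\in\NGlid FR$ and $\bar N\in\NPreglid FR$, which uses in turn full faithfulness of $\nu$, the commutativity just established, the adjunction $L\dashv Q$, and the identification $Q\nu\cong\sigma Q'$ together with full faithfulness of $\sigma$. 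Since $L'$ is a fully faithful left adjoint, $\NGlid FR$ is a coreflective subcategory of $\NPreglid FR$ with coreflection $Q'$; in particular, $Q'$ is a localization in the sense of Definition \ref{Definition:LocalizationWithRespectToMorphisms}.

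It remains to construct the left adjoint of $\sigma$. The left adjoint of $\nu$ is $\rho$ by the definition of $\NPreglid FR$. For $\sigma$, composing the adjunctions $L\dashv Q$ and $\rho\dashv\nu$ gives $\rho L\dashv Q\nu\cong\sigma Q'$, and then Proposition \ref{proposition:CycleOfAdjoints}\eqref{enumerate:CycleOfAdjointsLocOuter}, applied with $\CC=\Glid FR$, $\DD=\NPreglid FR$, $\EE=\NGlid FR$, $F=\rho L$, $G=Q'$ (the localization just identified), and $H=\sigma$, yields $GF=Q'\rho L\dashv\sigma$. The main conceptual point is the recognition that $Q'$ qualifies as a localization in the sense required by the cycle of adjoints; once the adjunction $L'\dashv Q'$ with $L'$ fully faithful is established, all remaining verifications reduce to straightforward manipulation of the adjunctions already in hand.
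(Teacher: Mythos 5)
Your proposal is correct and follows essentially the same route as the paper: restrict $L$ via Proposition \ref{proposition:CriterionNatural} to get commutativity and full faithfulness, restrict $Q$ to obtain the right adjoints of the horizontal functors, and apply Proposition \ref{proposition:CycleOfAdjoints}\eqref{enumerate:CycleOfAdjointsLocOuter} with $F=\rho\circ L$, $H=\sigma$ to produce the left adjoint of $\sigma$. You are in fact slightly more careful than the paper, since you explicitly verify the adjunction $L'\dashv Q'$ and record that $Q'$ is a localization, which is the hypothesis needed to invoke the cycle-of-adjoints lemma.
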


\begin{proof}
That the functor $L\colon \Glid FR \to \Preglid FR$ restricts to the natural gliders and pregliders, has been shown in proposition \ref{proposition:CriterionNatural}.  This shows that the diagram commutes.  By definition of the category $\NGlid FR$, the functor $Q\colon \Preglid FR \to \Glid FR$ also restricts to natural gliders and pregliders.  This shows that the horizontal functors have right adjoints.

It follows from proposition \ref{proposition:AdjointsNaturalPregliders} that $\nu$ has a left adjoint $\rho\colon \Preglid FR \to \NPreglid FR.$  It follows from proposition \ref{proposition:CycleOfAdjoints}\eqref{enumerate:CycleOfAdjointsLocOuter} (with $F = \rho \circ L$, $H = \sigma$, and $G = Q$) that $Q \circ \rho \circ L$ is left adjoint to $\sigma.$
\end{proof}

\begin{corollary}
The category $\NGlid FR$ is complete and cocomplete. 
\end{corollary}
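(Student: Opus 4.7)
The plan is to deduce this from Theorem~\ref{theorem:NaturalGliders} together with Proposition~\ref{proposition:LimitsAndColimits}. First, I would observe that Theorem~\ref{theorem:NaturalGliders} exhibits $\sigma\colon \NGlid FR \to \Glid FR$ as a fully faithful functor with left adjoint $\pi \coloneqq Q \circ \rho \circ L$, and that Proposition~\ref{proposition:LimitsAndColimits} ensures $\Glid FR$ is complete and cocomplete. Hence $\NGlid FR$ is a reflective subcategory of a complete and cocomplete category, and the statement reduces to the standard fact that reflective subcategories inherit (co)completeness.

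Spelling this out: given a diagram $D\colon J \to \NGlid FR$, I would compute the colimit of $D$ in $\NGlid FR$ as $\pi(\colim_{\Glid FR}(\sigma \circ D))$; this is verified directly from the adjunction $\pi \dashv \sigma$ and the full faithfulness of $\sigma$ via a short Yoneda calculation. For the limit, I would first form $L \coloneqq \lim_{\Glid FR}(\sigma \circ D)$ and argue that $L$ lies in the essential image of $\sigma$. This argument uses that $\pi \circ \sigma \cong 1_{\NGlid FR}$ (since $\sigma$ is fully faithful), so that the components $\eta_{\sigma D(j)}$ of the unit $\eta\colon 1_{\Glid FR} \to \sigma \pi$ are isomorphisms; one then concludes via a cone-comparison argument invoking the universal property of $L$ that $\eta_L$ is itself an isomorphism. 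Consequently $L \cong \sigma \pi(L)$, so $\pi(L)$ serves as the desired limit of $D$ in $\NGlid FR$, and $\sigma$ preserves it.

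I do not anticipate any real obstacle: the entire content of the corollary is packaged in the reflectivity already established by Theorem~\ref{theorem:NaturalGliders}, with the (co)completeness of the ambient category supplied by Proposition~\ref{proposition:LimitsAndColimits}. The only point worth checking carefully is that the limit in $\Glid FR$ of a diagram valued in the essential image of $\sigma$ again lies in that essential image, and this is a standard formal consequence of the reflectivity.
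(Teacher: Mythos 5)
Your proposal is correct and follows essentially the same route as the paper: the paper's proof is exactly the one-line observation that Theorem \ref{theorem:NaturalGliders} makes $\NGlid FR$ a reflective subcategory of the complete and cocomplete category $\Glid FR$ (the paper additionally notes it is coreflective in $\NPreglid FR$, which gives an alternative description of colimits, but reflectivity alone suffices as you argue). Only a cosmetic remark: avoid naming the limit object $L$, since that symbol already denotes the functor $L\colon \Glid FR \to \Preglid FR$ in the paper.
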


\begin{proof}
It follows from theorem \ref{theorem:NaturalGliders} that $\NGlid FR$ is a reflective subcategory of $\Glid FR$, as well as a coreflective subcategory of $\NPreglid FR$, both of which are complete and cocomplete.
\end{proof}

\begin{remark}
As in proposition \ref{proposition:AdjointsNaturalPregliders}, the category $\operatorname{Preglid}_{\{0\}} FR$ admits two fully faithful embeddings into $\Preglid FR$: via $n_!$ and via $n_*$.  Both essential images contain the subcategory $i_*(\Mod R).$

Since $n_!(\operatorname{Preglid}_{\{0\}} FR)$ is a Serre subcategory of $\Preglid FR$ (and $i_*(\Mod R)$ is deflation-percolating in $\Preglid FR$, see proposition \ref{proposition:GlidAsLocalization}), we find that $i_*(\Mod R)$ is a deflation-percolating subcategory of $n_!(\operatorname{Preglid}_{\{0\}} FR)$.  The quotient $n_!(\operatorname{Preglid}_{\{0\}} FR) / i_*(\Mod R)$ is equivalent to $\operatorname{Glid}_{\{0\}} FR \simeq \Mod S.$

It follows from proposition \ref{proposition:InflationsToNaturalPregliders} that $i_*(\Mod R)$ is a deflation-percolating subcategory of $\NPreglid FR.$  It follows from lemma \ref{lemma:NaturalPregliders} that $\NPreglid FR / i_*(\Mod R)$ is equivalent to $\NGlid FR$ as conflation categories. 

Note that $n_!(\operatorname{Preglid}_{\{0\}} FR) \simeq \NPreglid FR$ as categories, but not as conflation categories.
\end{remark}
\section{The derived category of glider representations as a Verdier localization}\label{TheDerivedCategoryOfGliderRepresentationsAsAVerdierLocalization}

In \S\ref{section:GliderCategory}, we introduced the category of glider representations as a localization of the category of pregliders.  In \S\ref{section:PropertiesOfTheCategoryOfGliderRepresentations}, we showed that $\Glid FR$ can be seen as the quotient $\Preglid FR / i_*(\Mod R),$ giving $\Glid FR$ the structure of a deflation-exact category.  In this section, we show that this quotient induces a Verdier localization sequence of the derived categories.

\subsection{Projective gliders}\label{subsection:ProjectiveGliders} We start by recalling the definition of a projective object in a deflation-exact category (see, for example, \cite{BazzoniCrivei13, Buhler10}).

\begin{definition}\label{definition:ProjectiveInDeflationExactCat}
	Let $\EE$ be a deflation-exact category.
	\begin{enumerate}
		\item An object $P\in \EE$ is called \emph{projective} if $\Hom(P,-)\colon \EE \to \Ab$ is an exact functor.  We say that $\EE$ has \emph{enough projectives} if for every object $M$ of $\EE$ there is a deflation $P\twoheadrightarrow M$ where $P$ is projective.
		\item Dually, an object $I$ is called \emph{injective} if $\Hom(-,I)\colon \EE^\circ \to \Ab$ is an exact functor.   We say that $\EE$ has \emph{enough injectives} if for every object $M$ of $\EE$ there is an inflation $M \inflation I$ where $I$ is injective.
	\end{enumerate}
	We write $\Proj \EE$ and $\Inj \EE$ for the full subcategories of projectives and injectives, respectively.
\end{definition}

The following proposition (see \cite[proposition~3.22]{HenrardVanRoosmalen19b}) characterizes projective modules in a deflation-exact category.

\begin{proposition}\label{proposition:ProjectiveCharacterizations}
Let $\EE$ be a deflation-exact category. The following are equivalent:
\begin{enumerate}
	\item $P$ is projective.
	\item\label{Item:ProjLift} For all deflations $f\colon X\twoheadrightarrow Y$ and any map $g\colon P\rightarrow Y$ there exists a map $h\colon P\rightarrow X$ such that $g=f\circ h$.
	\item\label{Item:ProjSplit} Any deflation $f\colon X\twoheadrightarrow P$ is a retraction, i.e.~there exist a map $g\colon P\rightarrow X$ such that $f\circ g=1_P$.
\end{enumerate}
\end{proposition}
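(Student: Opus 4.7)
The plan is to prove the cyclic chain of implications $(1) \Rightarrow (2) \Rightarrow (3) \Rightarrow (1)$, adapting the standard argument from the abelian/exact setting to the deflation-exact one.

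For $(1) \Rightarrow (2)$: any deflation $f\colon X \twoheadrightarrow Y$ fits into a conflation $\ker(f) \inflation X \twoheadrightarrow Y$ by the very definition of a conflation category (deflations are cokernels in a distinguished kernel-cokernel pair). Applying the exact functor $\Hom(P,-)$ yields a short exact sequence of abelian groups, so $\Hom(P,X) \to \Hom(P,Y)$ is surjective, which is exactly the lifting property of (2).

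For $(2) \Rightarrow (3)$: given a deflation $f\colon X \twoheadrightarrow P$, lift $1_P\colon P \to P$ along $f$ to obtain $h\colon P \to X$ with $f \circ h = 1_P$, which exhibits $f$ as a retraction.

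For $(3) \Rightarrow (1)$: this is the only step with content. Given any conflation $A \inflation B \xrightarrow{p} C$, we must check that the induced sequence $0 \to \Hom(P,A) \to \Hom(P,B) \to \Hom(P,C) \to 0$ is short exact. Injectivity of $\Hom(P,A) \to \Hom(P,B)$ and exactness at $\Hom(P,B)$ are automatic from the kernel property of $A \inflation B = \ker(p)$. For surjectivity onto $\Hom(P,C)$, given $g\colon P \to C$, apply axiom \ref{R2} to form the pullback
\[\xymatrix{
X \ar[r]^{q'} \ar@{->>}[d]_{q} & B \ar@{->>}[d]^{p} \\
P \ar[r]_{g} & C
}\]
Since deflations are stable under pullback, $q\colon X \twoheadrightarrow P$ is a deflation. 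By hypothesis (3), $q$ is a retraction, so there exists $s\colon P \to X$ with $q \circ s = 1_P$. Then $q' \circ s\colon P \to B$ is the desired lift, since $p \circ q' \circ s = g \circ q \circ s = g$.

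The only real obstacle is the use of axiom \ref{R2} in the third implication: this is precisely where the deflation-exact structure is used in an essential way, producing both the pullback object $X$ and the deflation $X \twoheadrightarrow P$ which can then be split by (3). Everything else reduces to formal properties of kernel-cokernel pairs and the universal property of the kernel.
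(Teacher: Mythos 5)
Your proof is correct. Note that the paper itself does not prove this proposition --- it is quoted from \cite[proposition~3.22]{HenrardVanRoosmalen19b} --- so there is no internal proof to compare against; your argument is the standard one and every step is valid in the deflation-exact setting: the implication $(1)\Rightarrow(2)$ uses only that a deflation is by definition the cokernel in a kernel-cokernel pair (hence admits a kernel, giving a conflation to which $\Hom(P,-)$ is applied), $(2)\Rightarrow(3)$ is the lift of $1_P$, and $(3)\Rightarrow(1)$ correctly isolates axiom \ref{R2} as the only place the one-sided exact structure is genuinely needed, to produce the pullback deflation $X\twoheadrightarrow P$ that is then split.
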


\begin{proposition}\label{proposition:AboutProjectives}
\begin{enumerate}
	\item An object $M \in \Preglid FR$ is projective if and only if $\iota(M) \in \Mod \oFF_\Lambda R$ is projective.
	\item An object $M \in \Prefrag FR$ is projective if and only if $\eta(M) \in \Mod \FF_\Lambda R$ is projective.
\end{enumerate}
\end{proposition}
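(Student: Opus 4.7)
The plan is to prove both statements by a standard "summand of standard projectives" argument, exploiting that $\iota$ and $\eta$ are fully faithful and conflation-exact, and that $\Preglid FR$ and $\Prefrag FR$ are closed under subobjects and direct sums inside the ambient module categories. I will spell out only the preglider case; the prefragment case is completely parallel.

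For the easy direction, suppose $\iota(M)$ is projective in $\Mod\oFF_\Lambda R$. A deflation $X \deflation M$ in $\Preglid FR$ is, by the definition of the conflation structure inherited from $\Mod\oFF_\Lambda R$ (proposition \ref{proposition:PreglidIsTorsionfree}), an epimorphism in $\Mod\oFF_\Lambda R$ with kernel again in $\Preglid FR$. Projectivity of $\iota(M)$ yields a section $\iota(M)\to\iota(X)$ in $\Mod\oFF_\Lambda R$, which is automatically a morphism in $\Preglid FR$ by full faithfulness of $\iota$. Proposition \ref{proposition:ProjectiveCharacterizations}\eqref{Item:ProjSplit} then gives that $M$ is projective in $\Preglid FR$.

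For the converse, the key observation is that $\Preglid FR$ is closed under arbitrary direct sums in $\Mod\oFF_\Lambda R$: direct sums in $\Mod\oFF_\Lambda R$ are computed pointwise and are exact, so each map $(\bigoplus M_i)(1_{\alpha,\beta}) = \bigoplus M_i(1_{\alpha,\beta})$ remains a monomorphism. By example \ref{example:StandardProjectives}, the standard projectives $\oFF_\Lambda R(\lambda,-)$ lie in $\Preglid FR$, so any direct sum $P$ of standard projectives belongs to $\Preglid FR$. Now assume $M$ is projective in $\Preglid FR$, and choose an epimorphism $P\deflation \iota(M)$ in $\Mod\oFF_\Lambda R$ with $P$ such a direct sum. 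Its kernel is a subobject of $P$, hence again in $\Preglid FR$ (proposition \ref{proposition:PreglidIsTorsionfree} again), so $P\deflation M$ is a deflation in $\Preglid FR$. By the assumed projectivity and proposition \ref{proposition:ProjectiveCharacterizations}\eqref{Item:ProjSplit}, this deflation splits, exhibiting $\iota(M)$ as a direct summand of the projective module $P$; hence $\iota(M)$ is projective in $\Mod\oFF_\Lambda R$.

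The only potentially delicate step is the closure of $\Preglid FR$ under direct sums, which is not automatic for a general torsion-free class but holds here because the defining property (each $M(1_{\alpha,\beta})$ being monic) is evidently preserved under the pointwise, exact direct sum in $\Mod\oFF_\Lambda R$. Once this is in hand, the prefragment statement follows verbatim by replacing $\oFF_\Lambda R$ with $\FF_\Lambda R$ and $\iota$ with $\eta$; no new ideas are needed since $\Prefrag FR$ is likewise a cotilting torsion-free subcategory closed under subobjects and direct sums.
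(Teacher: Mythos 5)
Your proof is correct and follows essentially the same route as the paper: the forward direction uses that conflations in $\Preglid FR$ are exactly the short exact sequences of $\Mod \oFF_\Lambda R$ with terms in the subcategory (plus full faithfulness of $\iota$), and the converse exhibits $\iota(M)$ as a summand of a direct sum of standard projectives, using that all projectives of $\Mod\oFF_\Lambda R$ lie in $\Preglid FR$ (as already noted in proposition \ref{proposition:PreglidIsTorsionfree}). Your write-up is somewhat more detailed than the paper's, but there is no substantive difference in the argument.
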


\begin{proof}
The functor $\iota\colon \Preglid FR \to \Mod \oFF_\Lambda R$ reflect conflations.  This shows that if an object $\iota(M) \in \Mod \oFF_\Lambda R$ is projective, then $M \in \Preglid FR$ is projective.  For the other direction, assume that $M \in \Preglid FR$ is projective.  Let $P \deflation \iota(M)$ be an epimorphism in $\Mod \oFF_\Lambda R$ where $P$ is projective.  As all projective objects in $\Mod \oFF_\Lambda R$ are pregliders (and projective in $\Preglid FR$), and $M$ is projective as a preglider, we find that $M$ is a direct sumand of $P$.  Hence, $\iota(M)$ is a direct summand of $P$.  We find that $\iota(M)$ is projective.

The other statement can be proven in an analogous fashion.
\end{proof}

\begin{proposition}\label{proposition:AboutProjectiveGliders}
\begin{enumerate}
	\item An object $M \in \Glid FR$ is projective if and only if $\phi(M) \in \Prefrag FR$ is projective.
	\item An object $M \in \Glid FR$ is projective if and only if $L(M) \in \Preglid FR$ is projective.
\end{enumerate}
\end{proposition}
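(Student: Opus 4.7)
The plan is to establish statement (2) first via the adjunction $L \dashv Q,$ and then to deduce statement (1) from (2) using the factorization $\phi = j^* \circ L$ (remark \ref{remark:PhiExplicit}) together with proposition \ref{proposition:AboutProjectives}.

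For (2), the two essential inputs are that $L,$ being a left adjoint, preserves cokernels and therefore, by theorem \ref{theorem:GlidIsDeflationQuasiAbelian} and proposition \ref{proposition:PreglidIsQuasiAbelian}, sends deflations to deflations, and that $Q$ is conflation-exact by corollary \ref{corollary:GlidersAreDeflationExact}. Both directions then reduce to routine lifting arguments using the adjoint bijection $\Hom_{\Preglid FR}(L(M),-) \cong \Hom_{\Glid FR}(M,Q(-))$ and the isomorphism $QL \cong \mathrm{id}.$ Concretely, for ``$L(M)$ projective $\Rightarrow M$ projective'', given a deflation $p\colon X \deflation Y$ in $\Glid FR$ and $f\colon M \to Y,$ I apply $L,$ use projectivity of $L(M)$ to lift $L(f)$ through the deflation $L(p),$ and apply $Q$ (with $QL \cong \mathrm{id}$) to transport the lift back to $\Glid FR.$ For the converse, a deflation $p\colon X \deflation Y$ in $\Preglid FR$ yields a deflation $Q(p)$ in $\Glid FR;$ the transpose $\bar f\colon M \to Q(Y)$ of a given $f\colon L(M) \to Y$ lifts through $Q(p)$ by projectivity of $M,$ and transposing back produces the desired lift of $f.$

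For (1), the direction ``$\phi(M)$ projective $\Rightarrow M$ projective'' is direct: $\phi$ is conflation-exact (proposition \ref{proposition:GlidAsSubcatOfPrefrag}) and fully faithful (proposition \ref{proposition:PhiFullyFaithful}), so a lifting problem in $\Glid FR$ maps under $\phi$ to a lifting problem in $\Prefrag FR,$ which is solved by projectivity of $\phi(M);$ full faithfulness of $\phi$ then pulls the lift back. For the converse, I use the factorization $\phi = j^* \circ L$ and show that $j^*\colon \Preglid FR \to \Prefrag FR$ preserves projectives: if $N \in \Preglid FR$ is projective, then $\iota(N)$ is projective in $\Mod \oFF_\Lambda R$ by proposition \ref{proposition:AboutProjectives}, and applying the module-level $j^*$ (which preserves projectives since its right adjoint $j_*$ is visibly exact from its explicit description in \S\ref{subsection:recollement}) gives $j^* \iota(N) \cong \eta j^*(N)$ projective in $\Mod \FF_\Lambda R;$ proposition \ref{proposition:AboutProjectives} then yields $j^*(N)$ projective in $\Prefrag FR.$ Combining this with (2), $M$ projective in $\Glid FR$ implies $\phi(M) = j^*L(M)$ projective in $\Prefrag FR.$

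The argument is almost entirely formal; the main bookkeeping lies in the repeated translation of projectivity between the subcategories $\Preglid FR,\ \Prefrag FR$ and their ambient module categories via proposition \ref{proposition:AboutProjectives}, and in verifying that each functor in play respects the appropriate notion of exactness (cokernels vs.\ deflations vs.\ conflations) in each of the relevant categories.
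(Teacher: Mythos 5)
Your argument is correct. For statement (2) you follow essentially the same route as the paper: both directions are the formal adjunction argument, using that $L$ sends deflations to deflations (it preserves cokernels and $\Preglid FR$ is quasi-abelian) and that $Q$ is conflation-exact with $Q\circ L\cong \mathrm{id}$; you phrase it via the lifting property of proposition \ref{proposition:ProjectiveCharacterizations}\eqref{Item:ProjLift} where the paper phrases it via splitting of deflations ending in $M$, which is the same content. For statement (1) the ``if'' direction also matches the paper's (transport the lifting problem along the fully faithful, conflation-exact $\phi$). Where you genuinely diverge is the ``only if'' direction of (1): the paper's proof is declared analogous to that of proposition \ref{proposition:AboutProjectives}, i.e.\ one takes a projective deflation $P\deflation \phi(M)$ in $\Prefrag FR$, observes that projectives of $\Prefrag FR$ are gliders and projective in $\Glid FR$, and concludes that $\phi(M)$ is a direct summand of $P$; this implicitly requires checking that direct summands of direct sums of standard projectives lie in the essential image of $\phi$ (via corollary \ref{corollary:GlidersClosedUnderSubobjects} and example \ref{example:StandardProjectives}). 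You instead deduce it from (2) through the factorization $\phi\cong j^*\circ L$, showing that $j^*\colon \Preglid FR\to\Prefrag FR$ preserves projectives because the module-level $j_*$ is exact and proposition \ref{proposition:AboutProjectives} lets you pass back and forth to the ambient module categories. Both are valid; your route trades the ``projective cover lives in the subcategory'' verification for the small computation that $j^*$ preserves projectives, and has the mild advantage of making the dependence of (1) on (2) explicit rather than leaving (1) as an unspelled analogy.
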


\begin{proof}
The proof of the first statement is analogous to the proof of proposition \ref{proposition:AboutProjectives}.  For the second statement, as $L$ has a conflation-exact right adjoint, we find that $L$ maps projective objects to projective objects.  Indeed, for every projective $M \in \Glid FR$, we have that the functor $\Hom(LM,-) \cong \Hom(M,Q(-))$ is conflation-exact.

Assume now that $LM$ is projective in $\Preglid FR$.  Let $K \inflation N \deflation M$ be any conflation in $\Glid FR$, ending in $M.$  As $L$ commutes with cokernels and $\Preglid FR$ is quasi-abelian, we find that $LN \deflation LM$ is a deflation.  As $LM$ is projective, this deflation splits.  Applying the functor $Q$, we find that the given map $N \deflation M$ is a split deflation.  As any conflation ending in $M$ splits, we find that $M$ is projective. 
\end{proof}

\begin{corollary}\label{corollary:EnoughProjectives}
The categories $\Preglid FR, \Prefrag FR$ and $\Glid FR$ have enough projectives.
\end{corollary}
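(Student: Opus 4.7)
The plan is to lift projective covers from the ambient module categories $\Mod\oFF_\Lambda R$ and $\Mod\FF_\Lambda R$. For $\Preglid FR$ (and identically $\Prefrag FR$), the standard projectives $P_\lambda \coloneqq \oFF_\Lambda R(\lambda,-)$ for $\lambda \in \Lambda \cup \{\infty\}$ lie in $\Preglid FR$ by example \ref{example:StandardProjectives} and are projective in $\Mod\oFF_\Lambda R$ by the Yoneda lemma; by proposition \ref{proposition:AboutProjectives} they remain projective in $\Preglid FR$. Given any $M \in \Preglid FR$, the usual surjection $P = \bigoplus_i P_{\lambda_i} \twoheadrightarrow \iota(M)$ in $\Mod\oFF_\Lambda R$ has kernel lying in $\Preglid FR$ (as $\Preglid FR$ is a torsion-free class by proposition \ref{proposition:PreglidIsTorsionfree}, hence closed under subobjects), so it is a deflation in $\Preglid FR$ with projective source.

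For $\Glid FR$, I would transport such a cover along $Q$. Given $M \in \Glid FR$, I claim that $LM \in \Preglid FR$ admits a cover using only standard projectives indexed by $\Lambda$: from the explicit formula $L = j_L\circ\phi = \kappa\circ j_!\circ\eta\circ j^*$ (propositions \ref{proposition:jL} and \ref{proposition:LeftAdjoint}), the $R$-module $LM(\infty)$ is generated by the $R$-orbit of $\bigcup_{\lambda\in\Lambda} LM(\lambda)$, so the Yoneda morphisms $P_\lambda \to LM$ corresponding to elements of $LM(\lambda)$ for $\lambda \in \Lambda$ jointly surject onto $LM$. Applying $Q$ (which is conflation-exact by theorem \ref{theorem:MainTheoremPercoLocalization}) to the resulting deflation $P = \bigoplus_i P_{\lambda_i} \deflation LM$ and using $QL \cong 1_{\Glid FR}$ yields a deflation $Q(P) \deflation M$ in $\Glid FR$. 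To conclude, I would invoke proposition \ref{proposition:AboutProjectiveGliders}(2) and compute $LQ(P)$: the functors $\eta, j_!, \kappa$ are all left adjoints (to $\theta, j^*, \iota$ respectively), so they preserve coproducts, and a direct computation shows $j_L(\FF_\Lambda R(\lambda,-)) = \kappa(P_\lambda) = P_\lambda$ for $\lambda \in \Lambda$ (using that the structural maps of $P_\lambda$ are inclusions $F_{\mu\lambda^{-1}}R \hookrightarrow R$). Hence $LQ(P) \cong \bigoplus_i P_{\lambda_i} = P$ is projective in $\Preglid FR$ (coproducts of projectives being projective, since $\Hom$ sends coproducts to products and products of conflations in $\Ab$ are conflations), so $Q(P)$ is projective in $\Glid FR$.

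The central subtlety is ensuring that $Q(P)$ remains projective, since a localization functor need not preserve projectivity in general. This is resolved by the explicit identity $LQ(P_\lambda) = P_\lambda$ for $\lambda \in \Lambda$, combined with proposition \ref{proposition:AboutProjectiveGliders}, which together reduce projectivity in $\Glid FR$ to the already established projectivity in $\Preglid FR$.
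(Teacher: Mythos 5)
Your proof is correct and takes essentially the same route the paper intends: the paper's own proof merely cites propositions \ref{proposition:AboutProjectives} and \ref{proposition:AboutProjectiveGliders}, and your argument supplies exactly the details those citations presuppose (covers by standard projectives in the ambient module categories, restricted to the torsion-free classes, with projectivity transported to $\Glid FR$ via $L$ and $Q$ using $LQ(P_\lambda)\cong P_\lambda$ for $\lambda\in\Lambda$). One cosmetic slip: $\eta$ is a right adjoint (of $\theta$), not a left adjoint, but it still preserves the relevant coproducts since $\Prefrag FR$ is closed under direct sums in $\Mod \FF_\Lambda R$ (proposition \ref{proposition:PreglidIsQuasiAbelian}), so nothing breaks.
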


\begin{proof}
This follows from propositions \ref{proposition:AboutProjectives} and \ref{proposition:AboutProjectiveGliders}.
\end{proof}

\subsection{A recollement on the derived level}
We begin by observing that the upwards arrows in the diagram in figure \ref{figure:TheDiagram} induce derived equivalences.

\begin{proposition}\label{proposition:DerivedEquivalences}
The following functors are triangle equivalences:
\begin{enumerate}
	\item $\iota\colon \Db(\Preglid FR) \to \Db( \Mod \oFF_\Lambda R)$,
	\item $\eta\colon \Db(\Prefrag FR) \to \Db(\Mod \FF_\Lambda R)$,
	\item $\phi\colon \Db(\Glid FR) \to \Db(\Prefrag FR)$.
\end{enumerate}
\end{proposition}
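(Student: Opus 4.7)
The three equivalences admit a uniform proof: in each case the functor identifies projective subcategories, and both source and target admit the standard description of the bounded derived category as $\K^{-,b}(\Proj)$.

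\textbf{Step 1 (Identification of projectives).} For $\iota$, the standard projectives $\oFF_\Lambda R(\lambda,-)$ are pregliders by example \ref{example:StandardProjectives}. Since coproducts in $\Mod\oFF_\Lambda R$ are pointwise and $\Preglid FR$ is closed in $\Mod\oFF_\Lambda R$ under subobjects (torsion-freeness, proposition \ref{proposition:PreglidIsTorsionfree}), hence under retracts, every projective of $\Mod\oFF_\Lambda R$ is already a preglider. Combined with proposition \ref{proposition:AboutProjectives}, this yields an equivalence $\Proj\Preglid FR \simeq \Proj\Mod\oFF_\Lambda R$ via $\iota$. The case of $\eta$ is identical. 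For $\phi$, proposition \ref{proposition:AboutProjectiveGliders} already gives preservation and reflection of projectivity; for essential surjectivity, let $P \in \Prefrag FR$ be projective, note that the reflection $P \to \phi\psi(P)$ is a deflation (proposition \ref{proposition:phiIsEpiReflective}) which splits by projectivity of $P$, and invoke corollary \ref{corollary:GlidersClosedUnderSubobjects} (closure of $\phi(\Glid FR)$ under subobjects, hence under retracts) to conclude that $P$ lies in the essential image of $\phi$.

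\textbf{Step 2 (Passage to $\K^{-,b}(\Proj)$ and conclusion).} Each of the six categories has enough projectives (corollary \ref{corollary:EnoughProjectives} for the quasi-abelian and deflation quasi-abelian ones; standard for module categories), and for each I invoke the equivalence $\Db(\EE) \simeq \K^{-,b}(\Proj\EE)$. This is Keller's classical theorem in the abelian and quasi-abelian cases; for the deflation quasi-abelian category $\Glid FR$ it extends via the framework of \cite{HenrardVanRoosmalen19b}, since $\Glid FR$ has all kernels (theorem \ref{theorem:GlidIsDeflationQuasiAbelian}), so the iterative construction of bounded-above projective resolutions by taking projective covers of successive kernels is available. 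Combining with step 1, each functor yields a chain of equivalences $\Db(\text{source}) \simeq \K^{-,b}(\Proj\,\text{source}) = \K^{-,b}(\Proj\,\text{target}) \simeq \Db(\text{target})$, and naturality of the construction forces the composite to agree with the triangle functor induced by the original embedding.

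\textbf{Main obstacle.} The delicate point is the equivalence $\Db(\Glid FR) \simeq \K^{-,b}(\Proj\Glid FR)$ in the one-sided setting. Classically, one proves this using projective resolutions together with the comparison theorem; the task is to verify that the comparison theorem still holds when only the deflation side of the exact structure is available. This reduces to checking that a quasi-isomorphism (in the sense of definition \ref{definition:AcylicComplex}) between bounded-above complexes of projectives is a homotopy equivalence, which follows from the deflation-quasi-abelian property: kernels exist unconditionally, and each acyclic complex of projectives splits degreewise by proposition \ref{proposition:ProjectiveCharacterizations}\eqref{Item:ProjSplit} applied to the deflations in its factorization.
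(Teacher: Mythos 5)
Your overall strategy coincides with the paper's: both arguments reduce the proposition to the claim that each functor restricts to an equivalence between the full subcategories of projective objects, and then combine enough projectives (corollary \ref{corollary:EnoughProjectives}) with the identification, from \cite{HenrardVanRoosmalen19b}, of the bounded derived category of a deflation-exact category having enough projectives with a homotopy category of complexes of projectives. Your Step 1 for $\iota$ and $\eta$ and your Step 2 match what the paper does (the paper delegates the homotopy-category input entirely to the citation, where you sketch it).

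There is, however, a genuine error in your essential-surjectivity argument for $\phi$ on projectives. You claim that the reflection $r_P\colon P \to \phi\circ\psi(P)$, being a deflation, ``splits by projectivity of $P$''. Projectivity of $P$ splits deflations \emph{onto} $P$ (proposition \ref{proposition:ProjectiveCharacterizations}\eqref{Item:ProjSplit}), not deflations out of $P$; compare $\bZ \twoheadrightarrow \bZ/2\bZ$ in $\Mod \bZ$. Moreover, even granting a splitting $s\colon \phi\circ\psi(P) \to P$ of $r_P$, it exhibits $\phi\circ\psi(P)$ as a retract, hence a subobject, of $P$, whereas corollary \ref{corollary:GlidersClosedUnderSubobjects} would require $P$ to be a subobject of a glider; the implication runs in the wrong direction, and $P \cong \ker(r_P)\oplus\phi\circ\psi(P)$ only reduces the question to whether $\ker(r_P)$ is a glider. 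The correct argument is the one you already used for $\iota$: a projective $P$ in $\Prefrag FR$ is a retract of a direct sum of standard projectives $\FF_\Lambda R(\lambda,-)$; such a direct sum is $j^*$ applied to the corresponding direct sum of the pregliders $\oFF_\Lambda R(\lambda,-)$ and hence lies in the essential image of $\phi$, so corollary \ref{corollary:GlidersClosedUnderSubobjects} (closure under subobjects, hence under retracts) places $P$ in the essential image of $\phi$. With this local repair the rest of your proof goes through and agrees with the paper's.
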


\begin{proof}
It is shown in corollary \ref{corollary:EnoughProjectives} that $\Preglid FR$ has enough projectives.  It follows from proposition \ref{proposition:AboutProjectives} that the functor $\iota\colon \Preglid FR \to \Mod (\oFF_\Lambda R)$ induces an equivalence between the categories of projective objects, and hence between the homotopy categories of projectives.  The result now follows from \cite{HenrardVanRoosmalen19b}.  The other proofs are similar. 
\end{proof}

\begin{proposition}
	The recollement 
	\[\xymatrix{
{\Mod R} \ar[rr]|{i_*} && {\Mod \oFF_\Lambda R} \ar[rr]|{j^*} \ar@/^/[ll]^{i^!} \ar@/_/[ll]_{i^*} && {\Mod \FF_\Lambda R} \ar@/^/[ll]^{j_*} \ar@/_/[ll]_{j_!}
}\]
lifts to a recollement on the bounded derived categories.
\end{proposition}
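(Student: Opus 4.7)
The plan is to verify the axioms of definition \ref{definition:TriangulatedRecollement} on the bounded derived categories, handling the exact and right-exact functors separately.

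First, four of the six functors---namely $i_*$, $i^!$, $j_*$, and $j^*$---are exact on the abelian level. Indeed, $i_*$ and $j_*$ are extensions by zero at the complementary index, $j^*$ is restriction along the inclusion $\FF_\Lambda R \hookrightarrow \oFF_\Lambda R$, and $i^!(M) = M(\infty)$ is evaluation at a single object; each is exact because short exact sequences in these module categories are checked pointwise. These four functors therefore extend to exact (hence triangulated) functors between the corresponding bounded derived categories. The adjunctions $(i_*, i^!)$ and $(j^*, j_*)$ and the vanishing $j^* \circ i_* = 0$ all lift immediately. The identities $i^! i_* \cong \mathrm{Id}$ and $j^* j_* \cong \mathrm{Id}$ then imply full faithfulness of $i_*$ and $j_*$ on $\Db$ via the standard adjunction-shift argument $\Hom_{\Db}(i_* A, \Sigma^n i_* B) \cong \Hom_{\Db}(A, \Sigma^n i^! i_* B) \cong \Hom_{\Db}(A, \Sigma^n B)$, and similarly for $j_*$.

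Next I would construct the left derived functors $Lj_!$ and $Li^*$. Both $\Mod \FF_\Lambda R$ and $\Mod \oFF_\Lambda R$ have enough projectives, namely the standard projectives $\FF_\Lambda R(\alpha, -)$ and $\oFF_\Lambda R(\alpha, -)$. On these generators the functors act simply: $j_!(\FF_\Lambda R(\alpha, -)) \cong \oFF_\Lambda R(\alpha, -)$, so $j_!$ preserves projectives, and $i^*(\oFF_\Lambda R(\alpha, -))$ equals $R$ when $\alpha = \infty$ and $0$ otherwise. The total left derived functors $Lj_!$ and $Li^*$ are thus defined via projective resolutions, and the derived adjunctions $(Lj_!, j^*)$ and $(Li^*, i_*)$ follow from the abelian adjunctions together with the exactness of $j^*$ and $i_*$. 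Full faithfulness of $Lj_!$ follows from $j^* \circ Lj_! \cong \mathrm{Id}$, itself a consequence of the abelian identity $j^* j_! \cong \mathrm{Id}$ (coming from the full faithfulness of $j_!$ in the abelian recollement) together with exactness of $j^*$.

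The two recollement triangles are then constructed as follows. The first, $i_* i^! X \to X \to j_* j^* X \to \Sigma i_* i^! X$, is the derived image of the abelian short exact sequence $0 \to i_* i^!(X) \to X \to j_* j^*(X) \to 0$, which holds because the unit $X \to j_* j^* X$ is pointwise a surjection (identity at each $\lambda \in \Lambda$, zero at $\infty$) with kernel $i_*(X(\infty)) = i_* i^!(X)$. The second triangle $Lj_! j^* X \to X \to i_* Li^* X \to \Sigma Lj_! j^* X$ is obtained from the derived counit $Lj_! j^* X \to X$: applying $j^*$ and using $j^* \circ Lj_! \cong \mathrm{Id}$, the cofiber is annihilated by $j^*$; the Verdier localization sequence $\Db(\Mod R) \to \Db(\Mod \oFF_\Lambda R) \to \Db(\Mod \FF_\Lambda R)$ (obtained from theorem \ref{theorem:MainTheoremDerivedCatOfLoc} applied to the admissibly deflation-percolating subcategory $\Mod R \subseteq \Preglid FR$, combined with the derived equivalences of proposition \ref{proposition:DerivedEquivalences}) then identifies $\ker(j^*)$ with the essential image of $i_*$, so the cofiber equals $i_* Li^* X$ for a uniquely determined $Li^* X$.

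The main obstacle will be verifying that $Lj_!$ and $Li^*$ preserve the bounded derived category; a priori the standard construction via projective resolutions only lands in the unbounded-above derived category. One addresses this by exploiting the ``triangular'' structure of $\oFF_\Lambda R$ as a one-point extension of $\FF_\Lambda R$ by the bimodule $\lambda \mapsto R$: combined with the Verdier localization sequence, one sees that for any bounded complex the higher left derived functors of $j_!$ and $i^*$ stabilize after a controlled number of steps, so the resulting complexes have bounded cohomology.
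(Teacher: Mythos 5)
Your route is genuinely different from the paper's: the paper dispatches this proposition in two lines, observing that all three module categories have enough projectives and injectives and then citing \cite[theorem~7.2, lemma~7.3]{Psaroudakis14}, whereas you verify the axioms of definition \ref{definition:TriangulatedRecollement} directly. Most of your verification is sound: the exactness of $i_*$, $i^!$, $j^*$, $j_*$ and the lifting of the adjunctions $(i_*,i^!)$ and $(j^*,j_*)$, the first recollement triangle coming from the pointwise short exact sequence $0\to i_*i^!X\to X\to j_*j^*X\to 0$, and the construction of $Lj_!$ and $Li^*$ via projective resolutions (which works cleanly on $\D^-$, using that $j_!$ sends standard projectives to standard projectives) are all correct.

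The genuine gap is exactly at the step you flag as ``the main obstacle'' and then wave away: the assertion that for a bounded complex the higher left derived functors of $j_!$ and $i^*$ ``stabilize after a controlled number of steps'' is unproved, and the one-point-extension structure of $\oFF_\Lambda R$ does not imply it. That structure only tells you that $L_nj_!(M)$ is concentrated at $\infty$, where its value is $\operatorname{Tor}_n^{\FF_\Lambda R}\bigl(\oFF_\Lambda R(-,\infty),M\bigr)$; it says nothing about these groups vanishing for $n\gg 0$, which amounts to a finite-Tor-dimension hypothesis on the right module $\oFF_\Lambda R(-,\infty)$ (value $R$ at each $\lambda$). For example, take $\Gamma=\bZ$, $\Lambda=\{0\}$, $R=k[x]$ with $F_0R=k[x^2,x^3]$ and $F_iR=R$ for $i\geq 1$. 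Then $\Mod\FF_\Lambda R=\Mod F_0R$, and since $k[x]/k[x^2,x^3]\cong k$ as an $F_0R$-module and $F_0R$ is a non-regular (hypersurface) singularity, one gets $L_nj_!(k)(\infty)=\operatorname{Tor}_n^{F_0R}(k[x],k)\cong\operatorname{Tor}_n^{F_0R}(k,k)\neq 0$ for all $n\geq 2$. So $Lj_!(k)$ is not a bounded complex, and your construction of the functor $j_!$ on $\Db$ breaks down; moreover, since any bounded candidate $C$ for $j_!(k)$ would have to satisfy $\Hom(C,i_*Z[n])=0$ for all $Z,n$, hence $Li^*C=0$, hence $C\cong Lj_!j^*C\cong Lj_!(k)$ in $\D^-$, no bounded replacement exists at all. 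Closing this gap requires an actual homological hypothesis of the kind encoded in the hypotheses of the cited theorem of \cite{Psaroudakis14} (finite flat dimension of the connecting bimodule, or a homological-embedding condition); be aware that this is also precisely the point that the paper's own two-line citation passes over without verification.
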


\begin{proof}
	As the categories $\Mod(R), \Mod(\overline{\FF}_\Lambda R)$ and $\Mod(\FF_\Lambda R)$ have enough injective and projective objects, the six functors $i^*,i_*,i^!,j_!,j^*$ and $j_*$ lift to triangle functors on the bounded derived categories. It now follows from \cite[theorem~7.2; lemma~7.3]{Psaroudakis14} that we obtain a recollement
\[\xymatrix{
		\Db(\Mod(R))\ar[rr]|{i_*} && \Db(\Mod(\overline{\FF}_\Lambda R)) \ar@/^/[ll]^{R i^!}\ar@/_/[ll]_{i^*} \ar[rr]|{j^*} && \Db(\Mod(\FF_\Lambda R))\ar@/^/[ll]^{j_*}\ar@/_/[ll]_{L j_!}.
	}\] of triangulated categories.
\end{proof}

	Note that the sequence $\Db(\Mod(R))\xrightarrow{i_*}\Db(\Mod(\overline{\FF}_\Lambda R)) \xrightarrow{j^*} \Db(\Mod(\FF_\Lambda R))$ is a Verdier localization sequence. By proposition \ref{proposition:GlidAsLocalization} and theorem \ref{theorem:MainTheoremDerivedCatOfLoc}, the sequence 
	\[\DModRb(\Preglid FR)\to \Db(\Preglid FR)\xrightarrow{Q} \Db(\Glid FR)\]
	is a Verdier localization sequence as well. 
	
\begin{proposition}\label{proposition:CondC2}
The natural functor $\Db(\Mod(R)) \to \DModRb(\Preglid FR)$ is a triangle equivalence.
\end{proposition}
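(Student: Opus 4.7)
The plan is to factor the functor through the derived equivalence $\iota\colon \Db(\Preglid FR)\xrightarrow{\sim} \Db(\Mod\oFF_\Lambda R)$ of proposition \ref{proposition:DerivedEquivalences}, reducing the statement to a property of the abelian recollement already lifted to derived categories.

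First I would check that $i_*\colon \Mod R\to \Preglid FR$ is exact. By proposition \ref{proposition:GlidAsLocalization}, $i_*(\Mod R)$ is an admissibly deflation-percolating, hence Serre, subcategory of the exact category $\Preglid FR$, so $i_*$ sends short exact sequences of $R$-modules to conflations. It therefore induces a triangle functor $\Db(i_*)\colon \Db(\Mod R)\to \Db(\Preglid FR)$. Since $\Db(\Mod R)$ is generated, as a thick triangulated subcategory of itself, by the stalk complexes coming from $\Mod R$, the essential image of $\Db(i_*)$ lies inside the thick subcategory generated by $i_*(\Mod R)$, i.e.~inside $\DModRb(\Preglid FR)$, so the functor in question is well-defined.

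Next I would establish full faithfulness. Composing with the triangle equivalence $\iota$ of proposition \ref{proposition:DerivedEquivalences} yields a commutative (up to natural isomorphism) diagram
\[\xymatrix{
\Db(\Mod R) \ar[r]^-{\Db(i_*)} \ar[rd]_-{\Db(\iota\circ i_*)} & \Db(\Preglid FR) \ar[d]^-{\iota}_-{\rotatebox{270}{$\sim$}}\\
& \Db(\Mod\oFF_\Lambda R)
}\]
whose diagonal is the $i_*$-leg of the recollement of bounded derived categories obtained from the abelian recollement of proposition \ref{proposition:AbelianRecollement}. In that triangulated recollement $i_*$ is fully faithful, so the diagonal is fully faithful, and since $\iota$ is an equivalence, $\Db(i_*)$ is fully faithful as well.

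Finally I would argue essential surjectivity onto $\DModRb(\Preglid FR)$. The essential image $\II\subseteq \Db(\Preglid FR)$ of $\Db(i_*)$ is a triangulated subcategory that contains $i_*(\Mod R)$, so it suffices to show $\II$ is thick, i.e.~closed under direct summands in $\Db(\Preglid FR)$. Since $\Mod R$ has enough projectives, $\Db(\Mod R)$ is idempotent complete; combined with full faithfulness, any splitting of an object $\Db(i_*)(X)$ in $\Db(\Preglid FR)$ corresponds to an idempotent in $\End_{\Db(\Mod R)}(X)$, which splits in $\Db(\Mod R)$ and yields preimages of the summands in $\II$. Hence $\II=\DModRb(\Preglid FR)$, completing the equivalence.

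The main obstacle is the last step: one has to be careful that the idempotent-completeness argument is set up using the fully faithfulness just proven, rather than attempting to verify thickness of $\II$ directly inside $\Db(\Preglid FR)$.
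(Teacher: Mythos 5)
Your proof is correct, but it takes a genuinely different route from the paper. The paper argues intrinsically within the deflation-exact framework: it invokes \cite[proposition~5.9]{HenrardVanRoosmalen19b}, which reduces the statement to a lifting condition on conflations, namely that every conflation $E'\rightarrowtail E\twoheadrightarrow A$ in $\Preglid FR$ with $A\in i_*(\Mod R)$ receives a map from a conflation lying entirely in $\Mod R$; this is verified by resolving $A$ by a projective $R$-module $P$ and using that $i_*(P)$ remains projective in $\Preglid FR$. You instead transport the problem across the triangle equivalence $\iota\colon \Db(\Preglid FR)\to\Db(\Mod \oFF_\Lambda R)$ of proposition \ref{proposition:DerivedEquivalences}, deduce full faithfulness from the fact that $i_*$ is fully faithful in the lifted recollement of bounded derived categories, and then identify the essential image with $\DModRb(\Preglid FR)$ via stupid truncations together with idempotent completeness of $\Db(\Mod R)$ (for $\Mod R$ this follows, e.g., from closure of $\Db$ under summands inside the unbounded derived category, which has coproducts; your appeal to ``enough projectives'' is not quite the right reason, but the conclusion is standard). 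Your route has the advantage of using only results already established in this section of the paper and no external criterion, at the cost of relying on the derived equivalence with the module-category recollement; the paper's route is self-contained within the localization theory of one-sided exact categories and is the form of the statement that plugs directly into theorem \ref{theorem:MainTheoremDerivedCatOfLoc} to produce the Verdier localization sequence. Both arguments are complete and non-circular.
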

	
\begin{proof}
By \cite[proposition~5.9]{HenrardVanRoosmalen19b} it suffices to show that the following property holds: for each conflation $E'\inflation E \deflation A$ in $\Preglid FR$ with $A\in \Mod(R)\subseteq \Preglid FR$ there exists a commutative diagram
	\[\xymatrix{
		A''\ar@{>->}[r]\ar[d] & A'\ar@{->>}[r]\ar[d] & A\ar@{=}[d]\\
		E'\ar@{>->}[r] & E\ar@{->>}[r] & A
	}\] where the rows are conflations and the top row belongs to $\Mod(R)$. 
	
	Now let $E'\inflation E\deflation A$ be a conflation with $A\in \Mod(R)$. As $\Mod(R)$ has enough projectives, there exists a conflation $K\inflation P\deflation A$ with $P$ a projective in $\Mod(R)$.  Since $P$ is also projective in $\Preglid FR,$ we obtain a commutative diagram
		\[\xymatrix{
		K\ar@{>->}[r]\ar@{.>}[d] & P\ar@{->>}[r]\ar@{.>}[d] & A\ar@{=}[d]\\
		E'\ar@{>->}[r] & E\ar@{->>}[r] & A
	}\] where the map $P\to E$ is obtained by the lifting property of the projective $P$. Clearly $K\in \Mod(R)$ as well. This shows the claim.
\end{proof}

The following result summarizes the results of this section.

\begin{proposition}\label{proposition:VerdierLocalization}
There is a Verdier localization sequence
\[\Db(\Mod R)\xrightarrow{i^*} \Db(\Preglid FR)\xrightarrow{Q} \Db(\Glid FR),\]
which can be completed to a recollement of triangulated categories.
\end{proposition}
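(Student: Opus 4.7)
The plan is to derive both claims by combining the three ingredients already established in this section: the Verdier localization sequence coming from Theorem \ref{theorem:MainTheoremDerivedCatOfLoc} applied to the admissibly deflation-percolating subcategory $i_*(\Mod R)\subseteq \Preglid FR$ (see Proposition \ref{proposition:GlidAsLocalization}), the derived equivalence $\Db(\Mod R)\to \DModRb(\Preglid FR)$ of Proposition \ref{proposition:CondC2}, and the derived equivalences $\iota$, $\eta$, $\phi$ of Proposition \ref{proposition:DerivedEquivalences}.

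First, Theorem \ref{theorem:MainTheoremDerivedCatOfLoc} together with Proposition \ref{proposition:GlidAsLocalization} directly supply the Verdier localization sequence
\[
\DModRb(\Preglid FR)\longrightarrow \Db(\Preglid FR)\xrightarrow{\;Q\;}\Db(\Glid FR),
\]
and composing with the equivalence $\Db(\Mod R)\simeq \DModRb(\Preglid FR)$ from Proposition \ref{proposition:CondC2} yields the first claim, with $i^*\colon \Db(\Mod R)\to \Db(\Preglid FR)$ being (canonically isomorphic to) the derived functor of $i_*\colon \Mod R\hookrightarrow \Preglid FR$.

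Second, to extend this to a recollement of triangulated categories, I would transport the recollement of bounded derived categories
\[
\Db(\Mod R)\;\rightleftarrows\;\Db(\Mod \oFF_\Lambda R)\;\rightleftarrows\;\Db(\Mod \FF_\Lambda R)
\]
established just before Proposition \ref{proposition:CondC2} along the triangle equivalences $\iota\colon \Db(\Preglid FR)\stackrel{\sim}{\to}\Db(\Mod \oFF_\Lambda R)$ and $\eta\circ\phi\colon \Db(\Glid FR)\stackrel{\sim}{\to}\Db(\Mod \FF_\Lambda R)$ from Proposition \ref{proposition:DerivedEquivalences}. Since a recollement can be transported along triangle equivalences, it suffices to check that the middle functor of the recollement on module categories corresponds to $Q$ (up to natural isomorphism) under these equivalences, and similarly that the left functor corresponds to $i^*$.

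The main obstacle is this compatibility check. On the non-derived level, the identities $\phi\circ Q=j^*|_{\Preglid FR}$ and $\eta\circ j^*|_{\Preglid FR}=j^*\circ \iota$ hold tautologically, and $i_*\colon \Mod R\to \Preglid FR$ postcomposed with $\iota$ is the abelian $i_*\colon \Mod R\to \Mod \oFF_\Lambda R$. I would promote these to the derived level using that $\iota$, $\eta$, $\phi$ all preserve (and reflect) conflations, and hence commute with the derived functors in play; the chosen right adjoint of $Q$ then corresponds (up to isomorphism) to $Rj_*$, and the left adjoint to $Lj_!$, yielding the full six-functor package satisfying the axioms of Definition \ref{definition:TriangulatedRecollement}. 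This completes the recollement.
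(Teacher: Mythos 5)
Your proposal assembles exactly the ingredients the paper uses: Theorem \ref{theorem:MainTheoremDerivedCatOfLoc} applied to the percolating subcategory $i_*(\Mod R)$ of Proposition \ref{proposition:GlidAsLocalization}, the identification $\Db(\Mod R)\simeq \DModRb(\Preglid FR)$ from Proposition \ref{proposition:CondC2}, and transport of the module-category recollement along the equivalences of Proposition \ref{proposition:DerivedEquivalences}; this is precisely how the paper arrives at the statement, which it presents as a summary of the section. Your added compatibility check for the transported functors is a reasonable filling-in of a step the paper leaves implicit.
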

\section{Glider representations of bialgebra filtrations}\label{section:HopfCategories}

In this section, we consider the category of glider representations associated to bialgebras.  The definitions are based on the glider representations of groups \cite{CaenepeelVanOystaeyen18, CaenepeelVanOystaeyenBook19}.  Our aims in this section are to related the monoidal category of glider representations to the notion of semi-Hopf categories (see \S\ref{susection:SemiHopf}), and to show that one can, in general, recover the original bialgebra from the monoidal category of glider representations.

Thus, let $k$ be a field, $\Gamma$ be an ordered group, $B$ a $k$-bialgebra and $FB$ a $\Gamma$-filtration of $B$ by bialgebras, i.e.~a collection of subbialgebras $(FB_\gamma)_{\gamma\in \Gamma^+}$ of $B$ satisfying 
\begin{enumerate}
	\item $FB_{\gamma}\cdot FB_{\gamma'}\subseteq FB_{\gamma \gamma'}$,
	\item $\Delta(FB_{\gamma})\subseteq FB_{\gamma}\otimes FB_{\gamma}$.
\end{enumerate}	

\begin{example}
Consider a group $G$ and a finite group filtration
	\[1\subseteq G_1\subseteq G_2 \subseteq \dots \subseteq G_n=G.\]
	From this filtration, one obtains the associated filtration of group algebras over some field $k$	
	\[k\subseteq k[G_1]\subseteq k[G_2]\subseteq \dots \subseteq k[G_n]=k[G].\]
	This is a filtration of $k[G]$ by bialgebras as above.
\end{example}

\subsection{The monoidal structure and the connection with semi-Hopf categories}  With notations as introduced before, we show that the filtered companion category $\FF_\Lambda B$ is a semi-Hopf category and that the category $\Glid FB$ of glider representations is a full monoidal subcategory of $\Mod \FF_\Lambda B$.  Moreover, the natural embedding $\Glid FB \to \Mod \FF_\Lambda$ lifts to an equivalence on the level of derived categories.

\begin{proposition}\label{proposition:CompanionCategorySemiHopf}
The categories $\FF_\Lambda B$ and $\oFF_\Lambda B$ have the structure of $k$-linear semi-Hopf categories.
\end{proposition}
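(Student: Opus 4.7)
The plan is to unpack the enrichment conditions directly and match them term-by-term with the bialgebra axioms of $B$ and the compatibility of the filtration with the coalgebra structure. Recall that a $k$-linear semi-Hopf category is a category whose Hom-spaces are $k$-coalgebras, whose composition maps are coalgebra morphisms (with respect to the standard tensor-product coalgebra structure), and whose identity maps $k \to \End(\alpha)$ are coalgebra morphisms.

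First I would treat $\FF_\Lambda B$. The Hom-spaces are either $0$ or of the form $F_{\beta\alpha^{-1}}B$. Since each $F_\gamma B$ is by hypothesis a sub-bialgebra of $B$, the condition $\Delta(F_\gamma B) \subseteq F_\gamma B \otimes F_\gamma B$ ensures that $\Delta$ and $\varepsilon$ of $B$ restrict to $F_\gamma B$, endowing every Hom-space with a canonical $k$-coalgebra structure (the zero space being a trivial coalgebra). Next, composition is given by multiplication in $B$, restricted to the appropriate filtration pieces; that this is a coalgebra morphism
\[ F_{\gamma\beta^{-1}}B \otimes F_{\beta\alpha^{-1}}B \longrightarrow F_{\gamma\alpha^{-1}}B \]
is exactly the bialgebra identities $\Delta(xy) = \Delta(x)\Delta(y)$ and $\varepsilon(xy) = \varepsilon(x)\varepsilon(y)$ in $B$, restricted to the relevant subcoalgebras. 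Finally, the identity of $\End_{\FF_\Lambda B}(\alpha) = F_e B$ is $1_B$, and the bialgebra axioms $\Delta(1_B) = 1_B \otimes 1_B$ and $\varepsilon(1_B) = 1_k$ precisely express that the unit map $k \to F_e B$ is a coalgebra morphism.

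For $\oFF_\Lambda B$, the same verification applies to the Hom-spaces of the form $F_{\beta\alpha^{-1}}B$. The remaining Hom-spaces involving $\infty$ are either $0$ or equal to $B$ itself, which is a coalgebra tautologically; and the compositions into or out of $\infty$ are again given by multiplication in $B$, so the bialgebra axioms give the coalgebra-morphism condition without change. The identity at $\infty$ is $1_B \in B$, and the coalgebra-morphism condition on the unit $k \to B$ is, once more, $\Delta(1_B) = 1_B \otimes 1_B$ and $\varepsilon(1_B) = 1_k$.

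This proof is essentially a translation exercise: each datum of an enrichment over $\underline{\CC}(\operatorname{Vec}_k)$ corresponds to one of the bialgebra axioms for $B$, and the filtration-by-bialgebras hypothesis is exactly what guarantees that all these axioms descend to each $F_\gamma B$. There is no genuine obstacle; the only subtlety to flag is that one must verify once that the tensor-product coalgebra structure appearing in the compatibility of composition is precisely the one induced from $\Delta_B$ via the embedding $F_{\gamma_1}B \otimes F_{\gamma_2}B \hookrightarrow B \otimes B$, which is immediate from the inclusion $\Delta(F_\gamma B) \subseteq F_\gamma B \otimes F_\gamma B$.
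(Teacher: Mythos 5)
Your proof is correct and follows essentially the same route as the paper's: both arguments observe that the filtration-by-bialgebras hypothesis makes each Hom-space a subcoalgebra of $B$ and that composition, being multiplication in $B$, is a coalgebra map by the bialgebra axioms. Your write-up is slightly more explicit than the paper's (which treats $\oFF_\Lambda B$ as ``similar'' and does not spell out the unit condition), but there is no substantive difference.
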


\begin{proof}
	We only show that $\FF_\Lambda B$ is a $k$-linear semi-Hopf category, the case $\oFF_\Lambda B$ is similar. Let $\lambda_1,\lambda_2,\lambda_3\in \Lambda$ and set 
	\[\circ_{\lambda_1,\lambda_2,\lambda_3}\colon FB_{\lambda_3\lambda_2^{-1}}\otimes FB_{\lambda_2\lambda_1^{-1}}\to FB_{\lambda_3\lambda_1^{-1}}\colon a\otimes b\mapsto ab.\] The map $\circ_{\lambda_1,\lambda_2,\lambda_3}$ is well-defined as $\left(FB_{\gamma}\right)_{\gamma\in \Gamma^+}$ is an algebra filtration. 
	Moreover, as $B$ is a bialgebra, the multiplication map $\circ_{\lambda_1,\lambda_2,\lambda_3}$ is a coalgebra map. It follows that $\FF_\Lambda B$ is enriched over $\underline{\CC}(\MM_k)$ and thus $\FF_\Lambda B$ is a $k$-linear semi-Hopf category.
\end{proof}

Let $\Mod_k(\FF_\Lambda B)$ and $\Mod_k(\oFF_\Lambda B)$ be the categories of covariant additive functors to $\Vect(k)$. Define a tensor product $\otimes$ on $\Mod _k(\oFF_\Lambda B)$ by setting $(M\otimes N)(\lambda)=M(\lambda)\otimes_k N(\lambda)$ for each $\lambda\in \Lambda\coprod \left\{\infty\right\}$.

\begin{proposition}\label{proposition:ExactTensor}
The categories $\Mod_k(\FF_\Lambda B)$ and $\Mod_k(\oFF_\Lambda B)$ are $k$-linear monoidal categories. Moreover, the tensor product is exact.
\end{proposition}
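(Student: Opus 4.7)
The plan is to combine proposition \ref{proposition:CompanionCategorySemiHopf} with the general construction recalled in \S\ref{susection:SemiHopf}: for any $k$-linear semi-Hopf category $\CC$, the functor category $\Mod_k \CC = \Fun_k(\CC, \Vect k)$ inherits a pointwise monoidal structure (this is \cite[proposition 3.2]{BatistaCaenepeelVercruysse16}, already referenced earlier). Since proposition \ref{proposition:CompanionCategorySemiHopf} establishes that both $\FF_\Lambda B$ and $\oFF_\Lambda B$ are $k$-linear semi-Hopf categories, the first claim is immediate once we observe that the pointwise tensor product on $\Mod_k(\FF_\Lambda B)$ described just before the proposition coincides with the abstract one.

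To make this explicit (and, in particular, to convince the reader that $M\otimes N$ is actually a functor on $\FF_\Lambda B$), I would spell out the module structure on $M\otimes N$. Given $\lambda\leq\mu$ in $\Lambda$ and an element $a\in \Hom_{\FF_\Lambda B}(\lambda,\mu)=FB_{\mu\lambda^{-1}}$, the assumption that $FB$ is a filtration by bialgebras yields $\Delta(a)=\sum a_{(1)}\otimes a_{(2)}\in FB_{\mu\lambda^{-1}}\otimes FB_{\mu\lambda^{-1}}$; one then defines
\[
(M\otimes N)(a)(m\otimes n)=\sum M(a_{(1)})(m)\otimes N(a_{(2)})(n).
\]
Coassociativity of $\Delta$ and the fact that $\Delta$ is an algebra morphism translate directly into functoriality of $M\otimes N$. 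The unit object is the constant functor with value $k$, where any $a\in FB_{\mu\lambda^{-1}}$ acts through the counit $\epsilon(a)\in k$. The associator and unit constraints are simply those of $\Vect k$, taken pointwise, and the coherence axioms in $\Mod_k(\FF_\Lambda B)$ reduce, object by object, to the corresponding coherence in $\Vect k$. The extended case $\oFF_\Lambda B$ is identical, with the extra object $\infty$ treated on equal footing.

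For the second statement, observe that all (co)limits, and in particular kernels and cokernels, in $\Mod_k(\FF_\Lambda B)$ and $\Mod_k(\oFF_\Lambda B)$ are computed pointwise in $\Vect k$. Since the tensor product is likewise defined pointwise, for any $M$ the functor $M\otimes -$ is exact if and only if each $M(\lambda)\otimes_k-\colon \Vect k\to\Vect k$ is exact, and the latter holds because every vector space over a field is flat. The only mild subtlety — and the main thing one needs to verify carefully rather than by citation — is that the pointwise tensor product really does assemble into a bifunctor on the module categories (i.e.\ that the $\FF_\Lambda B$-action on $M\otimes N$ is well defined and functorial); once the semi-Hopf structure from proposition \ref{proposition:CompanionCategorySemiHopf} is in hand, this is an exercise in Sweedler notation as sketched above.
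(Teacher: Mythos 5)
Your proposal is correct and follows essentially the same route as the paper: the monoidal structure is obtained by citing \cite[proposition~3.2]{BatistaCaenepeelVercruysse16} via the semi-Hopf structure from proposition \ref{proposition:CompanionCategorySemiHopf}, and exactness is the pointwise observation that tensoring with a vector space over a field is exact. The extra Sweedler-notation details you supply are a harmless (and welcome) expansion of what the paper dismisses as ``straightforward to verify.''
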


\begin{proof}
	It follows from \cite[proposition~3.2]{BatistaCaenepeelVercruysse16} that $\Mod_k(\FF_\Lambda B)$ and $\Mod_k(\oFF_\Lambda B)$ are $k$-linear tensor categories. The exactness of the tensor product is straightforward to verify.
\end{proof}

The exactness of the tensor product over $k$ immediately gives the following corollary.

\begin{corollary}
The categories $\Preglid FB$ and $\Prefrag FB$ inherit a monoidal structure from $\Mod_k(\oFF_\Lambda B)$ and $\Mod_k(\FF_\Lambda B)$, respectively.
\end{corollary}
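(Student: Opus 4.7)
The plan is to verify that the full subcategories $\Preglid FB \subseteq \Mod_k(\oFF_\Lambda B)$ and $\Prefrag FB \subseteq \Mod_k(\FF_\Lambda B)$ are closed under the monoidal product and contain the monoidal unit constructed in proposition \ref{proposition:ExactTensor}. Since both are \emph{full} subcategories, closure on objects suffices for the monoidal structure to restrict.

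First I would unpack how the tensor product acts on morphisms. By the semi-Hopf structure established in proposition \ref{proposition:CompanionCategorySemiHopf}, for a morphism $f \in \Hom_{\oFF_\Lambda B}(\alpha,\beta) = FB_{\beta\alpha^{-1}}$ with comultiplication $\Delta f = \sum f_{(1)} \otimes f_{(2)}$, the pointwise tensor acts by $(M \otimes N)(f) = \sum M(f_{(1)}) \otimes_k N(f_{(2)})$. The crucial input is the bialgebra identity $\Delta(1_B) = 1_B \otimes 1_B$, which descends to each sub-bialgebra $FB_{\beta\alpha^{-1}}$. It follows that
\[ (M \otimes N)(1_{\alpha,\beta}) = M(1_{\alpha,\beta}) \otimes_k N(1_{\alpha,\beta}). \]

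With this formula in hand, given $M, N \in \Preglid FB$, both factors on the right are injective $k$-linear maps by definition \ref{definition:PrefragAndPreglid}. Since $k$ is a field every $k$-vector space is flat, so tensor products of injections remain injections. Hence $(M \otimes N)(1_{\alpha,\beta})$ is a monomorphism and $M \otimes N \in \Preglid FB$. The same argument, with $\oFF_\Lambda B$ replaced by $\FF_\Lambda B$, establishes closure of $\Prefrag FB$ under the tensor product.

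For the monoidal unit, I would identify it as the functor $\mathbb{1}$ sending every object to $k$ and acting on $f$ by multiplication by the counit $\epsilon(f)$. Since $\epsilon(1_B) = 1$, the map $\mathbb{1}(1_{\alpha,\beta})$ is the identity on $k$ and hence injective, so $\mathbb{1}$ is both a preglider and a prefragment. There is no genuine obstacle: once the monoidal structure of proposition \ref{proposition:ExactTensor} is made explicit on the identity morphisms $1_{\alpha,\beta}$, closure of both subcategories reduces to the bialgebra identity $\Delta(1) = 1 \otimes 1$ together with flatness over a field.
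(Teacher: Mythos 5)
Your proof is correct and follows essentially the same route as the paper, which simply observes that exactness of $\otimes_k$ (applied pointwise) preserves the defining monomorphism conditions of definition \ref{definition:PrefragAndPreglid}; your additional observations (that $\Delta(1_B)=1_B\otimes 1_B$ identifies $(M\otimes N)(1_{\alpha,\beta})$ with $M(1_{\alpha,\beta})\otimes_k N(1_{\alpha,\beta})$, and that the unit object is a preglider) are just the details the paper leaves implicit.
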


\begin{proposition}\label{proposition:MonoidalLocalization}
\begin{enumerate}
	\item The subcategory $i_*(\Mod_k(B)) \subseteq \Mod_k(\FF_\Lambda B)$ is a tensor ideal.
	\item The subcategory $i_*(\Mod_k(B)) \subseteq \Preglid FB$ is a tensor ideal.
	\item The category $\Glid FB$ is a monoidal category, and the quotient functor $Q\colon \Preglid FR \to \Glid FR$ is universal in the category of monoidal categories and functors.
\end{enumerate}
\end{proposition}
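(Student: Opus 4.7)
The plan is to verify (1) and (2) by a direct pointwise computation, and then to deduce (3) by showing the localization class $\Sigma$ of weak $i_*(\Mod_k B)$-isomorphisms is stable under tensoring on either side, after which the monoidal structure descends to $\Glid FB$ via the universal property of the localization.

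For (1), since the tensor product on $\Mod_k(\oFF_\Lambda B)$ is defined pointwise and $i_*(M')(\lambda) = 0$ for $\lambda \neq \infty$, I compute $(i_*(M') \otimes N)(\lambda) = 0$ for $\lambda \in \Lambda$ and $(i_*(M') \otimes N)(\infty) = M' \otimes_k N(\infty)$, so that $i_*(M') \otimes N \cong i_*(M' \otimes_k N(\infty))$ with $B$-module structure induced by the comultiplication of $B$. The symmetric argument handles $N \otimes i_*(M')$, establishing (1) (interpreting the stated inclusion as being in $\Mod_k(\oFF_\Lambda B)$, as $i_*$ takes values there). Statement (2) is then immediate: $\Preglid FB$ is closed under tensor in $\Mod_k(\oFF_\Lambda B)$ because over a field the tensor of monomorphisms is a monomorphism, and $i_*(M' \otimes_k N(\infty))$ lies trivially in $\Preglid FB$.

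For (3), by proposition \ref{proposition:GlidAsLocalization} we have $\Glid FB = \Sigma^{-1}\Preglid FB$ where $\Sigma$ is the right multiplicative system of weak $i_*(\Mod_k B)$-isomorphisms. The key step is that tensoring preserves $\Sigma$. Given $s\colon M \to N$ in $\Sigma$, remark \ref{remark:WeakIsomorphismsAdmissible} factors $s$ as $M \deflation I \inflation N$ with $\ker s,\coker s \in i_*(\Mod_k B)$. Since tensoring over a field preserves both monomorphisms and epimorphisms, tensoring this factorization with $P \in \Preglid FB$ yields a factorization $P \otimes M \deflation P \otimes I \inflation P \otimes N$ of $1_P \otimes s$ whose kernel $P \otimes \ker s$ and cokernel $P \otimes \coker s$ lie in $i_*(\Mod_k B)$ by (2). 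Hence $1_P \otimes s \in \Sigma$, and symmetrically $s \otimes 1_P \in \Sigma$. The rule $(f_1/s_1) \otimes (f_2/s_2) := (f_1 \otimes f_2)/(s_1 \otimes s_2)$ on roofs then lands in $\Hom_{\Glid FB}$, since $s_1 \otimes s_2 = (s_1 \otimes 1)(1 \otimes s_2)$ lies in $\Sigma$ by \ref{RMS1}, and it defines a (strict) monoidal structure on $\Glid FB$ for which $Q$ is strictly monoidal.

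The universal property then follows formally: any monoidal functor $F\colon \Preglid FB \to \DD$ that inverts $\Sigma$ factors as $F = \bar F \circ Q$ by the ordinary universal property of the localization, and $\bar F$ inherits a monoidal structure since the monoidal structure on $\Glid FB$ is the unique one making $Q$ strictly monoidal. The main obstacle is the well-definedness of the tensor bifunctor on equivalence classes of roofs: if $(f_1/s_1) \sim (f_1'/s_1')$ via a common refinement as in description \ref{descript:Localization}, one must verify that tensoring with a fixed roof $(f_2/s_2)$ preserves this equivalence. This is a diagram chase obtained by applying the functor $-\otimes -$ to the equivalence diagram for roofs, using tensor-stability of $\Sigma$ and the bifunctoriality of $\otimes$ on $\Preglid FB$ to conclude that the tensored roofs are equivalent in $\Glid FB$.
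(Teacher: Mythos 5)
Your proof is correct, but for part (3) it takes a more self-contained route than the paper. The paper dismisses (1) and (2) as trivial (your pointwise computation, including the observation that the codomain in (1) should read $\Mod_k(\oFF_\Lambda B)$ rather than $\Mod_k(\FF_\Lambda B)$, is exactly why) and then obtains (3) in one line by citing Day's monoidal localization theorem \cite[corollary 1.4]{Day73}, which says that localizing a monoidal category at a class of morphisms compatible with the tensor product yields a monoidal category with a universal monoidal quotient functor; the compatibility is supplied by (2). You instead reprove this in the special case of a right calculus of fractions: you check that $\Sigma$ is stable under $P \otimes -$ and $- \otimes P$ and then define the tensor product of roofs directly. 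Two remarks on your argument. First, the tensor-stability of $\Sigma$ follows even more directly from proposition \ref{proposition:GlidAsLocalization}(2): $\Sigma$ consists precisely of the morphisms $s$ with $s_\lambda$ invertible for all $\lambda \in \Lambda$, and since the tensor product is computed pointwise over the field $k$, each $(1_P \otimes s)_\lambda = 1_{P(\lambda)} \otimes_k s_\lambda$ is again invertible; your factorization argument via $M \deflation I \inflation N$ is correct but unnecessary. Second, the well-definedness of $\otimes$ on equivalence classes of roofs, which you rightly flag as the main obstacle but only sketch, is exactly the content of the cited result of Day, so invoking \cite{Day73} at that point (as the paper does) is the cleanest way to close the argument if you do not want to write out the diagram chase. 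What your approach buys is an explicit description of the monoidal structure on $\Glid FB$ at the level of roofs, which the paper only supplies afterwards in the remark following the proposition.
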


\begin{proof}
	The first two statements are trivial.  The last statement follows from the second one (see \cite[corollary 1.4]{Day73}).
\end{proof}

\begin{remark}
The monoidal structure on $\Glid FB = \Sigma^{-1}\Preglid FR$ (see definition \ref{definition:GlidAsLoc}) can be described as follows.  As $\Ob (\Glid FR) = \Ob(\Preglid FR)$, the tensor product on gliders is the same as the tensor product on pregliders.  Let now $M \stackrel{\sim}{\leftarrow} M' \to X$ and $N \stackrel{\sim}{\leftarrow} N' \to Y$ be morphisms in $\Glid FR$.  The corresponding morphism from $M \otimes N$ to $X \otimes Y$ is given by:
\[M \otimes N \stackrel{\sim}{\leftarrow} M' \otimes N' \to X \otimes Y.\]
Here, we use that if $s,t\in \Sigma$, then $s \otimes t \in \Sigma.$

As $Q\colon \Glid FB \to \Preglid FB$ is the identity on objects, we can give $Q$ the structure of a monoidal functor by choosing the identity map for $J_{M,N}\colon Q(M) \otimes Q(N) \to Q(M \otimes N)$.
\end{remark}

\begin{remark}
The tensor product on $\Glid FB$ is conflation-exact; but need not commute with colimits.  As an example, the cokernel of the morphism $f\colon M \to N$ in $\Glid FR$ from example \ref{example:SymmetricGroup1} is zero.  Let $T$ be the prefragment given by $0 \subseteq \mathbb{C}$.  Note that $T$ is a glider with $T(\infty) = \mathbb{C}S_3$.  The cokernel of the map $f \otimes T\colon M \otimes T \to N \otimes T$ is isomorphic to $T.$

In particular, the monoidal structure on $\Glid FR$ is not closed (i.e. the tensor product does not have a right adjoint).
\end{remark}

\begin{proposition}
The functor $\phi\colon \Glid FB \to \Preglid FB$ is a monoidal functor.
\end{proposition}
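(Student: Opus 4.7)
The plan is to exploit the factorization $j^* = \phi \circ Q$ together with the fact that the tensor products on both $\Preglid FB$ and $\Prefrag FB$ are inherited pointwise from the ambient module categories. First I would verify that the restriction functor $j^*\colon \Mod_k(\oFF_\Lambda B) \to \Mod_k(\FF_\Lambda B)$ is strictly monoidal: since the tensor product on each side is computed pointwise on $\Lambda \sqcup \{\infty\}$ (resp.\ $\Lambda$), restricting a functor to $\FF_\Lambda B \subseteq \oFF_\Lambda B$ commutes on the nose with the tensor product and sends the unit to the unit. This strict monoidal property descends to the torsion-free subcategories, giving a strict monoidal functor $j^*\colon \Preglid FB \to \Prefrag FB$.

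Next I would invoke Proposition \ref{proposition:MonoidalLocalization}(3), which asserts that $Q\colon \Preglid FB \to \Glid FB$ is universal among monoidal functors inverting the weak $\Mod_k(B)$-isomorphisms in $\Sigma$. Since $j^*$ sends every $s \in \Sigma$ to an isomorphism (by the very definition of $\Sigma$), this universal property produces a unique monoidal structure on the induced functor $\phi\colon \Glid FB \to \Prefrag FB$ such that $\phi \circ Q = j^*$ as monoidal functors. This is the desired monoidal structure.

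As a concrete sanity check, using the explicit description of $\phi$ in Remark \ref{remark:PhiExplicit}, the coherence isomorphism $J_{M,N}\colon \phi(M) \otimes \phi(N) \to \phi(M \otimes N)$ is simply the identity on each component $\lambda \in \Lambda$, because $\phi(M \otimes N)(\lambda) = (M \otimes N)(\lambda) = M(\lambda) \otimes_k N(\lambda) = \phi(M)(\lambda) \otimes_k \phi(N)(\lambda)$. Naturality in morphisms represented by roofs $M \xleftarrow{s} M' \xrightarrow{g} N$ follows from the fact that if $s,t \in \Sigma$ then $s \otimes t \in \Sigma$, so that $\phi(f \otimes h)(\lambda) = (g(\lambda) \otimes g'(\lambda)) \circ (s(\lambda) \otimes s'(\lambda))^{-1} = \phi(f)(\lambda) \otimes \phi(h)(\lambda)$.

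There is essentially no hard step in this argument: the whole content is that the monoidal structure on $\Glid FB$ was defined precisely so that $Q$ is monoidal, and $j^*$ is strict monoidal for trivial pointwise reasons, so $\phi$ is forced to be monoidal (in fact strict) by the universal property. The coherence axioms for $\phi$ (pentagon, triangle) reduce on each $\lambda \in \Lambda$ to the corresponding axioms in $\Vect_k$, so they hold automatically.
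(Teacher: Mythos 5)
Your argument is correct and is exactly the paper's own proof: the paper simply writes ``Directly from proposition \ref{proposition:MonoidalLocalization}'', i.e.\ it invokes the universal property of the monoidal localization $Q$ applied to the (pointwise, hence strict) monoidal functor $j^*=\phi\circ Q$, which is precisely what you spell out. Your additional explicit check that $J_{M,N}$ is the identity on each $\lambda\in\Lambda$ matches the remark the paper records immediately afterwards.
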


\begin{proof}
Directly from proposition \ref{proposition:MonoidalLocalization}.
\end{proof}

\begin{remark}
The category $\Glid FR$ is a full subcategory of $\Preglid FR$ via the functor $\phi$.  There is a unique structure of a monoidal functor on $\phi$ such that $\phi \circ Q = j_*$ (this follows from proposition \ref{proposition:MonoidalLocalization}); the morphisms $J_{M,N}\colon \phi (M) \otimes \phi (N) \to \phi(M \otimes N)$ are given by the identities.
\end{remark}

Combining the results of this section, we obtain the following theorem.

\begin{theorem}
Let $B$ a $k$-bialgebra and $FB$ a $\Gamma$-filtration of $B$ by bialgebras.  Let $\Lambda \subseteq \Gamma$ be any subset.  The filtered companion category $\FF_\Lambda B$ has the structure of a semi-Hopf category and there is a monoidal triangle equivalence $\Db(\Glid FB) \to \Db(\Mod \FF_\Lambda B).$
\end{theorem}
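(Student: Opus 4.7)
\medskip

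The plan is to assemble the theorem from pieces already in place. The first assertion, that $\FF_\Lambda B$ is a semi-Hopf category, is exactly proposition \ref{proposition:CompanionCategorySemiHopf}, so nothing new needs to be said there. The content of the theorem is therefore the existence of a \emph{monoidal} triangle equivalence $\Db(\Glid FB) \to \Db(\Mod \FF_\Lambda B)$.

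First I would exhibit the triangle equivalence as the composition
\[\Db(\Glid FB) \xrightarrow{\Db(\phi)} \Db(\Prefrag FB) \xrightarrow{\Db(\eta)} \Db(\Mod \FF_\Lambda B),\]
both of which are triangle equivalences by proposition \ref{proposition:DerivedEquivalences}. The underlying functor $\eta \circ \phi$ of the composition is a monoidal functor: $\phi$ is monoidal by the proposition immediately preceding the theorem, and $\eta$ is a monoidal inclusion since the tensor product on $\Prefrag FB$ was defined by restriction from the pointwise tensor product on $\Mod \FF_\Lambda B$.

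Next I would upgrade this to a monoidal functor on derived categories. The pointwise tensor product on $\Mod_k(\FF_\Lambda B)$ is exact (proposition \ref{proposition:ExactTensor}), so it descends term-wise to a tensor product on $\Cb$, $\Kb$ and hence $\Db(\Mod \FF_\Lambda B)$, giving a triangulated monoidal structure. On the gliders side, $\Glid FB$ has enough projectives (corollary \ref{corollary:EnoughProjectives}), and the tensor product is conflation-exact, so in the same spirit it descends to $\Db(\Glid FB)$; equivalently, one transports the monoidal structure along the equivalence $\Db(\phi)$ from the monoidal structure on $\Db(\Prefrag FB)\subseteq \Db(\Mod \FF_\Lambda B)$. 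This transport is automatic once the underlying $\phi$ is known to be monoidal.

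Finally, the monoidal coherence data for the derived functor come for free from the coherence data of the underlying monoidal functor $\eta \circ \phi$: since the tensor products on both sides are defined pointwise (degree-wise on complexes) and exact, the constraint isomorphisms $J_{M,N}\colon \Db(\eta\phi)(M)\otimes \Db(\eta\phi)(N)\to \Db(\eta\phi)(M\otimes N)$ are obtained by applying $\eta\phi$ componentwise to a complex and using the existing $J$'s in each degree; the hexagon and triangle axioms hold because they already hold at the level of the abelian categories. The main point of vigilance — and the only place where anything could go wrong — is checking that tensoring two acyclic complexes in $\Glid FB$ remains acyclic (so that the tensor product is well-defined on the Verdier quotient $\Db(\Glid FB) = \Kb(\Glid FB)/\Acb_\K(\Glid FB)$); this reduces, via proposition \ref{proposition:DerivedEquivalences} and the pointwise-exactness of $\otimes_k$, to the analogous and trivial fact for $\Mod \FF_\Lambda B$. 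Combining these observations yields the desired monoidal triangle equivalence.
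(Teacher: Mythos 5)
Your proof is correct and takes essentially the same route as the paper, whose own proof is a two-line statement that the equivalence is induced by the monoidal functor $\eta\circ\phi$ (relying on propositions \ref{proposition:CompanionCategorySemiHopf} and \ref{proposition:DerivedEquivalences}); you have simply made the derived-level details explicit. One small imprecision: for the tensor product to descend to the Verdier quotient one needs that an acyclic complex tensored with an \emph{arbitrary} complex stays acyclic, not merely that the tensor product of two acyclic complexes is acyclic, but your reduction to the pointwise exactness of $\otimes_k$ gives this stronger statement anyway.
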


\begin{proof}
It was verified in proposition \ref{proposition:CompanionCategorySemiHopf} that $\FF_\Lambda B$ has the structure of a semi-Hopf category.  The monoidal equivalence $\Db(\Glid FB) \to \Db(\Mod \FF_\Lambda B)$ is induced by the monoidal functor $\eta\circ \phi\colon \Glid FB \to \Mod \FF_\Lambda B.$
\end{proof}

\subsection{Glider representations and isocategorical groups}\label{subsection:Isocategorical}

Let $k$ be a field and let $G$ be a group.  It is known that one cannot recover the group $G$ from the monoidal category $\Mod kG$ alone.  Indeed, we say that the groups $G$ and $H$ are \emph{isocategorical} over $k$ if $\Mod kG$ and $\Mod kH$ are monoidally equivalent.  Examples of nonisomorphic isocategorical groups are given in \cite[Theorem 1.2]{EtingofGelaki01}.

In \cite{CaenepeelVanOystaeyen19b}, the authors started from the $\bZ$-filtered algebra $F(kG)$ given by
\[ F_i(kG) = \begin{cases} 0 & i < 0, \\ k \cdot e_G & i = 0, \\ kG & i \geq 1, \end{cases}\]
and let $\Lambda = \{0,1\}$.  Similar to the study of the characters of a group, one can look at various decategorifications of the category $\glid F(kG)$ of glider representations, such as the glider character ring or the (reduced) representation ring \cite{CaenepeelJanssens19, CaenepeelVanOystaeyenBook19}.  Here, it can be shown that these invariants are sufficient to distinguish between the groups $Q_8$ and $D_8$ (\cite{CaenepeelVanOystaeyen19b, CaenepeelVanOystaeyenBook19}) and even between some isocategorical groups (\cite{CaenepeelJanssens19}).  As of yet, there is no example known of groups $G$ and $H$ that cannot be distinguished using the glider character ring and the (reduced) representation ring.

In this section, we approach a similar question from a different perspective: can one recover the group $G$ from the monoidal category $\Glid F(kG)$?  As the glider character ring and the (reduced) representation ring only depend on the monoidal category $\Glid F(kG),$ the information one can recover from these rings is bounded above by the information one can recover from the category $\Glid F(kG).$  In corollary \ref{corollary:RecoverGroupsFromGliders} below, we show that the monoidal category $\Glid F(kG)$ alone is sufficient to reconstruct the group $G.$

We will proceed with more generality.  Let $B$ be a finite-dimensional $k$-bialgebra, and consider the bialgebra $\bZ$-filtered algebra $FB$ given by
\begin{equation}\label{eq:StandardOneStep}
F_i B = \begin{cases} 0 & i < 0, \\ k \cdot 1_B & i = 0, \\ B & i \geq 1. \end{cases}
\end{equation}
We refer to such a filtration as the \emph{standard one-step filtration} of $B.$  We write $\mod B$ for the category of Noetherian (or, equivalently, finite-dimensional) modules.

For ease of notation, we consider the category $\glid FR$ of Noetherian glider representations as a subcategory of $\Prefrag FR$ via the embedding $\phi\colon \Glid FR \to \Prefrag FR$, i.e.~we will describe an object of $\glid FR$ as an $\FF_\Lambda R$-module.

As in definition \ref{definition:NoetherianGliders}, we write $\glid FB$ for the subcategory of noetherian objects in $\Glid FB$.  By proposition \ref{proposition:NoetherianObjects}, these are exactly the glider representation with finite total dimension.  In particular, as the bialgebra $B$ is finite-dimensional, $P_0 = \FF_\Lambda R(0,-)$ and $P_{-1} = \FF_\Lambda R(-1,-)$ lie in $\glid FR$.

Our goal is to recover the bialgebra $B$ from the monoidal category $\glid FB.$  We provide a short overview.  First, we define full monoidal subcategories $\MM$ and $\VV$ of $\glid FB.$  Here, the category $\MM$ can be thought of as consisting of all gliders $M \in \glid FB$ for which $M(1_{-1,0})\colon M(-1) \to M(0)$ is an isomorphism; the category $\VV$ consists of all $M \in \glid FB$ for which $M(-1) = 0$ (and hence, $\VV \simeq \Vect_k$).  The embedding $\VV \to \glid FB$ has a monoidal left adjoint.  We then obtain a fiber functor $\MM \to \glid FB \to \VV \simeq \Vect_k.$  Finally, we show that the reconstruction theorem of finite-dimensional bialgebras applied to this fiber functor yields the bialgebra $B.$

We will be careful to construct the categories $\VV, \MM$ and the fiber functor $\MM \to \VV$ using only categorical properties of $\glid FR$ (thus, without referring to the internal structure of the objects of $\glid FR$).

We start with specifying the full subcategories $\MM$ and $\VV$ of $\glid FB.$

\begin{notation}\label{notation:RecoveringBialgebra}
The category $\glid FB$ has precisely two isomorphism classes of indecomposable projective objects: $P_{-1}$ and $P_0$.  They are distinguished by the property that $\Hom(P_{-1}, P_0) = 0$ while $\Hom(P_0, P_{-1}) \not= 0$.

We consider the subcategory $\MM$ of $\glid FB$ given by those objects $M$ for which $\Hom(P_0,M) \cong \Hom(P_{-1},M)$ as vector spaces.

We write $\VV$ for the full subcategory of $\glid FB$ consisting of those objects $M$ for which $\Hom(P_{-1},M) = 0.$ 
\end{notation}

\begin{remark}
\begin{enumerate}
	\item Note that $P_0 \in \VV$.
	\item For any $M \in \glid FR,$ we have $\dim_k \Hom(P_0,M) < \infty$ (see proposition \ref{proposition:NoetherianObjects}).  As \[M(1_{-1,0})\colon M(-1) \to M(0)\] is a monomorphism, we find that $M \in \MM$ if and only if $M(1_{-1,0})$ is an isomorphism. 
\end{enumerate}
\end{remark}

\begin{proposition}
The categories $\MM$ and $\VV$ are monoidal subcategories of $\glid FB.$ 
\end{proposition}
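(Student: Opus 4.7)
The plan is to work directly with the pointwise tensor product formula $(M\otimes N)(\lambda)=M(\lambda)\otimes_k N(\lambda)$ of proposition \ref{proposition:ExactTensor} and with the explicit description of the monoidal unit of $\glid FB$. That unit is the $\FF_\Lambda B$-module $\mathbf{1}$ with $\mathbf{1}(\lambda)=k$ at every $\lambda\in\Lambda$, on which each morphism $x\in F_{\beta\alpha^{-1}}B$ acts by the scalar $\varepsilon(x)$ given by the counit of the bialgebra; the isomorphism $\mathbf{1}\otimes N\cong N$ then follows from the counit axiom.

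For $\MM$, I would first rephrase the defining condition by Yoneda: since $\Hom(P_\lambda,M)=M(\lambda)$, and since the structure map $M(1_{-1,0})\colon M(-1)\to M(0)$ is always an injection between finite-dimensional spaces (by the glider axiom of definition \ref{definition:PrefragAndPreglid} combined with proposition \ref{proposition:NoetherianObjects}), the equality of dimensions is equivalent to $M(1_{-1,0})$ being an isomorphism. The unit $\mathbf{1}$ then lies in $\MM$ because $\mathbf{1}(1_{-1,0})$ is multiplication by $\varepsilon(1_B)=1$. Closure under tensor follows from $\Delta(1_B)=1_B\otimes 1_B$, which gives $(M\otimes N)(1_{-1,0}) = M(1_{-1,0})\otimes_k N(1_{-1,0})$; the tensor product of two $k$-linear isomorphisms is again an isomorphism, so $M\otimes N\in\MM$.

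For $\VV$, the defining condition $\Hom(P_{-1},M)=0$ becomes $M(-1)=0$. Closure under tensor is immediate from the pointwise formula $(M\otimes N)(-1)=M(-1)\otimes_k N(-1)=0$. The monoidal unit inside $\VV$ is not $\mathbf{1}$ (which fails $\mathbf{1}(-1)=k\neq 0$) but the object $\mathbf{1}_\VV\cong P_0$ with $\mathbf{1}_\VV(-1)=0$ and $\mathbf{1}_\VV(0)=k$. The isomorphism $\mathbf{1}_\VV\otimes N\cong N$ for $N\in\VV$ follows from the pointwise formula at $\lambda=0$ together with the counit-induced action on $P_0$, and it identifies $\VV$ as a monoidal category with $\Vect_k$.

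The only real subtlety is therefore the choice of unit in $\VV$: closure under $\otimes$ is formal from the pointwise formula, but one must make explicit that $\VV$ carries its own monoidal unit $\mathbf{1}_\VV$ rather than the ambient $\mathbf{1}$, and check that the associators and unitors of $\glid FB$ restrict consistently to $\VV$. Once this is done, the monoidal structure on the announced left adjoint $\glid FB\to\VV$ will be the strong monoidal structure sending $\mathbf{1}$ to $\mathbf{1}_\VV$, and the construction of the fiber functor $\MM\to\VV$ described in the surrounding discussion is set up.
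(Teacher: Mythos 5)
Your proof is correct, and since the paper's own proof is just ``Directly from the definitions,'' your argument is exactly the expected unpacking: the unit $\mathbf{1}$ (constant $k$ with the counit action) lies in $\MM$ because $\varepsilon(1_B)=1$, closure of $\MM$ under $\otimes$ follows from $\Delta(1_B)=1_B\otimes 1_B$ together with the fact that the structure maps $M(1_{-1,0})$ are injections between finite-dimensional spaces, and closure of $\VV$ is immediate from the pointwise tensor formula. Your observation that $\VV$ does not contain $\mathbf{1}$ but carries its own unit $P_0$ is accurate and flags a genuine point the paper leaves implicit: the ``monoidal embedding'' $\VV\to\glid FB$ is compatible with tensor products but not with units, so $\VV$ is a monoidal subcategory only in the weaker sense of a tensor-closed subcategory that is monoidal in its own right.
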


\begin{proof}
Directly from the definitions.
\end{proof}

\begin{proposition}
The monoidal embedding $\VV \to \glid FB$ has a monoidal right adjoint $R\colon \glid FB \to \VV$, which is unique up to monoidal natural equivalence.
\end{proposition}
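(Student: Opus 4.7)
My plan is to construct $R$ explicitly, verify the adjunction and the monoidal structure using the pointwise tensor product, and conclude uniqueness by standard adjoint principles.

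For $M \in \glid FB$, I would define $R(M)$ to be the $\FF_\Lambda B$-module with $R(M)(0) = M(0)$ and $R(M)(-1) = 0$, where the $B$-action $B \otimes R(M)(-1) \to R(M)(0)$ is necessarily zero. Equivalently, $R(M) \cong \Hom_{\glid FB}(P_0,M) \otimes_k P_0$, realizing $R(M)$ as a finite direct sum of copies of $P_0$; since $P_0 \in \VV \cap \glid FB$ and $\glid FB$ is closed under finite direct sums, $R(M)$ lies in $\VV$, and functoriality is inherited from $\Hom_{\glid FB}(P_0,-)$. The adjunction is then routine: for $V \in \VV$ (so $V(-1) = 0$) and any $M \in \glid FB$, a natural transformation $V \to M$ of $\FF_\Lambda B$-modules is determined by the single $k$-linear map $V(0) \to M(0)$, all compatibility conditions involving $V(-1)$ being vacuous. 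Hence $\Hom_{\glid FB}(V,M) \cong \Hom_k(V(0), M(0)) \cong \Hom_\VV(V, R(M))$, naturally in both variables.

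For the monoidal structure, I would compute both sides pointwise using proposition \ref{proposition:ExactTensor}: at $\lambda \in \Lambda$, one has $(R(M) \otimes R(N))(\lambda) = R(M)(\lambda) \otimes R(N)(\lambda)$, which is $M(0) \otimes N(0)$ at $\lambda = 0$ and zero otherwise. This agrees pointwise with $R(M \otimes N)$. The tensor unit $\mathbf{1}$ of $\glid FB$ (given by $\mathbf{1}(-1) = \mathbf{1}(0) = k$ with counit-induced action) maps to $R(\mathbf{1}) = P_0$, which is the tensor unit of $\VV$: indeed $P_0 \otimes V \cong V$ pointwise for $V \in \VV$, the underlying identity being $P_0 \otimes P_0 \cong P_0$ (reflecting $1_B \cdot 1_B = 1_B$). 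These isomorphisms are natural, so $R$ is strong monoidal.

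Uniqueness of $R$ as a functor is the standard uniqueness of right adjoints up to unique natural isomorphism; uniqueness of its monoidal structure up to monoidal natural isomorphism then follows from Kelly's doctrinal adjunction applied to the strong monoidal left adjoint $\VV \hookrightarrow \glid FB$. The step that requires the most care is verifying that $R(M)$ genuinely lies in $\glid FB$ (not merely in $\prefrag FB$); describing $R(M)$ as a finite direct sum of copies of the projective glider $P_0$ handles this directly and bypasses any appeal to the glider criterion of proposition \ref{proposition:WhenGlider}.
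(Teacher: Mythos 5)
Your proof is correct and follows essentially the same route as the paper: the paper defines $R$ by the identical formula $R(M)(-1)=0$, $R(M)(0)=M(0)$, asserts the monoidal structure is clear, and deduces uniqueness from $R$ being an adjoint in the 2-category of monoidal categories, functors and monoidal transformations. You simply supply the routine verifications (the Hom computation for the adjunction, the pointwise tensor compatibility, and the identification of $R(M)$ with a finite direct sum of copies of $P_0$ to see it lands in $\glid FB$) that the paper leaves implicit.
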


\begin{proof}
The right adjoint is given by mapping an object $M \in \glid FB$ to the functor
\[R(M)(i) = \begin{cases} 0 & i = -1, \\ M(0) & i=0.\end{cases}\]
It is clear that this is a monoidal functor.  As $R$ is an adjoint to the embedding in the 2-category of monoidal categories, monoidal functors, and monoidal natural transformations, $R$ is unique up to monoidal natural equivalence.
\end{proof}

\begin{lemma}\label{lemma:EquivalenceWithVect}
\begin{enumerate}
	\item A monoidal equivalence $E\colon \Vect_k \to \Vect_k$ is monoidally naturally isomorphic to the identity.
	\item Let $H_1, H_2\colon \Vect_k \to \VV$ be monoidal functors.  If $H_1$ and $H_2$ are equivalences, then there is a monoidal natural isomorphism $H_1 \to H_2.$
\end{enumerate}
\end{lemma}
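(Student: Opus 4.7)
The plan is to establish part (1) by constructing an explicit monoidal natural isomorphism $\mathrm{Id} \Rightarrow E$, and then to deduce part (2) from part (1) by a formal $2$-categorical argument.

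For part (1), I would start from the unit constraint $\epsilon \colon k \to E(k)$, which is invertible since $E$ is a strong monoidal equivalence. For each $V \in \Vect_k$, define $\beta_V \colon V \to E(V)$ as the composite
\[
V \cong \Hom_k(k, V) \xrightarrow{E} \Hom_k(E(k), E(V)) \xrightarrow{-\circ \epsilon} \Hom_k(k, E(V)) \cong E(V),
\]
using the canonical isomorphism $V \cong \Hom_k(k, V)$, the bijectivity of $E$ on hom-spaces (since $E$ is fully faithful), and the invertibility of $\epsilon$. Each $\beta_V$ is then an isomorphism, and naturality of $\beta$ in $V$ is immediate from the functoriality of each ingredient. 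Moreover, by construction $\beta_k = \epsilon$, so the unit-compatibility axiom for a monoidal natural transformation holds automatically.

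The nontrivial step is to check tensor-compatibility, i.e.\ that $\beta_{V\otimes W} = J_{V,W} \circ (\beta_V \otimes \beta_W)$, where $J_{V,W} \colon E(V) \otimes E(W) \to E(V \otimes W)$ is the monoidal structure map of $E$. Tracing an element $v \otimes w$ through both composites and applying the naturality of $J$ (to move $E(f_v) \otimes E(f_w)$ past $J$) reduces the identity to the equation
\[
J_{k,k} \circ (\epsilon \otimes \epsilon) \circ \ell_k^{-1} = E(\ell_k^{-1}) \circ \epsilon,
\]
which is precisely one of the standard unit coherence diagrams for the monoidal functor $E$.

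For part (2), I would choose a monoidal quasi-inverse $H_2^{-1} \colon \VV \to \Vect_k$ of $H_2$ (which exists, since any strong monoidal equivalence admits a strong monoidal quasi-inverse together with monoidal unit and counit). The composite $E \coloneqq H_2^{-1} \circ H_1 \colon \Vect_k \to \Vect_k$ is then a monoidal equivalence to which part (1) applies, producing a monoidal natural isomorphism $\mathrm{Id}_{\Vect_k} \Rightarrow H_2^{-1} \circ H_1$. Whiskering with $H_2$ on the left and composing with the monoidal counit $H_2 \circ H_2^{-1} \Rightarrow \mathrm{Id}_{\VV}$ yields the desired monoidal natural isomorphism $H_2 \Rightarrow H_1$. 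The main obstacle is the tensor-compatibility check in part (1); it is a routine diagram chase, but requires careful bookkeeping of the coherence axioms of $E$. Once that is in place, both statements follow.
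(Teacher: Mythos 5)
Your proposal is correct, but part (1) is handled by a genuinely different route than the paper's. The paper disposes of (1) in one line by invoking the reconstruction theorem for finite-dimensional bialgebras (\cite[Theorem 5.2.3]{EtingofGelakiNikshychOstrik15}): a monoidal autoequivalence of $\Vect_k$ is in particular a fiber functor on $\Vect_k \simeq \mod k$, and the reconstruction theorem pins such a pair down up to monoidal isomorphism. You instead build the monoidal natural isomorphism $\beta\colon \mathrm{Id} \Rightarrow E$ by hand, $\beta_V(v) = E(f_v)(\epsilon(1))$ with $f_v\colon k \to V$, $1 \mapsto v$, and your verification goes through: naturality is functoriality of $E$, the identity $\beta_k = \epsilon$ gives unit compatibility, and tensor compatibility reduces, after applying naturality of $J$, to $E(\lambda_k^{-1})\circ\epsilon = J_{k,k}\circ(\epsilon\otimes\epsilon)\circ\lambda_k^{-1}$; note that this is a consequence of the left unit coherence axiom for $(E,J,\epsilon)$ \emph{combined with} naturality of the unitor $\lambda$ (it is not literally one of the axioms), so one extra line is warranted there. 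Your argument is more elementary and self-contained, at the cost of the coherence bookkeeping; the paper's is shorter and sits naturally in the reconstruction-theoretic framework the rest of the subsection uses anyway. In either treatment one should record that $E$ is implicitly assumed $k$-linear (otherwise Galois twists of $\Vect_k$ give monoidal autoequivalences not monoidally isomorphic to the identity); in your construction this is exactly what makes each $\beta_V$ a morphism of $\Vect_k$, via $E(\lambda\cdot\mathrm{id}_k)=\lambda\cdot\mathrm{id}_{E(k)}$. Part (2) of your proposal is essentially the paper's argument: compose with a monoidal quasi-inverse, apply part (1), and whisker; the only difference is which of $H_1$, $H_2$ gets inverted.
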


\begin{proof}
The first statement follows from the reconstruction theorem of finite-dimensional bialgebras, cf.~\cite[Theorem 5.2.3]{EtingofGelakiNikshychOstrik15}).

For the second part, let $H_1, H_2\colon \Vect_k \to \VV$ be as in the statement of the proposition, and let $H_1^{-1}$ be a monoidal quasi-inverse of $H_1.$  It is established in the first part that $H_1^{-1} \circ H_2 \cong 1_{\Vect}.$  This shows that $H_1 \cong H_2,$ as required.
\end{proof}

\begin{remark}
The composition $\Xi\colon \MM \to \glid FB \to \VV \to \Vect_k$ is a faithful monoidal functor.  In theorem \ref{theorem:RecoverBialgebra} below, we show that, by applying the reconstruction theorem of finite-dimensional bialgebras, we can recover the bialgebra $B$.
\end{remark}

For the next lemma, recall that a finite-dimensional $B$-module is a pair $(V,\rho_V)$ where $V$ is a finite-dimensional vector space and $\rho_V\colon B \to \End(V)$ is an algebra homomorphism.

\begin{lemma}\label{lemma:MMisModB}
There is a monoidal equivalence $\Phi\colon \mod B \to \MM$, mapping a $B$-module $M = (V, \rho_V)$ to the functor $\Phi(M)\colon \FF_\Lambda B \to \Vect_k$ given by $\Phi(M)(-1) = \Phi(M)(0) = V$ and, for each $b \in B = \Hom(-1,0)$, the corresponding map $\Phi(M)(b)\colon \Phi(M)(-1) \to \Phi(M)(0)$ is given by the action of $B$ on $V$ (i.e.~$\Phi(M)(b) = \rho(b)$).
\end{lemma}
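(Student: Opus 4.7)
The plan is to exhibit $\Phi$ as a well-defined, fully faithful, essentially surjective, and strong monoidal functor $\mod B \to \MM$. First I would verify well-definedness: for $V = (V,\rho_V) \in \mod B$, I would extend $\Phi(V)$ to a preglider $\bar\Phi(V) \in \Preglid FB$ by setting $\bar\Phi(V)(\infty) = V$ with its $B$-action and all transition maps $\bar\Phi(V)(1_{\lambda, \infty}) = 1_V$, so that $\Phi(V) = j^*(\bar\Phi(V))$ lies in the essential image of $\phi \circ Q$ and is therefore a glider. Finite-dimensionality together with proposition \ref{proposition:NoetherianObjects} places $\Phi(V)$ in $\glid FB$, and since $\Phi(V)(1_{-1,0}) = \rho_V(1_B) = 1_V$ is an isomorphism, $\Phi(V)$ lies in $\MM$. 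Fully faithfulness is then routine: by proposition \ref{proposition:PhiFullyFaithful}, morphisms in $\MM$ agree with natural transformations in $\Mod_k(\FF_\Lambda B)$, and a natural transformation $f\colon \Phi(V) \to \Phi(W)$ consists of linear maps $f_{-1}, f_0\colon V \to W$; naturality at $1_B$ forces $f_0 = f_{-1}$, after which naturality at arbitrary $b \in B$ becomes exactly $B$-linearity of $f_0$.

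Next I would turn to essential surjectivity, which I expect to be the main subtle step. Given $M \in \MM$, I would apply the fully faithful left adjoint $L\colon \Glid FB \to \Preglid FB$ of proposition \ref{proposition:LeftAdjoint} to obtain an ambient $B$-module $\Omega := L(M)(\infty)$, together with monomorphisms $M(-1) \hookrightarrow M(0) \hookrightarrow \Omega$ whose composite is scalar multiplication by $1_B$. Since $M \in \MM$ makes $M(1_{-1,0})$ an isomorphism, the images of $M(-1)$ and $M(0)$ in $\Omega$ coincide; call this common subspace $V$. The fragmented action $M(b)\colon M(-1) \to M(0)$ is by construction the restriction of the $B$-action on $\Omega$, so $B \cdot V \subseteq V$ and $V$ inherits a genuine $B$-module structure from $\Omega$. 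The desired isomorphism $\Phi(V) \xrightarrow{\sim} M$ is then the identity at $0$ and $M(1_{-1,0})^{-1}$ at $-1$. The subtlety I wish to highlight is that it is precisely the glider axiom---the existence of an ambient $B$-module realizing the partial actions---rather than mere prefragment data that guarantees the induced action on $V$ is a bona fide $B$-module structure (associative, unital), not a non-associative partial action.

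Finally I would prove monoidality by invoking the semi-Hopf structure on $\FF_\Lambda B$ from proposition \ref{proposition:CompanionCategorySemiHopf}: the coalgebra structure on $\Hom_{\FF_\Lambda B}(-1,0) = B$ is the bialgebra coproduct $\Delta$ of $B$. Consequently $\Phi(V) \otimes \Phi(W)$ and $\Phi(V \otimes_k W)$ have the same underlying assignment of vector spaces, and for $b \in B$ with $\Delta(b) = \sum b_{(1)} \otimes b_{(2)}$ I would compute
\[(\Phi(V) \otimes \Phi(W))(b)(v \otimes w) = \sum \rho_V(b_{(1)})(v) \otimes \rho_W(b_{(2)})(w) = b \cdot (v \otimes w) = \Phi(V \otimes_k W)(b)(v \otimes w).\]
Hence the identities at $-1$ and $0$ furnish coherent monoidal structure isomorphisms, and the trivial $B$-module $(k,\epsilon)$ is sent to the monoidal unit of $\MM$, completing the argument.
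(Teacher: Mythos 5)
Your proposal is correct and follows essentially the same route as the paper: the paper likewise verifies monoidality directly from the pointwise tensor product and constructs a quasi-inverse by using the glider property to realize $N(-1)=N(0)$ as a $B$-submodule of an ambient preglider's value at $\infty$, which is exactly your essential-surjectivity step via $L(M)(\infty)$. Your write-up merely fills in details (full faithfulness, the $\Delta$-computation) that the paper dismisses as straightforward.
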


\begin{proof}
It is straightforward to verify that this correspondence is a monoidal functor.  To see that it is an equivalence, we will construct a quasi-inverse.  Let $N \in \MM$.  As $\dim N(-1) = \dim N(0)$, the monomorphism $N(1_{-1,0})\colon N(-1) \to N(0)$ is an isomorphism.  As $N$ is a glider, there is a preglider $N' \in \Preglid FR$ such that $Q(N') \cong N.$  It is now easy to see that $N(-1) = N(0)$ is a $B$-submodule of $N'(\infty).$  The functor $\Psi\colon \MM \to \mod B$, given by mapping $N$ to the $B$-submodule $N(-1)$ is a quasi-inverse to $\Phi.$
\end{proof}

\begin{remark}
Note that the construction of the functor $\Phi$ in lemma \ref{lemma:MMisModB} is based on the forgetful functor $\mod B \to \Vect_k.$
\end{remark}

We now come to the main result of this subsection.

\begin{theorem}\label{theorem:RecoverBialgebra}
Let $\Gamma = \bZ$ and let $\Lambda = \{-1, 0\} \subset \bZ.$  Let $B$ and $B'$ be finite-dimensional $k$-algebras and let $FB$ and $FB'$ be their standard one-step filtrations.  The categories $\glid FB$ and $\glid FB'$ are monoidally equivalent if and only if $B \cong B'$ as bialgebras.
\end{theorem}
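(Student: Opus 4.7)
The ``if'' direction is immediate: an isomorphism $B\cong B'$ of bialgebras induces an isomorphism of the standard one-step filtrations \eqref{eq:StandardOneStep}, hence of the companion categories $\oFF_\Lambda B \cong \oFF_\Lambda B'$, and therefore a monoidal equivalence $\glid FB \simeq \glid FB'$.

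For the converse, let $F\colon \glid FB \to \glid FB'$ be a $k$-linear monoidal equivalence. The first step is to show that $F$ restricts to the subcategories $\MM$ and $\VV$ of notation \ref{notation:RecoveringBialgebra}. Being an additive equivalence between deflation quasi-abelian categories (theorem \ref{theorem:GlidIsDeflationQuasiAbelian}), $F$ preserves projectivity and indecomposability, so it permutes the two isomorphism classes of indecomposable projectives. The asymmetry $\Hom(P_{-1},P_0)=0\neq \Hom(P_0,P_{-1})$ is preserved by $F$, forcing $F(P_0)\cong P_0'$ and $F(P_{-1})\cong P_{-1}'$. Since $\MM$ and $\VV$ are cut out by dimension conditions on the representable functors $\Hom(P_0,-)$ and $\Hom(P_{-1},-)$, which $F$ preserves, we obtain monoidal equivalences $F_\MM\colon \MM_B \to \MM_{B'}$ and $F_\VV\colon \VV_B \to \VV_{B'}$. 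Moreover, by uniqueness of monoidal adjoints up to monoidal natural isomorphism, $F_\VV \circ R_B \cong R_{B'}\circ F$ as monoidal functors, so $F$ intertwines the reflection data on both sides.

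Next I would combine this with the monoidal equivalence $\Phi\colon \mod B \xrightarrow{\sim} \MM_B$ of lemma \ref{lemma:MMisModB} and choose, via lemma \ref{lemma:EquivalenceWithVect}, monoidal equivalences $\omega_B\colon \VV_B \xrightarrow{\sim} \Vect_k$ and $\omega_{B'}\colon \VV_{B'}\xrightarrow{\sim}\Vect_k$ in such a way that $\omega_{B'}\circ F_\VV$ and $\omega_B$ agree up to monoidal natural isomorphism (this is where the second part of lemma \ref{lemma:EquivalenceWithVect} is essential). Unwinding the definitions of $\Phi$ and of $R$, the composite $\omega_B\circ R_B\circ \Phi\colon \mod B \to \Vect_k$ is monoidally naturally isomorphic to the standard forgetful functor $(V,\rho)\mapsto V$, and likewise for $B'$. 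Thus $F$ produces a $k$-linear monoidal equivalence $\mod B \simeq \mod B'$ which intertwines the two forgetful fiber functors to $\Vect_k$.

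Finally, I invoke Tannakian reconstruction for finite-dimensional bialgebras (see, e.g., \cite[Theorem 5.2.3]{EtingofGelakiNikshychOstrik15}): the bialgebra $B$ is recovered from the pair $(\mod B, \mathrm{forget})$ as (the dual of) the coend of the fiber functor, and this construction is invariant under monoidal equivalences compatible with the fiber functor. Hence $B\cong B'$ as bialgebras, completing the sketch. The main obstacle I anticipate is the bookkeeping in the previous paragraph: verifying carefully that the composite $\omega_B\circ R_B\circ\Phi$ really equals the standard forgetful functor as a monoidal functor (not just as an abstract $k$-linear functor), and that the freedom afforded by lemma \ref{lemma:EquivalenceWithVect} in the choice of $\omega_B$ and $\omega_{B'}$ can be arranged simultaneously on both sides so that the induced equivalence $\mod B\simeq \mod B'$ strictly (up to monoidal natural isomorphism) commutes with the forgetful functors.
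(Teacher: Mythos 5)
Your proposal is correct and follows essentially the same route as the paper: both arguments characterize $P_0$ and $P_{-1}$ (hence $\MM$ and $\VV$) purely categorically, use lemma \ref{lemma:EquivalenceWithVect} to pin down the monoidal equivalence $\VV\simeq \Vect_k$ up to monoidal natural isomorphism, identify the resulting fiber functor on $\MM\simeq\mod B$ (lemma \ref{lemma:MMisModB}) with the usual forgetful functor, and conclude by the reconstruction theorem for finite-dimensional bialgebras. Your write-up merely makes explicit the transport of all this data along the given equivalence $F$, which the paper compresses into the remark that $B$ has been recovered from the monoidal category $\glid FB$ alone.
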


\begin{proof}
It is clear that an isomorphism $B \cong B'$ induces a monoidal equivalence $\glid FB$ and $\glid FB'.$  For the other direction, consider the subcategory $\VV$ of $\glid FB.$  There is a monoidal functor given by mapping $V \in \Vect_k$ to the glider representation $M_V$ given by $M_V(0) = V$ and $M_V(-1) = 0.$  It follows from lemma \ref{lemma:EquivalenceWithVect} that this monoidal functor is unique (up to monoidal natural isomorphism), and hence, does not depend on the bialgebra $B$.

The composition $\Xi\colon \MM \to \glid FB \to \VV \to \Vect_k$ is a faithful monoidal functor.  We claim that the bialgebra obtained from the reconstruction theorem of finite-dimensional bialgebras (\cite[Theorem 5.2.3]{EtingofGelakiNikshychOstrik15}) is $B$.  Consider the monoidal equivalence $\Phi\colon \mod B \to \MM$ from lemma \ref{lemma:MMisModB}.  It is straightforward to verify that the composition $\Phi \circ \Xi\colon \mod B \to \MM \to \glid FB \to \VV \to \Vect_k$ is the usual fiber functor.  It now follows from the reconstruction theorem of finite-dimensional bialgebras that one recovers the bialgebra $B$ from $\End(\Xi).$

As we have recovered the bialgebra $B$ from only the monoidal category $\glid FB$, we infer that $\glid FB$ and $\glid FB'$ are monoidally equivalent if and only if $B \cong B'$ as bialgebras.
\end{proof}

Restricting to the case where the bialgebra $B$ is a group algebra $kG$, we see that the monoidal category $\glid F(kG)$ of glider representations, associated with the trivial group filtration $\{e\} \subseteq G$ is sufficient to recover the group.

\begin{corollary}\label{corollary:RecoverGroupsFromGliders}
Let $G$ and $H$ be finite groups.  There is a group isomorphism $G \cong H$ if and only if there is a monoidal equivalence $\glid F(kG) \simeq \glid F(kH).$
\end{corollary}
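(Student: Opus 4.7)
The plan is to deduce this corollary directly from Theorem \ref{theorem:RecoverBialgebra} applied to the bialgebras $B = kG$ and $B' = kH$. First I would verify that the hypothesis of the theorem is satisfied: since $G$ and $H$ are finite groups, the group algebras $kG$ and $kH$ are finite-dimensional $k$-bialgebras (with $\Delta(g) = g\otimes g$ and $\varepsilon(g)=1$ for $g\in G$), and the trivial group filtration $\{e\}\subseteq G$ induces exactly the standard one-step filtration $k\cdot 1_{kG}\subseteq kG$ of Equation~(\ref{eq:StandardOneStep}), and analogously for $H$.

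Theorem \ref{theorem:RecoverBialgebra} then yields that $\glid F(kG)\simeq \glid F(kH)$ as monoidal categories if and only if $kG\cong kH$ as bialgebras. It therefore suffices to show that $kG\cong kH$ as bialgebras if and only if $G\cong H$ as groups. The ``only if'' direction is the standard fact that a bialgebra (or Hopf algebra) isomorphism preserves group-like elements, and the group-like elements of a group algebra $kG$ are precisely the elements of $G$. Hence any bialgebra isomorphism $kG\to kH$ restricts to a bijection $G\to H$ which is automatically a group homomorphism (since the algebra multiplication restricted to group-likes is the group operation). The ``if'' direction is immediate, as a group isomorphism $G\to H$ extends $k$-linearly to a bialgebra isomorphism $kG\to kH$.

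I do not anticipate a genuine obstacle: the corollary is essentially a specialization of the theorem combined with the standard reconstruction of a group from group-likes. The only point worth emphasizing in the write-up is why one may invoke Theorem \ref{theorem:RecoverBialgebra}, namely the finite-dimensionality of $kG$ and $kH$ (which uses the finiteness of $G$ and $H$) and the identification of the trivial group filtration with the standard one-step bialgebra filtration.
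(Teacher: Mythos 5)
Your proposal is correct and matches the paper's intended argument: the corollary is stated as an immediate specialization of Theorem \ref{theorem:RecoverBialgebra} to $B=kG$ and $B'=kH$, combined with the standard fact that a bialgebra isomorphism $kG\cong kH$ is equivalent to a group isomorphism $G\cong H$ via the group-like elements. Your additional remarks on finite-dimensionality and the identification of the trivial group filtration with the standard one-step filtration are exactly the points the paper leaves implicit.
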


\begin{remark}
The monoidal category $\glid F(kG)$ of Noetherian glider representations retains more information than the monoidal category $\mod kG$ of finite-dimensional modules.  In contrast, the Grothendieck ring of $\glid F(kG)$ is isomorphic to the product $\bZ \times \bZ$ (this follows from the derived equivalence $\Db(\glid F(kG)) \simeq \Db(\mod \FF_\Lambda(kG))$).  As such, the Grothendieck ring of $\glid F(kG)$ is disassociated from the group $G$. 
\end{remark}

As the following example illustrates, there is no direct generalization of corollary \ref{corollary:RecoverGroupsFromGliders} to the category of prefragments.

\begin{example}
Let $G$ and $H$ be any groups with the same number of elements.  Consider the standard one-step bialgebra filtrations, $F(kG)$ and $F(kH),$ of the group algebras $kG$ and $kH.$  Note that there is a coalgebra isomorphism $kG \to kH,$ mapping grouplike elements to grouplike elements (and hence mapping $1\in kG$ to $1 \in kH$), inducing an equivalence $\FF_\Lambda (kH) \to \FF_\Lambda (kG)$ of semi-Hopf categories.  We obtain a monoidal equivalence $\Prefrag F(kH) \to \Prefrag F(kG),$ even though $G$ and $H$ need not be isomorphic groups.
\end{example}

\subsection{Recovering an algebra from the category of glider representations}

The proof of theorem \ref{theorem:RecoverBialgebra} can easily be adapted to work in the setting of finite-dimensional algebras instead of bialgebras.

\begin{theorem}\label{theorem:RecoverAlgebra}
Let $\Gamma = \bZ$ and let $\Lambda = \{-1, 0\} \subset \bZ.$  Let $FA$ and $FA'$ be the standard one-step filtrations of finite-dimensional $k$-algebras $A$ and $A'$.  The categories $\glid FA$ and $\glid FA'$ are equivalent if and only if $A \cong A'$ as algebras.
\end{theorem}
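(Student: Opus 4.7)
The plan is to follow the strategy in the proof of theorem \ref{theorem:RecoverBialgebra}, replacing the Tannakian reconstruction of finite-dimensional bialgebras with the standard reconstruction of a finite-dimensional algebra $A$ from the pair $(\mod A, U)$, where $U\colon \mod A \to \Vect_k$ is the forgetful functor and $A \cong \End(U)$ as $k$-algebras via $\eta \mapsto \eta_A(1_A)$. Because no monoidal compatibility is required, the argument is in fact simpler than in the bialgebra case.

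First, I would identify the two indecomposable projectives $P_0$ and $P_{-1}$ of $\glid FA$ categorically (they are the only indecomposable projectives of the category), and distinguish them as in notation \ref{notation:RecoveringBialgebra}, using $\Hom(P_{-1},P_0)=0$ and $\Hom(P_0,P_{-1})\neq 0$. Next, I would introduce the full subcategory $\MM_A \subseteq \glid FA$ consisting of those $M$ with $\dim_k \Hom(P_0, M) = \dim_k \Hom(P_{-1}, M)$; equivalently, those $M$ for which the canonical monomorphism $M(1_{-1,0})\colon M(-1) \to M(0)$ is an isomorphism. The argument of lemma \ref{lemma:MMisModB} (which nowhere uses the bialgebra structure) carries over verbatim to produce an equivalence $\Phi\colon \mod A \xrightarrow{\sim} \MM_A$ sending a module $(V,\rho)$ to the fragment with $M(-1)=M(0)=V$ and action of $\Hom(-1,0)=A$ given by $\rho$.

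I would then form the functor $\omega_A \coloneqq \Hom_{\glid FA}(P_0,-)|_{\MM_A}\colon \MM_A \to \Vect_k$, using the canonical identification $\End(P_0)\cong k$. By Yoneda, the composite $\mod A \xrightarrow{\Phi} \MM_A \xrightarrow{\omega_A} \Vect_k$ is naturally isomorphic to the forgetful functor $U$, since $\Hom(P_0,\Phi(V)) \cong \Phi(V)(0) = V$. The reconstruction recalled above then yields an isomorphism $A \cong \End(\omega_A)$ of $k$-algebras. Finally, any equivalence $F\colon \glid FA \to \glid FA'$ preserves indecomposable projectives and the Hom pairings between them, hence maps $P_0, P_{-1}$ to objects isomorphic to $P_0', P_{-1}'$ and restricts to an equivalence $\MM_A \to \MM_{A'}$ intertwining $\omega_A$ with $\omega_{A'}$ up to natural isomorphism. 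Applying $\End(-)$ to these functors produces an isomorphism $A \cong A'$ of $k$-algebras.

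The genuinely new ingredient is the replacement of bialgebra Tannakian reconstruction by its algebra analogue, which I expect to be the easiest step rather than the hardest. The delicate point, as in theorem \ref{theorem:RecoverBialgebra}, is ensuring that $P_0, P_{-1}, \MM_A$ and $\omega_A$ admit purely intrinsic descriptions so that any equivalence $F$ automatically transports them; the characterizations given above achieve this without appealing to the ambient module structure.
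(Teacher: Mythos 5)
Your proposal is correct and follows essentially the same route as the paper's own proof: identify $P_0$ and $P_{-1}$ categorically as the indecomposable projectives, carry over the subcategory $\MM$ from notation \ref{notation:RecoveringBialgebra}, use the (non-monoidal) argument of lemma \ref{lemma:MMisModB} to get $\MM \simeq \mod A$, and recover $A$ as $\End$ of the functor $\Hom(P_0,-)|_{\MM}$, which corresponds to the forgetful functor. The only difference is that you spell out the final transport-along-an-equivalence step, which the paper leaves implicit.
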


\begin{proof}
The definition of the indecomposable projective objects, $P_0$ and $P_{-1}$, does not use the monoidal structure.  Similarly, the definition of the subcategories $\MM$ and $\VV$ from notation \ref{notation:RecoveringBialgebra} carries over to this setting.  As in lemma \ref{lemma:MMisModB}, we infer that $\MM \simeq \mod A.$  Under this equivalence, the functor $F = \Hom_{\glid FA}(P_0,-)\colon \MM \to \Vect_k$ corresponds to the forgetful functor $\Hom(A,-)\colon \mod A \to \Vect_k.$  We then recover $A$ as the algebra $\Hom(F,F).$
\end{proof}

As for bialgebras, there is no direct generalization of theorem \ref{theorem:RecoverAlgebra} to the setting of prefragments.

\begin{example}\label{example:ShortPrefragIsBlind}
Assume that $A$ and $A'$ be finite-dimensional $k$-algebras of the same dimension.  In this case, there is an equivalence of categories $\FF_\Lambda A \to \FF_\Lambda A'$ mapping $1 \in A$ to $1 \in A'$.  The natural functor $\Mod \FF_\Lambda A' \to \Mod \FF_\Lambda A$ induces an equivalence $\Prefrag FA' \to \Prefrag FA,$ and thus an equivalence $\prefrag FA' \to \prefrag FA,$ even though $A$ and $A'$ need not be isomorphic.
\end{example}
\appendix
\section{Comparison to earlier works}

In this appendix, we provide a comparison of some notions in this paper to existing literature about glider representations.  Our main reference for this appendix is \cite{CaenepeelVanOystaeyenBook19}.

Fragments over filtered rings were first introduced in \cite{NawalVanOystaeyen95, NawalVanOystaeyen96} as a generalization of modules; since then, the definition has been amended (see \cite{CaenepeelVanOystaeyen19} or \cite{CaenepeelVanOystaeyenBook19}).  To avoid confusion, we opt to refer to the original concept (see \cite{NawalVanOystaeyen96}) as a prefragment, reserving the notion of a fragment for the one introduced in \cite{CaenepeelVanOystaeyenBook19}.

The definition of a glider as we use it, has been introduced in \cite{CaenepeelVanOystaeyen19}, as a prefragment $M$ whose partial actions $\phi\colon F_i R \times M_{-i} \to M$ are induced by an ambient $R$-module $\Omega_M$.  A morphism between glider representations (as given in \cite{CaenepeelVanOystaeyenBook19}) is required to be compatible with a chosen ambient $R$-module.  As expounded on in remark \ref{remark:Obstruction}, this compatibility condition obscures whether the composition of glider morphisms is well-defined.  Indeed, in \cite[\S1.7]{CaenepeelVanOystaeyenBook19}, a sequence of morphisms is called a glider sequence if the sequence is composable.

As the functor embedding $\phi\colon \Glid FR \to \Prefrag FR$ is fully faithful (see proposition \ref{proposition:PhiHasLeftAdjoint}), we can alternatively define the category of glider representations as follows.

\begin{definition}\label{definition:AppendixGliders}
Let $FR$ be a $\bZ$-filtered ring.  A \emph{glider representation} over $FR$ consists of an $R$-module $\Omega$ together with a $\bZ^{\leq 0}$-indexed chain sequence of subgroups:
\[\ldots \subseteq M_{-2} \subseteq M_{-1} \subseteq M_0 \subseteq \Omega\]
such that the action $R \otimes \Omega \to \Omega$ induces maps $F_i R \otimes M_{j} \to M_{i+j}.$

Let $M_\bullet \subseteq \Omega_M$ and $N_\bullet \subseteq \Omega_N$ be glider representations over $FR$.  A \emph{morphism of glider representations} $f\colon (M_\bullet \subseteq \Omega_M) \to (N_\bullet \subseteq \Omega_N)$ is given by an additive map $f\colon M_0 \to N_0$ satisfying the following conditions: 
  \begin{enumerate}
		\item for all $j \in \bZ^{\leq 0}$, we have $f(M_j) \subseteq N_j$, and
		\item\label{enumerate:GliderMorphism2} $\forall i \in \bZ$ and $\forall j \leq -i$, the following diagram commutes:
		\[
		\xymatrix{
		F_i R \otimes M_j \ar[r] \ar[d]^{1 \otimes f} & M_{i+j} \ar[d]^{f}\\
		F_i R \otimes N_j \ar[r] \ar[r] & N_{i+j}\\		}
		\]
		or, equivalently, for all $r \in F_i R$ and $m \in M_j$, we have $f(rm) = rf(m).$
	\end{enumerate}
\end{definition}

Following proposition \ref{proposition:GlidAsSubcatOfPrefrag}, a sequence $K \to M \to N$ is a \emph{conflation} if and only each of the induced sequences $0 \to K_j \to M_j \to N_j \to 0$ is exact in $\Mod F_0 R$ (for all $j \in \mathbb{Z}^{\leq 0}$).

\begin{remark}
\begin{enumerate}
	\item This definition is based on proposition \ref{proposition:PhiFullyFaithful}.  In this definition, the role of the $R$-module $\Omega_M$ is to determine whether the prefragment is, in fact, a glider representation; it plays no role in the definition of the morphisms, nor in determining whether a sequence of gliders is a conflation.
		\item As in \cite{CaenepeelVanOystaeyenBook19}, it suffices to require \eqref{enumerate:GliderMorphism2} for $i+j=0$.
	\item By taking $i=0$ in the definition of a glider morphism, we see that $f\colon M_j \to N_j$ is an $F_0R$-module homomorphism, for all $j \leq 0.$
	\item We do not claim that the map $f\colon M_0 \to N_0$ can be extended to an $R$-morphism $f_\Omega\colon\Omega_M \to \Omega_N$ (as is illustrated in example \ref{example:EasyGliders}), but it follows from proposition \ref{proposition:LeftAdjoint} that this can be done after possibly re-choosing $\Omega_M$.
	
	In fact, given the fragment part $\ldots \subseteq M_{-2} \subseteq M_{-1} \subseteq M_0$ of a glider, there is a canonical way of completing this to a glider in the sense of definition \ref{definition:AppendixGliders} by applying the functor $L\colon \Glid FR \to \Preglid FR$ (see \S\ref{subsection:GlidInPreglid}).
	\item The category $\Frag FR$ of fragments, considered in \cite{CaenepeelVanOystaeyenBook19}, is an additive subcategory of the category of prefragments, containing the category of glider representations.  Thus, $\Glid FR \subseteq \Frag FR \subseteq \Prefrag FR.$
\end{enumerate}
\end{remark}

\subsubsection*{Limits and colimits} Limits and colimits of fragments and gliders are discussed in \cite[\S1.8]{CaenepeelVanOystaeyenBook19}.  The categories of pregliders, prefragments, and (natural) gliders are complete and cocomplete.  With the description of a glider from definition \ref{definition:AppendixGliders}, limits are taken pointwise.  As is illustrated in example \ref{example:SymmetricGroup1}, colimits cannot be taken pointwise (see proposition \ref{proposition:LimitsAndColimits}).

\subsubsection*{The conflation structure on the category of glider representations}  A morphism in a conflation category is called \emph{admissible} if it admits a deflation-inflation factorization, i.e.~$f\colon X \to Y$ is admissible if it factors as $X \deflation Z \inflation Y.$ 	Admissible morphisms have been called \emph{strict} in \cite[proposition 1.7.1]{CaenepeelVanOystaeyenBook19}.  A strict monomorphism is an inflation, and a strict epimorphism is a deflation.

In particular, a morphism $f\colon M \to N$ of glider representations is a deflation if and only if, for all $i \in \bZ^{\leq 0}$, the map $f_i\colon M_i \to N_i$ is an epimorphism.
	
What is called the image of a morphism $f\colon X \to Y$ in \cite{CaenepeelVanOystaeyenBook19} is $\coker(\ker f)$, and is often called the coimage of $f$ (see, for example, \cite{Buhler10, Mitchell65}).  As the category of gliders is not an abelian category, the natural morphism from the coimage of a morphism to its image need not be an isomorphism.  However, as $\Glid FR$ is deflation quasi-abelian, a morphism $f\colon M \to N$ of gliders is the composition of a deflation $d\colon M \deflation \coim f$ and a monomorphism $m\colon \coim f \hookrightarrow N$ (see \cite[corollary 1]{Rump01}).

\subsubsection*{Noetherian glider representations} Noetherian fragments were introduced in \cite[\S2.3]{CaenepeelVanOystaeyenBook19}.  It follows from proposition \ref{proposition:WhenNoetherian} that a glider is Noetherian if and only if it is Noetherian as a prefragment.  Our theorem \ref{theorem:NoetherianObjects} and proposition \ref{proposition:WhenNoetherian} are analogues of \cite[theorems 2.3.2 and 2.3.4]{CaenepeelVanOystaeyenBook19}.

\subsubsection*{Projective glider representations}  A glider $M \in \Glid FR$ is projective if and only if the prefragment part $\phi(M) \in \Prefrag FM$ is projective.  A glider representation is freely generated in the sense of \cite[definition 2.1.3]{CaenepeelVanOystaeyenBook19} if and only if it is a direct sum of standard projectives.  A glider representation is projective if and only if it is a direct summand of a freely generated projective (a similar statement for fragments has been shown in \cite[proposition 2.2.5]{CaenepeelVanOystaeyenBook19}).

\subsubsection*{Monoidal structure for glider representations of filtered groups} Let $k$ be a field.  In \cite[proposition 4.6.7 and definition 4.6.8]{CaenepeelVanOystaeyenBook19}, a tensor product of $k$-linear fragments and glider representations of filtered groups were defined: the product $M \otimes N$ is defined via $(M \otimes N)_{-i} \cong M_{-i} \otimes_k N_{-i}.$  This coincides with the product considered in \S\ref{section:HopfCategories}.  In particular, for a filtered group $G_0 \subseteq G_1 \subseteq \ldots \subseteq G_n \subseteq \ldots$, the category of glider representations is (monoidally) derived equivalent to the category of representations of a semi-Hopf category.  Following \S\ref{subsection:Isocategorical}, one can recover the group $G$ from the monoidal category of glider representations.

\subsubsection*{Natural gliders} Let $\Omega$ be an $R$-module.  Given an $S$-submodule $M_0 \subseteq \Omega$, we can build a glider representation by setting $M_{-i}^* = \{m \in M_0 \mid F_i R \cdot m \subseteq M_0\}$.  Such glider representations (as well as those isomorphic) are called \emph{natural gliders} in \cite[definition 1.3.1]{CaenepeelVanOystaeyenBook19}.  In earlier work \cite{NawalVanOystaeyen95, NawalVanOystaeyen96}, the same concept was called a natural fragment.  The definitions given in \cite{CaenepeelVanOystaeyenBook19} and in \S\ref{section:NaturalGliders} coincide.  Our proposition \ref{proposition:InflationsToNaturalGliders} recovers \cite[lemma 1.6.2(3)]{CaenepeelVanOystaeyenBook19}.  It follows from theorem \ref{theorem:NaturalGliders} that the category $\NGlid FR$ of natural glider representations is a reflective subcategory of $\Glid FR.$  As such, the category $\NGlid FR$ is complete and cocomplete, and a limit of natural gliders is again a natural glider.  This answers a question posed at the end of \cite[\S1.8]{CaenepeelVanOystaeyenBook19}.

\bibliographystyle{amsplain}
\providecommand{\bysame}{\leavevmode\hbox to3em{\hrulefill}\thinspace}
\providecommand{\MR}{\relax\ifhmode\unskip\space\fi MR }
\providecommand{\MRhref}[2]{%
  \href{http://www.ams.org/mathscinet-getitem?mr=#1}{#2}
}
\providecommand{\href}[2]{#2}

\end{document}